\newcommand{\class}{\mathsf}
\newcommand{\alg}{\mathbf}
\newcommand{\structure}[1]{\mathbb{#1}}
\newcommand{\set}[2]{\{ #1 \mid #2 \}}
\newcommand{\pair}[2]{\langle #1, #2 \rangle}
\newcommand{\tuple}[1]{\overline{#1}}
\newcommand{\range}[1]{[#1]}
\newcommand{\card}[1]{| #1 |}
\newcommand{\assign}{\mathrel{:=}}
\newcommand{\bsubseteq}[1]{\subseteq_{#1}}
\newcommand{\corightarrow}{\mathbin{-}}
\newcommand{\equals}{\approx}
\newcommand{\idmap}{\mathrm{id}}
\newcommand{\into}{\hookrightarrow}
\newcommand{\True}{\mathsf{1}}
\newcommand{\False}{\mathsf{0}}
\newcommand{\Eps}{\mathrm{E}}
\newcommand{\FmAlg}{\alg{Tm}}
\newcommand{\Fsymbol}{\mathrm{F}}
\DeclareMathOperator{\Fg}{Fg}
\DeclareMathOperator{\Fi}{Fi}
\newcommand{\BA}[1]{\alg{B}_{#1}}
\newcommand{\dBA}[2]{\alg{B}_{#1 \times #2}}
\newcommand{\BAm}[1]{\structure{B}_{#1}}
\newcommand{\nonempty}[1]{P_{#1}}
\newcommand{\dnonempty}[2]{P_{#1,#2}}
\newcommand{\atom}[1]{\mathsf{a}_{#1}}
\newcommand{\coatom}[1]{\mathsf{c}_{#1}}
\newcommand{\fg}[2]{[#2]_{#1}}
\newcommand{\height}[2]{F_{#1}^{#2}}
\newcommand{\Mfive}{\alg{M}_{\alg{5}}}
\newcommand{\Nfive}{\alg{N}_{\alg{5}}}
\newcommand{\FreeDLat}[1]{\alg{F}_{\mathsf{DL}}(#1)}
\newcommand{\FreeBA}[1]{\alg{F}_{\mathsf{BA}}(#1)}
\newcommand{\FreeuSLat}[1]{\alg{F}_{\mathsf{uSL}}(#1)}
\newcommand{\DLclass}[1]{\class{DL}_{#1}}
\newcommand{\BAclass}[1]{\class{BA}_{#1}}
\newcommand{\SLclass}[1]{\class{SL}_{#1}}
\newcommand{\uSLclass}[1]{\class{uSL}_{#1}}
\newcommand{\BAlg}{\class{BA}}
\newcommand{\DLat}{\class{DLat}}
\newcommand{\SLat}{\class{SLat}}
\newcommand{\uSLat}{\class{uSLat}}
\newcommand{\HSop}{\mathbb{H}_{\mathrm{S}}}
\newcommand{\HSinvop}{\mathbb{H}^{-1}_{\mathrm{S}}}
\newcommand{\Sop}{\mathbb{S}}
\newcommand{\Pop}{\mathbb{P}}
\newcommand{\PUop}{\mathbb{P}_{\mathrm{U}}}
\newtheorem*{theorem*}{Theorem}
\newtheorem{theorem}{Theorem}[section]
\newtheorem{lemma}[theorem]{Lemma}
\newtheorem{corollary}[theorem]{Corollary}
\newtheorem{fact}[theorem]{Fact}
\author{Adam P\v{r}enosil}
\address{Universit\`{a} degli Studi di Cagliari, Cagliari, Italy}
\email{adam.prenosil@unica.it}
\title{Filter classes of upsets of distributive lattices}
\date{}
\thanks{The author gratefully acknowledges the support of Fondazione di Sardegna within the project ``Resource sensitive reasoning and logic'', Cagliari, CUP: F72F20000410007. The author also wishes to thank the two anonymous referees for their useful comments, which helped to improve the clarity of the manuscript.}
\keywords{Distributive lattices, Boolean algebras, prime filters, algebraic logic}
\begin{document}

\begin{abstract}
  Let us say that a class of upward closed sets (upsets) of distributive lattices is a \mbox{finitary} filter class if it is closed under homomorphic preimages, intersections, and directed unions. We show that the only finitary filter classes of up\-sets of distributive lattices are formed by what we call $n$-filters. These are related to the finite Boolean lattice with $n$ atoms in the same way that filters are related to the two-element Boolean \mbox{lattice}: $n$-filters are precisely the intersections of prime $n$-filters and prime $n$-filters are precisely the homo\-morphic pre\-images of the prime $n$-filter of non-zero elements of the finite Boolean lattice with $n$ atoms. Moreover, $n$-filters on Boolean algebras are the only finitary filter classes of upsets of Boolean algebras generated by prime \mbox{upsets}.
\end{abstract}

  \maketitle

\section{Introduction}

  The structure theory of distributive lattices rests on two facts: the prime filter separation property (which states that each filter is an intersection of prime filters) and the correspondence \mbox{between} prime filters and homo\-morphisms into the two-element distributive lattice $\BA{1}$ consisting of the elements $\False < \True$ (which states that each prime filter is a homomorphic preimage of the prime filter $\{ \True \}$ on $\BA{1}$). Together, these facts show that distributive lattices are subdirect powers of~$\BA{1}$.

  Are there other kinds of upward closed subsets (upsets) of distributive lattices besides filters which admit an analogous representation in terms of intersections of homomorphic preimages of a certain canonical upset? We show that if the role of the canonical prime filter $\{ \True \}$ on $\BA{1}$ is taken over by the upset $\nonempty{n}$ of non-zero ($x > \False$) elements of the finite Boolean lattice with $n$ atoms ${\BA{n} \assign (\BA{1})^{n}}$, we obtain precisely what we call \emph{$n$-filters}. An $n$-filter is an upset $F$ of a meet semilattice $\alg{S}$ which enjoys the following property for each non-empty finite set $X \subseteq F$:
\begin{align*}
  \text{if $\bigwedge Y \in F$ for each $Y \subseteq X$ with $1 \leq \card{Y} \leq n$, then $\bigwedge X \in F$.}
\end{align*}
  Each $m$-filter is an $n$-filter if $m < n$, but the $n$-filter $\nonempty{n}$ is not an $m$-filter for any $m < n$. Clearly $1$-filters in this sense are just ordinary lattice filters.  An $n$-filter is said to be \emph{prime} if $a \vee b \in F$ implies that $a \in F$ or $b \in F$.

  The relationship between prime $n$-filters on distributive lattices and the canonical prime $n$-filter $\nonempty{n}$ on $\BA{n}$ extends the relationship between prime filters and the canonical prime filter $\{ \True \}$ on $\BA{1}$:
\begin{align*}
  \text{prime $n$-filters are precisely the homomorphic preimages of $\nonempty{n}$.}
\end{align*}
  This relationship follows from the observation that prime $n$-filters on distributive lattices are precisely the unions of at most $n$ ordinary prime filters (which is far from true for $n$-filters in general).

  The relationship between $n$-filters and prime $n$-filters on distributive lattices then extends the relationship between filters and prime filters:
\begin{align*}
  \text{$n$-filters are precisely the intersections of prime $n$-filters.}
\end{align*}
  The key part of the proof is describing the $n$-filter generated by an upset of a distributive lattice. Observe that if we apply this description of $n$-filters to ideal lattices of distributive lattices, we obtain the filter--ideal separation property for distributive lattices: 
\begin{align*}
  \text{$n$-filters can be separated from ideals by prime $n$-filters.}
\end{align*}
  In other words, the fundamental properties of (prime) filters on distributive lattices have direct analogues for (prime) $n$-filters.

  Are there other kinds of upsets of distributive lattices besides $n$-filters which share the basic properties of lattice filters? We encapsulate these properties in the definition of a \emph{finitary filter class}: a class of upsets closed under homo\-morphic preimages, intersections, and directed unions. The only finitary filter classes of upsets of distributive lattices are indeed the families of $n$-filters for some $n$. This is not true for Boolean algebras, where many other finitary filter classes exist. Nonetheless, $n$-filters are the only finitary filter classes of upsets of Boolean algebras generated by some \mbox{family} of prime upsets. There are also other finitary filter classes of upsets of unital meet semilattices. However, $n$-filters are the only finitary filter classes of non-empty upsets of unital meet semilattices which are closed under so-called strict homomorphic images. This is not true for general meet semilattices.

  To the best of our knowledge, the problem of describing which families of upsets of distributive lattices (and other kinds of ordered algebras) behave like the family of all lattice filters has not been considered in the literature. Equivalently, as explained in more detail in Section~\ref{sec: filter classes}, this problem asks for a full classification of upsets according to which first-order properties of a particular type they satisfy (namely properties expressible by what we call filter implications and logical implications in Theorem~\ref{thm: axiomatization by implications}). In the case of distributive lattices and unital meet semilattices, we provide a full classification of their upsets according to which logical implications they satisfy (Theorems~\ref{cor: extensions of dlinfty} and~\ref{cor: extensions of uslinfty}). The analogous problem for Boolean algebras is substantially more complicated. Indeed, the major questions left open by the present paper are whether there are tractable classifications of upsets of Boolean algebras and meet semilattices. (We conjecture that the answer is negative for Boolean algebras and positive for meet semilattices.)

  The conceptual machinery used in the later sections (from Section~\ref{sec: filter classes} onwards) to solve these problems will largely be familiar to practitioners of so-called abstract algebraic logic~\cite{font16}, but perhaps not to lattice theorists. This might require the reader to first get to grips with some possibly unfamiliar jargon and notation. We have tried to keep this to the bare minimum required to prove the desired results.

  The concept of an $n$-filter may remind the reader of the so-called \emph{$n$-Helly property} for families of sets. A family of sets enjoys this property if for each non-empty finite subfamily $S_{i}$ with $i \in I$
\begin{align*}
  \bigcap_{i \in I} S_{i} = \emptyset & \implies \bigcap_{j \in J} S_{j} = \emptyset \text{ for some } J \subseteq I \text{ with } 1 \leq \card{J} \leq n.
\end{align*}
  This property is named after Helly's theorem in convex geometry, which states that the family of all convex subsets of $\mathbb{R}^{n}$ enjoys the $(n+1)$-Helly property~\cite{helly23}. In~the terminology of the present paper we would say that the non-empty convex subsets of $\mathbb{R}^{n}$ form an $(n+1)$-filter in the lattice of all convex subsets of $\mathbb{R}^{n}$. The characteristic equational property of this lattice is the $n$-distributive property introduced by Huhn~\cite{huhn83}:
\begin{align*}
  & x \wedge (y_{1} \vee \dots \vee y_{n+1}) = (x \wedge z_{1}) \vee \dots \vee (x \wedge z_{n+1}),
\end{align*}
  where
\begin{align*}
 & z_{i} \assign \bigwedge \set{y_{j}}{1 \leq j \leq n+1 \text{ and } j \neq i}.
\end{align*}
  While $n$-distributivity will not make any further appearance in the present paper, it is worth observing that this property is intimately related to $n$-filters: a lattice is $n$-distributive if and only if its filters can be separated from its ideals by $n$-prime filters (i.e.\ the complements of prime $n$-ideals). This separation property is, in a way, dual to the one considered in this paper, which states that $n$-filters can be separated from ideals by prime $n$-filters.

\section{Basic properties of \texorpdfstring{$n$-filters}{n-filters}}

  We first introduce $n$-filters, prime $n$-filters, and $m$-prime filters in more detail. These may be defined on any meet semilattice (in fact on any poset), but restricting to distributive meet semi\-lattices gives us a better grip on the generation of $n$-filters.\footnote{The $n$-filters studied here are unrelated to the $n$-filters of {\L}ukasiewicz--Moisil algebras~\cite{busneag+chirtes07}.}

\subsection{Defining \texorpdfstring{$n$-filters}{n-filters}}

  It~will be convenient to introduce the notation ${Y \bsubseteq{n} X}$ to abbreviate the claim that~$Y$ is a non-empty subset of $X$ of cardinality at~most~$n$. The~set $\{ 1, \dots, n \}$ will be denoted~$\range{n}$. In other words,
\begin{align*}
  Y \bsubseteq{n} X \iff Y \subseteq X \text{ and } \card{Y} \in \range{n}.
\end{align*}
  Throughout the paper, $\alg{S}$ will denote a meet semilattice and $\alg{L}$ will denote a lattice. By a semilattice we shall mean a \emph{meet} semilattice by default. The top (bottom) element of a meet semilattice, if it exists, will be denoted by $\True$ ($\False$), but in general we do not require that these elements exist. An upset $F$ of $\alg{S}$ is called \emph{total} if $F = \alg{S}$. An upset which is not total will be called \emph{proper}.
 
  An \emph{$n$-filter} on a semilattice $\alg{S}$ for $n \geq 1$ is an upset $F \subseteq \alg{S}$ such that for each non-empty finite $X \subseteq \alg{S}$
\begin{align*}
  \bigwedge Y \in F \text{ for each } Y \bsubseteq{n} X \implies \bigwedge X \in F.
\end{align*}
  We further define a \emph{$0$-filter} to be an upset which is either empty or total. An~\emph{\mbox{$n$-ideal}} on a join semilattice is an $n$-filter on the order dual of the join semilattice. A $1$-filter ($1$-ideal) will simply be called a \emph{filter} (\emph{ideal}).

  We may in fact restrict to $\card{X} = n+1$ and $\card{Y} = n$ in the definition of $n$-filters. For example, the~$2$-filters on $\alg{S}$ are precisely the upsets $F \subseteq \alg{S}$ such that
\begin{align*}
  x \wedge y, y \wedge z, z \wedge x \in F \implies x \wedge y \wedge z \in F.
\end{align*}

\begin{fact} \label{fact: restricted definition of n filters}
  An~upset $F$ of a semilattice $\alg{S}$ is an $n$-filter if and only if 
\begin{align*}
  \bigwedge_{j \neq i} x_{j} \in F \text{ for each } i \in \range{n+1} \implies x_1 \wedge \dots \wedge x_{n+1} \in F.
\end{align*}
\end{fact}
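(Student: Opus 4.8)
The plan is to prove the two implications separately, writing $m_i \assign \bigwedge_{j \neq i} x_j$ for the leave-one-out meets; only the converse direction carries real content.

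For the forward direction, suppose $F$ is an $n$-filter and that $m_i \in F$ for every $i \in \range{n+1}$, and put $X \assign \{ x_1, \dots, x_{n+1} \}$. I first note that $X \subseteq F$: since $n + 1 \geq 2$ there is for each $i$ some $i' \neq i$, and then $x_i \geq m_{i'} \in F$ forces $x_i \in F$ because $F$ is an upset. If the $x_i$ happen to be distinct, I check the hypothesis of the $n$-filter condition for this $X$: any $Y \bsubseteq{n} X$ has $\card{Y} \leq n < \card{X}$, so there is an index $i$ with $x_i \notin Y$, whence $\bigwedge Y \geq m_i \in F$ and thus $\bigwedge Y \in F$; the $n$-filter condition then yields $x_1 \wedge \dots \wedge x_{n+1} = \bigwedge X \in F$. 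If instead $x_a = x_b$ for some $a \neq b$, then omitting $x_a$ leaves the full meet unchanged, so $x_1 \wedge \dots \wedge x_{n+1} = m_a \in F$ outright.

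For the converse, assume $F$ is an upset satisfying the displayed implication and fix a non-empty finite $X \subseteq F$ with $\bigwedge Y \in F$ for every $Y \bsubseteq{n} X$; I must show $\bigwedge X \in F$. Writing $k \assign \card{X}$, the case $k \leq n$ is immediate since then $X \bsubseteq{n} X$, so assume $k > n$. The heart of the argument is an induction on $m$ from $n$ to $k$ establishing that $\bigwedge Z \in F$ for every non-empty $Z \subseteq X$ with $\card{Z} \leq m$; the base case $m = n$ is exactly the hypothesis. For the step, take a non-empty $Z \subseteq X$ with $\card{Z} = m + 1$. Since $m + 1 \geq n + 1$, I partition $Z$ into $n + 1$ non-empty blocks $Z_1, \dots, Z_{n+1}$ and set $y_i \assign \bigwedge Z_i$. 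Then $y_1 \wedge \dots \wedge y_{n+1} = \bigwedge Z$, while each leave-one-out meet $\bigwedge_{j \neq i} y_j = \bigwedge (Z \setminus Z_i)$ is the meet of a non-empty subset of $X$ of size $(m+1) - \card{Z_i} \leq m$, hence lies in $F$ by the induction hypothesis. Applying the displayed implication to $y_1, \dots, y_{n+1}$ gives $\bigwedge Z \in F$, and taking $Z = X$ at the stage $m + 1 = k$ completes the proof.

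The step I expect to be the main obstacle is exactly this arity mismatch: the displayed condition only constrains families of $n + 1$ elements, whereas an $n$-filter must be tested against arbitrary finite $X$. The device that resolves it is to collapse $Z$ into precisely $n + 1$ block-meets, chosen so that deleting any one block omits a non-empty part of $Z$ and hence drops the cardinality below $m + 1$, bringing the induction hypothesis to bear. The only remaining care concerns the degenerate cases — coincident $x_i$ above, and the empty or total filters for $n = 0$, where the convention $\bigwedge \emptyset = \True$ keeps both formulations in agreement.
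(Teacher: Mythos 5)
Your proof is correct, and its converse direction takes a genuinely different route from the paper's. The paper also argues by induction, but on the cardinality of $X$ and in contrapositive form: given $X = \{x_1, \dots, x_{k+1}\}$ with $\bigwedge X \notin F$, it merges just two elements and applies the inductive hypothesis to $\{x_1, \dots, x_{k-1}, x_k \wedge x_{k+1}\}$ to produce a small subset whose meet avoids $F$; when that subset contains the merged element, a second application of the inductive hypothesis to $Y \cup \{x_k, x_{k+1}\}$ is needed to split it apart again. You instead strengthen the invariant (the meet of \emph{every} non-empty subset of $X$ of size at most $m$ lies in $F$) and, in the inductive step, partition a set of size $m+1$ into exactly $n+1$ non-empty blocks, so that the displayed $(n+1)$-ary implication applies once, directly, to the block-meets $y_i$. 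Both devices exploit associativity of $\wedge$ to bridge the arity gap, but your partition argument avoids the contrapositive case analysis and the double use of the inductive hypothesis, at the negligible cost of proving a slightly stronger intermediate statement. You are also more careful than the paper on the forward direction, which it dismisses as trivial: you check that $X \subseteq F$ (needed because the definition of an $n$-filter quantifies over subsets of $F$) and you handle repeated $x_i$'s, where the set $X$ has at most $n$ elements and the conclusion must be read off one of the hypotheses $m_a \in F$ rather than from the $n$-filter condition. Like the paper, your main argument implicitly assumes $n \geq 1$, but you at least flag the degenerate case $n = 0$.
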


\begin{proof}
  The left-to-right implication is trivial. Conversely, suppose that the above implication holds for~$F$. We~prove by induction over the cardinality of the set ${X \subseteq \alg{S}}$ that if $\bigwedge Y \in F$ for each $Y \bsubseteq{n} X$, then $\bigwedge X \in F$. If~$\card{X} \leq n$, we may take ${Y \assign X}$. If~$\card{X} = n+1$, the claim holds by assumption. Now consider $X = \{ x_{1}, \dots, x_{k+1} \}$ with $\card{X} = k+1 > n+1$ and suppose that the claim holds for all $Y \bsubseteq{k} X$. If $\bigwedge X \notin F$, then by the inductive hypothesis applied to $\{ x_{1}, \dots, x_{k-1}, x_{k} \wedge x_{k+1} \}$ there is either some $Y \bsubseteq{n} \{ x_{1}, \dots, x_{k-1} \}$ such that $\bigwedge Y \notin F$ or there is some $Y \bsubseteq{n-1} \{ x_{1}, \dots, x_{k-1} \}$ such that $\bigwedge Y \wedge x_{k} \wedge x_{k+1} \notin F$.  This is a meet of $n+1$ elements and $n+1 \leq k$, therefore the inductive hypothesis applies to $Y \cup \{ x_{k}, x_{k+1} \}$. In either case we obtain some $Z \bsubseteq{n} X$ such that $\bigwedge Z \notin F$.
\end{proof}

  The family of all $n$-filters on a semilattice $\alg{S}$ ordered by inclusion forms a complete lattice $\Fi_{n} \alg{S}$ where meets are intersections of $n$-filters and directed joins are directed unions of $n$-filters. We~can thus define the $n$-filter \emph{generated} by $X \subseteq \alg{S}$ as the smallest \mbox{$n$-filter} on $\alg{S}$ which contains~$X$. An $n$-filter generated by a finite set will be called \emph{finitely generated}. Because each $n$-filter is a directed union of its finitely generated $n$-subfilters, $\Fi_{n} \alg{S}$ is an algebraic lattice and its compact elements are precisely the finitely generated $n$-filters.

  An element $x$ of a meet semilattice $\alg{S}$ will be called \emph{(meet) $m$-prime}, where $1 \leq m$, if for each non-empty finite family $Y \subseteq \alg{S}$
\begin{align*}
  \bigwedge Y \leq x & \implies \bigwedge Z \leq x \text{ for some } Z \bsubseteq{m} Y.
\end{align*}
  We may again restrict to $\card{I} = m+1$ and $\card{J} = m$. We now define an \emph{$m$-prime} $n$-filter on a semilattice $\alg{S}$ as a meet $m$-prime element of the lattice $\Fi_{n} \alg{S}$ of $n$-filters on $\alg{S}$. An \emph{\mbox{$m$-prime}} \mbox{$n$-ideal} on a join semilattice is then an $m$-prime $n$-filter on its order dual. A~\mbox{$1$-prime} $n$-filter will simply be called a \emph{prime} $n$-filter. This agrees with the existing definition of prime filters on semilattices.

  Note that this is a second-order definition, in the sense that it quantifies over subsets of~$\alg{S}$. However, if either $n = 1$ or $m = 1$, it may be reduced to a first-order definition, which quantifies only over elements of $\alg{S}$.\footnote{One can more generally define an $n$-filter on a poset as an upset $F$ such that for each non-empty finite $X \subseteq F$ the set $X$ has a lower bound in $F$ whenever each $Y \bsubseteq{n} {X}$ does. Although the family of all $n$-filters on a poset need not be closed under intersections, we can still call an $n$-filter $F$ on a poset $m$-prime if for each non-empty finite family $F_{i}$ of $n$-filters for $i \in I$
\begin{align*}
  \bigcap_{i \in I} F_{i} \subseteq F \implies \bigcap_{j \in J} F_{j} \subseteq F \text{ for some } J \bsubseteq{m} I.
\end{align*}
  With this definition in hand, Fact~\ref{fact: prime and m-prime} in fact holds for arbitrary posets.}

\begin{fact} \label{fact: prime and m-prime}
  An $n$-filter on a lattice is prime if and only if its complement is an ideal. A filter on a lattice is $m$-prime if and only if its complement is an $m$-ideal.
\end{fact}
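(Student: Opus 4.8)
The plan is to treat both equivalences with a single device. Since $F$ is an upset, its complement $C \assign P \setminus F$ is automatically a downset, so in each case only a directedness-type condition is genuinely at stake, and the bridge between the second-order notion of (\,$m$-)primeness and this first-order condition is the family of \emph{principal} upsets ${\uparrow} a \assign \set{x}{x \geq a}$. Each ${\uparrow} a$ is a filter, since $a$ is a common lower bound in ${\uparrow} a$ of any of its subsets, and hence an $n$-filter for every $n \geq 1$; thus principal upsets are always available as test $n$-filters. In both claims one direction is the straightforward one (closure properties of upsets force primeness) and the other extracts the directedness condition by feeding principal upsets into primeness.

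For the first claim, the implication from a directed complement to primeness uses only that the $F_{i}$ are upsets: if no $F_{j}$ is contained in $F$, I would choose $x_{i} \in F_{i} \setminus F$ for each $i$, use upward directedness of $C$ to find some $c \in C$ above all the $x_{i}$, and observe that $c \in \bigcap_{i} F_{i}$ because each $F_{i}$ is an upset, so $\bigcap_{i} F_{i} \not\subseteq F$. Conversely, given $a, b \in C$, I would feed the two principal $n$-filters ${\uparrow} a$ and ${\uparrow} b$ into primeness: neither is contained in $F$ (each meets $C$), so their intersection is not contained in $F$, producing some $c \in C$ with $c \geq a$ and $c \geq b$. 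Hence $C$ is upward directed, pairwise directedness upgrading to directedness of all finite subsets by the usual induction.

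The forward direction of the second claim mirrors this. Given a nonempty finite $X \subseteq C$ in which every $Y \bsubseteq{m} X$ has a common upper bound in $C$, I would feed the principal filters ${\uparrow} x$ for $x \in X$ into $m$-primeness, using that $\bigcap_{x \in X} {\uparrow} x \subseteq F$ says exactly that $X$ has no common upper bound in $C$. Were this the case, $m$-primeness would return some $Y \bsubseteq{m} X$ with $\bigcap_{y \in Y} {\uparrow} y \subseteq F$, i.e.\ with no common upper bound of $Y$ in $C$, contradicting the hypothesis; so $X$ has a common upper bound in $C$ and $C$ is an $m$-ideal.

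The reverse direction of the second claim is the main obstacle, because the test family $\set{F_{i}}{i \in I}$ now consists of \emph{arbitrary} filters, and the witnesses $z_{J} \in C \cap \bigcap_{j \in J} F_{j}$ that one extracts for the various $J \bsubseteq{m} I$ need not be comparable. The device I would use is the reformulation that $\bigcap_{i} F_{i} \not\subseteq F$ holds iff there are elements $x_{i} \in F_{i}$ with a common upper bound in $C$ (one direction is immediate; the other uses that the $F_{i}$ are upsets), combined with the downward directedness of each individual filter $F_{i}$ to \emph{merge} witnesses. Concretely, for each $i$ the finitely many witnesses contributed by the sets $J \ni i$ with $J \bsubseteq{m} I$ all lie in $F_{i}$, so directedness of $F_{i}$ yields a single $x_{i} \in F_{i}$ below all of them. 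For every $S \bsubseteq{m} I$ the corresponding upper-bound witness $u_{S} \in C$ then also bounds each $x_{j}$ with $j \in S$, so the merged set $\set{x_{i}}{i \in I} \subseteq C$ meets the hypothesis of the $m$-ideal condition; applying that condition produces a common upper bound $w \in C$ of all the $x_{i}$, whence $w \in \bigcap_{i} F_{i} \cap C$ and $\bigcap_{i} F_{i} \not\subseteq F$, as required. I expect the bookkeeping of witnesses and the verification that the merging respects the $m$-ideal hypothesis to be the only delicate points.
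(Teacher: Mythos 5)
Your proposal is correct and follows essentially the same route as the paper: principal upsets serve as the test $n$-filters in the forward directions, and in the converses witnesses are merged using the downward directedness of the given filters and then fed into the directedness (resp.\ $m$-ideal) condition on the complement. The only difference is in the converse of the second claim, where the paper treats only families of $m+1$ filters (invoking its earlier remark that one may restrict to $\card{I} = m+1$ and $\card{J} = m$), whereas your merging argument, indexed by the sets $J \bsubseteq{m} I$, handles arbitrary finite families directly and specializes to the paper's argument when $\card{I} = m+1$.
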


\begin{proof}
  Let $F$ be a prime $n$-filter with $x, y \notin F$, and let $G$ and $H$ be the principal filters generated by $x$ and $y$. Then $G \nsubseteq F$ and $H \nsubseteq F$, therefore $G \cap H \nsubseteq F$ as witnessed by some $z \in G \cap H$ such that $z \notin F$. In other words, $x, y \leq z \notin F$, so $x \vee y \notin F$ and the complement of $F$ is an ideal. Conversely, suppose that the complement of $F$ is an ideal and $G$ and $H$ are $n$-filters such that $G \nsubseteq F$ and $H \nsubseteq F$ as witnessed by $x \in G \setminus F$ and $y \in H \setminus F$. Because the complement of $F$ is an ideal, $x \vee y \notin F$. But $x \vee y \in G \cap H$, so $G \cap H \nsubseteq F$.

  To prove the left-to-right direction of the second claim, let $F$ be an $m$-prime filter and $x_{i} \notin F$ for $i \in I$ be a finite family such that $x_{J} \assign \bigvee \set{x_{j}}{j \in J} \notin F$ for each $J \bsubseteq{n} I$. Let $G_{i}$ be the principal filter generated by $x_{i}$ for $i \in I$. Then $x_{J}$ witnesses that $\bigcap_{j \in J} G_{j} \nsubseteq F$ for each $J \bsubseteq{n} I$. Because $F$ is an $m$-prime filter, it follows that $\bigcap_{i \in I} G_{i} \nsubseteq F$, so there is some $z \notin F$ such that $x_{i} \leq z$ for each~$i \in I$. Thus $\bigvee \set{x_{i}}{i \in I} \notin F$ and the complement of $F$ is an $m$-ideal.

  Conversely, consider filters $G_{i}$ for $i \in \range{m+1}$ with $\bigcap_{j \neq i} G_{j} \nsubseteq F$ for $i \in \range{m+1}$, as witnessed by the elements $x_{i} \in \bigcap_{j \neq i} G_{j} \setminus F$. Because $G_{i}$ is a filter, $y_{i} \assign \bigwedge \set{x_{j}}{j \neq i} \in G_{i}$. Then $y_{j} \leq x_{i} \notin F$ for each $j \neq i$. Because the complement of $F$ is an $m$-ideal, $z \assign \bigvee \set{y_{i}}{i \in \range{m+1}} \notin F$. But $z \in \bigcap_{i \in \range{m+1}} G_{i}$, so $\bigcap_{i \in \range{m+1}} G_{i} \nsubseteq F$.
\end{proof}

  The following equivalences thus provide a working definition of prime $n$-filters and $n$-prime filters on lattices.

\begin{fact}\label{fact: n prime ideal complement}
  An $n$-filter $F$ on a lattice is prime if and only if
\begin{align*}
  x \vee y \in F & \implies x \in F \text{ or } y \in F.
\end{align*}
  A filter $F$ on a lattice is $m$-prime if and only if
\begin{align*}
  x_1 \vee \dots \vee x_{m+1} \in F \implies \bigvee_{j \neq i} x_{j} \in F \text{ for some } i \in \range{m+1}.
\end{align*}
\end{fact}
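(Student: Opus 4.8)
The plan is to derive both equivalences from the preceding Fact, which characterizes primeness through the complement, and then to use the presence of joins to render the resulting condition first-order. Throughout, write $C \assign \alg{S} \setminus F$ for the complement. Since $F$ is an upset in both parts, $C$ is automatically a downset, so the only remaining content in ``$C$ is an ($m$-)ideal'' is the closure condition on upper bounds.

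For the first claim, by the preceding Fact $F$ is a prime $n$-filter if and only if $C$ is an ideal, i.e.\ a downset every non-empty finite subset of which has a common upper bound inside $C$. The key observation is that in a join semilattice a downset $C$ contains a common upper bound of a non-empty finite $X$ if and only if it contains $\bigvee X$, since $\bigvee X$ is the least such upper bound and $C$ is downward closed; specializing to pairs, this says $C$ is closed under binary joins, and by induction that already gives directedness. Hence $C$ is an ideal exactly when $x, y \in C$ implies $x \vee y \in C$, and contraposing yields $x \vee y \in F \imp x \in F$ or $y \in F$. The empty complement, i.e.\ the total filter $F = \alg{S}$, is covered vacuously by the same equivalence.

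For the second claim, by the preceding Fact $F$ is an $m$-prime filter if and only if $C$ is an $m$-ideal, that is, an $m$-filter on the order dual $\alg{S}^{\partial}$, whose meets are the joins of $\alg{S}$. Applying Fact~\ref{fact: restricted definition of n filters} to $\alg{S}^{\partial}$ (with $m$ in the role of $n$) reduces the $m$-ideal condition to the single cardinality $m+1$: the downset $C$ is an $m$-ideal if and only if, for all $x_1, \dots, x_{m+1}$, the hypothesis that $\bigvee_{j \neq i} x_{j} \in C$ for every $i \in \range{m+1}$ forces $x_1 \vee \dots \vee x_{m+1} \in C$. Contraposing this implication for each tuple, and using that lying outside $C$ is the same as lying in $F$, converts it directly into the asserted statement $x_1 \vee \dots \vee x_{m+1} \in F \imp \bigvee_{j \neq i} x_{j} \in F$ for some $i \in \range{m+1}$.

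The routine parts are the contrapositions. The one step demanding care is the order-dualization in the second claim: I must track that invoking the restricted-definition Fact on $\alg{S}^{\partial}$ replaces each meet by the corresponding join of $\alg{S}$, and that the downset condition on $C$ then negates cleanly, tuple by tuple, into the displayed implication for $F$. I do not anticipate any genuine obstacle beyond this bookkeeping.
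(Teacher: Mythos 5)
Your proposal is correct and takes essentially the same route as the paper: the paper states this Fact without proof, as an immediate consequence of the preceding complement characterization, which is exactly the reduction you perform. Your two supporting observations (the complement is automatically a downset, and joins plus Fact~\ref{fact: restricted definition of n filters} applied to the order dual turn the ideal/$m$-ideal conditions into the displayed first-order implications) are precisely the intended bookkeeping.
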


  While the complement of an $m$-prime $n$-filter is always an $m$-ideal, we shall see that the converse implication fails (even in Boolean algebras): a $2$-filter whose complement is a $2$-ideal need not be a $2$-prime $2$-filter.

  The following observation shows how ubiquitous ($m$-prime) $n$-filters are.

\begin{fact} \label{fact: finite upsets are n filters}
  Each upset of a finite semilattice (lattice) is an ($m$-prime) $n$-filter for some $n$ (and $m$). Each element of a finite semilattice is $m$-prime for some $m$.
\end{fact}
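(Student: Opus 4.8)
The plan is to establish the two assertions separately, exhibiting in each case explicit values of $n$ and $m$ depending only on the size of the finite structure. For the first assertion, fix a finite poset $P$ and an upset $F \subseteq P$. First I would take $n \geq \card{P}$ (or simply $n = \infty$), so that every upset of $P$ is an $n$-filter: for any non-empty finite $X \subseteq F$ we have $X \bsubseteq{n} X$, so the premise of the defining implication of an $n$-filter already supplies a common lower bound of $X$ in $F$, making the implication trivially true. Consequently $F$ is an $n$-filter and the $n$-filters on $P$ are exactly its upsets, so it remains to verify $m$-primeness of $F$ among upsets.

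For this I would set $m \assign \card{P}$ and argue by counting on the complement $P \setminus F$. Suppose $F_i$ for $i \in I$ is a non-empty finite family of upsets with $\bigcap_{i \in I} F_i \subseteq F$. Each $c \in P \setminus F$ then fails to lie in the intersection, so we may pick an index $i(c) \in I$ with $c \notin F_{i(c)}$. The set $J \assign \set{i(c)}{c \in P \setminus F}$ satisfies $\card{J} \leq \card{P \setminus F} \leq m$, and by construction every $c \notin F$ is omitted from some $F_{i(c)}$ with $i(c) \in J$, whence $\bigcap_{j \in J} F_j \subseteq F$. If $F = P$ the complement is empty and any singleton $J$ works, since the total filter is counted as $m$-prime. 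This exhibits $F$ as an $m$-prime $n$-filter.

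For the second assertion, fix a finite meet semilattice $\alg{S}$ and an element $x \in \alg{S}$; I claim $x$ is $m$-prime for $m \assign \card{\alg{S}}$. Given a non-empty finite $Y \subseteq \alg{S}$ with $\bigwedge Y \leq x$, I would choose $Z \subseteq Y$ minimal with respect to inclusion such that $\bigwedge Z \leq x$; such a $Z$ exists and is non-empty since $Y$ itself qualifies. The crux is bounding $\card{Z}$. Enumerating $Z = \{ z_1, \dots, z_k \}$ in any order and setting $p_i \assign z_1 \wedge \dots \wedge z_i$, I would show $p_1 > p_2 > \dots > p_k$ strictly: if $p_i = p_{i-1}$ then $z_i \geq p_{i-1} \geq \bigwedge (Z \setminus \{ z_i \})$, so $\bigwedge (Z \setminus \{ z_i \}) = \bigwedge Z \leq x$, contradicting minimality of $Z$. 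Thus the $p_i$ form a strictly descending chain of length $k$ in the finite poset $\alg{S}$, giving $k \leq \card{\alg{S}} = m$. Hence $Z \bsubseteq{m} Y$ and $\bigwedge Z \leq x$, as required.

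The main obstacle, and the only part carrying real content, is precisely this bound on the size of a minimal witness $Z$ in the second assertion; the strictly-descending-chain argument is what makes it go through, and it incidentally shows that $m$ may be taken to be the length of the longest chain in $\alg{S}$ rather than $\card{\alg{S}}$. Everything else is routine bookkeeping against the definitions of $n$-filter and $m$-primeness.
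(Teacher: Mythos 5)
Your proof is correct, and its overall strategy---take $n$ and $m$ so large that no witness of failure can fit inside the finite structure---is the same as the paper's, but your execution of the two primeness claims is genuinely different. The paper disposes of all three claims with a single counting remark: a failure of $n$-filterhood must be witnessed by at least $n+1$ distinct elements of $P$, a failure of $m$-primeness of an $n$-filter by at least $m+1$ distinct subsets of $P$, and a failure of $m$-primeness of an element by at least $m+1$ distinct elements of $\alg{S}$. For the primeness of upsets the paper's count is over distinct subsets, so it only yields a bound exponential in $\card{P}$, whereas your subfamily $J \assign \set{i(c)}{c \in P \setminus F}$ indexed by the complement gives the sharper $m = \card{P}$ (indeed $\card{P \setminus F}$); that is a genuinely different and slightly better argument. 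On the other hand, your self-described ``crux''---the minimal witness $Z$ and the strictly descending chain $p_1 > p_2 > \dots > p_k$---is not actually needed for the statement: since the paper's definition quantifies over sets $Y \subseteq \alg{S}$, the elements of $Y$ are distinct and $\card{Y} \leq \card{\alg{S}} = m$ holds automatically, so $Z \assign Y$ already works; this is precisely the same triviality you invoked for the $n$-filter part ($X \bsubseteq{n} X$), and it is all the paper uses. Your chain argument is nevertheless correct, and it buys the sharper conclusion that $m$ may be taken to be the length of the longest chain in $\alg{S}$ rather than $\card{\alg{S}}$. (One pedantic point: for the empty poset your choice $m = \card{P} = 0$ is not an admissible primeness parameter, so one should take $m = \max(\card{P}, 1)$; the same degenerate case is glossed over by the paper.)
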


\begin{proof}
  If an upset $F \subseteq \alg{S}$ is not an $n$-filter, then this must be witnessed by at least $n+1$ distinct elements of $\alg{S}$. Moreover, the failure of an $n$-filter to be $m$-prime must be witnessed by at least $m+1$ distinct subsets, and similarly the failure of an element of a semilattice $\alg{S}$ to be $m$-prime must be witnessed by at least $m+1$ distinct elements of $\alg{S}$.
\end{proof}

   Let us stress that we count the empty filter as well as the total filter as $m$-prime $n$-filters. This is because we wish to identify prime $n$-filters on distributive lattices (and $m$-prime $n$-filters on finite distributive lattices) as the homomorphic preimages of a certain canonical prime ($m$-prime) $n$-filter.

\subsection{Constructing \texorpdfstring{$n$-filters}{n-filters}}

  Homomorphisms, subalgebras, and products interact with (prime) $n$-filters in the expected ways.

\begin{fact} \label{fact: hom preimages}
  Let $h\colon \alg{S} \rightarrow \alg{T}$ be a semilattice homomorphism and $F$ be an $n$-filter on $\alg{T}$. Then $h^{-1}[F]$ is an $n$-filter on $\alg{S}$.
\end{fact}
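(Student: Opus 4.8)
The plan is to check the two requirements on $h^{-1}[F]$ directly: that it is an upset of $\alg{S}$, and that it satisfies the defining implication for $n$-filters. For the latter I would invoke Fact~\ref{fact: restricted definition of n filters}, which lets me reduce the verification to the single case of $n+1$ elements whose $n$-element submeets all lie in the filter. The whole argument rests on two elementary properties of a meet semilattice homomorphism $h$: that it is order-preserving, and that it commutes with finite meets.

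First I would verify that $G \assign h^{-1}[F]$ is an upset. If $x \in G$ and $x \leq y$ in $\alg{S}$, then applying $h$ gives $h(x) \leq h(y)$, because $h$ preserves order: being a meet homomorphism, $h(x) = h(x \wedge y) = h(x) \wedge h(y)$, so $h(x) \leq h(y)$. Since $h(x) \in F$ and $F$ is an upset of $\alg{T}$, we get $h(y) \in F$, hence $y \in G$.

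Next I would check the $n$-filter condition via Fact~\ref{fact: restricted definition of n filters}. Take $x_{1}, \dots, x_{n+1} \in \alg{S}$ with $\bigwedge_{j \neq i} x_{j} \in G$ for each $i \in \range{n+1}$. Because $h$ commutes with finite meets, $h\bigl( \bigwedge_{j \neq i} x_{j} \bigr) = \bigwedge_{j \neq i} h(x_{j})$, and this element lies in $F$ for each $i$. Writing $y_{j} \assign h(x_{j})$, the elements $y_{1}, \dots, y_{n+1}$ of $\alg{T}$ satisfy $\bigwedge_{j \neq i} y_{j} \in F$ for each $i$, so the fact that $F$ is an $n$-filter yields $y_{1} \wedge \dots \wedge y_{n+1} \in F$ by Fact~\ref{fact: restricted definition of n filters}. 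Translating back, $h(x_{1} \wedge \dots \wedge x_{n+1}) = y_{1} \wedge \dots \wedge y_{n+1} \in F$, that is, $x_{1} \wedge \dots \wedge x_{n+1} \in G$, which is exactly what Fact~\ref{fact: restricted definition of n filters} requires.

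There is no genuine obstacle here: the statement is essentially a bookkeeping exercise in moving the homomorphism across finite meets. The only points worth flagging are that a meet homomorphism is automatically order-preserving, which is what I need for the upset condition, and that the reduction supplied by Fact~\ref{fact: restricted definition of n filters} is what keeps the meet-commutation step finite and hence immediate, so that I never have to reason about arbitrary finite subsets directly.
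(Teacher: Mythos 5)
Your proof is correct: the paper states this fact without proof (it is listed among the routine observations that homomorphisms "interact with $n$-filters in the expected ways"), and your argument---order-preservation for the upset condition, meet-preservation plus Fact~\ref{fact: restricted definition of n filters} for the $(n+1)$-element implication---is exactly the routine verification the paper leaves to the reader. Note also that your appeal to Fact~\ref{fact: restricted definition of n filters} on the $\alg{T}$ side is harmless even when the images $h(x_{j})$ coincide, since the restricted implication quantifies over tuples rather than sets, so there is no hidden distinctness issue.
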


\begin{fact}
  Let $h\colon \alg{L} \rightarrow \alg{M}$ be a lattice homomorphism and $F$ be a prime $n$-filter ($m$-prime filter) on $\alg{M}$. Then $h^{-1}[F]$ is a prime $n$-filter ($m$-prime filter) on~$\alg{M}$.
\end{fact}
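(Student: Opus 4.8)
The plan is to reduce everything to the previous fact on meet-semilattice homomorphisms together with the first-order characterizations of primeness available on join semilattices (Fact~\ref{fact: n prime ideal complement}). Observe first that a lattice homomorphism $h$ is in particular a meet-semilattice homomorphism, so Fact~\ref{fact: hom preimages} already tells us that $h^{-1}[F]$ is an $n$-filter on $\alg{L}$ (and, taking $n = 1$, that it is a filter in the second case). It therefore remains only to check that this $n$-filter is prime, respectively that this filter is $m$-prime.

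For the prime $n$-filter case I would invoke the characterization that an $n$-filter on a join semilattice is prime precisely when $x \vee y \in F$ forces $x \in F$ or $y \in F$. Suppose $x \vee y \in h^{-1}[F]$. Since $h$ preserves joins, $h(x) \vee h(y) = h(x \vee y) \in F$, and primeness of $F$ on $\alg{M}$ yields $h(x) \in F$ or $h(y) \in F$, that is, $x \in h^{-1}[F]$ or $y \in h^{-1}[F]$. The $m$-prime filter case runs in exactly the same way via the second half of Fact~\ref{fact: n prime ideal complement}: if $x_1 \vee \dots \vee x_{m+1} \in h^{-1}[F]$, then applying $h$ and using that it preserves finite joins gives $h(x_1) \vee \dots \vee h(x_{m+1}) \in F$, so $m$-primeness of $F$ supplies some $i \in \range{m+1}$ with $\bigvee_{j \neq i} h(x_j) = h(\bigl(\bigvee_{j \neq i} x_j\bigr)) \in F$, which is precisely $\bigvee_{j \neq i} x_j \in h^{-1}[F]$.

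The only point worth flagging is that the definitions of prime and $m$-prime $n$-filters are a priori second-order, being phrased in terms of intersections of $n$-filters on the poset; arguing with those directly would require transporting whole families of $n$-filters across $h$, which is awkward because preimages do not interact transparently with such intersections. The substance of the argument is thus to route both cases through the first-order equivalents of Fact~\ref{fact: n prime ideal complement}, which apply because $\alg{L}$ and $\alg{M}$ are lattices and hence join semilattices, combined with the single fact that a lattice homomorphism preserves joins. I do not anticipate any genuine obstacle beyond correctly aligning these two ingredients.
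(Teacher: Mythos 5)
Your proof is correct and follows exactly the route the paper intends: the paper states this fact without proof, immediately after establishing the first-order characterizations of Fact~\ref{fact: n prime ideal complement}, and your argument---$h^{-1}[F]$ is an $n$-filter (filter) by Fact~\ref{fact: hom preimages}, after which primeness and $m$-primeness transfer along $h$ because a lattice homomorphism preserves joins, applying Fact~\ref{fact: n prime ideal complement} in both directions---is precisely the routine verification being left to the reader. (Incidentally, the conclusion of the stated fact should read ``on $\alg{L}$'' rather than ``on $\alg{M}$''; your proof correctly targets $\alg{L}$.)
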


\begin{fact}
  Let $\alg{S}$ be a subsemilattice of $\alg{T}$ and $F$ be an $n$-filter on $\alg{T}$. Then the restriction of $F$ to $\alg{S}$ is an $n$-filter on $\alg{S}$.
\end{fact}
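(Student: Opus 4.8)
The plan is to verify the defining condition of an $n$-filter for the restriction $F \cap \alg{S}$ directly. The one property of subsemilattices that does all the work is that the inclusion $\alg{S} \into \alg{T}$ preserves finite meets: for every non-empty finite $X \subseteq \alg{S}$, the meet $\bigwedge X$ computed in $\alg{S}$ coincides with the meet computed in $\alg{T}$. This in particular forces the order of $\alg{S}$ to be the restriction of the order of $\alg{T}$, since in any meet semilattice $x \leq y$ is equivalent to $x \wedge y = x$.

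I would first check that $F \cap \alg{S}$ is an upset of $\alg{S}$. If $x \in F \cap \alg{S}$ and $x \leq y$ holds in $\alg{S}$, then $x \leq y$ also holds in $\alg{T}$ by the remark above, so $y \in F$ because $F$ is an upset of $\alg{T}$; as $y \in \alg{S}$ this gives $y \in F \cap \alg{S}$. (This already settles the case $n = \infty$.) For the closure condition, I would take $X \subseteq F \cap \alg{S}$ non-empty and finite and suppose $\bigwedge Y \in F \cap \alg{S}$ for each $Y \bsubseteq{n} X$. Since meets in $\alg{S}$ agree with meets in $\alg{T}$, each such $\bigwedge Y$ lies in $F$ viewed as an element of $\alg{T}$; applying the hypothesis that $F$ is an $n$-filter on $\alg{T}$ to the finite set $X \subseteq F$ yields $\bigwedge X \in F$, where the meet is formed in $\alg{T}$. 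By meet preservation this element equals the meet of $X$ computed in $\alg{S}$ and so lies in $\alg{S}$, whence $\bigwedge X \in F \cap \alg{S}$, as required.

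There is no genuine obstacle in this argument: its entire content is the observation that a subsemilattice inclusion preserves finite meets, which is exactly what lets the $n$-filter condition be transported between $\alg{S}$ and $\alg{T}$ in both directions. One could equally route the verification through the finite characterization of Fact~\ref{fact: restricted definition of n filters}, restricting attention to $(n+1)$-element families, but this would offer no real simplification over the direct check.
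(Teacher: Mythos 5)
Your proof is correct: the whole content of the statement is that a subsemilattice inclusion preserves finite meets (hence also the order), which lets the $n$-filter condition transfer between $\alg{S}$ and $\alg{T}$ exactly as you describe. The paper states this fact without proof, treating it as one of the routine ways $n$-filters interact with subalgebras, and your direct verification is precisely the intended argument.
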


\begin{fact}
  Let $\alg{L}$ be a sublattice of $\alg{M}$ and $F$ be a prime $n$-filter ($m$-prime filter) on $\alg{M}$. Then the restriction of $F$ to $\alg{L}$ is a prime $n$-filter ($m$-prime filter) on $\alg{L}$.
\end{fact}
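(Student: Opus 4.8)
The plan is to deduce this from the facts already established rather than unwind the second-order definition of primeness. The cleanest route is to observe that the restriction of $F$ to $\alg{L}$ is precisely $F \cap \alg{L} = h^{-1}[F]$ for the inclusion map $h \colon \alg{L} \into \alg{M}$; since $\alg{L}$ is a sublattice, $h$ is a lattice homomorphism, so the statement is an immediate instance of the preceding fact on homomorphic preimages of prime $n$-filters and $m$-prime filters. Below I instead sketch a direct verification, to expose where the sublattice hypothesis is actually used.

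First I would note that, regarding $\alg{L}$ merely as a meet-subsemilattice of $\alg{M}$, the fact on restrictions along subsemilattice inclusions already yields that $F \cap \alg{L}$ is an $n$-filter on $\alg{L}$ (a filter when $n = 1$). It remains to check primeness, and here I would invoke the first-order characterisations, which apply because in both cases one parameter equals $1$ (a prime $n$-filter is $1$-prime, and an $m$-prime filter is an $m$-prime $1$-filter): an $n$-filter $G$ on a join semilattice is prime iff $x \vee y \in G$ implies $x \in G$ or $y \in G$, and a filter $G$ is $m$-prime iff $x_1 \vee \dots \vee x_{m+1} \in G$ implies $\bigvee_{j \neq i} x_j \in G$ for some $i \in \range{m+1}$.

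The verification then hinges on a single observation: because $\alg{L}$ is a sublattice, finite joins of elements of $\alg{L}$ are computed identically in $\alg{L}$ and in $\alg{M}$ and again lie in $\alg{L}$. Hence for $x, y \in \alg{L}$ with $x \vee y \in F \cap \alg{L}$ we have $x \vee y \in F$, so primeness of $F$ in $\alg{M}$ forces $x \in F$ or $y \in F$, whence $x \in F \cap \alg{L}$ or $y \in F \cap \alg{L}$; the $m$-prime case is identical, with the binary join replaced by the $(m+1)$-ary one. I do not anticipate a genuine obstacle: the only step requiring the full sublattice hypothesis rather than mere closure under meets is this stability of joins, which is exactly what the join-based primeness tests demand. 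Were $\alg{L}$ closed only under meets, joins computed in $\alg{L}$ could differ from those in $\alg{M}$ and the transfer of the primeness condition would break down.
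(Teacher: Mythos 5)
Your proposal is correct and matches the paper's intent: the paper states this fact without proof, placing it immediately after the fact on homomorphic preimages of prime $n$-filters ($m$-prime filters), of which it is an instance via the inclusion homomorphism $\alg{L} \into \alg{M}$ --- exactly your primary argument. Your direct verification via the join-based first-order characterizations is also sound, since a sublattice computes the relevant finite joins as in the ambient lattice.
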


\begin{fact}
  Let $\alg{S}_{i}$ for $i \in I$ be a family of semilattices and $F_{i}$ be an $n$-filter on~$\alg{S}_{i}$ for each $i \in I$. Then $F \assign \prod_{i \in I} F_{i}$ is an $n$-filter on $\alg{S} \assign \prod_{i \in I} \alg{S}_{i}$.
\end{fact}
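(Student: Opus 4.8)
The plan is to check that $F$ is an upset and then to verify the defining $n$-filter condition coordinatewise, exploiting the fact that both the order and the finite meets of $\alg{S}$ are computed componentwise. Write $\tuple{x}_{l} = (x_{l,i})_{i \in I}$ for a typical element of $\alg{S}$, so that $x_{l,i}$ denotes its $i$-th coordinate. The upset property is then immediate: if $\tuple{x} = (x_{i})_{i \in I} \in F$ and $\tuple{x} \leq \tuple{y} = (y_{i})_{i \in I}$, then $x_{i} \leq y_{i}$ with $x_{i} \in F_{i}$ for each $i \in I$, hence $y_{i} \in F_{i}$ because $F_{i}$ is an upset, and so $\tuple{y} \in F$.

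For the $n$-filter condition I would invoke Fact~\ref{fact: restricted definition of n filters}, which reduces the task to the single implication involving $n+1$ elements. So suppose $\tuple{x}_{1}, \dots, \tuple{x}_{n+1} \in \alg{S}$ satisfy $\bigwedge_{l \neq k} \tuple{x}_{l} \in F$ for each $k \in \range{n+1}$. Since meets in the product are taken componentwise and membership in $F = \prod_{i \in I} F_{i}$ is tested componentwise, this hypothesis says exactly that $\bigwedge_{l \neq k} x_{l,i} \in F_{i}$ for every coordinate $i \in I$ and every $k \in \range{n+1}$. Fixing a coordinate $i$, the $n+1$ elements $x_{1,i}, \dots, x_{n+1,i}$ of $\alg{S}_{i}$ now satisfy precisely the hypothesis of Fact~\ref{fact: restricted definition of n filters} for the $n$-filter $F_{i}$; since $F_{i}$ is an $n$-filter, we conclude $\bigwedge_{l=1}^{n+1} x_{l,i} \in F_{i}$. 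As this holds in every coordinate, $\bigwedge_{l=1}^{n+1} \tuple{x}_{l} \in F$, which is what we needed.

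I do not expect a genuine obstacle here: the whole argument is a routine componentwise verification. The one point deserving care is the passage to the restricted, $(n+1)$-element form of the $n$-filter condition, which is legitimate precisely because of Fact~\ref{fact: restricted definition of n filters}. The structural reason the proof works is that in a product semilattice both the partial order and finite meets decompose coordinatewise, so each instance of the $n$-filter condition on $\alg{S}$ reduces to independent instances on the factors $\alg{S}_{i}$.
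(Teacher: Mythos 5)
Your proof is correct: the paper states this fact without proof, treating it as a routine observation, and your componentwise verification (upset property plus the restricted $(n+1)$-element condition of Fact~\ref{fact: restricted definition of n filters}, checked coordinate by coordinate) is exactly the argument the paper implicitly has in mind. The only caveat is that the restricted form presupposes $1 \leq n < \infty$; the degenerate cases $n = 0$ and $n = \infty$ are immediate anyway, since a product of upsets is an upset and a product of empty-or-total filters is empty or total.
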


  To show that restricting an $m$-prime $n$-filter to a subsemilattice yields an $m$-prime $n$-filter, we shall need to make a certain assumption about how the smaller semilattice sits inside the full semilattice.

  We say that a subposet $P$ of a poset $Q$ is an \emph{ideal subposet} of~$Q$ if the restriction to $P$ of each ideal on $Q$ is an ideal on $P$. Equivalently, $P$~is an ideal subposet of $Q$ if for each $x, y \in P$ and each $u \in Q$ such that ${x, y \leq u}$ in $Q$ there is some $z \in P$ such that $x, y \leq z \leq u$. In particular, each subsemilattice of a join semilattice is an ideal subposet. A~subsemilattice of a meet semilattice which is an ideal subposet will be called an ideal subsemilattice. For example, $\alg{S}$ is isomorphic to an ideal subsemilattice of the lattice $\Fi \alg{S}$ of all filters on $\alg{S}$ with respect to the embedding assigning to each $a \in \alg{A}$ the principal filter generated by $a$.

\begin{fact} \label{fact: restriction of m prime n filter}
  Let $\alg{S}$ be an ideal subsemilattice of $\alg{T}$ and let $F$ be an $m$-prime $n$-filter on $\alg{T}$. Then the restriction of $F$ to $\alg{S}$ is an $m$-prime $n$-filter on $\alg{S}$.
\end{fact}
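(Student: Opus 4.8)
The plan is to transfer the problem from $\alg{S}$ to $\alg{T}$ by replacing each $n$-filter on $\alg{S}$ with its upward closure in $\alg{T}$, invoking the $m$-primeness of $F$ there, and restricting the outcome back to $\alg{S}$. Throughout I write $\widehat{G} \assign \set{t \in \alg{T}}{g \leq t \text{ for some } g \in G}$ for the upset of $\alg{T}$ generated by $G \subseteq \alg{S}$. Since $\alg{S}$ is in particular a subsemilattice of $\alg{T}$, the restriction $F \cap \alg{S}$ is already an $n$-filter on $\alg{S}$, so only $m$-primeness remains to be checked. I would verify the defining condition directly: given a finite family of $n$-filters $G_{i}$ ($i \in I$) on $\alg{S}$ with $\bigcap_{i \in I} G_{i} \subseteq F \cap \alg{S}$, I must produce some $J \bsubseteq{m} I$ with $\bigcap_{j \in J} G_{j} \subseteq F \cap \alg{S}$.

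The main obstacle is a lemma I would isolate first: \emph{if $G$ is an $n$-filter on $\alg{S}$, then $\widehat{G}$ is an $n$-filter on $\alg{T}$} (and plainly $\widehat{G} \cap \alg{S} = G$, since $G$ is an upset of $\alg{S}$). This is exactly where the ideal subsemilattice hypothesis does its work, and it fails for arbitrary subsemilattices. To prove it I would use the restricted criterion of Fact~\ref{fact: restricted definition of n filters}: given $x_{1}, \dots, x_{n+1} \in \alg{T}$ with $\bigwedge_{j \neq i} x_{j} \in \widehat{G}$ for each $i \in \range{n+1}$, pick $h_{i} \in G$ with $h_{i} \leq x_{j}$ for all $j \neq i$. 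For a fixed $i$ the elements $\set{h_{k}}{k \neq i} \subseteq \alg{S}$ all lie below $x_{i}$, so the ideal subsemilattice property (iterated over this finite set) yields $w_{i} \in \alg{S}$ with $h_{k} \leq w_{i} \leq x_{i}$ for all $k \neq i$; as $w_{i}$ dominates some $h_{k} \in G$, it lies in $G$. Now $w_{i} \leq x_{i}$ for every $i$, and for each $m \in \range{n+1}$ every $w_{i}$ with $i \neq m$ dominates $h_{m}$, so $\bigwedge_{i \neq m} w_{i} \geq h_{m} \in G$ and hence $\bigwedge_{i \neq m} w_{i} \in G$. Since $G$ is an $n$-filter, Fact~\ref{fact: restricted definition of n filters} then yields $\bigwedge_{i} w_{i} \in G$, and because $\bigwedge_{i} w_{i} \leq \bigwedge_{i} x_{i}$ we conclude $\bigwedge_{i} x_{i} \in \widehat{G}$, as required.

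With the lemma in hand the rest is routine. Each $\widehat{G_{i}}$ is an $n$-filter on $\alg{T}$, so I first check that $\bigcap_{i \in I} \widehat{G_{i}} \subseteq F$: if $t$ lies in every $\widehat{G_{i}}$, choose $g_{i} \in G_{i}$ with $g_{i} \leq t$; the finitely many $g_{i} \in \alg{S}$ lie below $t \in \alg{T}$, so the ideal subsemilattice property gives $s \in \alg{S}$ with $g_{i} \leq s \leq t$ for all $i$, whence $s \in \bigcap_{i} G_{i} \subseteq F$ and therefore $t \in F$ since $F$ is an upset. As $F$ is an $m$-prime $n$-filter on $\alg{T}$, there is some $J \bsubseteq{m} I$ with $\bigcap_{j \in J} \widehat{G_{j}} \subseteq F$. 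Intersecting with $\alg{S}$ and using $\widehat{G_{j}} \cap \alg{S} = G_{j}$ then yields $\bigcap_{j \in J} G_{j} \subseteq F \cap \alg{S}$, which is exactly what $m$-primeness of $F \cap \alg{S}$ demands. (One may equally run this argument only for families of size $m + 1$ and appeal to the reduction noted after the definition of $m$-prime elements.)
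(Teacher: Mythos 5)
Your proof is correct and takes essentially the same route as the paper's: lift each $G_{i}$ to its upward closure in $\alg{T}$, show via the ideal subposet property that the intersection of these upsets is contained in $F$, invoke $m$-primeness of $F$ there, and restrict back to $\alg{S}$. The auxiliary lemma you prove inline (that the upward closure of an $n$-filter on an ideal subsemilattice is an $n$-filter on $\alg{T}$) is exactly the fact the paper states and proves separately immediately after this one, by the same argument.
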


\begin{proof}
  We already know that the restriction of $F$ to $\alg{S}$ is an $n$-filter. Let us call it $G$. To prove that $G$ is an $m$-prime $n$-filter, suppose that $\bigcap_{i \in I} G_{i} \subseteq G$ for some non-empty finite family of $n$-filters $G_{i}$ with $i \in I$. Let $F_{i}$ be the upset generated by $G_{i}$ in~$\alg{T}$. Then each $F_{i}$ is an $n$-filter. Now if $a \in \bigcap_{i \in I} F_{i}$, then for each $i \in I$ there is some $g_{i} \in G_{i}$ such that $g_{i} \leq a$. Because $\alg{S}$ is an ideal subposet of $\alg{T}$, there is some $g \in \alg{S}$ such that $g_{i} \leq g$ for each $i \in I$ and $g \leq a$. Thus $a \geq g \in \bigcap_{i \in I} G_{i} \subseteq G \subseteq F$. We have therefore proved that $\bigcap_{i \in I} F_{i} \subseteq F$. Because $F$ is $m$-prime, there is some $J \bsubseteq{m} I$ such that $\bigcap_{j \in J} F_{j} \subseteq F$. Restricting both sides to $\alg{S}$ now yields $\bigcap_{j \in J} G_{j} \subseteq G$, therefore $G$ is an $m$-prime $n$-filter.
\end{proof}

  Conversely, we may wish to extend an $n$-filter on a subsemilattice (or more generally, a subposet which is a semilattice) to the full semilattice.

\begin{fact}
  Let $F$ be an $n$-filter on a semilattice $\alg{S}$ and let $\alg{S}$ be an ideal subposet of a semilattice $\alg{T}$. Then the upset generated by $F$ in $\alg{T}$ is an $n$-filter on $\alg{T}$.
\end{fact}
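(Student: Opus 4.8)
The plan is to reduce the claim to the $n$-filter property of $F$ on $\alg{S}$ by way of the restricted characterisation in Fact~\ref{fact: restricted definition of n filters}. Write $G$ for the upset generated by $F$ in $\alg{T}$, so that $a \in G$ holds exactly when $f \leq a$ for some $f \in F$. Since $G$ is an upset by construction, it suffices (for finite $n$; the case $n = \infty$ is immediate, as then $G$ need only be an upset) to fix $x_{1}, \dots, x_{n+1} \in \alg{T}$ with $\bigwedge_{j \neq i} x_{j} \in G$ for every $i \in \range{n+1}$ and to produce a single element of $F$ lying below $x_{1} \wedge \dots \wedge x_{n+1}$.

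First I would unwind the hypothesis: for each $i \in \range{n+1}$ pick $f_{i} \in F$ with $f_{i} \leq \bigwedge_{j \neq i} x_{j}$, so that $f_{i} \leq x_{k}$ for all $k \neq i$. The naive move of meeting the $f_{i}$ together fails for two reasons, and overcoming this is the crux of the proof: since $\alg{S}$ is merely a subposet of $\alg{T}$, the meet of the $f_{i}$ formed in $\alg{T}$ may escape $\alg{S}$ entirely, and even the meet formed in $\alg{S}$ need not lie in $F$, as $F$ is only an $n$-filter and is not closed under arbitrary meets. The ideal subposet hypothesis is exactly what lets us sidestep both difficulties.

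The key construction is to replace each $x_{k}$ by an approximant sitting inside $\alg{S}$. For fixed $k \in \range{n+1}$, the elements $\set{f_{i}}{i \neq k}$ all lie in $\alg{S}$ and share the common upper bound $x_{k}$ in $\alg{T}$; iterating the two-element ideal subposet condition then yields some $s_{k} \in \alg{S}$ with $f_{i} \leq s_{k}$ for all $i \neq k$ and with $s_{k} \leq x_{k}$. Now I would feed the elements $s_{1}, \dots, s_{n+1}$ into Fact~\ref{fact: restricted definition of n filters}. For each fixed $l$ we have $f_{l} \leq s_{k}$ whenever $k \neq l$, so $f_{l}$ is a lower bound in $\alg{S}$ of $\set{s_{k}}{k \neq l}$; hence $\bigwedge_{k \neq l} s_{k}$, computed in $\alg{S}$, lies above $f_{l} \in F$ and therefore belongs to $F$. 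As this holds for every $l \in \range{n+1}$, the $n$-filter property of $F$ gives $s_{1} \wedge \dots \wedge s_{n+1} \in F$, the meet again being taken in $\alg{S}$.

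Finally I would check that this witness suffices. The element $s_{1} \wedge \dots \wedge s_{n+1}$ of $F$ lies in $\alg{S} \subseteq \alg{T}$ and satisfies $s_{1} \wedge \dots \wedge s_{n+1} \leq s_{k} \leq x_{k}$ for every $k$, so it sits below $x_{1} \wedge \dots \wedge x_{n+1}$ in $\alg{T}$. This exhibits an element of $F$ beneath $x_{1} \wedge \dots \wedge x_{n+1}$, which is precisely the assertion that $x_{1} \wedge \dots \wedge x_{n+1} \in G$. I expect the only genuine subtlety to be the careful bookkeeping of where each meet is computed, in $\alg{S}$ versus in $\alg{T}$, together with the initial recognition that the $f_{i}$ must be pushed up to the approximants $s_{k}$ rather than met directly.
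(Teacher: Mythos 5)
Your proof is correct and takes essentially the same route as the paper's: both pick witnesses $f_{i} \in F$ below the partial meets, use the ideal subposet condition to produce approximants $s_{k} \in \alg{S}$ with $f_{i} \leq s_{k} \leq x_{k}$ for $i \neq k$, and then apply the $n$-filter property of $F$ to the $s_{k}$ with all meets computed in $\alg{S}$ (your $s_{k}$ are the paper's $z_{i}$). The only differences are presentational: you make explicit the iteration of the two-element ideal subposet condition and the trivial $n = \infty$ case, both of which the paper leaves implicit.
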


\begin{proof}
  Let $G$ be the upset generated by $F$ in $\alg{T}$. To show that $G$ is an \mbox{$n$-filter}, suppose that there are $x_{1}, \dots, x_{n+1} \in \alg{T}$ with $\bigwedge_{j \neq i} x_{j} \in G$ for each $i \in \range{n+1}$. That is, there are $y_{1}, \dots, y_{n+1} \in F$ such that $y_{i} \leq \bigwedge_{j \neq i} x_{i}$. We~need to find $z \in F$ such that $z \leq x_{1} \wedge \dots \wedge x_{n+1}$. Because $\alg{S}$ is an ideal subposet of~$\alg{T}$, for each $i \in \range{n+1}$ there is some $z_{i}$ such that $y_{j} \leq z_{i} \leq x_{i}$ for each $j \neq i$. Then $\bigwedge_{j \neq i} z_{j} \geq y_{i} \in F$ for each $i \in \range{n+1}$, therefore $z \assign \bigwedge_{i \in \range{n+1}} z_{i} \in F$, where the meets take place in $\alg{S}$. Moreover, $z \leq z_{i} \leq x_{i}$ for each $i \in \range{n+1}$.
\end{proof}

\begin{fact}
  Let $F$ be an $n$-filter on a sublattice $\alg{L}$ of a lattice $\alg{M}$. Then the upset generated by $F$ in $\alg{M}$ is an $n$-filter on~$\alg{M}$.
\end{fact}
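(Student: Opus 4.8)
The plan is to reduce this statement to the preceding fact about ideal subposets; the only genuine work is to verify that a sublattice automatically satisfies the ideal subposet condition, after which the result is immediate. First I would view both $\alg{L}$ and $\alg{M}$ as meet semilattices, so that $F$ is an $n$-filter on the meet semilattice $\alg{L}$ and the upset generated by $F$ in $\alg{M}$ depends only on the order of $\alg{M}$.

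The key step is to show that $\alg{L}$ is an ideal subsemilattice of $\alg{M}$. To this end I would take $x, y \in \alg{L}$ and $u \in \alg{M}$ with $x, y \leq u$, and set $z \assign x \vee y$, the join computed in $\alg{L}$. Because $\alg{L}$ is a \emph{sublattice} of $\alg{M}$, this join coincides with the join of $x$ and $y$ computed in $\alg{M}$; hence from $x, y \leq u$ we obtain $z = x \vee y \leq u$ in $\alg{M}$. Thus $z \in \alg{L}$ witnesses $x, y \leq z \leq u$, which is precisely the defining condition for $\alg{L}$ to be an ideal subposet of $\alg{M}$.

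With this observation in hand, the statement follows at once from the preceding fact: $F$ is an $n$-filter on the semilattice $\alg{L}$, the semilattice $\alg{L}$ is an ideal subposet of $\alg{M}$, and therefore the upset generated by $F$ in $\alg{M}$ is an $n$-filter on $\alg{M}$. I do not anticipate a real obstacle, since the entire content lies in noting that the presence of joins in a sublattice supplies the required common upper bounds below $u$ for free. The one point to watch is the coincidence of the join taken in $\alg{L}$ with the join taken in $\alg{M}$ — exactly what being a sublattice, as opposed to merely a meet-subsemilattice, guarantees, and what makes the ideal subposet hypothesis of the previous fact hold here without any extra assumption.
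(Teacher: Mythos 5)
Your proof is correct and matches the paper's intended argument: the paper leaves this fact without proof precisely because, as you observe, a sublattice is a sub-join-semilattice and hence an ideal subposet (this is noted earlier in the paper), so the preceding fact applies directly. Your verification that the join in $\alg{L}$ supplies the required element $z$ with $x, y \leq z \leq u$ is exactly the intended reduction.
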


  The simplest way to construct $n$-filters is to take unions of filters.

\begin{fact} \label{fact: union of m prime n filters}
  Let $F_{i}$ for $i \in I$ be a finite family of upsets of $\alg{S}$ such that $F_{i}$ is an $n_{i}$-filter for each $i \in I$. Then $F \assign \bigcup_{i \in I} F_{i}$ is an $n$-filter for $n \assign \sum_{i \in I} n_{i}$.
\end{fact}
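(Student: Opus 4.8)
The plan is to argue by contraposition against the defining implication for $n$-filters on a semilattice: assuming that $X \subseteq F$ is nonempty and finite with $\bigwedge X \notin F$, I would manufacture a \emph{single} subset $Z \bsubseteq{n} X$ whose meet already lies outside $F$, directly contradicting the premise that $\bigwedge Y \in F$ for every $Y \bsubseteq{n} X$. The argument works verbatim for arbitrary $X$, so there is no need to first reduce to the case $\card{X} = n+1$ via the restricted definition.

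Before the main argument I would dispose of degenerate indices. Any $F_i = \emptyset$ may be deleted from the family without changing either $F$ or $n$, and if some $F_i = \alg{S}$ then $F = \alg{S}$ is the total filter and there is nothing to prove. Hence I may assume that every $n_i \geq 1$, which is exactly the slack the counting step will need.

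The heart of the proof is a per-index witness: for each $i \in I$ I would produce a nonempty $W_i \subseteq X$ with $\card{W_i} \leq n_i$ and $\bigwedge W_i \notin F_i$. There are two cases. If $X \nsubseteq F_i$, choose any $w_i \in X \setminus F_i$ and put $W_i \assign \{ w_i \}$; then $\card{W_i} = 1 \leq n_i$ and $\bigwedge W_i = w_i \notin F_i$. If instead $X \subseteq F_i$, then since $F_i$ is an $n_i$-filter and $\bigwedge X \notin F_i$ (because $\bigwedge X \notin F \supseteq F_i$), the contrapositive of the defining implication for $F_i$, applied to the test set $X$, yields some $W_i \bsubseteq{n_i} X$ with $\bigwedge W_i \notin F_i$. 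Finally I would set $Z \assign \bigcup_{i \in I} W_i \subseteq X$. This $Z$ is nonempty and $\card{Z} \leq \sum_{i \in I} \card{W_i} \leq \sum_{i \in I} n_i = n$, so $Z \bsubseteq{n} X$. For each $i$, from $W_i \subseteq Z$ we get $\bigwedge Z \leq \bigwedge W_i \notin F_i$, and since $F_i$ is an upset this forces $\bigwedge Z \notin F_i$; as this holds for every $i$, we conclude $\bigwedge Z \notin \bigcup_{i} F_i = F$, the desired contradiction.

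The only step requiring genuine thought is obtaining the witnesses $W_i$ uniformly. The case $X \subseteq F_i$ is where the hypothesis that $F_i$ is an $n_i$-filter is used, and it is essential there that $X \subseteq F_i$ so that the defining implication can be invoked. The budget $\card{W_i} \leq n_i$ is precisely what makes the total bound $\sum_i n_i = n$ tight, while the normalization $n_i \geq 1$ is exactly what lets the complementary case $X \nsubseteq F_i$ spend a single element; so the value $n = \sum_i n_i$ is forced by this accounting rather than chosen by hand.
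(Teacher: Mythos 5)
Your proof is correct, but it takes a genuinely different route from the paper's. The paper argues \emph{directly}: using the restricted characterization of $n$-filters (Fact~\ref{fact: restricted definition of n filters}), it considers $x_{1}, \dots, x_{n+1}$ whose maximal proper submeets $y_{i} \assign \bigwedge_{j \neq i} x_{j}$ all lie in $F$, and applies pigeonhole: since there are $n+1 = \sum_{i} n_{i} + 1$ such meets, some single $F_{k}$ contains at least $n_{k}+1$ of them; every subset of $\{x_{1}, \dots, x_{n+1}\}$ of size at most $n_{k}$ then omits some $x_{i}$ with $y_{i} \in F_{k}$, so its meet lies above that $y_{i}$ and hence in $F_{k}$, and $F_{k}$'s own $n_{k}$-filter property yields $x_{1} \wedge \dots \wedge x_{n+1} \in F_{k} \subseteq F$. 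The paper thus concentrates all the work in one member of the family and in fact concludes something slightly stronger: the full meet lands in a \emph{single} $F_{k}$. You instead argue by \emph{contraposition} with additive accounting distributed over all members: each $F_{i}$ supplies a bad set $W_{i}$ of size at most $n_{i}$ (a singleton when $X \nsubseteq F_{i}$, which is exactly where $n_{i} \geq 1$ is spent, and otherwise via $F_{i}$'s own contrapositive, legitimately applicable because $X \subseteq F_{i}$), and the union $Z \assign \bigcup_{i} W_{i}$ is a bad set for $F$ with $\card{Z} \leq \sum_{i} n_{i} = n$. Your route buys independence from the restricted-definition fact (you handle arbitrary finite $X$ directly), makes the additivity of the bound transparent, and makes visible where the side condition ``$X \subseteq F_{i}$'' in the definition of an $n_{i}$-filter actually matters; the paper's route is shorter once Fact~\ref{fact: restricted definition of n filters} is available. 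The two arguments are roughly dual --- a crowded-box pigeonhole versus a union-of-witnesses count --- but neither is a rephrasing of the other.

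One cosmetic slip: deleting an empty $F_{i}$ with $n_{i} \geq 1$ \emph{does} change $n$, contrary to your parenthetical claim. This is harmless, either because an $n'$-filter with $n' \leq n$ is automatically an $n$-filter, or, more simply, because such $F_{i}$ need not be deleted at all: your Case 1 already handles them, since $X \nsubseteq \emptyset$ for non-empty $X$. Only the members with $n_{i} = 0$ genuinely need to be disposed of, and for those your bookkeeping is exact. (Like the paper, you may also note that if some $n_{i} = \infty$ then $n = \infty$ and there is nothing to prove, though your counting covers this case trivially.)
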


\begin{proof}
  We may assume that $I$ is non-empty and $0 < n_{i}$ for each $i \in I$ (in particular, $n_{i} < n$ for each $n_{i}$): if $n_{i} = 0$, then either $F_{i} = \alg{S}$, in which case $F = \alg{S}$ is a $0$-filter, or $F_{i} = \emptyset$, in which case we may remove $F_{i}$ from the family.

  Now consider $x_{1}, \dots, x_{n+1} \in \alg{S}$ such that $y_{i} \assign \bigwedge_{j \neq i} x_{j} \in F$ for $i \in \range{n+1}$. Because $n = \sum_{i \in I} n_{i}$, there is some $k \in \range{n+1}$ such that at least $n_{k}+1 \geq 2$ of the meets $y_{i}$ belong to $F_{k}$, say $y_{j}$ and $y_{k}$ for $j \neq k$. But then the meet of each subset of $\{ x_{1}, \dots, x_{n+1} \}$ of cardinality at most $n_{k} < n$ is a submeet of $y_{j}$ or $y_{k}$, therefore it belongs to $F_{k}$. Because $F_{k}$ is an $n_{k}$-filter, $x_{1} \wedge \dots \wedge x_{n+1} \in F_{k} \subseteq F$.
\end{proof}

\begin{fact} \label{fact: union of n filters}
  The union of a family of $n$ filters is an $n$-filter.
\end{fact}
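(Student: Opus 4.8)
The plan is to read this as the special case of Fact~\ref{fact: union of m prime n filters} in which every member of the family is an ordinary filter, that is, a $1$-filter. First I would let $F_{1}, \dots, F_{n}$ be filters on $\alg{S}$ and set $F \assign \bigcup_{i \in \range{n}} F_{i}$. Since a filter is by definition exactly a $1$-filter, taking $n_{i} \assign 1$ for each of the $n$ indices and applying Fact~\ref{fact: union of m prime n filters} immediately yields that $F$ is a $\sum_{i \in \range{n}} 1 = n$-filter, with nothing further to verify.

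If a self-contained argument is preferred, I would instead invoke the restricted definition of $n$-filters from Fact~\ref{fact: restricted definition of n filters}. So suppose $x_{1}, \dots, x_{n+1} \in \alg{S}$ satisfy $\bigwedge_{j \neq i} x_{j} \in F$ for each $i \in \range{n+1}$; the goal is to show that $x_{1} \wedge \dots \wedge x_{n+1} \in F$. The key step is a counting argument: there are $n+1$ such meets but only $n$ filters in the union, so by the pigeonhole principle two distinct meets $\bigwedge_{j \neq i} x_{j}$ and $\bigwedge_{j \neq i'} x_{j}$, with $i \neq i'$, must lie in a common filter $F_{k}$.

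It then remains to observe that, because $i \neq i'$, the two index sets $\set{j}{j \neq i}$ and $\set{j}{j \neq i'}$ together exhaust $\range{n+1}$, so the meet of these two elements is precisely $x_{1} \wedge \dots \wedge x_{n+1}$. As $F_{k}$ is closed under binary meets, we conclude $x_{1} \wedge \dots \wedge x_{n+1} \in F_{k} \subseteq F$, as required. I anticipate no genuine obstacle; the only point to get right is that the pigeonhole bound matches the index $n$ exactly, which is precisely why a union of $n$ filters is an $n$-filter and not merely an $n'$-filter for some larger $n'$.
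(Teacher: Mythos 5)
Your proposal is correct and takes the paper's own route: the paper states this fact without proof precisely because it is the special case $n_{i} \assign 1$ of Fact~\ref{fact: union of m prime n filters}, which is your first argument. Your self-contained pigeonhole argument is also just the specialization of the paper's proof of that more general fact (where some $F_{k}$ must contain $n_{k}+1 = 2$ of the meets $\bigwedge_{j \neq i} x_{j}$), so both of your routes coincide with the paper's.
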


  We shall see that the converse is true for \emph{prime} $n$-filters: each of them decomposes into a union of $n$ filters. However, there is no bound on how many filters are required to obtain a general $n$-filter: the~upset \mbox{generated} by the $\kappa$~coatoms of the Boolean lattice~$\BA{\kappa} \assign (\BA{1})^{\kappa}$ is a $2$-filter which is not a union of fewer than $\kappa$ filters.

\subsection{Generating \texorpdfstring{$n$-filters}{n-filters}}

  We now wish to explicitly describe the $n$-filter on $\alg{S}$ generated by some $U \subseteq \alg{S}$. We denote this $n$-filter $\fg{n}{U}$. Because $\fg{n}{U}$ always extends the upset generated by~$U$, it suffices to restrict our discussion to cases where $U$ is an upset.

  The $1$-filter generated by an upset $U \subseteq \alg{S}$ admits a simple description:
\begin{align*}
  a \in \fg{1}{U} & \iff \text{there is some non-empty finite $X \subseteq U$ such that $\bigwedge X \leq a$.}
\end{align*}
  To extend this description to $n$-filters, we impose distributivity on $\alg{S}$. Recall that a meet semilattice is \emph{distributive} if
\begin{align*}
  x \wedge y \leq z \implies x' \wedge y' = z \text{ for some $x' \geq x$ and $y' \geq y$}.
\end{align*}
  We say that $Y \subseteq \alg{S}$ \emph{refines} $X \subseteq \alg{S}$ if for all $y \in Y$ there is $x \in X$ such that~$x \leq y$.

\begin{lemma}[Common refinement]
  Let $\alg{S}$ be a distributive meet semilattice and let $Y_{i} \subseteq \alg{S}$ for $i \in \range{n}$ be a family of non-empty finite sets. If $\bigwedge Y_{i} \leq x$ for each~$Y_{i}$, then there is some $Y \subseteq \alg{S}$ such that $\bigwedge Y = x$ and moreover $Y$ refines each $Y_{i}$.
\end{lemma}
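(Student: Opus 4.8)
The plan is to isolate the single-set core of the statement and then bootstrap it to arbitrary $n$ by induction on the number of sets. (I take the $Y_i$ to be finite, so that the meets $\bigwedge Y_i$ exist in $\alg{S}$; the $Y$ produced will then be finite as well.) The core is the case $n = 1$, which I would phrase in the following sharpened, indexed form: if $z_1 \wedge \dots \wedge z_k \leq x$ in a distributive meet semilattice, then there are elements $d_1 \geq z_1, \dots, d_k \geq z_k$ with $d_1 \wedge \dots \wedge d_k = x$. Taking $Y = \{ d_1, \dots, d_k \}$ recovers the $n=1$ instance of the lemma, but the indexing of $Y$ by the members of $Y_1$ is exactly what makes the induction on $n$ go through.

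I would prove this sharpened $n=1$ statement by induction on $k = \card{Y_1}$. For $k = 1$ we have $z_1 \leq x$ and may take $d_1 \assign x$. For $k \geq 2$, write $z_1 \wedge (z_2 \wedge \dots \wedge z_k) \leq x$ and apply the definition of distributivity to obtain $d_1 \geq z_1$ and $w \geq z_2 \wedge \dots \wedge z_k$ with $d_1 \wedge w = x$. Since $z_2 \wedge \dots \wedge z_k \leq w$, the inductive hypothesis applied with target $w$ yields $d_2 \geq z_2, \dots, d_k \geq z_k$ whose meet is $w$, and then $d_1 \wedge d_2 \wedge \dots \wedge d_k = d_1 \wedge w = x$. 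Note for later use that each $d_j \geq x$, since $x$ is their meet.

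For the main statement I would induct on $n$, the base case $n = 1$ being the sharpened statement above. For the inductive step, apply the induction hypothesis to $Y_1, \dots, Y_{n-1}$ with the same target $x$ to get a set $Y'$ with $\bigwedge Y' = x$ refining each of $Y_1, \dots, Y_{n-1}$. Now fix $y' \in Y'$; since $x \leq y'$ we have $\bigwedge Y_n \leq y'$, so the $n=1$ core applied with target $y'$ produces, for each $z \in Y_n$, an element $d_{y',z} \geq z$ with $\bigwedge_{z \in Y_n} d_{y',z} = y'$, where moreover $d_{y',z} \geq y'$. Thus $d_{y',z}$ lies above an element of each $Y_i$ with $i < n$ (through $y'$) and above $z \in Y_n$. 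Setting $Y \assign \set{ d_{y',z} }{ y' \in Y', z \in Y_n }$ gives a set refining every $Y_i$, and $\bigwedge Y = \bigwedge_{y' \in Y'} \bigwedge_{z \in Y_n} d_{y',z} = \bigwedge_{y' \in Y'} y' = x$, completing the induction.

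The only genuine work lies in the single-set core; once it is available in indexed form, the induction on $n$ is bookkeeping. I expect the main obstacle to be recognizing that the elements produced by distributivity must be chosen to dominate the current target ($x$, and later $y'$), since it is exactly this feature — that each $d_{y',z}$ sits above $y'$ and hence inherits $y'$'s domination of $Y_1, \dots, Y_{n-1}$ — that lets a single pass over $Y_n$ repair refinement for all $n$ sets simultaneously.
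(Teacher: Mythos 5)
Your proof is correct, and it follows the same route as the paper, which simply states that the lemma ``holds by a straightforward induction over $n$'': your induction over $n$, with the single-set core obtained by iterating the definition of distributivity over the elements of one $Y_i$, is precisely the filling-in of that outline. The observation that each element produced automatically lies above the current target (so that refinement of the earlier sets is inherited through $y'$) is exactly the detail that makes the ``straightforward'' induction work.
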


\begin{proof}
  This holds by a straightforward induction over $n$.
\end{proof}

\begin{lemma}[Generating $n$-filters] \label{lemma: generating n filters}
  Let $\alg{S}$ be a distributive semilattice and $U$ be an upset of~$\alg{S}$. Then
\begin{align*}
  a \in \fg{n}{U} \iff  & \text{there is some non-empty finite $X \subseteq U$ such that} \\
  & \text{$\bigwedge Y \in U$ for each $Y \bsubseteq{n} X$ and $\bigwedge X \leq a$.}
\end{align*}
  We may equivalently replace $\bigwedge X \leq a$ by $\bigwedge X = a$ in the above.
\end{lemma}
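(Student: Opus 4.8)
I would write $V$ for the set on the right-hand side, calling a non-empty finite $X \subseteq U$ \emph{good} if $\bigwedge Y \in U$ for every $Y \bsubseteq{n} X$; thus $a \in V$ means exactly that $a$ has a good witness $X$ with $\bigwedge X \le a$. The plan is to prove $V = \fg{n}{U}$ by the two inclusions, where the real content is that $V$ is itself an $n$-filter. The easy half comes first. For $U \subseteq V$ a singleton $X = \{a\}$ witnesses $a \in V$, and $V$ is visibly an upset. For $V \subseteq \fg{n}{U}$, if $X$ is a good witness for $a$ then $X \subseteq U \subseteq \fg{n}{U}$ and $\bigwedge Y \in U \subseteq \fg{n}{U}$ for each $Y \bsubseteq{n} X$, so the $n$-filter $\fg{n}{U}$ must contain $\bigwedge X$, whence $a \ge \bigwedge X$ gives $a \in \fg{n}{U}$. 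It then remains only to show that $V$ is an $n$-filter, since $\fg{n}{U}$ is the least $n$-filter containing $U$ and so $\fg{n}{U} \subseteq V$ will follow.

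To verify that $V$ is an $n$-filter I would use the reduced criterion of Fact~\ref{fact: restricted definition of n filters}, together with one preparatory observation: \emph{goodness is inherited by refinements}. Concretely, if $X$ is good and $Z$ refines $X$ via a choice $\sigma(z) \in X$ with $\sigma(z) \le z$, then any $Z' \bsubseteq{n} Z$ has $\bigwedge Z' \ge \bigwedge \sigma[Z']$ with $\sigma[Z'] \bsubseteq{n} X$, so $\bigwedge Z' \in U$ as $U$ is an upset; hence every finite non-empty refinement of a good set is good. This is the engine that converts ``$X$ sits above $Y$'' into ``$\bigwedge X \in U$''.

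The construction is the crux. Suppose $\bigwedge_{j \neq i} x_{j} \in V$ for each $i \in \range{n+1}$, witnessed by good sets $X_{i}$ with $\bigwedge X_{i} \le \bigwedge_{j \neq i} x_{j}$. For each fixed $k \in \range{n+1}$ the $n$ sets $X_{i}$ with $i \neq k$ all satisfy $\bigwedge X_{i} \le x_{k}$, so the Common Refinement lemma produces a set $R_{k}$ with $\bigwedge R_{k} = x_{k}$ that refines every $X_{i}$ with $i \neq k$. I would then set $X \assign \bigcup_{k \in \range{n+1}} R_{k}$. Its full meet is correct, $\bigwedge X = \bigwedge_{k} \bigwedge R_{k} = \bigwedge_{k} x_{k} = x_{1} \wedge \dots \wedge x_{n+1}$. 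For goodness, take $Y \bsubseteq{n} X$ and pick for each $y \in Y$ an index $k(y)$ with $y \in R_{k(y)}$; since $\card{Y} \le n < n+1$, some $k_{0}$ is omitted, and as $R_{k(y)}$ refines $X_{k_{0}}$ for every $k(y) \neq k_{0}$, the whole set $Y$ refines the good set $X_{k_{0}}$, so $\bigwedge Y \in U$ by the inheritance observation. Thus $X$ is a good witness with $\bigwedge X = x_{1} \wedge \dots \wedge x_{n+1}$, and $V$ is an $n$-filter.

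The hard part is precisely this merging step, and it is worth flagging why the obvious candidates fail: the naive union $\bigcup_{i} X_{i}$ has the right full meet but wholly uncontrolled cross-meets, while a ``product'' of choice functions controls nothing even at singletons. The passage through the common refinements $R_{k}$ is what repairs this, the decisive point being that $\card{Y} \le n$ forces $Y$ to omit some index $k_{0}$ and therefore to refine a \emph{single} good set $X_{k_{0}}$. Finally, the equivalent formulation with $\bigwedge X = a$ in place of $\bigwedge X \le a$ is immediate: applying the Common Refinement lemma to the single set $X$ with target $a$ replaces any good witness for $\bigwedge X \le a$ by a good witness of meet exactly $a$, and this refinement stays good by the same inheritance observation.
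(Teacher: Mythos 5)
Your proposal is correct and follows essentially the same route as the paper's proof: both define the candidate set of meets of ``good''/admissible witnesses, apply the Common Refinement lemma for each index $k$ to the $n$ witnesses $X_{i}$ ($i \neq k$) with target $x_{k}$, take the union of the resulting refinements, and verify goodness via the pigeonhole observation that a set of size at most $n$ must omit some index $k_{0}$ and hence refines the single good set $X_{k_{0}}$. Your explicit statement of the refinement-inheritance observation and of the $\bigwedge X = a$ reduction only makes explicit what the paper leaves implicit.
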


\begin{proof}
  Let $F \subseteq \alg{S}$ be the upset defined by this condition, and let us call a non-empty finite set $X \subseteq \alg{S}$ admissible if $\bigwedge Y \in U$ for each $Y \bsubseteq{n} X$. Because $F$ is distributive and $U$ is an upset, $F$ consists precisely of the meets of admissible subsets of $\alg{S}$. Clearly $F \subseteq \fg{n}{U}$. Conversely, we prove that $F \supseteq U$ is an $n$-filter.

  Consider $x_1, \dots, x_{n+1} \in F$ such that $y_{i} \assign \bigwedge_{j \neq i} x_{j} \in F$ for each $i \in \range{n+1}$. We need to prove that $\bigwedge_{i} x_{i} \in F$. That is, we need to express $\bigwedge_{i} x_{i}$ as the meet of an admissible set. By the definition of $F$, for each $i \in \range{n+1}$ there is an admissible set $Y_{i}$ with $\bigwedge Y_{i} = y_{i}$. In particular, $\bigwedge Y_{i} \leq x_{j}$ for each $j \neq i$.

  Applying the previous lemma to the family $Y_{j}$ for $j \neq i$ and to $x \assign x_{i}$ yields a set $Z_{i}$ which refines each $Y_{j}$ with $j \neq i$ such that $\bigwedge Z_{i} = x_{i}$. It follows that $\bigwedge \bigcup \set{Z_{i}}{i \in \range{n+1}} = \bigwedge_{i \in \range{n+1}} x_{i}$. It thus suffices to show that $\bigcup \set{Z_{i}}{i \in \range{n+1}}$ is an admissible set. If $Z \bsubseteq{n} \bigcup \set{Z_{i}}{i \in \range{n+1}}$, then for each $z \in Z$ there is $i \in \range{n+1}$ such that $z \in Z_{i}$. Thus for each $j \neq i$ there is an element of $Y_{j}$ which lies below $z$. Crucially, since $Z$ has at most $n$ elements but $j$ varies over $\range{n+1}$, we may choose the same $Y_{j}$ for each $z \in Z$. That is, there is some $Y_{j}$ such that for each $z \in Z$ there is $y \in Y_{j}$ with $y \leq z$. But then $\bigwedge Z \in U$ follows from the fact that the meet of each non-empty subset of $Y_{j}$ with at most $n$ elements lies in $U$.
\end{proof}

\begin{figure}
\caption{Generating $n$-filters in semilattices}
\label{fig: generating}
\vskip 15pt
\begin{center}
\begin{tikzpicture}[scale=1, dot/.style={inner sep=2.5pt,outer sep=2.5pt}, solid/.style={circle,fill,inner sep=2pt,outer sep=2pt}, empty/.style={circle,draw,inner sep=2pt,outer sep=2pt}]
  \node (0) at (0,0) [empty] {};
  \node (11) at (-1,1) [solid] {};
  \node (12) at (0,1) [solid] {};
  \node (13) at (1,1) [solid] {};
  \node (21) at (-1,2) [solid] {};
  \node (22) at (0,2) [solid] {};
  \node (23) at (1,2) [solid] {};
  \draw[-] (0) edge (11) edge (12) edge (13);
  \draw[-] (21) edge (11) edge (12);
  \draw[-] (22) edge (11) edge (13);
  \draw[-] (23) edge (12) edge (13);
  \node (b0) at (3,0) [empty] {};
  \node (b11) at (2,1) [empty] {};
  \node (b12) at (3,1) [solid] {};
  \node (b13) at (4,1) [solid] {};
  \node (b21) at (2,2) [solid] {};
  \node (b22) at (3,2) [solid] {};
  \node (b23) at (4,2) [solid] {};
  \node (0l) at (-0.5, 0) [dot] {$\mathsf{a}$};
  \node (b0r) at (3.5, 0) [dot] {$\mathsf{b}$};
  \draw[-] (b0) edge (b11) edge (b12) edge (b13);
  \draw[-] (b21) edge (b11) edge (b12);
  \draw[-] (b22) edge (b11) edge (b13);
  \draw[-] (b23) edge (b12) edge (b13);
  \draw[-] (0) edge (b11);
  \node (bot) at (1.5, -1) [empty] {};
  \draw[-] (bot) edge (0) edge (b0);
\end{tikzpicture}
\end{center}
\end{figure}

  The above lemma does not hold in all semilattices. Figure~\ref{fig: generating} shows why. Let~$U$ be the upset consisting of the solid dots. Then there is no $X \subseteq U$ such that $\bigwedge Y \in U$ for each $Y \bsubseteq{2} X$ and $\bigwedge X \leq \mathsf{b}$. There is, however, some such~$X$ with $\bigwedge X = \mathsf{a}$, therefore $\mathsf{a} \in \fg{2}{U}$. This fills the empty dot above $\mathsf{b}$, i.e.\ shows that $\mathsf{a} \vee \mathsf{b} \in \fg{2}{U}$. It immediately follows that $\mathsf{b} \in \fg{2}{U}$. Appending a top element to this semilattice shows that the lemma may even fail in a finite lattice.

  Informally speaking, the $n$-filter generated by $X$ on a general semilattice $\alg{S}$ is formed by applying the construction of the above lemma $\omega$ times.

  Recall that in general the $n$-filters of a meet semilattice $\alg{S}$ form an algebraic lattice~$\Fi_{n} \alg{S}$. If $\alg{S}$ is distributive, we can in fact say more about $\Fi_{n} \alg{S}$.

\begin{fact}
  If $\alg{S}$ is a distributive semilattice, then $\Fi_{n} \alg{S}$ is a distributive lattice.
\end{fact}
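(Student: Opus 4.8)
The plan is to establish the nontrivial distributive inclusion $F \cap (G \vee H) \subseteq (F \cap G) \vee (F \cap H)$ for $n$-filters $F, G, H$ on $\alg{S}$, since the reverse inclusion $(F \cap G) \vee (F \cap H) \subseteq F \cap (G \vee H)$ holds in every lattice and $\Fi_{n} \alg{S}$ is already known to be a complete lattice. Throughout I use that meets in $\Fi_{n} \alg{S}$ are intersections and that the binary join is $G \vee H = \fg{n}{G \cup H}$, the $n$-filter generated by the union.

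First I would take an arbitrary $a \in F \cap \fg{n}{G \cup H}$ and feed the upset $U \assign G \cup H$ into Lemma~\ref{lemma: generating n filters}. In its equality form this produces a non-empty finite set $X \subseteq G \cup H$ such that $\bigwedge X = a$ and $\bigwedge Y \in G \cup H$ for every $Y \bsubseteq{n} X$.

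The decisive observation is that all of these meets already lie in $F$. Indeed, every $Y \subseteq X$ satisfies $\bigwedge Y \geq \bigwedge X = a$, and since $a \in F$ while $F$ is an upset we get $\bigwedge Y \in F$; in particular each $\bigwedge Y$ with $Y \bsubseteq{n} X$, as well as each element of $X$ itself, belongs to $F$. Using the set-theoretic identity $F \cap (G \cup H) = (F \cap G) \cup (F \cap H)$, this means that $X$ is contained in the upset $V \assign (F \cap G) \cup (F \cap H)$ and that $\bigwedge Y \in V$ for every $Y \bsubseteq{n} X$. Applying Lemma~\ref{lemma: generating n filters} once more, now to $V$ with the \emph{same} witnessing set $X$, yields $a \in \fg{n}{V} = (F \cap G) \vee (F \cap H)$, as desired.

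I do not expect a genuine obstacle here: the argument is driven entirely by the equality form $\bigwedge X = a$ of the Generating Lemma, which forces every relevant meet to lie at or above $a$ and hence inside the upset $F$. The only points requiring attention are that $V$ is indeed an upset (so that the lemma applies to it) and that distributivity of $\alg{S}$ is already absorbed into the statement of the Generating Lemma, so that no separate appeal to distributivity is needed in this final step.
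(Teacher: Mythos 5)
Your proposal is correct and follows essentially the same route as the paper's proof: both reduce to the nontrivial inclusion $F \cap (G \vee H) \subseteq (F \cap G) \vee (F \cap H)$, invoke Lemma~\ref{lemma: generating n filters} in its equality form to get a witnessing set $X$ with $\bigwedge X = a$, and then observe that since $F$ is an upset every relevant submeet lies in $F \cap (G \cup H) = (F \cap G) \cup (F \cap H)$, so the same $X$ witnesses membership in the join of the intersections. Your explicit checks (that $X$ itself lands in $(F \cap G) \cup (F \cap H)$ and that this set is an upset) are details the paper leaves implicit, but the argument is identical in substance.
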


\begin{proof}
  Let $F$, $G$, $H$ be $n$-filters on $\alg{S}$ and let $G \sqcup H$ denote the $n$-filter generated by $G \cup H$. Suppose that $x \in F \cap (G \sqcup H)$. Then there is $Y \subseteq \alg{S}$ such that $\bigwedge Y = x$ and $\bigwedge Z \in G \cup H$ for each $Z \bsubseteq{n} Y$. Since $x \in F$, then in fact $\bigwedge Z \in F \cap (G \cup H) = (F \cap G) \cup (F \cap H)$, hence $Y$ witnesses that $x \in (F \cap G) \sqcup (F \cap H)$.
\end{proof}

\section{\texorpdfstring{$n$-filters}{n-filters} as intersections of homomorphic preimages}

  We now show that each $n$-filter is an intersection of homomorphic preimages of the $n$-filter $\nonempty{n} \subseteq \BA{n}$.

\subsection{Prime \texorpdfstring{$n$-filters}{n-filters} as homomorphic preimages}

  We first exhibit each prime $n$-filter as a homomorphic preimage of $\nonempty{n} \subseteq \BA{n}$. In order to better understand how the $n$-filter $\nonempty{n}$ arises in this context, we recall the notion of the dual product of a family of structures. Here by a \emph{structure} we simply mean a pair $\pair{\alg{A}}{F}$ consisting of an algebra $\alg{A}$ and a set $F \subseteq \alg{A}$. Given a family of structures $\pair{\alg{A}_{i}}{F_{i}}$ with $i \in I$ in some common algebraic signature, their \emph{(direct) product} is defined as
\begin{align*}
  \prod_{i \in I} \pair{\alg{A}_{i}}{F_{i}} \assign \Big \langle {\prod_{i \in I} \alg{A}_{i}}, {\bigcap_{i \in I} \pi_{i}^{-1}[F_{i}]} \Big \rangle,
\end{align*}
  where $\pi_{i}$ are the projections. The \emph{dual (direct) product} of the family $\pair{\alg{A}_{i}}{F_{i}}$, studied by Badia and Marcos~\cite{badia+marcos18}, is less well known. It is defined as the structure
\begin{align*}
  \bigotimes_{i \in I} \pair{\alg{A}_{i}}{F_{i}} \assign \Big \langle {\prod_{i \in I} \alg{A}_{i}}, {\bigcup_{i \in I} \pi_{i}^{-1}[F_{i}]} \Big \rangle.
\end{align*}
  The two constructions are related by De~Morgan duality:
\begin{align*}
  \overline{\pair{\alg{A}}{F} \otimes \pair{\alg{B}}{G}} = \overline{\pair{\alg{A}}{F}} \times \overline{\pair{\alg{B}}{G}}, \text{ where } \overline{\pair{\alg{A}}{F}} \assign \pair{\alg{A}}{\alg{A} \setminus F}.
\end{align*}
  The construction $\pair{\alg{A}}{F} \mapsto \pair{\alg{A}}{\alg{A} \setminus F}$ in our case corresponds to switching from an upset (prime $n$-filter) to a downset ($n$-prime ideal) and vice versa.

  In order to describe prime $n$-filters and $n$-prime filters on distributive lattices, we will need to construct homomorphisms into direct products and dual direct products of structures. The following simple observation will be crucial.

  Let us recall that a \emph{strict} homomorphism of structures $h\colon \pair{\alg{A}}{F} \to \pair{\alg{B}}{G}$ is a homomorphism of algebras $h\colon \alg{A} \to \alg{B}$ such that $F = h^{-1}[G]$.

\begin{fact} \label{fact: hom into dual product}
  Let $h_{i}\colon \pair{\alg{A}}{F_{i}} \to \pair{\alg{B}_{i}}{G_{i}}$ be a family of strict homomorphisms for $i \in I$ and let $h\colon \alg{A} \to \prod_{i \in I} \alg{B}_{i}$ be the product of these homomorphisms. Then the following are strict homomorphisms:
\begin{align*}
  & h\colon \pair{\alg{A}}{\bigcap_{i \in I} F_{i}} \to \prod_{i \in I} \pair{\alg{B}_{i}}{G_{i}}, & & h\colon \pair{\alg{A}}{\bigcup_{i \in I} F_{i}} \to \bigotimes_{i \in I} \pair{\alg{B}_{i}}{G_{i}}.
\end{align*}
\end{fact}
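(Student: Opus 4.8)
The plan is simply to verify strictness of $h$ in each of the two cases, since $h$ is already a homomorphism of the underlying algebras by construction. Strictness of a homomorphism of structures means that the designated set of the source equals the $h$-preimage of the designated set of the target, so in each case the entire content of the claim reduces to a single equation between subsets of $\alg{A}$. The one general fact I would lean on is that taking preimages under $h$ commutes with arbitrary intersections and unions, together with the defining property $\pi_{i} \circ h = h_{i}$ of the product homomorphism and the strictness of each $h_{i}$, i.e.\ $F_{i} = h_{i}^{-1}[G_{i}]$.

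For the first homomorphism, whose target is the direct product $\prod_{i} \pair{\alg{B}_{i}}{G_{i}} = \pair{\prod_{i} \alg{B}_{i}}{\bigcap_{i} \pi_{i}^{-1}[G_{i}]}$, I would compute
\begin{align*}
  h^{-1}\Big[ \bigcap_{i \in I} \pi_{i}^{-1}[G_{i}] \Big] = \bigcap_{i \in I} h^{-1}\big[\pi_{i}^{-1}[G_{i}]\big] = \bigcap_{i \in I} (\pi_{i} \circ h)^{-1}[G_{i}] = \bigcap_{i \in I} h_{i}^{-1}[G_{i}] = \bigcap_{i \in I} F_{i},
\end{align*}
where the first equality is the commutation of preimage with intersection, the third uses $\pi_{i} \circ h = h_{i}$, and the last is exactly the strictness of each $h_{i}$. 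This is precisely the assertion that $h\colon \pair{\alg{A}}{\bigcap_{i} F_{i}} \to \prod_{i} \pair{\alg{B}_{i}}{G_{i}}$ is strict. For the second homomorphism, whose target is the dual product $\bigotimes_{i} \pair{\alg{B}_{i}}{G_{i}} = \pair{\prod_{i} \alg{B}_{i}}{\bigcup_{i} \pi_{i}^{-1}[G_{i}]}$, the identical computation with every $\bigcap$ replaced by $\bigcup$ yields $h^{-1}[\bigcup_{i} \pi_{i}^{-1}[G_{i}]] = \bigcup_{i} F_{i}$, giving strictness of $h\colon \pair{\alg{A}}{\bigcup_{i} F_{i}} \to \bigotimes_{i} \pair{\alg{B}_{i}}{G_{i}}$.

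There is essentially no obstacle here once the definitions of the product and dual product are unwound; the proof is a routine chain of equalities resting on the elementary behaviour of preimages. The only conceptual point worth flagging is that the argument works precisely because all the source structures $\pair{\alg{A}}{F_{i}}$ share a single underlying algebra $\alg{A}$: this is what makes the product homomorphism $h$ well defined and what makes $\bigcap_{i} F_{i}$ and $\bigcup_{i} F_{i}$ meaningful as subsets of one common carrier, so that the two displayed maps have the intended structures as their sources.
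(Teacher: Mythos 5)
Your proof is correct: the paper states this Fact without proof, treating it as a simple observation, and your computation---pulling the preimage through the intersection or union, using $\pi_{i} \circ h = h_{i}$ and strictness $F_{i} = h_{i}^{-1}[G_{i}]$---is exactly the routine verification the paper leaves implicit. Nothing is missing.
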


  If we now apply these two constructions to the structure $\BAm{1} \assign \pair{\BA{1}}{\{ \True \}}$, where $\BA{1}$ is the two-element distributive lattice, we obtain the structures
\begin{align*}
 \pair{\BA{n}}{\{ \True \}} & = \prod_{i \in \range{n}} \pair{\BA{1}}{\{ \True \}}, & \pair{\BA{n}}{\nonempty{n}} & = \bigotimes_{i \in \range{n}} \pair{\BA{1}}{\{ \True \}}.
\end{align*}
  Recall that $\BA{n} \assign \BA{1}^{n}$ and $\nonempty{n}$ denotes the upset of non-zero elements of $\BA{n}$, with $\BA{0}$ being the trivial lattice and $\nonempty{0} = \emptyset$. We shall use the notation
\begin{align*}
  (\BAm{1})^{n} & \assign \pair{\BA{n}}{\{ \True \}}, & \BAm{n} & \assign \pair{\BA{n}}{\nonempty{n}}.
\end{align*}
  Occasionally, we also use the notation $\BAm{1}^{\Pi n}$ and $\BAm{1}^{\otimes n}$ for direct and dual powers.

\begin{fact} \label{fact: nonempty n is n filter}
  $\nonempty{n}$ is a prime $n$-filter on $\BA{n}$. It is not an $m$-filter for any $m < n$.
\end{fact}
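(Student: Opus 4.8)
The plan is to work in the concrete coordinate description $\BA{n} = \{ \False, \True \}^{\range{n}}$, in which meets and joins are computed coordinatewise and $\nonempty{n}$ consists of every tuple other than the bottom tuple $\tuple{\False}$. The reformulation I would record first is that for a set $X \subseteq \BA{n}$ one has $\bigwedge X \in \nonempty{n}$ if and only if $X$ has a \emph{full coordinate}, that is, some $k \in \range{n}$ with $(x)_{k} = \True$ for every $x \in X$; dually $\bigwedge X = \tuple{\False}$ exactly when every coordinate is sent to $\False$ by some member of $X$. Primeness is then immediate from Fact~\ref{fact: n prime ideal complement}: since $x \vee y = \tuple{\False}$ forces $x = y = \tuple{\False}$, we get $x \vee y \in \nonempty{n} \imp x \in \nonempty{n}$ or $y \in \nonempty{n}$.

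To see that $\nonempty{n}$ is an $n$-filter I would invoke the restricted criterion of Fact~\ref{fact: restricted definition of n filters}: given $x_{1}, \dots, x_{n+1}$ with $\bigwedge_{j \neq i} x_{j} \in \nonempty{n}$ for each $i \in \range{n+1}$, I must produce a full coordinate for the whole family. For each $i$ the hypothesis supplies a coordinate $k_{i}$ that is full for $\set{x_{j}}{j \neq i}$, i.e.\ $(x_{j})_{k_{i}} = \True$ for all $j \neq i$. If some $k_{i}$ also satisfies $(x_{i})_{k_{i}} = \True$, then $k_{i}$ is full for the entire family and $\bigwedge_{i} x_{i} \in \nonempty{n}$, as required. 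The heart of the argument is ruling out the opposite case: if $(x_{i})_{k_{i}} = \False$ for every $i$, then at the coordinate $k_{i}$ the tuple $x_{i}$ is the \emph{unique} member of the family taking the value $\False$, so the assignment $i \mapsto k_{i}$ is injective; this embeds $\range{n+1}$ into $\range{n}$, which is impossible. This pigeonhole step is the only real content, and it is exactly where the bound $n$ (rather than $n-1$) enters.

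For the second assertion I would exhibit a single witness refuting the $m$-filter condition for all $m < n$ simultaneously, namely the set $X = \set{\coatom{k}}{k \in \range{n}}$ of the $n$ coatoms of $\BA{n}$, where $\coatom{k}$ is $\False$ in coordinate $k$ and $\True$ elsewhere. Each $\coatom{k}$ lies in $\nonempty{n}$ (for $n \geq 2$), their meet is $\tuple{\False} \notin \nonempty{n}$, yet every proper subset $Y \subsetneq X$ omits some $\coatom{k}$ and is therefore full in coordinate $k$, giving $\bigwedge Y \in \nonempty{n}$. Since $m < n = \card{X}$, every $Y \bsubseteq{m} X$ is proper, so the antecedent of the $m$-filter condition holds while the consequent fails; hence $\nonempty{n}$ is not an $m$-filter. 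The degenerate cases $n \leq 1$ reduce to the defining convention for $0$-filters, since for every $n \geq 1$ the upset $\nonempty{n}$ is neither empty nor all of $\BA{n}$. I expect no serious obstacle beyond packaging the coordinatewise bookkeeping so that the pigeonhole in the middle paragraph is visible.
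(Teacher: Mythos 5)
Your proof is correct, but the first half takes a genuinely different route from the paper's. For the conjunction of the $n$-filter property and primeness, the paper simply observes that $\nonempty{n}$ is the union of the $n$ prime principal filters generated by the atoms of $\BA{n}$: a union of prime upsets is trivially prime, and the $n$-filter property is immediate from Fact~\ref{fact: union of n filters}. You instead verify the restricted criterion of Fact~\ref{fact: restricted definition of n filters} directly in coordinates, with the impossibility of an injection $\range{n+1} \into \range{n}$ doing the real work, and then check primeness by hand via Fact~\ref{fact: n prime ideal complement}. The two arguments are close cousins --- your pigeonhole step is essentially the counting argument by which the paper proves Fact~\ref{fact: union of m prime n filters}, of which Fact~\ref{fact: union of n filters} is a special case --- but yours is self-contained and makes explicit exactly where the bound $n$ enters, whereas the paper's is shorter, reuses machinery already established, and its decomposition of $\nonempty{n}$ into prime filters generated by atoms is the conceptually significant one: it is precisely the description of prime $n$-filters that Theorem~\ref{thm: union of n prime filters} later shows to be completely general over distributive lattices. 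Your treatment of the second claim (the meet of any $m<n$ coatoms is nonzero but the meet of all $n$ coatoms is $\False$) is identical to the paper's, and your explicit handling of the degenerate cases $m = 0$ and $n \leq 1$ is a small bonus that the paper leaves implicit.
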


\begin{proof}
  It is a prime $n$-filter because it is a union of the $n$ prime filters generated by the atoms of $\BA{n}$. It is not an $m$-filter for $m < n$ because the meet of each set of $m$ coatoms lies in $\nonempty{1}$ but the meet of all coatoms does not.
\end{proof}

  We say that an upset $F$ on a lattice $\alg{L}$ is a \emph{homo\-morphic preimage} of an upset $G$ on a lattice $\alg{M}$ if there is a lattice homo\-morphism $h\colon \alg{L} \to \alg{M}$ such that $F = h^{-1}[G]$. An upset $F$ is \emph{$n$-generated} if it is the upward closure of a set of cardinality at most $n$. The following lemma holds equally well for semilattices.

\begin{lemma} \label{lemma: union of n filters}
  The following are equivalent for each upset $U$ of a lattice $\alg{L}$:
\begin{enumerate}[(i)]
  \item for all $a_{1}, \dots, a_{n+1} \in U$ there are $i \neq j$ such that $a_{i} \wedge a_{j} \in U$,
  \item $U$ is a union of at most $n$ filters on $\alg{L}$,
  \item $U$ is a homomorphic preimage of an $n$-generated upset.
\end{enumerate}
\end{lemma}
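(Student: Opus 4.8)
The plan is to prove the cycle of implications (i) $\Rightarrow$ (ii) $\Rightarrow$ (iii) $\Rightarrow$ (i), with the first implication carrying essentially all of the content.

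For (iii) $\Rightarrow$ (i): write the $n$-generated upset as $G = {\uparrow}\{ g_{1}, \dots, g_{n} \}$ and suppose $U = h^{-1}[G]$. Given $a_{1}, \dots, a_{n+1} \in U$, each $h(a_{k})$ lies above some $g_{i_{k}}$; since there are $n+1$ indices and only $n$ generators, the pigeonhole principle yields $s \neq t$ with $g_{i_{s}} = g_{i_{t}} =: g_{i}$, whence $h(a_{s} \wedge a_{t}) = h(a_{s}) \wedge h(a_{t}) \geq g_{i}$ and so $a_{s} \wedge a_{t} \in U$. This direction is immediate.

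For (ii) $\Rightarrow$ (iii): I would first replace the filters on $U$ by genuine semilattice filters of $\alg{S}$, since a down-directed upset $D \subseteq U$ generates the filter ${\uparrow} \set{\bigwedge X}{X \subseteq D \text{ finite}}$, which is still contained in $U$ because $D$ is down-directed and $U$ is an upset; the cover is thereby preserved. Writing $U = F_{1} \cup \dots \cup F_{n}$ with each $F_{i}$ a filter, I realize each $F_{i}$ as a strict preimage of a top element: collapsing $F_{i}$ by the congruence $x \equiv y \iff x \wedge f = y \wedge f$ for some $f \in F_{i}$ gives a quotient $\alg{B}_{i}$ in which $F_{i} = h_{i}^{-1}[\{ \top \}]$, so $h_{i} \colon \pair{\alg{S}}{F_{i}} \to \pair{\alg{B}_{i}}{\{ \top \}}$ is a strict homomorphism. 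After freely adjoining a bottom element to each $\alg{B}_{i}$, Fact~\ref{fact: hom into dual product} assembles the $h_{i}$ into a strict homomorphism $h \colon \pair{\alg{S}}{\bigcup_{i} F_{i}} \to \bigotimes_{i} \pair{\alg{B}_{i}}{\{ \top \}}$. The distinguished set of this dual product is $\bigcup_{i} \pi_{i}^{-1}[\{ \top \}] = {\uparrow}\{ e_{1}, \dots, e_{n} \}$, where $e_{i}$ is the tuple with $\top$ in coordinate $i$ and $\bot$ elsewhere; this is an $n$-generated upset $G$, and strictness says precisely that $U = h^{-1}[G]$. This mirrors the presentation of $\BAm{n}$ as $\bigotimes_{i} \BAm{1}$.

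The crux is (i) $\Rightarrow$ (ii), and the main obstacle is that condition (i) is purely local (it speaks only about pairs) whereas being a union of $n$ filters is a global, infinitary property, so there is no a priori bound on how many meets must be controlled at once. I would handle this in two stages. First, the finite case: given a finite $U_{0} \subseteq U$, consider the elements of $U$ of the form $\bigwedge X$ with $\emptyset \neq X \subseteq U_{0}$, and let $R$ collect those that are minimal among such meets. Any two distinct $r, r' \in R$ must be incompatible, for otherwise $r \wedge r' = \bigwedge (X \cup X') \in U$ would be a meet of the same kind lying strictly below $r$, contradicting minimality; hence $R$ is pairwise incompatible and condition (i) forces $\card{R} \leq n$. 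Since every $x \in U_{0}$ is itself such a meet, it lies above some $r \in R$, so coloring each $x$ by a witnessing $r$ partitions $U_{0}$ into at most $n$ classes, each contained in the filter ${\uparrow} r \subseteq U$, and in particular each class has all of its finite meets in $U$. Second, I would globalize by compactness: a coloring of $U$ by $n$ colors with every color class generating a filter inside $U$ is exactly a point of $[n]^{U}$ avoiding, for each finite $X \subseteq U$ with $\bigwedge X \notin U$, the closed ``$X$ is monochromatic'' condition; the finite case shows every finite subfamily of these conditions is satisfiable, so compactness of $[n]^{U}$ produces a global such coloring, and the $n$ filters generated by its color classes cover $U$. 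I expect this globalization — together with the correct finitary formulation of the constraint — to be the delicate point, with the minimal-meet argument being the key idea that makes the finite case go through.
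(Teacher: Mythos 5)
There is a genuine gap in your (ii) $\Rightarrow$ (iii). The lemma is stated for arbitrary lattices, and the paper defines ``homomorphic preimage'' to mean the preimage under a \emph{lattice} homomorphism into a lattice. The relation $x \equiv y \iff x \wedge f = y \wedge f$ for some $f \in F_{i}$ is a meet-semilattice congruence (and a lattice congruence when the lattice is distributive), but on a general lattice the quotient map fails to preserve joins. In the diamond $\Mfive$ with atoms $a, b, c$ and $F = \{\True, a\}$ we have $b \equiv c$ (witness $f = a$), yet $b = b \vee b \not\equiv b \vee c = \True$. Worse, your intermediate goal --- realizing each filter as a strict preimage of a top element --- is simply unattainable here: if $h$ were a lattice homomorphism with $F = h^{-1}[\{\top\}]$, $\top$ the top of the codomain, then $h(b) = h(b) \wedge h(a) = h(b \wedge a) = h(\False)$ and likewise $h(c) = h(\False)$, hence $\top = h(b \vee c) = h(\False)$, contradicting $\False \notin F$. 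So no repair of the quotient construction can work for lattices; this is exactly why the paper proves (ii) $\Rightarrow$ (iii) with an \emph{embedding} instead of a quotient: $\alg{L} \into \Fi_{1}(\alg{L})$, the lattice of filters ordered by reverse inclusion, sending $a$ to the principal filter of $a$, under which $U = F_{1} \cup \dots \cup F_{k}$ is the preimage of the upset generated by the elements $F_{1}, \dots, F_{k}$ themselves (which need not be tops). Your construction does establish the semilattice version mentioned right after the lemma, where only meets must be preserved, but not the lattice statement being proved.

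The rest of your proposal is correct. Your (iii) $\Rightarrow$ (i) is the paper's pigeonhole argument. Your (i) $\Rightarrow$ (ii) is sound and genuinely different from the paper's: the paper takes the least $n$ satisfying (i), picks pairwise-incompatible $b_{1}, \dots, b_{n} \in U$, extends their principal filters by Zorn's lemma to filters maximal among those contained in $U$, and checks directly that these $n$ filters cover $U$; you instead isolate a finite combinatorial core (minimality among meets of subsets of $U_{0}$ forces pairwise incompatibility, so (i) gives $\card{R} \leq n$) and then globalize by compactness of $\range{n}^{U}$, which works because each constraint ``$X$ is not monochromatic'' is clopen and your finite case yields the finite intersection property. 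The paper's route is shorter; yours trades Zorn for Tychonoff and makes the finite case explicit. Either argument supports the implication (i) $\Rightarrow$ (ii), which is the part of the lemma invoked later in the proof of Theorem~\ref{thm: union of n prime filters}.
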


\begin{proof}
  (i) $\Rightarrow$ (ii): let $n$ be the least number which satisfies (i). If~${n = 1}$, then $U$ is a filter, so (ii) holds. Otherwise, there are $b_{1}, \dots, b_{n} \in U$ such that ${b_{i} \wedge b _{j} \notin U}$ whenever $i \neq j$. We can now extend the principal filters generated by $b_{1}, \dots, b_{n}$ to maximal filters $F_{1}, \dots, F_{n} \subseteq U$. That is, if $x \notin F_{i}$, then $x \wedge f_{i} \notin U$ for some $f_{i} \in F_{i}$. It suffices to show that $U = F_{1} \cup \dots \cup F_{n}$. Suppose therefore that $x \notin F_{1} \cup \dots \cup F_{n}$. Then there are $c_{i} \in F_{i}$ such that $x \wedge c_{i} \notin U$. We~may assume without loss of generality that $c_{i} \leq b_{i}$, taking $b_{i} \wedge c_{i}$ instead of $c_{i}$ if necessary. But then $c_{i} \wedge c_{j} \notin F$ whenever $i \neq j$. The set $\{ x, c_{1}, \dots, c_{n} \}$ therefore contradicts the assumption (i).

  (ii) $\Rightarrow$ (iii) $\Rightarrow$ (iv): let $U = F_{1} \cup \dots \cup F_{k}$, where $F_{i}$ are filters on $\alg{L}$. Consider the embedding $\iota\colon \alg{L} \into \Fi_{1} (\alg{L})$ of $\alg{L}$ into the lattice $\Fi_{1} (\alg{L})$ of filters on~$\alg{L}$ ordered by reverse inclusion where $\iota(a)$ is the principal filter generated by $a$. This map is a lattice homomorphism. Then $a \in F_{i}$ if and only if $\iota(a) \subseteq F_{i}$, so $a \in F$ if and only if $\iota(a) \subseteq F_{i}$ for some $i \in \range{k}$. The~upset~$U$ is thus the preimage of the upset generated in $\Fi_{1} (\alg{L})$ by the $k$ elements~$F_{i}$.

  (iii) $\Rightarrow$ (i): it suffices to show that each upset $U$ generated by some elements $b_{1}, \dots, b_{n}$ satisfies (i). But for each set $\{ a_{1}, \dots, a_{n+1} \}$ there is some $b_{k}$ such that $b_{k} \leq a_{i}$ and $b_{k} \leq a_{j}$ for some $i \neq j$.
\end{proof}

\begin{lemma}[Homomorphism lemma for $\nonempty{1}$]
  Each prime \mbox{filter} on a distributive lattice~$\alg{L}$ is a homomorphic preimage of the prime filter $\nonempty{1} \subseteq \BA{1}$.
\end{lemma}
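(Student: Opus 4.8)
The plan is to realize the homomorphism explicitly via the characteristic function of the prime filter. Given a prime filter $F$ on $\alg{L}$, I would define $h \colon \alg{L} \to \BA{1}$ by setting $h(a) \assign \True$ if $a \in F$ and $h(a) \assign \False$ otherwise. With this definition the equality $F = h^{-1}[\nonempty{1}]$ is immediate, since $\nonempty{1} = \{ \True \}$ and $h(a) = \True$ precisely when $a \in F$. Thus the entire content of the lemma reduces to verifying that $h$ is a lattice homomorphism, i.e.\ that it preserves both $\wedge$ and $\vee$.

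For meet preservation I would invoke the fact that $F$ is a filter, i.e.\ a downward directed upset. Concretely, $a \wedge b \in F$ holds if and only if both $a \in F$ and $b \in F$: the forward direction follows because $a \wedge b \leq a, b$ and $F$ is an upset, while the backward direction follows because $a, b \in F$ have a common lower bound in $F$, which lies below $a \wedge b$, so $a \wedge b \in F$ by upward closure. Translating through $h$, this says exactly $h(a \wedge b) = h(a) \wedge h(b)$.

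For join preservation I would invoke primality. Since $F$ is an upset, $a \in F$ or $b \in F$ implies $a \vee b \in F$; the reverse implication is precisely the statement of Fact~\ref{fact: n prime ideal complement}, namely that a prime filter on the join-semilattice reduct of $\alg{L}$ satisfies $a \vee b \in F \Rightarrow a \in F$ or $b \in F$. Hence $a \vee b \in F$ if and only if $a \in F$ or $b \in F$, which under $h$ reads $h(a \vee b) = h(a) \vee h(b)$.

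This argument uniformly covers the degenerate cases: when $F = \emptyset$ the map $h$ is the constant $\False$ homomorphism and when $F = \alg{L}$ it is the constant $\True$ homomorphism, both of which are lattice homomorphisms with the correct preimage. I do not expect a genuine obstacle here, since this is the classical correspondence between prime filters on a distributive lattice and homomorphisms into the two-element lattice; the only point requiring care is the bookkeeping of which defining property of $F$---upward closure, downward directedness, or primality---licenses each of the two inclusions used in the meet and join computations.
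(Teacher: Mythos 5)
Your proof is correct. The paper states this lemma without proof, treating it as the classical correspondence between prime filters and homomorphisms into $\BA{1}$, and your characteristic-function argument is exactly that standard proof: the preimage condition is immediate, meet preservation comes from $F$ being a filter, and join preservation comes from primality via Fact~\ref{fact: n prime ideal complement}, with the empty and total filters handled by the constant maps. It is worth noting (as a feature, not a flaw) that your argument never uses distributivity of $\alg{L}$; distributivity enters the paper's development only where one needs prime filters to \emph{exist} in sufficient supply (separation), not for this correspondence itself.
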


  The homomorphism lemma for $\nonempty{1}$ immediately extends to~$\nonempty{n}$. Each family of prime filters $F_{i}$ on $\alg{L}$ for $i \in I$ corresponds to a family of strict homo\-morphisms $h_{i}\colon \pair{\alg{L}}{F_{i}} \to \pair{\BA{1}}{\{ \True \}}$, hence the direct product of this family of homo\-morphisms $h\colon \alg{L} \to \prod_{i \in I} \BA{1}$ yields a strict homomorphism $h\colon \pair{\alg{L}}{\bigcup_{i \in I} F_{i}} \to \pair{\BA{n}}{\nonempty{n}}$ by Fact~\ref{fact: hom into dual product}.

\begin{lemma}[Homomorphism lemma for $\nonempty{n}$] \label{lemma: hom into ba n}
  Let $F_{1}, \dots, F_{n}$ be prime \mbox{filters} on a distributive lattice~$\alg{L}$. Then $F_{1} \cup \dots \cup F_{n}$ is a homomorphic preimage of the prime $n$-filter $\nonempty{n} \subseteq \BA{n}$.
\end{lemma}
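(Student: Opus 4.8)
The plan is to reduce the Homomorphism Lemma for $\nonempty{n}$ to the Homomorphism Lemma for $\nonempty{1}$ by exploiting the dual product structure already developed in the text. Given prime filters $F_{1}, \dots, F_{n}$ on $\alg{L}$, I would first invoke the $\nonempty{1}$ lemma to obtain, for each $i \in \range{n}$, a lattice homomorphism $h_{i}\colon \alg{L} \to \BA{1}$ such that $F_{i} = h_{i}^{-1}[\{ \True \}]$. Each such $h_{i}$ is by definition a strict homomorphism of structures $h_{i}\colon \pair{\alg{L}}{F_{i}} \to \pair{\BA{1}}{\{ \True \}}$.

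The second step is to form the product homomorphism $h \assign \langle h_{1}, \dots, h_{n} \rangle \colon \alg{L} \to \prod_{i \in \range{n}} \BA{1} = \BA{n}$. Here the payoff of setting up the dual product earlier becomes clear: by the second half of Fact~\ref{fact: hom into dual product} applied to the family $h_{i}\colon \pair{\alg{L}}{F_{i}} \to \pair{\BA{1}}{\{ \True \}}$, the map $h$ is a strict homomorphism
\begin{align*}
  h\colon \Big\langle \alg{L}, \bigcup_{i \in \range{n}} F_{i} \Big\rangle \to \bigotimes_{i \in \range{n}} \pair{\BA{1}}{\{ \True \}} = \pair{\BA{n}}{\nonempty{n}},
\end{align*}
where the identification of the dual power with $\BAm{n} = \pair{\BA{n}}{\nonempty{n}}$ is exactly the one recorded in the displayed computation preceding Fact~\ref{fact: nonempty n is n filter}. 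Strictness of $h$ unpacks precisely to the statement $F_{1} \cup \dots \cup F_{n} = h^{-1}[\nonempty{n}]$, which is what we want.

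I do not expect a genuine obstacle here, since essentially all the work has been front-loaded: the $n = 1$ case supplies the component homomorphisms, Fact~\ref{fact: hom into dual product} assembles them into a single strict homomorphism into a dual product, and the earlier computation identifies that dual product as $\BAm{n}$. The only point requiring a sentence of care is the verification of the identity $\bigcup_{i} \pi_{i}^{-1}[\{\True\}] = \nonempty{n}$ underlying that identification, i.e.\ that a tuple in $\BA{n}$ is non-zero exactly when some coordinate equals $\True$; this is immediate from the coordinatewise order on $\BA{n} = \BA{1}^{n}$. Thus the lemma follows by chaining the $\nonempty{1}$ lemma with Fact~\ref{fact: hom into dual product}, exactly as foreshadowed in the paragraph immediately before the statement.
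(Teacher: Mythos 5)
Your proposal is correct and matches the paper's own argument exactly: the paper justifies this lemma in the paragraph immediately preceding it by combining the homomorphism lemma for $\nonempty{1}$ with Fact~\ref{fact: hom into dual product}, identifying $\bigotimes_{i \in \range{n}} \pair{\BA{1}}{\{ \True \}}$ with $\pair{\BA{n}}{\nonempty{n}}$, precisely as you do. No gaps; your extra sentence verifying that a tuple of $\BA{n}$ is non-zero exactly when some coordinate is $\True$ is the same observation the paper leaves implicit.
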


\begin{theorem}[Prime $n$-filters on distributive lattices] \label{thm: union of n prime filters}
  The~following conditions are equivalent for each upset~$F$ of a distributive lattice:
\begin{enumerate}[(i)]
  \item $F$ is a prime $n$-filter,
  \item $F$ is a prime upset which is a union of at most $n$ filters,
  \item $F$ is a union of at most $n$ prime filters,
  \item $F$ is a homomorphic preimage of the prime $n$-filter $\nonempty{n} \subseteq \BA{n}$.
\end{enumerate}
\end{theorem}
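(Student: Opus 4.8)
The plan is to prove the cycle of implications $(i) \Rightarrow (ii) \Rightarrow (iii) \Rightarrow (iv) \Rightarrow (i)$. Three of the four links are short consequences of the machinery already established; the single substantial step is $(i) \Rightarrow (ii)$, and that is where I expect the real work to lie. The guiding idea is that all the structural content is packaged into Lemma~\ref{lemma: union of n filters} (the characterization of unions of at most $n$ filters) and the Homomorphism lemma for $\nonempty{n}$ (Lemma~\ref{lemma: hom into ba n}), so the proof should mostly consist of feeding hypotheses into these.

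The hard step is $(i) \Rightarrow (ii)$. A prime $n$-filter $F$ is already a prime upset by Fact~\ref{fact: n prime ideal complement}, so by the equivalence in Lemma~\ref{lemma: union of n filters} it remains to verify condition~(i) of that lemma: for all $a_{1}, \dots, a_{n+1} \in F$ there are $i \neq j$ with $a_{i} \wedge a_{j} \in F$. I would argue by contradiction, assuming every pairwise meet $a_{i} \wedge a_{j}$ with $i \neq j$ lies in the complementary ideal $\alg{L} \setminus F$ (an ideal precisely because $F$ is a prime upset). The key move is to test the $n$-filter property not on the $a_{i}$ but on the elements $c_{i} \assign \bigvee_{j \neq i} a_{j}$ for $i \in \range{n+1}$, each of which lies in $F$ since it dominates some $a_{j}$.

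Two computations then collide. On one hand, omitting any single index $m$ gives $\bigwedge_{i \neq m} c_{i} \geq a_{m} \in F$, because every remaining $c_{i}$ joins in $a_{m}$; hence the meet of every subset of $\{ c_{1}, \dots, c_{n+1} \}$ of size at most $n$ lies in $F$, and the $n$-filter property in the form of Fact~\ref{fact: restricted definition of n filters} forces $\bigwedge_{i} c_{i} \in F$. On the other hand, distributing the meet of joins expresses $\bigwedge_{i} c_{i}$ as a join of terms $\bigwedge_{i} a_{f(i)}$ indexed by choice functions $f$ with $f(i) \neq i$; no such $f$ is constant, so each has $\card{\mathrm{im}(f)} \geq 2$ and each term is bounded above by some pairwise meet $a_{k} \wedge a_{\ell}$, hence lies in the ideal $\alg{L} \setminus F$. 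Closure of this ideal under joins yields $\bigwedge_{i} c_{i} \notin F$, the desired contradiction. The crux is recognizing that passing to the $c_{i}$ is exactly what converts the hypothesis on pairwise meets into the premise the $n$-filter property needs, while the distributive expansion simultaneously keeps the full meet inside the complementary ideal.

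The remaining links are routine. For $(ii) \Rightarrow (iii)$, write $F = G_{1} \cup \dots \cup G_{m}$ with $m \leq n$ filters; since $\alg{L} \setminus F$ is an ideal disjoint from each $G_{i}$, the prime filter separation theorem extends each $G_{i}$ to a prime filter $P_{i}$ with $G_{i} \subseteq P_{i} \subseteq F$, so $F = \bigcup_{i} P_{i}$ is a union of at most $n$ prime filters. For $(iii) \Rightarrow (iv)$, this is exactly Lemma~\ref{lemma: hom into ba n}, padding with repeated factors if fewer than $n$ prime filters occur. Finally $(iv) \Rightarrow (i)$ follows because $\nonempty{n}$ is a prime $n$-filter on $\BA{n}$ by Fact~\ref{fact: nonempty n is n filter} and homomorphic preimages of prime $n$-filters are again prime $n$-filters, so $F = h^{-1}[\nonempty{n}]$ is a prime $n$-filter. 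This closes the cycle.
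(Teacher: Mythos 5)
Your proposal is correct, and on the decisive step (i) $\Rightarrow$ (ii) it coincides in substance with the paper's proof: both introduce $b_{i} \assign \bigvee_{j \neq i} a_{j}$, observe that the meet of any $Y \bsubseteq{n} \{ b_{1}, \dots, b_{n+1} \}$ lies above an omitted $a_{m} \in F$ so that the $n$-filter property (Fact~\ref{fact: restricted definition of n filters}) yields $b_{1} \wedge \dots \wedge b_{n+1} \in F$, and then expand this meet by distributivity into a join of pairwise meets $a_{j} \wedge a_{k}$; the paper finishes directly by primeness of $F$, you contrapositively via closure of the complementary ideal under finite joins, which is the same use of primeness in contrapositive form. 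The only genuine divergence is (ii) $\Rightarrow$ (iii). The paper stays elementary and self-contained: it passes to an irredundant decomposition $F = F_{1} \cup \dots \cup F_{k}$ and shows each $F_{i}$ is automatically prime, using a witness $c_{i} \in F_{i}$ with $c_{i} \notin F_{j}$ for $j \neq i$ and the identity $(a \wedge c_{i}) \vee (b \wedge c_{i}) = (a \vee b) \wedge c_{i}$. You instead extend each filter $G_{i}$ to a prime filter $P_{i}$ with $G_{i} \subseteq P_{i} \subseteq F$ by prime filter separation and conclude $F = \bigcup_{i} P_{i}$. This is shorter but imports the classical Stone separation theorem, a nonconstructive principle the paper's argument avoids; you should also be explicit that you mean the classical $n = 1$ statement and not Theorem~\ref{thm: prime n filter separation}, which appears later in the paper and is itself proved using the present theorem (for $n = 1$ there is no actual circularity, but the dependence deserves a remark). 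Finally, your (iv) $\Rightarrow$ (i) is slightly more direct than the paper's routing (iv) $\Rightarrow$ (ii) $\Rightarrow$ (i): you combine Fact~\ref{fact: nonempty n is n filter} with the paper's (unlabelled) fact that lattice homomorphic preimages of prime $n$-filters are prime $n$-filters, which is exactly what that fact is there for.
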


\begin{proof}
  (i) $\Rightarrow$ (ii): suppose that $\{ a_{1}, \dots, a_{n+1} \} \subseteq F$. By Lemma~\ref{lemma: union of n filters} it suffices to show that $a_{i} \wedge a_{j} \in F$ for some $i \neq j$. Let $b_{i} \assign \bigvee_{j \neq i} a_{j}$. Then for each $Y \bsubseteq{n} \{ b_{1}, \ \dots, b_{n+1} \}$ we have $a_{j} \leq \bigwedge Y$ for some~$a_{j}$. But $a_{j} \in F$ and $F$ is an $n$-filter, thus $b_{1} \wedge \dots \wedge b_{n+1} \in F$. Moreover, $b_{1} \wedge \dots \wedge b_{n+1} = \bigvee_{j} \bigvee_{k \neq j} (a_{j} \wedge a_{k})$, therefore $a_{j} \wedge a_{k} \in F$ for some $j \neq k$ because $F$ is prime.

  (ii) $\Rightarrow$ (iii): suppose that $F$ is prime and $F = F_{1} \cup \dots \cup F_{k}$ for $k \leq n$, where $F_{i}$ are filters. We may assume that this union is irredundant, i.e.\ that for each $i$ there is some $c_{i} \in F_{i}$ such that $c_{i} \notin F_{j}$ for $j \neq i$. Let $a \vee b \in F_{i}$. Then $(a \wedge c_{i}) \vee (b \wedge c_{i}) = (a \vee b) \wedge c_{i} \in F_{i}$, so either $a \wedge c_{i} \in F$ or $b \wedge c_{i} \in F$. But $a \wedge c_{i} \notin F_{j}$ for $j \neq i$, thus $a \wedge c_{i} \in F_{i}$ or $b \wedge c_{i} \in F_{i}$. Therefore $a \in F_{i}$ or $b \in F_{i}$.

  (iii) $\Rightarrow$ (iv) $\Rightarrow$ (ii) $\Rightarrow$ (i): Lemma~\ref{lemma: hom into ba n} and Facts~\ref{fact: hom preimages} and~\ref{fact: union of n filters}.
\end{proof}

  An analogous fact holds for distributive semilattices.

\begin{theorem}[Prime $n$-filters on distributive semilattices]
  The~following conditions are equivalent for each upset~$F$ of a distributive meet semilattice:
\begin{enumerate}[(i)]
  \item $F$ is a prime $n$-filter,
  \item $F$ is a prime upset which is a union of at most $n$ filters,
  \item $F$ is a prime homomorphic preimage of the $n$-filter $\nonempty{n} \subseteq \BA{n}$.
\end{enumerate}
\end{theorem}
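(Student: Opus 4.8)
The plan is to prove the cycle (i) $\Rightarrow$ (ii) $\Rightarrow$ (iii) $\Rightarrow$ (i), in which the last two implications are routine and almost literally the lattice argument, so that all the content sits in (i) $\Rightarrow$ (ii). For (ii) $\Rightarrow$ (iii): writing $F = G_{1} \cup \dots \cup G_{n}$ with each $G_{i}$ a filter on $\alg{S}$ (padding with copies of the empty filter if fewer than $n$ suffice), the characteristic function $h_{i}$ of $G_{i}$ is a strict homomorphism $\pair{\alg{S}}{G_{i}} \to \BAm{1}$, since on a meet semilattice a filter is exactly a meet-closed upset and hence $h_{i}$ preserves meets. Applying Fact~\ref{fact: hom into dual product} to the dual product then gives a strict homomorphism $h \colon \pair{\alg{S}}{\bigcup_{i} G_{i}} \to \bigotimes_{i} \BAm{1} = \BAm{n}$, that is, $F = h^{-1}[\nonempty{n}]$; together with the primeness of $F$ assumed in (ii) this is exactly (iii). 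For (iii) $\Rightarrow$ (i): if $F = h^{-1}[\nonempty{n}]$ then $F$ is an $n$-filter by Fact~\ref{fact: hom preimages} (as $\nonempty{n}$ is an $n$-filter by Fact~\ref{fact: nonempty n is n filter}), and it is prime by hypothesis, hence a prime $n$-filter.

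The real work is (i) $\Rightarrow$ (ii). The obstruction is that the corresponding lattice implication in Theorem~\ref{thm: union of n prime filters} forms the elements $b_{i} = \bigvee_{j \ne i} a_{j}$ and exploits the distributive identity $\bigwedge_{i} b_{i} = \bigvee_{j \ne k}(a_{j} \wedge a_{k})$ together with primeness; this is an irreducibly join-theoretic argument, and the least upper bounds it needs simply need not exist in a meet semilattice. My plan is instead to reduce to the lattice case by embedding $\alg{S}$ into the ideal lattice $\Id \alg{S}$ ordered by inclusion, which is a distributive lattice because $\alg{S}$ is a distributive semilattice, and into which $\iota \colon a \mapsto {\downarrow}a$ embeds $\alg{S}$ as an ideal subsemilattice: if $a, b \le J$ for an ideal $J$, then up-directedness of $J$ yields $z \in J$ with $a, b \le z$. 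I then put $\hat{F} \assign \set{J \in \Id \alg{S}}{J \cap F \ne \emptyset}$, which is exactly the upset generated by $\iota[F]$ and so, by the fact that the upset generated by an $n$-filter over an ideal subposet is again an $n$-filter, an $n$-filter on $\Id \alg{S}$; moreover $\iota^{-1}[\hat{F}] = F$ since $F$ is an upset.

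The crux --- and the only genuinely delicate point --- is that $\hat{F}$ is \emph{prime}, which is exactly what fails for the more obvious embedding into the filter lattice $\Fi \alg{S}$. Writing $I = \alg{S} \setminus F$, which is an ideal of $\alg{S}$ because $F$ is prime, the complement of $\hat{F}$ is $\set{J \in \Id \alg{S}}{J \subseteq I}$; since any ideal generated by a subset of the up-directed downset $I$ again lies inside $I$, this set is closed under the join of $\Id \alg{S}$ and is therefore an ideal of $\Id \alg{S}$. By the characterisation of prime $n$-filters through their complements, $\hat{F}$ is a prime $n$-filter on the distributive lattice $\Id \alg{S}$. Theorem~\ref{thm: union of n prime filters} now expresses $\hat{F} = \hat{G}_{1} \cup \dots \cup \hat{G}_{k}$ with $k \le n$ and each $\hat{G}_{i}$ a (prime) filter, and taking preimages under $\iota$ yields $F = \bigcup_{i} \iota^{-1}[\hat{G}_{i}]$, a union of at most $n$ filters of $\alg{S}$ (preimages of filters under the meet homomorphism $\iota$); with the primeness of $F$ this is (ii). I expect identifying the ideal lattice as the right ambient structure, and the verification that $\hat{F}$ stays prime there, to be the main obstacle; the remaining steps are bookkeeping with facts already in hand.
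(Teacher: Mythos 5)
Your steps (ii) $\Rightarrow$ (iii) and (iii) $\Rightarrow$ (i) are correct, but the implication (i) $\Rightarrow$ (ii), which you rightly identify as carrying all the weight, rests on a false claim: the ideals (upward directed downsets) of a distributive meet semilattice, ordered by inclusion, do not in general form a lattice, let alone a distributive one. Directedness is not a closure condition: finite intersections of ideals are again ideals (meet the two witnessing upper bounds), but joins can simply fail to exist. Concretely, take $\alg{S} \assign \{\bot, a, b\} \cup \set{u_n}{n \geq 1}$ with $\bot < a, b$, with $a$ and $b$ incomparable, with $a, b < u_n$ for all $n$, and with $u_1 > u_2 > u_3 > \cdots$, so that $a \wedge b = \bot$ and $u_n \wedge u_m = u_{\max(n,m)}$. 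One checks directly that this is a distributive meet semilattice (it even has a top element $u_1$), yet the only ideals containing $\{a, b\}$ are the sets $D_n \assign \{\bot, a, b\} \cup \set{u_m}{m \geq n}$, a strictly decreasing chain with no least member; hence ${\downarrow} a$ and ${\downarrow} b$ have no join in $\Id \alg{S}$. Your appeal to Theorem~\ref{thm: union of n prime filters} for $\pair{\Id \alg{S}}{\hat{F}}$ is therefore unavailable, and the same defect already infects the phrase ``closed under the join of $\Id \alg{S}$'' in your primeness argument.

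The repair is precisely the route you dismissed. The paper reduces to the lattice case through the \emph{filter} lattice: $\Fi \alg{S}$ is closed under arbitrary intersections, hence a complete lattice, and it is distributive when $\alg{S}$ is; one embeds $\alg{S}$ into the order dual $\alg{L}$ of $\Fi \alg{S}$ via $\iota\colon a \mapsto {\uparrow} a$ and extends $F$ \emph{not} to the upset generated by $\iota[F]$ (which is presumably the construction whose primeness you distrusted) but to the larger upset $\overline{F} \assign \set{G \in \Fi \alg{S}}{G \subseteq F}$. Then $\iota^{-1}[\overline{F}] = F$; moreover $\overline{F}$ is an $n$-filter on $\alg{L}$, because meets in $\alg{L}$ are the filters generated by unions, so the hypothesis that $\bigwedge^{\alg{L}}_{j \neq i} G_j \in \overline{F}$ for all $i$ reduces, via Fact~\ref{fact: restricted definition of n filters}, to the $n$-filter property of $F$ applied to $\{g_1, \dots, g_{n+1}\}$ with $g_j \in G_j$; and $\overline{F}$ is prime by exactly the directedness argument you ran for $\hat{F}$: if $G, H \nsubseteq F$, pick $g \in G \setminus F$ and $h \in H \setminus F$, use that $\alg{S} \setminus F$ is an ideal of $\alg{S}$ to find $z \geq g, h$ with $z \notin F$, and observe that ${\uparrow} z \subseteq G \cap H$ while ${\uparrow} z \nsubseteq F$. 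So your dismissal of the filter-lattice embedding was unwarranted: with the extension $\overline{F}$ in place of the generated upset, your outline becomes the paper's proof, whereas on the ideal side the ambient structure you need does not exist.
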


\begin{proof}
  The equivalence (ii) $\Leftrightarrow$ (iii) and the implication (ii) $\Rightarrow$ (i) are entirely analogous to the lattice case, taking into account that semilattice filters are precisely the homomorphic preimages of $\{ \True \} \subseteq \BA{1}$. It remains to show that each prime $n$-filter $F$ on a semilattice $\alg{S}$ is a union of at most $n$ filters. To this end, let $\alg{L}$ be the order dual of the lattice $\Fi \alg{S}$ of filters on $\alg{S}$. Since $\alg{S}$ is distributive, we know that $\Fi \alg{S}$, hence also $\alg{L}$, is a distributive lattice. The map $\iota\colon \alg{S} \to \alg{L}$ which assigns to each $a \in \alg{S}$ the principal upset generated by $a$ then embeds $\alg{S}$ into $\alg{L}$ (as a meet semilattice). Let $\overline{F} \assign \set{G \in \Fi \alg{S}}{G \subseteq F}$. It is straight\-forward to show that $\overline{F}$ is a prime $n$-filter on $\alg{L}$ such that $\iota(a) \in \overline{F}$ if and only if $a \in F$. Applying the characterization of prime $n$-filters on distributive lattices, $\overline{F}$ is a union of $n$ filters on $\alg{L}$, therefore $F = \iota^{-1}[\overline{F}]$ is a union of $n$ filters on $\alg{S}$.
\end{proof}

  Distributivity is essential in the above theorems. The upset $P(\Mfive)$ of non-zero elements is a prime $2$-filter on the five-element diamond $\Mfive$ but it is not a union of at most $2$ filters, and accordingly it is not a homomorphic preimage of $\nonempty{2} \subseteq \BA{2}$. On the other hand, the upset generated by the coatoms of the five-element pentagon $\Nfive$ is a prime union of $2$ filters, but it is not a union of prime filters, and accordingly it is not a homomorphic preimage of $\nonempty{2} \subseteq \BA{2}$.

  Dualizing the above characterization of prime $n$-filters on distributive lattices yields an analogous characterization of prime $n$-ideals. But prime $n$-ideals are precisely the complements of $n$-prime filters, therefore the theorem can equivalently be phrased in terms of $n$-prime filters.

\begin{theorem}[$n$-prime filters on distributive lattices]
  The following conditions are equivalent for each upset $F$ of a distributive lattice:
\begin{enumerate}[(i)]
  \item $F$ is an $n$-prime filter,
  \item $F$ is a filter and $F$ is an intersection of at most $n$ prime upsets,
  \item $F$ is an intersection of at most $n$ prime filters,
  \item $F$ is a homomorphic preimage of the $n$-prime filter $\{ \True \} \subseteq \BA{n}$.
\end{enumerate}
\end{theorem}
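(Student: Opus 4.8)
The plan is to derive this theorem from Theorem~\ref{thm: union of n prime filters} by order duality together with set complementation, so that no new combinatorial work is required. Let $\alg{L}^{\partial}$ denote the order dual of $\alg{L}$, which is again a distributive lattice, and set $G \assign \alg{L} \setminus F$. Since $F$ is an upset of $\alg{L}$, its complement $G$ is a downset of $\alg{L}$, i.e.\ an upset of $\alg{L}^{\partial}$, so Theorem~\ref{thm: union of n prime filters} applies to $G$ viewed as an upset of $\alg{L}^{\partial}$. The strategy is to translate each of the four equivalent conditions of that theorem, stated for $G$ on $\alg{L}^{\partial}$, back into the corresponding condition for $F$ on $\alg{L}$.

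The translations rely on a complementation dictionary, all instances of the fact that an upset is prime precisely when its complement is an ideal (together with its dual). Filters of $\alg{L}^{\partial}$ are exactly ideals of $\alg{L}$ and prime filters of $\alg{L}^{\partial}$ are exactly prime ideals of $\alg{L}$; moreover the complement in $\alg{L}$ of a prime ideal (resp.\ of an ideal) is a prime filter (resp.\ a prime upset). Working through the conditions: ``$G$ is a prime $n$-filter on $\alg{L}^{\partial}$'' says exactly that $G$ is a prime $n$-ideal of $\alg{L}$, which by the fact on complements means that $F$ is an $n$-prime filter, giving~(i); ``$G$ is a prime upset which is a union of at most $n$ filters'' complements to ``$F$ is a filter which is an intersection of at most $n$ prime upsets'', giving~(ii); and ``$G$ is a union of at most $n$ prime filters'' complements to ``$F$ is an intersection of at most $n$ prime filters'', giving~(iii).

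For condition~(iv) the one extra ingredient is the De~Morgan self-duality of $\BA{n}$: complementation $\neg \colon \BA{n} \to \BA{n}$ is a lattice anti-isomorphism, so it identifies $\BA{n}^{\partial}$ with $\BA{n}$ and carries $\nonempty{n} = \set{x}{x \neq \False}$ to $\set{x}{x \neq \True} = \BA{n} \setminus \{ \True \}$. A homomorphism $h \colon \alg{L}^{\partial} \to \BA{n}$ with $G = h^{-1}[\nonempty{n}]$ therefore corresponds, via $h' \assign \neg \circ h$, to a homomorphism $h' \colon \alg{L} \to \BA{n}$, and $a \in G$ iff $h(a) \neq \False$ iff $h'(a) \neq \True$, so that $F = \alg{L} \setminus G = (h')^{-1}[\{ \True \}]$; the construction is reversible, giving~(iv). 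It remains only to record that $\{ \True \}$ is itself an $n$-prime filter on $\BA{n}$: it is the complement of $\nonempty{n}$, and a short pigeonhole argument shows that whenever $x_{1} \vee \dots \vee x_{n+1} = \True$ in $\BA{n}$, some $x_{i}$ may be dropped with the join still equal to $\True$ (among $n+1$ joinands covering the $n$ coordinates, some joinand is never the unique coverer of a coordinate, since otherwise the uniquely covered coordinates would furnish $n+1$ distinct elements of $\range{n}$).

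The only real care needed is bookkeeping across the three superimposed dualities---the order dual $\alg{L} \rightsquigarrow \alg{L}^{\partial}$, set complementation $F \rightsquigarrow G$, and the De~Morgan self-duality of $\BA{n}$---and in particular checking in~(iv) that these combine to replace the canonical prime $n$-filter $\nonempty{n} \subseteq \BA{n}$ by the canonical $n$-prime filter $\{ \True \} \subseteq \BA{n}$, rather than by some other upset. Once the dictionary above is fixed, every clause matches mechanically and the theorem follows.
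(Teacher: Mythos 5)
Your proof is correct and is essentially the paper's own argument: the paper derives this theorem from Theorem~\ref{thm: union of n prime filters} by exactly the same dualize-and-complement route, remarking in the preceding paragraph that dualization yields the characterization of prime $n$-ideals and that prime $n$-ideals are precisely the complements of $n$-prime filters. Your write-up simply makes explicit the bookkeeping the paper leaves implicit (the complementation dictionary, the De~Morgan self-duality of $\BA{n}$, and the $n$-primality of $\{ \True \} \subseteq \BA{n}$).
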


\subsection{Prime \texorpdfstring{$n$-filter}{n-filter} separation}

  We now exhibit each $n$-filter as an intersection of prime $n$-filters.

\begin{theorem}[Prime $n$-filter separation] \label{thm: prime n filter separation}
  Let $F$ be an $n$-filter on a distributive lattice which is disjoint from an ideal $I$. Then $F$ extends to a prime $n$-filter which is disjoint from $I$.
\end{theorem}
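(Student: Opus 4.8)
The plan is to produce, via Zorn's lemma, an $n$-filter $G$ that is maximal among the $n$-filters containing $F$ and disjoint from $I$, and then to show that maximality forces $G$ to be prime. Such a maximal $G$ exists: the family of $n$-filters containing $F$ and disjoint from $I$ is non-empty (it contains $F$) and closed under unions of chains, since a directed union of $n$-filters is again an $n$-filter and a union of sets disjoint from $I$ is disjoint from $I$. By Fact~\ref{fact: n prime ideal complement} it then suffices to verify that $x \vee y \in G$ implies $x \in G$ or $y \in G$, as this makes $G$ the sought prime $n$-filter.

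Suppose toward a contradiction that $x \vee y \in G$ while $x \notin G$ and $y \notin G$. Then the $n$-filters $\fg{n}{G \cup \{x\}}$ and $\fg{n}{G \cup \{y\}}$ both strictly extend $G$, so by maximality neither is disjoint from $I$. Fix $p \in \fg{n}{G \cup \{x\}} \cap I$ and $q \in \fg{n}{G \cup \{y\}} \cap I$. Since $I$ is an ideal, $p \vee q \in I$, so it suffices to derive the contradiction $p \vee q \in G$. To obtain witnesses I would invoke the Generating Lemma (Lemma~\ref{lemma: generating n filters}), which applies because a distributive lattice is in particular a distributive meet semilattice. Applied to the upset generated by $G \cup \{x\}$, namely $G \cup \set{z}{z \geq x}$, it yields a finite non-empty $X \subseteq G \cup \set{z}{z \geq x}$ with $\bigwedge X = p$ such that $\bigwedge Y \in G \cup \set{z}{z \geq x}$ for every $Y \bsubseteq{n} X$; symmetrically I obtain $W \subseteq G \cup \set{z}{z \geq y}$ with $\bigwedge W = q$ and the analogous admissibility property.

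The crux is to show that the finite set $X \vee W \assign \set{u \vee w}{u \in X,\ w \in W}$ witnesses $p \vee q \in G$ through the defining property of an $n$-filter. By distributivity,
\[
  p \vee q = \Big( \bigwedge X \Big) \vee \Big( \bigwedge W \Big) = \bigwedge \set{u \vee w}{u \in X,\ w \in W}.
\]
For admissibility I would take any $Y \bsubseteq{n} (X \vee W)$, say $Y = \set{u_{l} \vee w_{l}}{l \in \range{t}}$ with $t \leq n$, and use the bound $\bigwedge Y \geq (\bigwedge_{l} u_{l}) \vee (\bigwedge_{l} w_{l})$. The sets $\set{u_{l}}{l \in \range{t}} \subseteq X$ and $\set{w_{l}}{l \in \range{t}} \subseteq W$ are non-empty of size at most $n$, so by the admissibility of $X$ and $W$ the meet $\bigwedge_{l} u_{l}$ lies in $G$ or above $x$, and $\bigwedge_{l} w_{l}$ lies in $G$ or above $y$. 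If either meet lies in $G$ then so does the join; in the one remaining case $\bigwedge_{l} u_{l} \geq x$ and $\bigwedge_{l} w_{l} \geq y$ force $(\bigwedge_{l} u_{l}) \vee (\bigwedge_{l} w_{l}) \geq x \vee y \in G$. In every case $\bigwedge Y \in G$. Since $G$ is an $n$-filter, it follows that $p \vee q = \bigwedge (X \vee W) \in G$, contradicting $G \cap I = \emptyset$.

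The Zorn's lemma argument is routine; the main obstacle is the admissibility computation of the final paragraph, where distributivity is used twice — once to rewrite $p \vee q$ as a meet of joins, and once (through the Generating Lemma) to produce the witnessing sets $X$ and $W$ in the first place. The hypothesis $x \vee y \in G$ enters precisely in the single case of the verification that is not already settled by $G$ being an upset.
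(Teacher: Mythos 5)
Your proof is correct and takes essentially the same approach as the paper's second, direct proof: Zorn's lemma produces a maximal $n$-filter $G \supseteq F$ disjoint from $I$, and primality follows by combining the generating-lemma witnesses for $\fg{n}{G \cup \{x\}}$ and $\fg{n}{G \cup \{y\}}$ through the distributive law. The only difference is cosmetic: you inline exactly the instance needed of the paper's Lemma~\ref{lemma: intersect fg n} (that $p \in \fg{n}{G, x}$, $q \in \fg{n}{G, y}$, and $x \vee y \in G$ force $p \vee q \in G$) rather than proving the full identity $\fg{n}{G, x} \cap \fg{n}{G, y} = \fg{n}{G, x \vee y}$, and your case analysis on the set $X \vee W$ is the paper's analysis of the four types of elements of its set $Z$.
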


\begin{proof}
  If $I = \emptyset$, the claim is true. Otherwise, let $\theta_{I}$ be the congruence which collapses all elements of $I$. That is, $\pair{a}{b} \in  \theta_{I}$ if and only if $a \vee i = b \vee i$ for some $i \in I$. If $\pi_{I}\colon \alg{L} \to \alg{L} / \theta_{I}$ is the projection map and $G \subseteq \alg{L} / \theta_{I}$ is a proper prime $n$-filter extending $\pi_{I}[F]$, then $\pi^{-1}[G]$ is a prime $n$-filter separating $F$ from $I$. It~thus suffices to prove that each proper $n$-filter on a distributive lattice with a lower bound extends to a proper prime $n$-filter.

  Consider a lower bounded distributive lattice $\alg{L}$ with a proper $n$-filter $F$ on~$\alg{L}$. Then~$\pair{\alg{L}}{F}$ embeds into an ultraproduct $\prod_{U} \pair{\alg{L}_{i}}{F_{i}}$ of its finitely generated (hence finite) substructures $\pair{\alg{L}_{i}}{F_{i}}$ for $i \in I$, where $F_{i}$ is the restriction of $F$ to~$\alg{L}_{i}$. Suppose that for each $F_{i}$ there is a proper prime $n$-ideal $G_{i} \supseteq F_{i}$ on $\alg{L}_{i}$. Being a prime \mbox{$n$-filter} can be expressed by a first-order sentence, therefore the ultraproduct $\prod_{U} \pair{\alg{L}_{i}}{G_{i}}$ satisfies this sentence. Restricting the prime $n$-filter on this ultra\-product to~$\alg{L}$ (as canonically embedded in~$\prod_{U} \alg{L}_{i}$) now yields the desired prime $n$-filter on $\alg{L}$ extending $F$. It therefore suffices to prove that each proper $n$-filter on a finite distributive lattice extends to a proper prime $n$-filter.

  Each finite distributive lattice $\alg{L}$ embeds into a finite Boolean \mbox{algebra}~$\BA{m}$ for some $m$. The upset generated by a proper $n$-filter $F$ on $\alg{L}$ is a proper $n$-filter $G$ on $\BA{m}$. If $G$ extends to a proper prime $n$-filter $G'$, then restricting $G'$ to $\alg{L}$ yields a proper  prime $n$-filter $F' \supseteq F$ on $\alg{L}$. It therefore suffices to prove that each proper $n$-filter on $\BA{m}$ extends to a proper prime $n$-filter.

  Prime~filters on $\BA{m}$ are principal filters generated by atoms, thus by Theorem~\ref{thm: union of n prime filters} prime $n$-filters are precisely the upsets generated by sets of at most $n$~atoms. An upset $F$ on $\BA{m}$ thus extends to a prime $n$-filter if and only if there are atoms $\atom{1}, \dots, \atom{n}$ such that $b \in F$ implies $\atom{i} \leq b$ for some $\atom{i}$. Equivalently, $F$ extends to a prime $n$-filter if and only if there are coatoms $\coatom{1}, \dots, \coatom{n}$ such that $b \in F$ implies $b \nleq \coatom{i}$ for some $\coatom{i}$. Contraposition yields that $F$ does not extend to any prime $n$-filter if and only if for each $n$-tuple of coatoms $\coatom{1}, \dots, \coatom{n}$ there is some $b \in F$ such that $b \leq \coatom{1} \wedge \dots \wedge \coatom{n}$. If $F$ is an $n$-filter, this implies that the meet of all coatoms lies in $F$. An $n$-filter $F$ on $\BA{m}$ which does not extend to any prime $n$-filter is therefore not a proper $n$-filter.
\end{proof}

  Let us also provide a more direct proof of this theorem.

\begin{proof}
  Let $G$ be a maximal $n$-filter disjoint from $I$ which extends $F$. If $x, y \notin G$, then the $n$-filter generated by $F \cup \{ x \}$ intersects $I$ and the $n$-filter generated by $F \cup \{ y \}$ intersects $I$. That is, there is some $i \in I$ such that $i \in \fg{n}{F, x}$ and some $j \in I$ such that $j \in \fg{n}{F, y}$. By the following lemma, $i \vee j \in \fg{n}{F, x \vee y}$, so $x \vee y \notin G$.
\end{proof}

\begin{lemma} \label{lemma: intersect fg n}
  $\fg{n}{F, x} \cap \fg{n}{F, y} = \fg{n}{F, x \vee y}$ in every distributive lattice.
\end{lemma}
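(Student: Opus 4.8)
The plan is to prove the two inclusions separately, the right-to-left one being routine and the left-to-right one carrying all the content. For $\fg{n}{F, x \vee y} \subseteq \fg{n}{F, x} \cap \fg{n}{F, y}$, observe that $x \leq x \vee y$ and that $F$ is an upset, so both $x \vee y$ and every element of $F$ already lie in the $n$-filter $\fg{n}{F, x}$; symmetrically $F \cup \{ x \vee y \} \subseteq \fg{n}{F, y}$. Since the intersection of two $n$-filters is again an $n$-filter, it contains the $n$-filter generated by $F \cup \{ x \vee y \}$, which is the claimed inclusion.

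For the reverse inclusion I would lean on the explicit description of generated $n$-filters in Lemma~\ref{lemma: generating n filters}. Fix $a \in \fg{n}{F, x} \cap \fg{n}{F, y}$ and write $U_{x}$ for the upset generated by $F \cup \{ x \}$, namely $U_{x} = F \cup {\uparrow}x$, and similarly $U_{y}$ and $U_{x \vee y}$. Membership $a \in \fg{n}{U_{x}}$ produces a non-empty finite set $X \subseteq U_{x}$ with $\bigwedge X = a$ all of whose $n$-element submeets lie in $U_{x}$, and $a \in \fg{n}{U_{y}}$ produces such an $X' \subseteq U_{y}$ with $\bigwedge X' = a$. The candidate witnessing $a \in \fg{n}{U_{x \vee y}}$ is the set of pairwise joins $W \assign \set{u \vee v}{u \in X, \, v \in X'}$, which is clearly a finite non-empty subset of $U_{x \vee y}$ (if $u \in F$ or $v \in F$ then $u \vee v \in F$; otherwise $u \vee v \geq x \vee y$).

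Two verifications then remain. First, $\bigwedge W = a$: this is a direct distributive computation, since for fixed $u$ we have $\bigwedge_{v \in X'} (u \vee v) = u \vee \bigwedge X' = u \vee a$, and meeting over $u \in X$ gives $\bigl( \bigwedge X \bigr) \vee a = a$. Second, and this is the crux, $W$ must be admissible, i.e. every $Z \bsubseteq{n} W$ must satisfy $\bigwedge Z \in U_{x \vee y}$. Here I would write $Z = \{ u_{1} \vee v_{1}, \dots, u_{k} \vee v_{k} \}$ with $k \leq n$ and each $u_{i} \in X$, $v_{i} \in X'$, choosing one join-representation per element of $Z$; then $\{ u_{1}, \dots, u_{k} \} \bsubseteq{n} X$ and $\{ v_{1}, \dots, v_{k} \} \bsubseteq{n} X'$, so admissibility of $X$ and of $X'$ yields $\bigwedge_{i} u_{i} \in U_{x}$ and $\bigwedge_{i} v_{i} \in U_{y}$. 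The inequality $\bigwedge Z \geq \bigl( \bigwedge_{i} u_{i} \bigr) \vee \bigl( \bigwedge_{i} v_{i} \bigr)$, valid in any lattice because each $u_{i} \vee v_{i}$ dominates both $\bigwedge_{i} u_{i}$ and $\bigwedge_{i} v_{i}$, then settles the cases: if either meet lies in $F$ then $\bigwedge Z \in F$ since $F$ is an upset, whereas if $\bigwedge_{i} u_{i} \geq x$ and $\bigwedge_{i} v_{i} \geq y$ then $\bigwedge Z \geq x \vee y$. In every case $\bigwedge Z \in U_{x \vee y}$, so $W$ is admissible and $a \in \fg{n}{F, x \vee y}$.

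The only delicate point I anticipate is this admissibility check: one must fix a single representation $u \vee v$ for each element of $Z$ so that the associated index sets genuinely have at most $n$ elements and the hypotheses on $X$ and $X'$ apply to them. It is worth noting that distributivity is used only to obtain $\bigwedge W = a$; the admissibility argument relies solely on the order structure and on $F$ being an upset.
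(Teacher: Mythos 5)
Your proof is correct and takes essentially the same route as the paper's: both extract admissible witness sets via Lemma~\ref{lemma: generating n filters}, form the set of pairwise joins, and check that every submeet of size at most $n$ lands in $F$ or above $x \vee y$. The only real difference is presentational --- the paper singles out $x$ and $y$ inside the witness sets and runs a four-type case analysis on $\{p \vee q,\, p \vee y,\, x \vee q,\, x \vee y\}$, whereas your inequality $\bigwedge Z \geq \bigl(\bigwedge_{i} u_{i}\bigr) \vee \bigl(\bigwedge_{i} v_{i}\bigr)$ handles all cases uniformly (and neatly sidesteps the special form of witnesses the paper asserts without proof).
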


\begin{proof}
  Clearly $\fg{n}{F, x \vee y} \subseteq \fg{n}{F, x} \cap \fg{n}{F, y}$. Conversely, let $i \in \fg{n}{F, x} \cap \fg{n}{F, y}$. There are non-empty finite $P, Q \subseteq \alg{L}$ such that $\bigwedge P \wedge x \leq i$ and $\bigwedge Q \wedge y \leq i$, where (i) $R \bsubseteq{n} P$ and $S \bsubseteq{n} Q$ implies $\bigwedge R \in F$ and $\bigwedge S \in F$, and (ii) $T \bsubseteq{n-1} P$ and $U \bsubseteq{n-1} Q$ implies $\bigwedge T \wedge x \in F$ and $\bigwedge U \wedge y \in F$. Take $z \assign (\bigwedge P \wedge x) \vee (\bigwedge Q \wedge y)$. Then $z \leq i$. We show that $z \in \fg{n}{F, x \vee y}$.

  The~element $z$ is the meet of a set $Z$ consisting of elements of four types: $p \vee q$ for $p \in P$ and $q \in Q$, $p \vee y$ for $p \in P$, $x \vee q$ for $q \in Q$, and $x \vee y$. It suffices to prove that the meet of each subset of $Z$ of cardinality at most $n$ lies in $F$ or above $x \vee y$. This follows by a straightforward case analysis using the assumptions that $\bigwedge R, \bigwedge S, \bigwedge T \wedge x, \bigwedge U \wedge y \in F$ for each $R \bsubseteq{n} P$ and each $S \bsubseteq{n} Q$ and each $T \bsubseteq{n-1} P$ and $U \bsubseteq{n-1} Q$. For~example, if elements of all four types are represented in a subset of $Z$ of cardinality at most $n$, we use the bounds $p \leq p \vee q$, $p \leq p \vee y$, $x \leq x \vee q$, and $x \leq x \vee y$ (or the bounds $q \leq p \vee q$, $y \leq p \vee y$, $q \leq x \vee q$, and $y \leq x \vee y$) to show that the meet of this subset lies in $F$ or above $x \vee y$.
\end{proof}

\begin{corollary}[$n$-filters on distributive lattices]
  The $n$-filters on a distributive lattice are precisely the inter\-sections of prime $n$-filters.
\end{corollary}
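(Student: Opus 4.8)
The plan is to establish the two inclusions separately, with essentially all of the real content supplied by the prime $n$-filter separation theorem (Theorem~\ref{thm: prime n filter separation}), which I may assume.

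One inclusion is immediate. Since the $n$-filters on a distributive lattice $\alg{L}$ form a complete lattice $\Fi_{n} \alg{L}$ whose meets are set-theoretic intersections, and since every prime $n$-filter is in particular an $n$-filter, any intersection of prime $n$-filters is again an $n$-filter.

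For the converse I would argue pointwise. Let $F$ be an $n$-filter on $\alg{L}$. For each $a \notin F$ consider the principal ideal $I_{a} \assign \set{x}{x \leq a}$. Because $F$ is an upset, an element $x \leq a$ lying in $F$ would force $a \in F$; hence $F$ is disjoint from $I_{a}$. Applying Theorem~\ref{thm: prime n filter separation} to $F$ and $I_{a}$ yields a prime $n$-filter $F_{a} \supseteq F$ disjoint from $I_{a}$, so in particular $a \notin F_{a}$. I then claim $F = \bigcap_{a \notin F} F_{a}$. The inclusion $F \subseteq \bigcap_{a \notin F} F_{a}$ holds because each $F_{a}$ extends $F$; the reverse inclusion holds because any $b \notin F$ is excluded by its own separating filter $F_{b}$, so that $b \notin \bigcap_{a \notin F} F_{a}$.

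The only points needing care are the degenerate cases, which the paper's conventions are designed to absorb, so I anticipate no genuine obstacle beyond the separation theorem itself. If $F = \alg{L}$ is the total filter there is nothing to separate, and the empty intersection is by convention the top of $\Fi_{n} \alg{L}$, namely $\alg{L}$. The stipulation that $\emptyset$ and $\alg{L}$ both count as prime $n$-filters keeps the remaining extreme cases in line. In short, the corollary is a formal consequence of separation together with the lattice structure of $\Fi_{n} \alg{L}$.
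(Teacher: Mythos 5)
Your proof is correct and is exactly the intended derivation: the paper leaves this corollary as an immediate consequence of Theorem~\ref{thm: prime n filter separation}, obtained by separating $F$ from each principal ideal $\mathord{\downarrow} a$ for $a \notin F$ and intersecting, just as you do. The degenerate cases are handled by the paper's conventions as you note, so nothing is missing.
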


\begin{figure}
\caption{The structures $(\BAm{1}^{\otimes 2})^{\Pi 2}$ and $(\BAm{1}^{\Pi 2})^{\otimes 2}$}
\label{fig: dBA22}
\vskip 15pt
\begin{center}
\begin{tikzpicture}[scale=1, dot/.style={inner sep=2.5pt,outer sep=2.5pt}, solid/.style={circle,fill,inner sep=2pt,outer sep=2pt}, empty/.style={circle,draw,inner sep=2pt,outer sep=2pt}]
  \node (0) at (0,0) [empty] {};
  \node (1) at (-1.5,1) [empty] {};
  \node (2) at (-0.5,1) [empty] {};
  \node (3) at (0.5,1) [empty] {};
  \node (4) at (1.5,1) [empty] {};
  \node (12) at (-2.5,2) [empty] {};
  \node (13) at (-1.5,2) [solid] {};
  \node (14) at (-0.5,2) [solid] {};
  \node (23) at (0.5,2) [solid] {};
  \node (24) at (1.5,2) [solid] {};
  \node (34) at (2.5,2) [empty] {};
  \node (123) at (-1.5,3) [solid] {};
  \node (124) at (-0.5,3) [solid] {};
  \node (134) at (0.5,3) [solid] {};
  \node (234) at (1.5,3) [solid] {};
  \node (1234) at (0,4) [solid] {};
  \draw (0) edge (1) edge (2) edge (3) edge (4);
  \draw (1234) edge (123) edge (124) edge (134) edge (234);
  \draw (12) edge (1) edge (2) edge (123) edge (124);
  \draw (13) edge (1) edge (3) edge (123) edge (134);
  \draw (14) edge (1) edge (4) edge (124) edge (134);
  \draw (23) edge (2) edge (3) edge (123) edge (234);
  \draw (24) edge (2) edge (4) edge (124) edge (234);
  \draw (34) edge (3) edge (4) edge (134) edge (234);
\end{tikzpicture}
\qquad
\begin{tikzpicture}[scale=1, dot/.style={inner sep=2.5pt,outer sep=2.5pt}, solid/.style={circle,fill,inner sep=2pt,outer sep=2pt}, empty/.style={circle,draw,inner sep=2pt,outer sep=2pt}]
  \node (0) at (0,0) [empty] {};
  \node (1) at (-1.5,1) [empty] {};
  \node (2) at (-0.5,1) [empty] {};
  \node (3) at (0.5,1) [empty] {};
  \node (4) at (1.5,1) [empty] {};
  \node (12) at (-2.5,2) [solid] {};
  \node (13) at (-1.5,2) [empty] {};
  \node (14) at (-0.5,2) [empty] {};
  \node (23) at (0.5,2) [empty] {};
  \node (24) at (1.5,2) [empty] {};
  \node (34) at (2.5,2) [solid] {};
  \node (123) at (-1.5,3) [solid] {};
  \node (124) at (-0.5,3) [solid] {};
  \node (134) at (0.5,3) [solid] {};
  \node (234) at (1.5,3) [solid] {};
  \node (1234) at (0,4) [solid] {};
  \draw (0) edge (1) edge (2) edge (3) edge (4);
  \draw (1234) edge (123) edge (124) edge (134) edge (234);
  \draw (12) edge (1) edge (2) edge (123) edge (124);
  \draw (13) edge (1) edge (3) edge (123) edge (134);
  \draw (14) edge (1) edge (4) edge (124) edge (134);
  \draw (23) edge (2) edge (3) edge (123) edge (234);
  \draw (24) edge (2) edge (4) edge (124) edge (234);
  \draw (34) edge (3) edge (4) edge (134) edge (234);
\end{tikzpicture}
\end{center}
\end{figure}

  As a consequence of prime $n$-filter separation, we may exhibit each $m$-prime $n$-filter $F$ on a \emph{finite} distributive lattice $\alg{L}$ as the homomorphic preimages of a $m$-prime $n$-filter $\dnonempty{n}{m} \subseteq \dBA{n}{m}$, where $\pair{\dBA{n}{m}}{\dnonempty{n}{m}}$ is the structure defined as
\begin{align*}
  \pair{\dBA{n}{m}}{\dnonempty{n}{m}} & \assign (\BAm{n})^{m} = \prod_{i \in \range{m}} \bigotimes_{j \in \range{n}} \pair{\BA{1}}{\{ \True \}}.
\end{align*}

  These structures are the $m$-th direct powers of the $n$-th dual powers of $\BAm{1}$. The direct square of the dual square of $\BAm{1}$ is shown in Figure~\ref{fig: dBA22} on the left. The structure shown on the right of Figure~\ref{fig: dBA22} is, by contrast, the dual square of the direct square of $\BAm{1}$. This structure provides an example of a $2$-filter whose complement is a $2$-ideal but which is not a $2$-prime $2$-filter. The reader may verify that it cannot be expressed as an intersection of two prime $2$-filters.

\begin{theorem} \label{thm: m prime n filters}
  The $m$-prime $n$-filters on a \emph{finite} distributive lattice are precisely the homomorphic preimages of the $m$-prime $n$-filter $\dnonempty{n}{m} \subseteq \dBA{n}{m}$.
\end{theorem}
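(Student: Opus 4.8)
The plan is to reduce the statement to a purely order-theoretic claim inside the lattice $\Fi_{n} \alg{L}$: on a finite distributive lattice, the $m$-prime $n$-filters are exactly the intersections of at most $m$ prime $n$-filters. The reduction comes from unwinding the defining direct product. Since a direct product of structures intersects the designated subsets, we have $\dnonempty{n}{m} = \bigcap_{i \in \range{m}} \pi_{i}^{-1}[\nonempty{n}]$, where $\pi_{i} \colon \dBA{n}{m} \to \BA{n}$ are the projections. Hence an upset $F$ on $\alg{L}$ is a homomorphic preimage of $\dnonempty{n}{m}$ exactly when $F = \bigcap_{i \in \range{m}} (\pi_{i} \circ h)^{-1}[\nonempty{n}]$ for some lattice homomorphism $h \colon \alg{L} \to \dBA{n}{m}$. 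By Theorem~\ref{thm: union of n prime filters} every factor $(\pi_{i} \circ h)^{-1}[\nonempty{n}]$ is a prime $n$-filter, so each such $F$ is an intersection of at most $m$ prime $n$-filters; conversely, given prime $n$-filters $P_{1}, \dots, P_{m}$, Theorem~\ref{thm: union of n prime filters} writes each $P_{i} = h_{i}^{-1}[\nonempty{n}]$ for a strict homomorphism $h_{i} \colon \pair{\alg{L}}{P_{i}} \to \BAm{n}$, and Fact~\ref{fact: hom into dual product} assembles the product into a strict homomorphism exhibiting $\bigcap_{i} P_{i}$ as a preimage of $\dnonempty{n}{m}$ (padding with the constant map to $\True$, a strict homomorphism whose designated subset is the total filter, to reach exactly $m$ factors). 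Thus the two descriptions coincide, and it remains to match them with the $m$-prime $n$-filters.

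For the easy inclusion I would prove the general fact that the meet of $m$ prime ($1$-prime) elements of $\Fi_{n} \alg{L}$ is $m$-prime. Intersections of $n$-filters are $n$-filters, so $F \assign \bigcap_{i} P_{i}$ is an $n$-filter. Suppose $\bigcap_{k \in K} G_{k} \subseteq F$ for a finite family of $n$-filters $G_{k}$. Then $\bigcap_{k \in K} G_{k} \subseteq P_{i}$ for each $i \in \range{m}$, and primeness of $P_{i}$ supplies an index $k_{i} \in K$ with $G_{k_{i}} \subseteq P_{i}$. Setting $J \assign \{ k_{1}, \dots, k_{m} \} \bsubseteq{m} K$ gives $\bigcap_{k \in J} G_{k} \subseteq \bigcap_{i \in \range{m}} P_{i} = F$, which is precisely $m$-primeness. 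In particular this confirms that $\dnonempty{n}{m}$ itself is an $m$-prime $n$-filter, since it is the intersection of the $m$ prime $n$-filters $\pi_{i}^{-1}[\nonempty{n}]$.

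The substantive direction is the converse, and here finiteness is essential. Let $F$ be an $m$-prime $n$-filter on the finite distributive lattice $\alg{L}$. Prime $n$-filter separation (Theorem~\ref{thm: prime n filter separation}) and its corollary that every $n$-filter is an intersection of prime $n$-filters let me write $F = \bigcap_{k \in K} P_{k}$, where $K$ indexes all prime $n$-filters containing $F$; because $\alg{L}$ is finite this family is finite, and it is non-empty since the total filter is a prime $n$-filter containing $F$. Now $\bigcap_{k \in K} P_{k} = F \subseteq F$, so $m$-primeness of $F$ yields some $J \bsubseteq{m} K$ with $\bigcap_{j \in J} P_{j} \subseteq F$; the reverse inclusion $F \subseteq \bigcap_{j \in J} P_{j}$ is immediate because every $P_{j} \supseteq F$, whence $F = \bigcap_{j \in J} P_{j}$ is an intersection of at most $m$ prime $n$-filters. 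The main obstacle is not this slick cutting-down step but the separation machinery it draws on, together with the indispensable role of finiteness: over an infinite lattice an $m$-prime $n$-filter may be expressible only as an intersection of an \emph{infinite} family of prime $n$-filters, which the finitary definition of $m$-primeness cannot trim down to $m$ factors.
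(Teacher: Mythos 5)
Your proof is correct and takes essentially the same route as the paper's: prime $n$-filter separation together with $m$-primeness cuts $F$ down to an intersection of at most $m$ prime $n$-filters, which Theorem~\ref{thm: union of n prime filters} and Fact~\ref{fact: hom into dual product} then assemble into a homomorphic preimage of $\dnonempty{n}{m}$, while conversely every such preimage decomposes as an intersection of $m$ prime $n$-filters and is hence $m$-prime. The only difference is one of bookkeeping: you spell out explicitly that a meet of $m$ prime elements of $\Fi_{n} \alg{L}$ is $m$-prime (a point the paper leaves implicit), and you invoke the preimage characterization of prime $n$-filters directly where the paper re-expands each one into a union of at most $n$ prime filters before applying the product construction.
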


\begin{proof}
  Let $F$ be an $m$-prime $n$-filter on a finite distributive lattice $\alg{L}$. By prime $n$-filter separation, $F$ is the intersection of a finite family of prime $n$-filters. Because $F$ is an $m$-prime $n$-filter, we may assume that this family consists of $m$ prime $n$-filters (not necessarily distinct): $F = F_{1} \cap \dots \cap F_{m}$. Each of these prime $n$-filters is in turn a union of at most $n$ prime filters (not necessarily distinct): $F_{i} = G_{i,1} \cup \dots \cup G_{i,n}$. Finally, each prime filter $G_{i, j}$ on $\alg{L}$ yields a strict homomorphism $\pair{\alg{L}}{G_{i, j}} \to \pair{\BA{1}}{\{ \True \}}$. Putting all of this together, Fact~\ref{fact: hom into dual product} now yields a strict homomorphism from $\pair{\alg{L}}{F}$ into $\pair{\dBA{n}{m}}{\dnonempty{n}{m}}$.

  Conversely, let $\pi_{i}\colon \dBA{n}{m} \to \BA{n}$ for $i \in \range{m}$ be the projection maps for $\dBA{n}{m} = (\BA{n})^{m}$. Because $\nonempty{n} \subseteq \BA{n}$ is a prime $n$-filter, so is $\pi_{i}^{-1}[\nonempty{n}]$, hence $\dnonempty{n}{m}$ is an $n$-filter by virtue of being a product of $n$-filters. Moreover, each homomorphic preimage of $\dnonempty{n}{m}$ is an $m$-prime $n$-filter by virtue of being the intersection of the $m$ homomorphic preimages of the prime $n$-filters $\pi_{i}^{-1}[\nonempty{n}]$.
\end{proof}

  In the results proved so far, we may replace the two-element distributive lattice $\BA{1}$ by one of its expansions. By the analysis of Post~\cite{post41}, there are \mbox{exactly} six proper expansions of~$\BA{1}$ up to term equivalence, namely expansions by a constant $\False$ for the bottom element, by a constant $\True$ for the top element, by both constants $\False$ and $\True$, by co-implication ($x \corightarrow y \assign x \wedge \neg y$), by implication ($x \rightarrow y \assign \neg x \vee y$), and by negation ($\neg x$). The~(quasi)varieties generated by these six expansions are the varieties of lower bounded distributive lattices, upper bounded distributive lattices, bounded distributive lattices, generalized Boolean algebras, dual generalized Boolean algebras, and Boolean algebras.

  The results proved so far extend immediately to these six cases, provided that appropriate modifications are made to the definition of $n$-filters. In~the case of upper bounded distributive lattices and dual generalized Boolean \mbox{algebras}, we require that $n$-filters be non-empty. In the case of lower bounded \mbox{distributive} lattices and generalized Boolean algebras, we require that $m$-prime filters be proper. Finally, we impose both of these requirements in the case of bounded distributive lattices and Boolean \mbox{algebras}.

\section{Filter classes}
\label{sec: filter classes}

  Are there other families of upsets of distributive lattices besides $n$-filters which share the basic properties of filters, i.e.\ closure under intersections, homomorphic preimages, and directed unions? To investigate this question, we first introduce some terminology and review some relevant results.

  Let $\class{L}$ be a quasivariety of algebras, i.e.\ a class of algebras in a given signature closed under subalgebras, products, and ultraproducts. Structures of the form $\pair{\alg{A}}{F}$ for $\alg{A} \in \class{L}$ will be called \emph{$\class{L}$-structures}. In the following, $\class{K}$ will denote a class of $\class{L}$-structures. A set $F \subseteq \alg{A} \in \class{L}$ is called a \emph{$\class{K}$-filter} if $\pair{\alg{A}}{F} \in \class{K}$.

  We say that $\class{K}$ forms a \emph{filter class} if it is closed under substructures, products, and strict homomorphic preimages. Equivalently, $\class{K}$ is a filter class if $\class{K}$-filters are closed under homo\-morphic preimages (not necessarily surjective) and arbitrary intersections. If the class~$\class{K}$ is moreover closed under strict homomorphic images, we say that $\class{K}$ is a~\emph{logical class}. A filter class will be called \emph{trivial} if it only contains structures $\pair{\alg{A}}{F}$ where $F$ is the total $\class{K}$-filter, i.e.\ $F = \alg{A}$.

  (Recall that a structure $\pair{\alg{A}}{F}$ is called a strict homomorphic pre\-image of $\pair{\alg{B}}{G}$ and conversely $\pair{\alg{B}}{G}$ is called a strict homomorphic image of $\pair{\alg{A}}{F}$ if there is a strict surjective homomorphism $h\colon \pair{\alg{A}}{F} \to \pair{\alg{B}}{G}$, i.e.\ a surjective homo\-morphism of algebras $h\colon \alg{A} \to \alg{B}$ such that $F = h^{-1}[G]$.)

  A filter class $\class{K}$ of $\class{L}$-structures is said to be \emph{finitary} if $\class{K}$ is closed under ultra\-products, or equivalently if the $\class{K}$-filters on each $\alg{A} \in \class{L}$ are closed under directed unions. We may also phrase this definition in terms of $\class{K}$-filter generation. The $\class{K}$-filter $\Fg_{\class{K}}^{\alg{A}} X$ \emph{generated} by a set $X \subseteq \alg{A} \in \class{L}$ is, of course, the smallest $\class{K}$-filter on $\alg{A}$ which contains $X$. The class $\class{K}$ is then finitary if and only if the closure operator $\Fg_{\class{K}}^{\alg{A}}$ is finitary for each $\alg{A} \in \class{L}$, i.e.\ $\Fg_{\class{K}}^{\alg{A}} X = \bigcup \set{\Fg_{\class{K}}^{\alg{A}} Y}{Y \subseteq \alg{A} \text{ and $Y$ is finite}}$.

  The permutation properties of the class operators in question immediately yield the following theorems, where $\HSinvop(\class{K})$, $\HSop(\class{K})$, $\Sop(\class{K})$, $\Pop(\class{K})$, and $\PUop(\class{K})$ denote the classes of $\class{L}$-structures which are strict homo\-morphic pre\-images, strict homo\-morphic images, substructures, products, and ultraproducts of structures in~$\class{K}$.

\begin{theorem}
  Let $\class{K}$ be a class of $\class{L}$-structures. The filter class generated by $\class{K}$ is $\HSinvop \Sop \Pop(\class{K})$. The logical class generated by $\class{K}$ is $\HSinvop \HSop \Sop \Pop(\class{K})$.
\end{theorem}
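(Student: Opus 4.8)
The plan is to follow the standard Birkhoff--Tarski pattern for each of the two claimed normal forms: show that the class on the right-hand side (i) contains $\class{K}$, (ii) is closed under the operators defining the relevant kind of class, and (iii) is consequently the least such class, since any filter class (respectively logical class) containing $\class{K}$ is closed under those operators and must therefore contain the whole displayed expression. Part (i) is immediate because each operator is expansive (every structure is a one-fold product, a substructure of itself, and its own strict preimage, and for the logical case its own strict image), and part (iii) is formal once (ii) holds. All the content sits in (ii), which I will reduce to a short list of permutation (absorption) laws together with the idempotency facts $\HSinvop \HSinvop = \HSinvop$, $\HSop \HSop = \HSop$, $\Sop \Sop = \Sop$, and $\Pop \Pop \subseteq \Pop$.

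For the filter class I will establish three absorption laws. First, $\Pop \Sop \subseteq \Sop \Pop$, the classical fact that a product of substructures embeds into the product of the ambient structures, which transfers verbatim since the filter on $\prod_{i} \pair{\alg{B}_{i}}{G_{i}}$ is $\bigcap_{i} \pi_{i}^{-1}[G_{i}]$. Second, $\Sop \HSinvop \subseteq \HSinvop \Sop$: restricting a strict preimage map $h\colon \pair{\alg{A}}{F} \to \pair{\alg{B}}{G}$ to a subalgebra $\alg{A}' \leq \alg{A}$ and to the subalgebra $h[\alg{A}'] \leq \alg{B}$ yields a strict map, because for $a \in \alg{A}'$ the value $h(a)$ already lies in $h[\alg{A}']$. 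Third, $\Pop \HSinvop \subseteq \HSinvop \Pop$: the product $\prod_{i} h_{i}$ of strict maps is strict into the product, since $(\prod_{i} h_{i})^{-1}[\prod_{i} G_{i}] = \prod_{i} h_{i}^{-1}[G_{i}] = \prod_{i} F_{i}$. Feeding these into $\Sop(\HSinvop \Sop \Pop) \subseteq \HSinvop \Sop \Sop \Pop = \HSinvop \Sop \Pop$ and $\Pop(\HSinvop \Sop \Pop) \subseteq \HSinvop \Pop \Sop \Pop \subseteq \HSinvop \Sop \Pop \Pop \subseteq \HSinvop \Sop \Pop$, together with idempotency of $\HSinvop$, shows that $\HSinvop \Sop \Pop(\class{K})$ is a filter class.

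For the logical class I will add the two routine laws $\Sop \HSop \subseteq \HSop \Sop$ (taking $\alg{A}' \assign h^{-1}[\alg{B}']$ for a substructure $\pair{\alg{B}'}{G'}$ of a strict image) and $\Pop \HSop \subseteq \HSop \Pop$ (a product of strict surjections is a strict surjection, by the same computation as above), and, crucially, the interaction law
\begin{align*}
  \HSop \HSinvop \subseteq \HSinvop \HSop \Sop .
\end{align*}
This last law is the main obstacle and the only point requiring an actual construction; I will prove it by a pushout of congruences. After restricting the preimage target to the image of the preimage map — this accounts for the $\Sop$ on the right and reduces us to a \emph{surjective} strict $p\colon \pair{\alg{B}}{G} \to \pair{\alg{B}_{1}}{G_{1}}$ alongside a surjective strict $q\colon \pair{\alg{B}}{G} \to \pair{\alg{C}}{H}$ — I set $\gamma \assign \ker p \vee \ker q$, let $\alg{D} \assign \alg{B}/\gamma$ with quotient $s\colon \alg{B} \to \alg{D}$, and factor $s = r \circ p = k \circ q$ through the induced surjections $r\colon \alg{B}_{1} \to \alg{D}$ and $k\colon \alg{C} \to \alg{D}$. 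The decisive lemma is that a set saturated with respect to two congruences is saturated with respect to their join: since $\ker p \vee \ker q$ is the transitive closure of $\ker p \cup \ker q$ and $G$ is both $\ker p$-saturated ($G = p^{-1}[G_{1}]$) and $\ker q$-saturated ($G = q^{-1}[H]$), chasing along a connecting chain shows $G$ is $\gamma$-saturated. Hence $E \assign s[G]$ satisfies $s^{-1}[E] = G$, and surjectivity of $p$ and $q$ forces $r^{-1}[E] = G_{1}$ and $k^{-1}[E] = H$; that is, $\pair{\alg{D}}{E} \in \HSop(\pair{\alg{B}_{1}}{G_{1}})$ and $\pair{\alg{C}}{H} \in \HSinvop(\pair{\alg{D}}{E})$, as required.

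With these laws in hand, closure of $\HSinvop \HSop \Sop \Pop(\class{K})$ under $\Pop$ and $\Sop$ follows by the same shuffling as in the filter case, closure under $\HSinvop$ is immediate from idempotency, and closure under $\HSop$ follows from
\begin{align*}
  \HSop (\HSinvop \HSop \Sop \Pop) \subseteq \HSinvop \HSop \Sop \HSop \Sop \Pop \subseteq \HSinvop \HSop \HSop \Sop \Sop \Pop = \HSinvop \HSop \Sop \Pop,
\end{align*}
using the interaction law followed by $\Sop \HSop \subseteq \HSop \Sop$. I expect the bookkeeping of absorption laws to be entirely routine; the single step that demands an idea is the saturation lemma underlying the interaction law, so that is where the write-up should concentrate its detail, the remaining verifications being mechanical applications of the permutation calculus.
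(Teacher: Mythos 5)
Your proof is correct and is essentially the argument the paper intends: the paper gives no written proof beyond invoking ``the permutation properties of the class operators,'' and your absorption laws are exactly those properties, worked out in full. The only genuinely non-trivial one, $\HSop \HSinvop \subseteq \HSinvop \HSop \Sop$, you establish correctly via the congruence-join saturation lemma (using that $\ker p \vee \ker q$ is the transitive closure of $\ker p \cup \ker q$ and that a set saturated for both congruences is saturated for their join), so nothing further is needed.
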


\begin{theorem}
 The finitary filter class (finitary logical class) generated by $\class{K}$ is the filter class (logical class) generated by $\PUop(\class{K})$.
\end{theorem}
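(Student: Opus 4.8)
The plan is to reduce everything to the permutation calculus of the operators $\HSinvop$, $\HSop$, $\Sop$, $\Pop$, $\PUop$. Recall that a finitary filter class is precisely a class of $\class{L}$-structures closed under $\Sop$, $\Pop$, $\HSinvop$, and $\PUop$, so the finitary filter class generated by $\class{K}$ is the smallest class containing $\class{K}$ and closed under these four operators. By the preceding theorem applied to $\PUop(\class{K})$, the filter class generated by $\PUop(\class{K})$ equals $\HSinvop \Sop \Pop \PUop(\class{K})$, and the logical class generated by $\PUop(\class{K})$ equals $\HSinvop \HSop \Sop \Pop \PUop(\class{K})$. It therefore suffices to show that $\HSinvop \Sop \Pop \PUop(\class{K})$ is exactly the finitary filter class generated by $\class{K}$, and analogously with $\HSop$ inserted for the logical case. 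I will prove the two inclusions separately.

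One inclusion is immediate. Write $\class{C}$ for the finitary filter class generated by $\class{K}$. Since $\class{K} \subseteq \class{C}$ and $\class{C}$ is closed under $\PUop$, we get $\PUop(\class{K}) \subseteq \class{C}$; applying closure under $\HSinvop$, $\Sop$, $\Pop$ together with monotonicity of these operators yields $\HSinvop \Sop \Pop \PUop(\class{K}) \subseteq \class{C}$.

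For the reverse inclusion it is enough to check that $\HSinvop \Sop \Pop \PUop(\class{K})$ is itself a finitary filter class containing $\class{K}$. It contains $\class{K}$ because $\class{K} \subseteq \PUop(\class{K})$ (a structure is an ultrapower of itself), and it is a filter class by the preceding theorem. The only nontrivial point is closure under $\PUop$, which I obtain from the standard commutation inclusions
\begin{align*}
  \PUop \HSinvop \subseteq \HSinvop \PUop, \qquad
  \PUop \Sop \subseteq \Sop \PUop, \qquad
  \PUop \Pop \subseteq \Sop \Pop \PUop, \qquad
  \PUop \PUop = \PUop.
\end{align*}
The first two are direct consequences of {\L}o\'{s}'s theorem: an ultraproduct of the maps witnessing strict preimages is again a strict homomorphism, and an ultraproduct of substructures is a substructure of the ultraproduct of the ambient structures. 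The identity $\PUop\PUop = \PUop$ is the theorem that an ultraproduct of ultraproducts is again an ultraproduct. Chaining these,
\begin{align*}
  \PUop \HSinvop \Sop \Pop \PUop(\class{K})
  &\subseteq \HSinvop \PUop \Sop \Pop \PUop(\class{K}) \\
  &\subseteq \HSinvop \Sop \PUop \Pop \PUop(\class{K}) \\
  &\subseteq \HSinvop \Sop \Sop \Pop \PUop \PUop(\class{K}) \\
  &= \HSinvop \Sop \Pop \PUop(\class{K}),
\end{align*}
so $\HSinvop \Sop \Pop \PUop(\class{K})$ is closed under ultraproducts, completing the filter-class statement. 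I expect the inclusion $\PUop \Pop \subseteq \Sop \Pop \PUop$ --- the classical fact that an ultraproduct of direct products embeds into a direct product of ultraproducts --- to be the only step requiring a genuine construction rather than a routine appeal to {\L}o\'{s}'s theorem; it is the main obstacle.

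The logical-class statement is handled by the same argument with the extra operator $\HSop$ inserted. Here one needs, in addition, the commutation $\PUop \HSop \subseteq \HSop \PUop$ (an ultraproduct of strict surjections is a strict surjection of the corresponding ultraproducts), after which the identical chain pushes $\PUop$ past $\HSinvop$, $\HSop$, $\Sop$, and $\Pop$ to the far right, where the two copies of $\PUop$ and of $\Sop$ collapse. This gives closure of $\HSinvop \HSop \Sop \Pop \PUop(\class{K})$ under $\PUop$, and the two inclusions then match exactly as in the filter-class case.
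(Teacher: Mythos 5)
Your proof is correct and is exactly the argument the paper intends: the paper gives no explicit proof, stating only that "the permutation properties of the class operators in question immediately yield" the theorem, and your commutation chain $\PUop \HSinvop \subseteq \HSinvop \PUop$, $\PUop \Sop \subseteq \Sop \PUop$, $\PUop \Pop \subseteq \Sop \Pop \PUop$, $\PUop \PUop = \PUop$ (plus $\PUop \HSop \subseteq \HSop \PUop$ for the logical case) is precisely that permutation argument. The one step you leave as classical, $\PUop \Pop \subseteq \Sop \Pop \PUop$, is indeed true and goes through for structures with the predicate: embed $\prod_{U} \prod_{j \in J_{i}} \pair{\alg{A}_{i,j}}{F_{i,j}}$ into the product, indexed by choice functions $f \in \prod_{i} J_{i}$, of the ultraproducts $\prod_{U} \pair{\alg{A}_{i,f(i)}}{F_{i,f(i)}}$; injectivity and strictness both follow by choosing, on the $U$-large set of bad coordinates, an index $f(i)$ witnessing the failure.
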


\begin{corollary}
  The finitary filter class (finitary logical class) generated by $\class{K}$ coincides with the filter class (logical class) generated by $\class{K}$.
\end{corollary}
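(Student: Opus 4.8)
The plan is to read the corollary off the two preceding theorems by composing their operator descriptions, reducing everything to a permutation calculus for the class operators. The first theorem identifies the filter class generated by an arbitrary class $\class{K}'$ as $\HSinvop \Sop \Pop(\class{K}')$, and the second identifies the finitary filter class generated by $\class{K}$ as the filter class generated by $\PUop(\class{K})$. Combining the two, the finitary filter class generated by $\class{K}$ is the filter class generated by $\PUop(\class{K})$, which by the first theorem equals $\HSinvop \Sop \Pop \PUop(\class{K})$; likewise the finitary logical class is $\HSinvop \HSop \Sop \Pop \PUop(\class{K})$. The content of the corollary is thus the identity $\HSinvop \Sop \Pop \PUop(\class{K}) = \HSinvop \Sop \Pop(\class{K})$, i.e.\ that the ordinary filter class generated by $\class{K}$ is already closed under ultraproducts, together with its logical analogue.

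The inclusion $\supseteq$ is trivial since $\class{K} \subseteq \PUop(\class{K})$. For $\subseteq$ I would push $\PUop$ to the right through the generating operators using the standard permutation lemmas, which hold for structures just as for algebras: $\PUop \Sop \subseteq \Sop \PUop$, $\PUop \Pop \subseteq \Pop \PUop$, and $\PUop \PUop \subseteq \PUop$. The one permutation that needs a genuine check is $\PUop \HSinvop \subseteq \HSinvop \PUop$, which rests on the observation that an ultraproduct of strict homomorphisms is again a strict homomorphism: if $h_{k} \colon \pair{\alg{A}_{k}}{F_{k}} \to \pair{\alg{B}_{k}}{G_{k}}$ are strict, then the induced map $h \assign \prod_{U} h_{k}$ is a homomorphism and, by the fundamental theorem of ultraproducts, $[a] \in \prod_{U} F_{k}$ iff $h_{k}(a_{k}) \in G_{k}$ for $U$-almost all $k$ iff $h([a]) \in \prod_{U} G_{k}$, so that $\prod_{U} F_{k} = h^{-1}[\prod_{U} G_{k}]$. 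Chaining these inclusions yields $\PUop \HSinvop \Sop \Pop \subseteq \HSinvop \Sop \Pop \PUop$.

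The step I expect to be the main obstacle is the final absorption of the trailing ultraproduct, namely $\PUop(\class{K}) \subseteq \HSinvop \Sop \Pop(\class{K})$. Here one cannot argue purely formally: the reduced-product presentation exhibits the ultraproduct $\prod_{U} \pair{\alg{A}_{k}}{F_{k}}$ as a strict homomorphic \emph{image} of the product $\prod_{k} \pair{\alg{A}_{k}}{F_{k}}$, which is the wrong direction for a filter class. The task is instead to realize the ultraproduct as a strict homomorphic \emph{preimage} of a substructure of a product of members of $\class{K}$, which requires producing a homomorphism out of the ultraproduct that recovers $\prod_{U} F_{k}$ as a preimage of a product filter; I would attack this via the explicit description of the generated filters together with the fundamental theorem of ultraproducts. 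Granting this step, $\HSinvop \Sop \Pop$ absorbs $\PUop$ and the corollary follows. The logical-class version is identical, carrying the extra factor $\HSop$ through via the analogous permutation $\PUop \HSop \subseteq \HSop \PUop$.
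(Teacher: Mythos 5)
Your reduction of the statement (read for arbitrary $\class{K}$) to the absorption identity $\HSinvop \Sop \Pop \PUop(\class{K}) = \HSinvop \Sop \Pop(\class{K})$ is faithful to the literal wording, and the permutation inclusions you check are standard (though $\PUop \Pop \subseteq \Pop \PUop$ as literally stated needs an extra $\Sop$, or a detour through reduced products). The problem is the step you single out as ``the main obstacle'' and then grant, namely $\PUop(\class{K}) \subseteq \HSinvop \Sop \Pop(\class{K})$: this is not a hard missing lemma but a false claim, so the gap cannot be filled by any argument. The paper itself refutes it later: for $\class{K} = \set{\BAm{n}}{n \geq 1}$, the filter class generated by $\class{K}$ is $\DLclass{\omega}$, the finitary filter class generated by $\class{K}$ is $\DLclass{\infty}$, and $\DLclass{\omega}$ is a \emph{proper} subclass of $\DLclass{\infty}$, as witnessed by $\pair{\BA{\omega}}{\nonempty{\omega}}$. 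If every ultraproduct of the $\BAm{n}$ lay in $\DLclass{\omega}$, the second theorem would force the finitary filter class generated by $\class{K}$ into $\DLclass{\omega}$; hence some ultraproduct of the $\BAm{n}$ escapes the filter class they generate. The logical-class half fails as well, in the signature of Boolean algebras: the infinitary implications expressing well-partial-orderedness hold in each $\BAm{n}$, hence in the logical class the $\BAm{n}$ generate, but they fail in $\pair{\BA{\omega}}{\nonempty{\omega}} \in \BAclass{\infty}$, which does lie in the finitary logical class they generate.

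Consequently the corollary cannot be about arbitrary $\class{K}$; it must be read with the implicit hypothesis that $\class{K}$ is closed under ultraproducts up to isomorphism. This covers every instance to which the paper applies it, since its generators are finite families of finite structures (a single $\BAm{n}$, $\BAm{m} \times \BAm{n}$, $\pair{\BA{d+m}}{\height{d}{m}}$), and an ultraproduct of finitely many finite structures is isomorphic to one of them, so that $\PUop(\class{K}) \subseteq \Sop(\class{K})$ trivially. Under that reading there is nothing to prove beyond quoting the preceding theorem: the finitary filter class generated by $\class{K}$ is the filter class generated by $\PUop(\class{K})$, which is generated by $\class{K}$ itself; this is why the paper supplies no proof. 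Your strategy of establishing unconditional absorption of the trailing $\PUop$ is therefore aimed at a strictly stronger statement, and that stronger statement is false.
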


  Filter classes of $\class{L}$-structures admit an equivalent syntactic characterization as classes \mbox{axiomatized} by implications of a certain form. Consider a proper class of variables. An \emph{atomic formula} has either the form $t \equals u$ or $\Fsymbol(t)$, where $t$ and $u$ are terms over the given class of variables in the algebraic signature of $\class{L}$. The equality symbol is of course interpreted by the equality relation, while $\Fsymbol$ is interpreted in the structure $\pair{\alg{A}}{F}$ by the set $F \subseteq \alg{A}$. By an \emph{implication} (more explicitly, a possibly infinitary \emph{Horn formula}) we shall mean an infinitary formula of the form
\begin{align*}
  \alpha_{i} \text{ for each } i \in I & \implies \beta,
\end{align*}
  where $\beta$ is an atomic formula and $\alpha_{i}$ for $i \in I$ is a set of atomic formulas. The implication is said to be \emph{finitary} if $I$ is finite. In a~\emph{filter implication}, we require that $\beta$ have the form $\Fsymbol(u)$. In an \emph{equality-free} implication, we require that all of the formulas $\alpha_{i}$ and $\beta$ have this form. For example, $F$ is an upset of a distributive latice $\alg{L}$ if and only if $\pair{\alg{L}}{F}$ satisfies the implication
\begin{align*}
  \Fsymbol(x) ~ \& ~ x \wedge y \equals x & \implies \Fsymbol(y).
\end{align*}
  This is a filter implication, but it is not equality-free.

  (The reader concerned about having a proper class of variables may instead consider an infinite set of variables of some cardinality $\kappa$ and add the following constraint on filter classes and logical classes: if each $\kappa$-generated substructure of $\pair{\alg{A}}{F}$ belongs to~$\class{K}$, then so does $\pair{\alg{A}}{F}$. With this modification, logical classes are precisely the classes of all models of some logic over the given set of variables in the sense of abstract algebraic logic~\cite{font16}. The problem of describing the logical classes of $\class{L}$-structures is therefore intimately related to the problem of describing the extensions of the logic of all $\class{L}$-structures.)

  We now introduce some notational conventions for talking about implications. When no confusion is likely to arise, we identify the atomic formula $\Fsymbol(t)$ with the term $t$. Atomic formulas, or the corresponding terms, will be denoted by lowercase Greek letters. A set of atomic formulas of the form $\Fsymbol(t)$, or the corresponding set of terms, will be denoted by $\Gamma$, $\Delta$, or $\Phi$, while a set of equations will be denoted by~$\Eps$. We adopt notation common in abstract algebraic logic and write $\Eps, \Gamma \vdash \varphi$ instead of $\Eps ~ \& ~ \Gamma \implies \varphi$ and $\Eps, \Gamma \vdash t \equals u$ instead of $\Eps ~ \& ~ \Gamma \implies t \equals u$. For example, the equality-free implication which defines $2$-filters becomes simply $x \wedge y, y \wedge z, z \wedge x \vdash x \wedge y \wedge z$ in this notation. The notation $\Eps, \Gamma \vdash_{\class{K}} \varphi$ means that the filter implication $\Eps, \Gamma \vdash \varphi$ holds in each structure in $\class{K}$.

  The following syntactic description of filter classes and logical classes is due to Stronkowski~\cite[Lemma~2.3 and Theorems~3.7 \& 3.8]{stronkowski18}. The claim for finitary logical classes, as well as its infinitary version in case we restrict to a set of variables, is originally due to Dellunde and Jansana~\cite{dellunde+jansana96}.

\begin{theorem} \label{thm: axiomatization by implications}
  Let $\class{K}$ be a class of $\class{L}$-structures. Then
\begin{enumerate}[(i)]
\item $\class{K}$ is a (finitary) filter class if and only if $\class{K}$ is axiomatized by some class of (finitary) filter implications.
\item $\class{K}$ is a (finitary) logical class if and only if $\class{K}$ is axiomatized by some class of (finitary) equality-free implications.
\end{enumerate}
\end{theorem}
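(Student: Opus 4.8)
The plan is to run the standard Mal'cev-style argument in both directions, leaning on the operator descriptions $\class{K} = \HSinvop \Sop \Pop(\class{K})$ for filter classes and $\class{K} = \HSinvop \HSop \Sop \Pop(\class{K})$ for logical classes recorded above. For the soundness (right-to-left) directions I would first verify the relevant preservation properties. A filter implication is a universal Horn sentence in the signature of $\class{L}$ expanded by the unary predicate $\Fsymbol$, so it is preserved under substructures and direct products by the classical arguments, and under ultraproducts (when finitary) by the theorem of {\L}o\'{s}. The case specific to our setting is preservation under strict homomorphic preimages: if $h \colon \alg{A} \to \alg{B}$ satisfies $F = h^{-1}[G]$, then $h$ carries the premises forward (equations because $h$ is a homomorphism, atoms $\Fsymbol(t)$ because $h[F] \subseteq G$) and carries the conclusion $\Fsymbol(u)$ backward precisely because $F = h^{-1}[G]$; this last step is exactly why the conclusion must be a filter atom rather than an equation. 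For equality-free implications the same bookkeeping yields preservation under strict homomorphic images as well, lifting assignments along the surjection — and here equational premises would fail, since a surjection need not reflect equalities, which is why equality must be dropped entirely.

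For the harder left-to-right directions, let $\Sigma$ be the set of all filter implications valid in $\class{K}$, so that $\class{K} \subseteq \mathrm{Mod}(\Sigma)$ and only the reverse inclusion is at issue. Fixing $\pair{\alg{A}}{F} \models \Sigma$, the heart of the matter is a separation lemma: for each $a \in \alg{A} \setminus F$ there should be $\pair{\alg{B}}{G} \in \class{K}$ and a homomorphism $h \colon \alg{A} \to \alg{B}$ with $h[F] \subseteq G$ yet $h(a) \notin G$. Granting this, the tupling $e$ of all such $h$ (a set once each $\alg{B}$ is cut down to the subalgebra generated by its image, using closure under $\Sop$) is a strict homomorphism into $\prod \pair{\alg{B}}{G} \in \Pop(\class{K})$, placing $\pair{\alg{A}}{F}$ in $\HSinvop \Pop(\class{K}) \subseteq \class{K}$. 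To prove the lemma, suppose no such $h$ exists. Letting $\Eps_{\alg{A}}$ be the set of all equations true in $\alg{A}$ in variables $x_b$ naming the elements $b \in \alg{A}$, the failure of separation says exactly that the filter implication $\Eps_{\alg{A}}, \set{\Fsymbol(x_f)}{f \in F} \vdash \Fsymbol(x_a)$ holds throughout $\class{K}$ — for an assignment into a $\class{K}$-structure satisfying $\Eps_{\alg{A}}$ is the same as a homomorphism out of $\alg{A}$, and the $\Fsymbol$-premises say it respects $F$. This implication therefore lies in $\Sigma$, hence holds in $\pair{\alg{A}}{F}$ under $x_b \mapsto b$, forcing $a \in F$, a contradiction.

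For finitary filter classes I would take $\Sigma$ to consist of the finitary valid implications and replace the single infinitary implication by compactness. To separate $a \notin F$, I consider the set of first-order conditions $\Eps_{\alg{A}} \cup \set{\Fsymbol(x_f)}{f \in F} \cup \{\neg \Fsymbol(x_a)\}$ on an assignment into a $\class{K}$-structure. If some finite subset had no model in $\class{K}$, its premises would form a finitary filter implication valid in $\class{K}$, hence in $\Sigma$, again forcing $a \in F$; so every finite subset is realized in $\class{K}$. An ultraproduct of these witnesses over a fine ultrafilter on the directed set of finite subsets stays in $\class{K}$ by closure under $\PUop$ and realizes all the conditions simultaneously, yielding the separating homomorphism. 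The product construction then concludes as before.

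Part (ii) runs the same way, but the separations above used the equational diagram $\Eps_{\alg{A}}$ as premises, which are banned from equality-free implications; overcoming this is the main obstacle, and the device is to separate on a \emph{free} algebra. Given $\pair{\alg{A}}{F} \models \Sigma^{\mathrm{ef}}$ for the valid equality-free implications $\Sigma^{\mathrm{ef}}$, I choose a surjection $q \colon \FmAlg \to \alg{A}$ from an $\class{L}$-free algebra and put $T \assign q^{-1}[F]$, so $\pair{\FmAlg}{T}$ is a strict homomorphic preimage of $\pair{\alg{A}}{F}$ and still models $\Sigma^{\mathrm{ef}}$. The universal property of $\FmAlg$ makes every assignment of its free generators into an $\class{L}$-algebra extend to a homomorphism automatically, so the equational premises become superfluous: separating some $\phi \in \FmAlg \setminus T$ now needs only the equality-free implication $\set{\Fsymbol(t)}{t \in T} \vdash \Fsymbol(\phi)$, which either fails in $\class{K}$ (giving the map) or lies in $\Sigma^{\mathrm{ef}}$ and forces $\phi \in T$. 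The product construction puts $\pair{\FmAlg}{T}$ into $\class{K}$, and closure under strict homomorphic images along $q$ returns $\pair{\alg{A}}{F} \in \class{K}$; the finitary logical case simply grafts the compactness argument onto this free-algebra separation.
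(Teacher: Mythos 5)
Your proposal is correct, but there is nothing in the paper to compare it against: the paper does not prove this theorem at all. It is quoted as a known result, attributed to Stronkowski \cite[Lemma~2.3 \& Theorems~3.7 \& 3.8]{stronkowski18}, with the finitary logical case credited to Dellunde \& Jansana \cite{dellunde+jansana96}. So your argument should be judged on its own, and it holds up: it is the standard Mal'cev-style diagram argument. The soundness bookkeeping correctly isolates why the conclusion must be an $\Fsymbol$-atom (strict preimages do not reflect equations) and why equational premises must be banned for logical classes (strict images do not reflect equations). The completeness direction via the separation lemma is sound: failure of separation for $a \notin F$ is exactly validity in $\class{K}$ of the diagram implication $\Eps_{\alg{A}}, \set{\Fsymbol(x_f)}{f \in F} \vdash \Fsymbol(x_a)$, and the tupled separating maps give a strict homomorphism into a product of members of $\Sop(\class{K})$. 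The free-algebra device in part (ii) is precisely the right way to eliminate equational premises, and discharging $\pair{\alg{A}}{F}$ by closure under strict images along $q$ is correct. Two small points deserve explicit care. First, $\FmAlg$ must be the free algebra \emph{in the quasivariety} $\class{L}$ (which exists and lies in $\class{L}$ since $\class{L}$ is closed under $\Sop$ and $\Pop$), not the absolutely free term algebra, which need not belong to $\class{L}$; your phrase ``$\class{L}$-free algebra'' presumably means this, but the distinction matters for $\pair{\FmAlg}{T}$ to be an $\class{L}$-structure at all. Second, when $F$ is the total filter there are no points to separate and the product is empty; this case is covered by the convention, implicit in the paper's definition of filter classes via closure under arbitrary intersections, that total filters are always $\class{K}$-filters. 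Neither point is a gap, just a place where the argument silently uses a convention.
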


  We shall be interested in cases where the quasivariety $\class{L}$ is the class $\DLat$ of distributive lattices, the class $\SLat$ of meet semilattices, the class $\uSLat$ of unital meet semilattices, and the class $\BAlg$ of Boolean algebras. We do not explicitly consider bounded distributive lattices and bounded meet semilattices, but our results easily extend to these classes. (Recall that a \emph{unital} meet semilattice is an upper bounded meet semilattice expanded by a constant $\True$ which denotes the top element.)

  Distributive lattices equipped with an upset form a filter class $\DLclass{\infty}$ of $\DLat$-structures. Distributive lattices equipped with an $n$-filter then form a filter subclass of $\DLclass{\infty}$ denoted $\DLclass{n}$ for $n \in \omega$. Clearly $\DLclass{i} \subseteq \DLclass{j} \subseteq \DLclass{\infty}$ for $i < j$, and the structures $\BAm{n}$ witness that these inclusions are strict. The classes of $\SLat$-structures $\SLclass{\infty}$ and $\SLclass{n}$ are defined in the same way. The classes of $\uSLat$-structures $\uSLclass{\infty}$ and $\uSLclass{n}$ for $n \geq 1$, and the classes of $\BAlg$-structures $\BAclass{\infty}$ and $\BAclass{n}$ for $n \geq 1$ are defined similarly, but we restrict to non-empty upsets and non-empty $n$-filters.

  The filter implications valid in $\DLclass{n}$ can be described in terms of implications valid in $\DLclass{\infty}$, which in turn can be described in terms of implications valid in distributive lattices. We write $\Eps \vdash_{\DLat} \gamma \leq \varphi$ if the implication $\Eps \vdash \gamma \wedge \varphi \equals \gamma$ holds in all distributive lattices. An entirely analogous theorem can be proved relating the filter classes $\SLclass{n}$ and $\SLclass{\infty}$ and implications valid in meet semilattices.

\begin{theorem}
  Let $\Eps, \Gamma \vdash \varphi$ be a filter implication in the signature of distributive lattices. Then:
\begin{enumerate}[(i)]
\item $\Eps, \gamma \vdash_{\DLclass{\infty}} \varphi$ if and only if $\Eps \vdash_{\DLat} \gamma \leq \varphi$.
\item $\Eps, \Gamma \vdash_{\DLclass{\infty}} \varphi$ if and only if $\Eps, \gamma \vdash_{\DLclass{\infty}} \varphi$ for some $\gamma \in \Gamma$.
\item $\Eps, \Gamma \vdash_{\DLclass{n}} \varphi$ if and only if there is some non-empty finite set of terms $\Phi$ such that $\Eps, \Gamma \vdash_{\DLclass{\infty}} \bigwedge \Delta$ for each $\Delta \bsubseteq{n} \Phi$ and $\Eps, \bigwedge \Phi \vdash_{\DLclass{\infty}} \varphi$.
\end{enumerate}
\end{theorem}

\begin{proof}
  The first equivalence is trivial. The third equivalence follows from the $n$-filter generation lemma (Lemma~\ref{lemma: generating n filters}) applied to the quotient $\FreeDLat{X} / \theta_{\Eps}$ of the free distributive lattice $\FreeDLat{X}$ generated by the set $X$ of all variables which occur in $\Gamma \cup \{ \varphi \}$ by the congruence $\theta_{\Eps}$ generated by $\Eps$. To prove the second equivalence, let $\FmAlg$ be the absolutely free algebra (the algebra of terms) over the set of all variables which occur in $\Gamma \cup \{ \varphi \}$ and suppose that ${\Eps, \gamma \nvdash_{\DLclass{\infty}} \varphi}$ for each $\gamma \in \Gamma$. This yields a distributive lattice $\alg{L}_{\gamma}$ and a homo\-morphism $h_{\gamma}\colon \FmAlg \to \alg{L}_{\gamma}$ such that $h_{\gamma}(\gamma) \nleq h(\varphi)$ for each $\gamma \in \Gamma$ and $h_{\gamma}(t) = h_{\gamma}(u)$ for each equality $t \equals u$ in $\Eps$. Let $\alg{L} \assign \prod_{\gamma \in \Gamma} \alg{L}_{\gamma}$, let $h\colon \FmAlg \to \alg{L}$ be the product of the maps $h_{\gamma}$, and let $F$ be the upset of $\alg{L}$ generated by $\set{h(\gamma)}{\gamma \in \Gamma}$. Clearly $h(\gamma) \nleq h(\varphi)$ for each $\gamma \in \Gamma$, hence $h(\varphi) \notin F$. Also, $h(t) = h(u)$ for each equality $t \equals u$ in $\Eps$. The structure $\pair{\alg{L}}{F}$ thus witnesses that $\Eps, \Gamma \nvdash_{\DLclass{\infty}} \varphi$. The right-to-left implication is trivial.
\end{proof}

  For Boolean algebras and unital meet semilattices, the above equivalences need to be modified slightly. Again, an entirely analogous fact can be proved relating the filter classes $\uSLclass{n}$ and $\uSLclass{\infty}$ and implications valid in unital meet semilattices.

\begin{theorem}
  Let $\Eps, \Gamma \vdash \varphi$ be a filter implication in the signature of Boolean algebras. Then:
\begin{enumerate}[(i)]
\item $\Eps \vdash_{\BAclass{\infty}} \varphi$ if and only if $\Eps \vdash_{\BAlg} \True \leq \varphi$.
\item $\Eps, \gamma \vdash_{\BAclass{\infty}} \varphi$ if and only if $\Eps \vdash_{\BAlg} \gamma \leq \varphi$.
\item $\Eps, \Gamma \vdash_{\BAclass{\infty}} \varphi$ for $\Gamma$ non-empty if and only if $\Eps, \gamma \vdash_{\BAclass{\infty}} \varphi$ for some $\gamma \in \Gamma$.
\item $\Eps, \Gamma \vdash_{\BAclass{n}} \varphi$ if and only if there is some non-empty finite set of terms $\Phi$ such that $\Eps, \Gamma \vdash_{\BAclass{\infty}} \bigwedge \Delta$ for each $\Delta \bsubseteq{n} \Phi$ and $\Eps, \bigwedge \Phi \vdash_{\BAclass{\infty}} \varphi$.
\end{enumerate}
\end{theorem}

  In order to understand what happens beyond the finitary case, it remains to introduce one more family of upsets. An upset of a \mbox{distributive} lattice (Boolean algebra, semilattice, unital semilattice) will be called \emph{residually finite} if it is an inter\-section of homomorphic preimages of some upsets of finite distributive lattices (Boolean algebras, semilattices, unital semilattices). Observe that an upset of a Boolean algebra $\alg{A}$ is residually finite if and only if it is residually finite as an upset of the distributive lattice reduct of $\alg{A}$: each homomorphism from a Boolean algebra $\alg{A}$ to a finite distributive lattice $\alg{B}$ extends to a homomorphism from $\alg{A}$ to the free Boolean extension of $\alg{B}$, which is also finite. Similarly, an upset of a unital semilattice $\alg{S}$ is residually finite if and only if it is residually finite as an upset of the semilattice reduct of $\alg{S}$. Moreover, an upset of a distributive lattice is residually finite if and only if it is an intersection of a family of upsets which are $n$-filters for some $n$ (not necessarily the same $n$ for the whole family).

  The class of all \mbox{distributive} lattices (Boolean algebras, semilattices, unital semilattices) equipped with a residually finite upset is a filter subclass of $\DLclass{\infty}$ denoted~$\DLclass{\omega}$ ($\BAclass{\omega}$, $\SLclass{\omega}$, $\uSLclass{\omega}$). Each $n$-filter on a distributive lattice is a residually finite upset but some residually finite upsets, such as the upset of $\prod_{n \geq 1} \BAm{n}$, are not $n$-filters for any $n$. The class $\DLclass{\omega}$ is a proper subclass of $\DLclass{\infty}$: consider for example the upset $\nonempty{\omega}$ of non-zero elements of $\BA{\omega} \assign (\BA{1})^{\omega}$. It follows that there is some filter implication which holds in each finite structure in $\DLclass{\infty}$ (or equivalently, in each upset which is an $n$-filter for some $n$) but which fails in some infinite one. Indeed, one such implication states that if $X_{1}, X_{2}, \dots$ are sets consisting of at most $2, 3, \dots$ elements in a distributive lattice such that $\bigwedge X_{1} = \bigwedge X_{2} = \dots = a$ and the meet of each proper subset of $X_{1}, X_{2}, \dots$ lies in $F$, then so does $a$. This implication holds for each upset which is an $n$-filter for some~$n$, but it fails in $\pair{\BA{\omega}}{\nonempty{\omega}}$.

\section{Filter classes of upsets of distributive lattices}

  We now show that the only finitary filter subclasses of $\DLclass{\infty}$ are the classes $\DLclass{n}$. The following theorem merely restates the fact that $n$-filters on distributive lattices are precisely the inter\-sections of homomorphic preimages of $\nonempty{n} \subseteq \BA{n}$. Recall that we are using the notation $\BAm{n} \assign \pair{\BA{n}}{\nonempty{n}}$. For $n = 0$ we take $\BA{0}$ to be the singleton Boolean lattice and $\nonempty{0} \assign \emptyset$.

\begin{theorem}
  $\DLclass{n}$ is generated as a filter class by the structure $\BAm{n}$.
\end{theorem}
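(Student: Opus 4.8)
The plan is to establish the two inclusions between $\DLclass{n}$ and the filter class generated by $\BAm{n}$, relying throughout on the characterization of a filter class as a class whose filters are closed under (not necessarily surjective) homomorphic preimages and arbitrary intersections. Both inclusions reduce to results already in hand: that $\nonempty{n}$ is an $n$-filter (Fact~\ref{fact: nonempty n is n filter}), that $\DLclass{n}$ is closed under the three filter-class operations, and — for the substantive direction — the packaging of prime $n$-filter separation together with the homomorphism lemma into the single statement that an $n$-filter on a distributive lattice is exactly an intersection of homomorphic preimages of $\nonempty{n}$.

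First I would show that the filter class generated by $\BAm{n}$ is contained in $\DLclass{n}$, for which it suffices to check that $\DLclass{n}$ is itself a filter class containing $\BAm{n}$. Membership of $\BAm{n}$ is Fact~\ref{fact: nonempty n is n filter}. Closure under strict homomorphic preimages is Fact~\ref{fact: hom preimages}; closure under products follows from the Fact that $\prod_{i \in I} F_{i}$ is an $n$-filter on $\prod_{i \in I} \alg{S}_{i}$, once we observe that the filter of the product structure $\prod_{i \in I} \BAm{n}$ is exactly $\bigcap_{i \in I} \pi_{i}^{-1}[\nonempty{n}] = \prod_{i \in I} \nonempty{n}$; and closure under substructures follows from the Fact that the restriction of an $n$-filter to a subsemilattice is again an $n$-filter, applied to sublattices. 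Since $\DLclass{n}$ is a filter class containing $\BAm{n}$, and the generated filter class is the smallest such, the generated class is contained in $\DLclass{n}$.

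For the reverse inclusion, let $\class{K}$ be any filter class with $\BAm{n} \in \class{K}$, so that $\nonempty{n}$ is a $\class{K}$-filter on $\BA{n}$; the goal is $\DLclass{n} \subseteq \class{K}$. Take an arbitrary $n$-filter $F$ on a distributive lattice $\alg{L}$. By the corollary describing $n$-filters on distributive lattices as intersections of prime $n$-filters, together with the equivalence (i)$\Leftrightarrow$(iv) of Theorem~\ref{thm: union of n prime filters}, we may write $F = \bigcap_{i} h_{i}^{-1}[\nonempty{n}]$ for a family of lattice homomorphisms $h_{i}\colon \alg{L} \to \BA{n}$. Each $h_{i}^{-1}[\nonempty{n}]$ is a $\class{K}$-filter because $\class{K}$-filters are closed under homomorphic preimages, whence $F$ is a $\class{K}$-filter because they are closed under intersections; thus $\pair{\alg{L}}{F} \in \class{K}$. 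Applying this to the generated filter class $\class{K} = \HSinvop \Sop \Pop(\{\BAm{n}\})$ gives $\DLclass{n} \subseteq \HSinvop \Sop \Pop(\{\BAm{n}\})$, and combining the two inclusions yields the theorem.

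Since the deep content lives in the results invoked above, I expect no genuine obstacle here; the argument is essentially bookkeeping. The one point demanding care is the degenerate case of the \emph{total} filter. When $F$ equals the whole lattice it is the empty intersection, and for $n = 0$ it is \emph{not} the preimage of $\nonempty{0} = \emptyset$ under any homomorphism into the singleton lattice $\BA{0}$. This is handled by the empty product $\prod_{i \in \emptyset} \BAm{n} = \pair{\alg{1}}{\alg{1}}$, which belongs to every filter class and whose homomorphic preimages are precisely the total filters. With this reading of the empty intersection, the intersection-of-preimages description holds uniformly for all $n \in \omega$, so the two inclusions close the proof in every case.
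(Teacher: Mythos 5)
Your proof is correct and follows essentially the same route as the paper, which offers no separate argument but explicitly treats this theorem as a restatement of the fact that $n$-filters on distributive lattices are precisely the intersections of homomorphic preimages of $\nonempty{n} \subseteq \BA{n}$ (the corollary on prime $n$-filter separation combined with Theorem~\ref{thm: union of n prime filters}(i)$\Leftrightarrow$(iv)), exactly the two ingredients you invoke. Your explicit bookkeeping of the closure facts and of the degenerate total-filter case (via the empty product/intersection) is a sound elaboration of what the paper leaves implicit.
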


\begin{theorem}
  $\DLclass{\omega}$ is generated as a filter class by $\set{\BAm{n}}{n \geq 1}$.
\end{theorem}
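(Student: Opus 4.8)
The plan is to identify the $\class{K}$-filters of the filter class generated by $\set{\BAm{n}}{n \geq 1}$ with the residually finite upsets, which by definition are exactly the filters of $\DLclass{\omega}$. Recall that the filter class generated by a class of structures is $\HSinvop \Sop \Pop$ of that class, so its filters are precisely the upsets obtained from the canonical filters $\nonempty{n} \subseteq \BA{n}$, $n \geq 1$, by closing under homomorphic preimages and arbitrary intersections (products contribute intersections of preimages, substructures contribute preimages along inclusions). I would prove the two inclusions separately.

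For the inclusion of the generated filter class in $\DLclass{\omega}$, I would observe that $\DLclass{\omega}$ is already known to be a filter class and that it contains every generator: $\nonempty{n}$ is an $n$-filter on the finite lattice $\BA{n}$, hence a residually finite upset. Since the generated filter class is the smallest filter class containing the generators, it is contained in $\DLclass{\omega}$. Unwinding this, each homomorphic preimage of $\nonempty{n}$ is (by the theorem characterizing prime $n$-filters) a prime $n$-filter and so residually finite, and residually finite upsets are visibly closed under intersection.

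For the converse, I would take a residually finite upset $F$ on a distributive lattice $\alg{L}$ and apply the characterization recorded above: $F = \bigcap_{i} V_{i}$ where each $V_{i}$ is an $n_{i}$-filter on $\alg{L}$. Because $\DLclass{n} \subseteq \DLclass{m}$ for $n \leq m$, I may assume $n_{i} \geq 1$ for every $i$. The preceding theorem --- that $\DLclass{n}$ is generated as a filter class by $\BAm{n}$ --- says precisely that each $V_{i}$ is an intersection of homomorphic preimages of $\nonempty{n_{i}}$. Therefore $F = \bigcap_{i} V_{i}$ is itself an intersection of homomorphic preimages of the structures $\nonempty{n}$ with $n \geq 1$, and so $F$ is a $\class{K}$-filter of the generated class. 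The two inclusions give the theorem.

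There is no real obstacle here; the proof is an assembly of the fixed-$n$ theorem with the description of residually finite upsets. The only two points needing any attention are the normalization $n_{i} \geq 1$ (which is what allows the family $\set{\BAm{n}}{n \geq 1}$, with no generator at $n = 0$, to suffice) and the closure of residually finite upsets under intersections and homomorphic preimages, which is exactly what makes $\DLclass{\omega}$ a filter class.
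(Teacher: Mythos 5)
Your proof is correct and matches the paper's approach: the paper states this theorem without a separate proof, treating it as an immediate assembly of the fixed-$n$ theorem (equivalently, that $n$-filters are exactly the intersections of homomorphic preimages of $\nonempty{n}$) with the observation that residually finite upsets are exactly the intersections of families of upsets that are $n_{i}$-filters for varying $n_{i}$, which is precisely the two-inclusion argument you give. The two details you flag --- the normalization $n_{i} \geq 1$ (harmless since every $0$-filter is a $1$-filter) and the closure of residually finite upsets under preimages and intersections --- are indeed the only points needing verification.
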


   An intruiging problem which we leave open is how to axiomatize $\DLclass{\omega}$.

\begin{theorem}
  $\DLclass{\infty}$ is generated as a finitary filter class by $\set{\BAm{n}}{n \geq 1}$.
\end{theorem}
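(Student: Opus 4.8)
The plan is to prove the two inclusions separately, the easy one by inspection and the hard one by a syntactic argument. Write $\class{C}$ for the finitary filter class generated by $\set{\BAm{n}}{n \geq 1}$. Since $\DLclass{\infty}$ is itself a finitary filter class (it is cut out by the single finitary filter implication $\Fsymbol(x), x \wedge y \equals x \vdash \Fsymbol(y)$ expressing that $\Fsymbol$ denotes an upset) and contains every $\BAm{n}$ (as $\nonempty{n}$ is an upset), the inclusion $\class{C} \subseteq \DLclass{\infty}$ is immediate from minimality of $\class{C}$. The content of the theorem is the reverse inclusion $\DLclass{\infty} \subseteq \class{C}$.

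By the syntactic characterization of (finitary) filter classes, $\class{C}$—being a finitary filter class—is axiomatized by finitary filter implications, and a routine Birkhoff-style argument identifies its axioms with the set $\Sigma$ of \emph{all} finitary filter implications valid in every $\BAm{n}$ with $n \geq 1$. Hence it suffices to show that every implication in $\Sigma$ holds in $\DLclass{\infty}$; equivalently, I would show that a finitary filter implication valid in all $\BAm{n}$ automatically holds for an arbitrary upset of an arbitrary distributive lattice. I argue the contrapositive.

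So suppose $\sigma = (\Eps, \Gamma \vdash \varphi)$, with $\Gamma$ finite and $\varphi = \Fsymbol(u)$, fails in some $\pair{\alg{L}}{F} \in \DLclass{\infty}$: there is a homomorphism $h_0 \colon \FmAlg \to \alg{L}$ with $h_0 \models \Eps$, with $h_0(\gamma) \in F$ for every $\gamma \in \Gamma$, and with $h_0(u) \notin F$. If $\Gamma = \emptyset$, then the all-zero assignment $\FmAlg \to \BA{1}$ sending every variable to $\False$ already refutes $\sigma$ in $\BAm{1}$, since it satisfies every equation in $\Eps$ (both sides evaluate to $\False$) and sends $u$ to $\False \notin \nonempty{1}$. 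So assume $\Gamma = \{ \gamma_1, \dots, \gamma_k \}$ is non-empty. Because $F$ is an upset and $h_0(\gamma_j) \in F \not\ni h_0(u)$, we have $h_0(\gamma_j) \nleq h_0(u)$ for each $j$. Prime filter separation (the case $n = 1$ of Theorem~\ref{thm: prime n filter separation}) then yields, for each $j \in \range{k}$, a prime filter $P_j$ on $\alg{L}$ with $h_0(\gamma_j) \in P_j$ and $h_0(u) \notin P_j$. The characteristic map $\chi_{P_j}$ of each $P_j$ is a homomorphism $\alg{L} \to \BA{1}$, and their product is a homomorphism $q \colon \alg{L} \to \BA{k}$ whose $j$-th coordinate is $\chi_{P_j}$. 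Setting $h \assign q \circ h_0 \colon \FmAlg \to \BA{k}$, we find that $h \models \Eps$ (since $h_0$ does and $q$ is a homomorphism), that $h(\gamma_j)$ has $\True$ in its $j$-th coordinate and so $h(\gamma) \in \nonempty{k}$ for all $\gamma \in \Gamma$, while every coordinate of $h(u)$ is $\False$, so $h(u) = \False \notin \nonempty{k}$. Thus $h$ refutes $\sigma$ in $\BAm{k}$ with $k = \card{\Gamma} \geq 1$, contradicting $\sigma \in \Sigma$. Therefore every $\sigma \in \Sigma$ holds throughout $\DLclass{\infty}$, giving $\DLclass{\infty} \subseteq \class{C}$ and hence equality.

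The decisive point—and the reason the statement requires \emph{finitary} filter class rather than merely filter class (compare the preceding theorem, where the same family generates only $\DLclass{\omega}$)—is the bound $k = \card{\Gamma}$ in the last step: a finite premise set $\Gamma$ can be separated from the conclusion by only $\card{\Gamma}$ prime filters, landing us inside a single finite $\BAm{k}$. For an infinitary filter implication $\Gamma$ could be infinite, forcing unboundedly many prime filters and hence only the residually finite structures of $\DLclass{\omega}$; it is precisely the ultraproducts in $\PUop(\set{\BAm{n}}{n \geq 1})$, whose upsets are $n$-filters for no finite $n$, that the passage to the finitary closure adds in order to reach all of $\DLclass{\infty}$. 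The only genuinely routine verifications are that each $\chi_{P_j}$ is a bona fide lattice homomorphism into $\BA{1}$ (using primeness and properness of $P_j$, the latter guaranteed by $h_0(u) \notin P_j$) and that composition with $q$ preserves the equations of $\Eps$.
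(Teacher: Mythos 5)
Your proof is correct, but it takes a genuinely different route from the paper. The paper argues semantically: every structure in $\DLclass{\infty}$ embeds into an ultraproduct of its finitely generated substructures, which are finite; and every finite $\pair{\alg{L}}{F}$ in $\DLclass{\infty}$ embeds into a product of the structures $\BAm{n}$, because $F$ is an intersection of prime upsets, each prime upset of a finite lattice is a prime $n$-filter for some $n$ (Fact~\ref{fact: finite upsets are n filters}), and prime $n$-filters are exactly the strict preimages of $\nonempty{n}$ (Theorem~\ref{thm: union of n prime filters}). You instead work syntactically: invoking Stronkowski's characterization of finitary filter classes as those axiomatized by finitary filter implications, you reduce the theorem to showing that any finitary filter implication $\Eps, \Gamma \vdash \Fsymbol(u)$ failing in some upset of a distributive lattice already fails in some $\BAm{k}$, and you produce the refutation in $\BAm{k}$ with $k = \card{\Gamma}$ by separating each premise $h_0(\gamma_j)$ from $h_0(u)$ with an ordinary prime filter and taking the product of the characteristic maps. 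Each approach has its advantages: the paper's proof needs no syntactic machinery but leans on the developed theory of prime $n$-filters, with the finitariness localized in the ultraproduct embedding; your proof needs only the classical $n=1$ prime filter separation (not the $n$-filter theory at all), gives the explicit bound $k = \card{\Gamma}$ identifying which generator refutes a failing implication, and makes transparent exactly where finitariness enters --- an infinite $\Gamma$ would force unboundedly many separating prime filters, which is consistent with the paper's preceding theorem that the non-finitary filter class generated by $\set{\BAm{n}}{n \geq 1}$ is only $\DLclass{\omega}$. Your style of argument is in fact close in spirit to the paper's own proof of the subsequent theorem that $\DLclass{\infty}$ is generated as a \emph{logical} class by $\set{\BAm{n}}{n \geq 1}$, which is also syntactic.
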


\begin{proof}
  Each structure embeds into an ultraproduct of its finitely generated substructures and each finitely generated structure in $\DLclass{\infty}$ is finite. Each finite structure $\pair{\alg{L}}{F}$ in $\DLclass{\infty}$ then embeds into a product of the structures $\BAm{n}$, since $F$ is an intersection of prime upsets of $\alg{L}$ and each prime upset of $\alg{L}$ is a prime $n$-filter for some $n$ because $\alg{L}$ is finite.
\end{proof}

\begin{theorem}
  $\DLclass{\infty}$ is generated as a logical class by $\set{\BAm{n}}{n \geq 1}$.
\end{theorem}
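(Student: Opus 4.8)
The plan is to reduce the statement to the immediately preceding theorem, which asserts that $\DLclass{\infty}$ is generated as a \emph{finitary filter class} by $\set{\BAm{n}}{n \geq 1}$, and then to upgrade this from a finitary filter class to a logical class by means of two observations. Writing $\class{K} \assign \set{\BAm{n}}{n \geq 1}$, I would establish the two inclusions between $\DLclass{\infty}$ and the logical class generated by $\class{K}$ separately.

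For the inclusion of the generated logical class in $\DLclass{\infty}$, the key point is that $\DLclass{\infty}$ is \emph{itself} a logical class. It is already a filter class by definition, so it remains only to check closure under strict homomorphic images. This is a short direct verification: if $h\colon \alg{L} \to \alg{M}$ is a surjective lattice homomorphism and $F = h^{-1}[G]$ is an upset of $\alg{L}$, then $G$ is an upset of $\alg{M}$. Indeed, given $g \in G$ and $g' \geq g$ in $\alg{M}$, choose preimages $a, b$ with $h(a) = g$ and $h(b) = g'$; then $a \in F$, so $a \vee b \in F$ because $F$ is an upset, whence $g' = g \vee g' = h(a \vee b) \in G$. (Alternatively, one may invoke the syntactic characterization of logical classes: $\DLclass{\infty}$ is axiomatized by the single equality-free filter implication $x \vdash x \vee y$, and is therefore a logical class.) Since $\DLclass{\infty}$ is a logical class containing $\class{K}$, the logical class generated by $\class{K}$ is contained in it.

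For the reverse inclusion I would exploit that every logical class is in particular a filter class, so the logical class generated by $\class{K}$ contains the filter class generated by $\class{K}$. By the corollary that finitary and non-finitary generation coincide, this filter class equals the finitary filter class generated by $\class{K}$, which is $\DLclass{\infty}$ by the preceding theorem. Hence $\DLclass{\infty}$ is contained in the logical class generated by $\class{K}$, and combining the two inclusions yields equality.

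The main conceptual obstacle is not any single computation but keeping the closure operators straight: passing from a finitary filter class to a logical class trades closure under ultraproducts ($\PUop$) for closure under strict homomorphic images ($\HSop$), and the loss of $\PUop$ is harmless \emph{only} because of the corollary equating finitary with non-finitary generation (so that the filter class generated by $\class{K}$ is already all of $\DLclass{\infty}$, with no ultraproducts left to take). The one genuinely new piece of content beyond citing earlier results is the elementary fact that a strict surjective homomorphism carries an upset to an upset, i.e.\ that $\DLclass{\infty}$ is closed under $\HSop$.
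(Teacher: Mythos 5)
Your first inclusion is sound: $\DLclass{\infty}$ is indeed a logical class (your verification that a strict surjective homomorphism pushes an upset forward to an upset is correct, as is the observation that $\DLclass{\infty}$ is axiomatized by the equality-free implication $x \vdash x \vee y$), so the logical class generated by $\class{K} \assign \set{\BAm{n}}{n \geq 1}$ is contained in $\DLclass{\infty}$. This half is left implicit in the paper, and making it explicit is fine.

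The reverse inclusion, however, has a genuine gap. You invoke the corollary on finitary generation to conclude that the filter class generated by $\class{K}$ is already all of $\DLclass{\infty}$. But the paper itself proves that the filter class generated by $\set{\BAm{n}}{n \geq 1}$ is $\DLclass{\omega}$, the class of structures with residually finite upsets, and that $\DLclass{\omega}$ is a \emph{proper} subclass of $\DLclass{\infty}$, witnessed by $\pair{\BA{\omega}}{\nonempty{\omega}}$. So the corollary cannot be applied to this infinite family: it is sound only when $\PUop(\class{K})$ produces nothing new up to isomorphism (e.g.\ for a finite family of finite structures, which is how it is used to pass between the two generation statements for $\DLclass{n}$ and $\BAm{n}$), and here some ultraproduct of the $\BAm{n}$ must escape $\DLclass{\omega}$, since otherwise the paper's two generation theorems for $\DLclass{\omega}$ and $\DLclass{\infty}$ would contradict one another. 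Your closing remark that there are ``no ultraproducts left to take'' is thus exactly what fails; the entire content of the theorem is that closure under strict homomorphic images compensates for the missing ultraproducts, i.e.\ that the logical class generated by $\class{K}$ is strictly larger than the filter class it generates. The paper supplies this content syntactically: every (possibly infinitary) equality-free implication $\Gamma \vdash u(\tuple{x})$ valid in all $\BAm{n}$ is reduced, by putting the premises in disjunctive normal form and evaluating all variables outside $\tuple{x}$ at $\True$, to an implication $\Gamma[\tuple{x}] \vdash u(\tuple{x})$ in finitely many variables, hence finitary up to equivalence, which is then valid in $\DLclass{\infty}$ by the finitary filter class theorem; finally $t \vdash t[\tuple{x}]$ holds in $\DLclass{\infty}$, so $\Gamma \vdash u(\tuple{x})$ holds there too. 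Note that this reduction works only because the premises are equality-free---the same substitution destroys equational premises, which is precisely why the analogous claim for filter classes is false. A semantic repair of your argument would need, e.g., the fact that every upset of a free distributive lattice is residually finite (so that every structure in $\DLclass{\infty}$ is a strict homomorphic image of a structure in $\DLclass{\omega}$), but that fact is essentially equivalent to the theorem and requires the same work; without it, your argument only yields $\DLclass{\omega} \subseteq \HSinvop \HSop \Sop \Pop(\class{K})$.
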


\begin{proof}
  We first define an auxiliary construction on distributive lattice terms. Recall that a distributive lattice term in \emph{disjunctive normal form} is a disjunction of conjunctions of variables. If $t$ is such a term and $\tuple{x}$ is a tuple of variables, we define the term $t[\tuple{x}]$ as follows: if one of the conjunctions in $t$ consists entirely of variables not in $\tuple{x}$, we leave $t[\tuple{x}]$ undefined, otherwise we remove all variables not in $\tuple{x}$ from $t$. If $t[\tuple{x}]$ is defined, then the inequality $t \leq t[\tuple{x}]$ holds in each distributive lattice. If $\Gamma$ is a set of terms in disjunctive normal form, we define $\Gamma[\tuple{x}]$ as the set of all $t[\tuple{x}]$ such that $t \in \Gamma$ and $t[\tuple{x}]$ is defined.

  Now consider an equality-free implication $\Gamma \vdash u(\tuple{x})$ which holds in each $\BAm{n}$ for $n \geq 1$. We may assume without loss of generality that each term in $\Gamma$ is in disjunctive normal form, i.e.\ a disjunction of conjunctions of variables. If $\Gamma \vdash u(\tuple{x})$ holds in $\BAm{n}$ for each $n \geq 1$, then in particular it holds if we assign the value $\True$ to every variable outside of $\tuple{x}$. It follows that $\Gamma[\tuple{x}] \vdash u(\tuple{x})$ holds in each $\BAm{n}$ for $n \geq 1$. But $\Gamma[\tuple{x}]$ only contains finitely many variables, therefore up to equivalence of distributive lattice terms $\Gamma[\tuple{x}]$ is finite and $\Gamma[\tuple{x}] \vdash u(\tuple{x})$ is a finitary equality-free implication. It follows that $\Gamma[\tuple{x}] \vdash u(\tuple{x})$ holds in each structure in $\DLclass{\infty}$. Finally, $\Gamma \vdash u(\tuple{x})$ follows from $\Gamma[\tuple{x}] \vdash u(\tuple{x})$ because $t \vdash t[\tuple{x}]$ holds in $\DLclass{\infty}$ whenever $t[\tuple{x}]$ is defined.
\end{proof}

  We can state this is more explicit terms.

\begin{theorem}
  Every upset of a distributive lattice is a strict homomorphic image of an intersection of homomorphic preimages of finitely generated upsets.
\end{theorem}

  The free distributive lattice over a set~$X$ will be denoted $\FreeDLat{X}$ in the following, with $\FreeDLat{n} \assign \FreeDLat{\{ x_{1}, \dots, x_{n} \}}$. Observe that $x_{1} \wedge \dots \wedge x_{n}$ is the bottom element of $\FreeDLat{n}$. Accordingly, $P(\FreeDLat{n}) \assign \set{a \in \FreeDLat{n}}{a > x_{1} \wedge \dots \wedge x_{n}}$.

\begin{lemma}
  The structure $\BAm{n}$ embeds into $\pair{\FreeDLat{n}}{P(\FreeDLat{n})}$.
\end{lemma}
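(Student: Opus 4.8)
The plan is to build the required embedding explicitly. Identify $\BA{n}$ with the powerset lattice $(2^{\range{n}}, \cup, \cap)$, so that its atoms $\atom{1}, \dots, \atom{n}$ are the singletons, its bottom element $\False$ is $\emptyset$, and $\nonempty{n}$ consists of the nonempty subsets. For each $i \in \range{n}$ set $w_{i} \assign \bigwedge_{j \neq i} x_{j}$ in $\FreeDLat{n}$ and define the map $e \colon \BA{n} \to \FreeDLat{n}$ by $e(T) \assign \bigvee_{i \in T} w_{i}$, where $e(\emptyset)$ is the empty join, i.e.\ the bottom $x_{1} \wedge \dots \wedge x_{n}$ of $\FreeDLat{n}$. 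The claim is that $e$ is a strict embedding of $\BAm{n}$ into $\pair{\FreeDLat{n}}{P(\FreeDLat{n})}$.

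First I would record two facts about the elements $w_{i}$. Since $(\range{n} \setminus \{ i \}) \cup (\range{n} \setminus \{ j \}) = \range{n}$ for $i \neq j$, we have $w_{i} \wedge w_{j} = x_{1} \wedge \dots \wedge x_{n}$; and the homomorphism $\FreeDLat{n} \to \BA{1}$ sending $x_{i} \mapsto \False$ and $x_{k} \mapsto \True$ for $k \neq i$ sends $w_{i} \mapsto \True$ but $x_{1} \wedge \dots \wedge x_{n} \mapsto \False$, so $w_{i}$ lies strictly above the bottom. Using the first identity together with distributivity, $e(T) \wedge e(T') = \bigvee_{i \in T,\, j \in T'} (w_{i} \wedge w_{j}) = \bigvee_{i \in T \cap T'} w_{i} = e(T \cap T')$, since the terms with $i \neq j$ collapse to the bottom; and $e(T) \vee e(T') = e(T \cup T')$ is immediate. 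Hence $e$ preserves both operations and maps $\BA{n}$ onto the sublattice $\set{e(T)}{T \subseteq \range{n}}$ of $\FreeDLat{n}$.

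The heart of the argument is injectivity, and this is where freeness of $\FreeDLat{n}$ is used. Suppose $e(T) = e(T')$ and pick $i_{0} \in T \setminus T'$. Then $w_{i_{0}} \leq e(T') = \bigvee_{i \in T'} w_{i}$, so distributivity and the pairwise-meet identity give $w_{i_{0}} = w_{i_{0}} \wedge \bigvee_{i \in T'} w_{i} = \bigvee_{i \in T'} (w_{i_{0}} \wedge w_{i})$, which is the bottom because $i_{0} \notin T'$. This contradicts $w_{i_{0}}$ lying strictly above the bottom, so $T = T'$ and $e$ is an embedding of lattices.

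It remains to check strictness. Because $e(T)$ is a join of elements $w_{i}$ each strictly above the bottom, $e(T)$ equals the bottom $x_{1} \wedge \dots \wedge x_{n}$ precisely when $T = \emptyset$; equivalently $e(T) \in P(\FreeDLat{n})$ iff $T \neq \emptyset$ iff $T \in \nonempty{n}$. Thus $\nonempty{n} = e^{-1}[P(\FreeDLat{n})]$, so $e$ is the desired strict embedding. The only genuinely nontrivial step is the injectivity computation; everything else is a routine verification once the two basic identities for the $w_{i}$ are in place. Note that $e$ does not preserve the top element for $n \geq 3$, which is exactly why this embedding exists despite the obstruction one would meet if bounds were required.
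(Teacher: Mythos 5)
Your proof is correct, and it takes a genuinely different route from the paper's. The paper invokes the representation of $\FreeDLat{n}$ as the lattice of non-empty, non-total upsets of the poset $\BA{n}$, observes that every preorder extending the order of $\BA{n}$ cuts out a subalgebra of $\FreeDLat{n}$, and gets the desired substructure by collapsing everything strictly below the coatoms to a single point. You instead write the embedding down explicitly, $T \mapsto \bigvee_{i \in T} w_{i}$ with $w_{i} = \bigwedge_{j \neq i} x_{j}$, and verify strict embedding by hand: the identity $w_{i} \wedge w_{j} = x_{1} \wedge \dots \wedge x_{n}$ for $i \neq j$, the characters $\FreeDLat{n} \to \BA{1}$ certifying that each $w_{i}$ lies strictly above the bottom, and finite distributivity do all the work, with the degenerate cases ($T \cap T' = \emptyset$, $T' = \emptyset$) correctly absorbed by the empty-join convention. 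What your approach buys is self-containedness: it uses only freeness (to produce the characters) and the paper's observation that $x_{1} \wedge \dots \wedge x_{n}$ is the bottom of $\FreeDLat{n}$, with no appeal to the structure theory of free distributive lattices; it also makes strictness of the embedding a one-line check. What the paper's approach buys is brevity and a more flexible template for a reader who knows the representation, since every preorder extending the order yields a substructure, not just this one. In fact the two constructions agree: under the standard representation your $w_{i}$ correspond to the principal upsets of the coatoms of $\BA{n}$, so your image is precisely the subalgebra determined by the paper's collapsed preorder, and your argument can be read as an explicit, representation-free verification of the same embedding.
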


\begin{proof}
  $\FreeDLat{n}$ is isomorphic to the lattice of non-empty non-total subsets of the poset $\BA{n}$. Each preorder on this poset which extends the partial order of $\BA{n}$ determines a subalgebra of $\FreeDLat{n}$ and thus a substructure of $\pair{\FreeDLat{n}}{P(\FreeDLat{n})}$. The preorder which collapses all the elements strictly below the coatoms into a single point then determines a substructure isomorphic to $\BAm{n}$.
\end{proof}

\begin{theorem}[Filter subclasses of $\DLclass{\infty}$] \label{thm: extensions of dlinfty}
  Let $\class{K}$ be a non-trivial filter subclass of $\DLclass{\infty}$. Then either $\DLclass{\omega} \subseteq \class{K}$ or $\class{K}$ is one of the classes $\DLclass{n}$ for $n \in \omega$.
\end{theorem}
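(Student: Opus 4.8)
The plan is to classify $\class{K}$ by recording which of the canonical generators $\BAm{n}$ it contains. First I would observe that non-triviality forces $\BAm{0}\in\class{K}$: if $\pair{\alg{L}}{F}\in\class{K}$ has some $a\in\alg{L}\setminus F$, then the one-element subalgebra $\{a\}$ carries the induced filter $F\cap\{a\}=\emptyset$, and $\pair{\{a\}}{\emptyset}\iso\BAm{0}$ is a substructure of $\pair{\alg{L}}{F}$, hence lies in $\class{K}$. Now either $\BAm{n}\in\class{K}$ for every $n$, or there is a least $k$ with $\BAm{k}\notin\class{K}$; since $\BAm{0}\in\class{K}$ we have $k\geq 1$, and setting $N\assign k-1$ gives $\BAm{N}\in\class{K}$ while $\BAm{N+1}\notin\class{K}$. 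In the first case, the theorem stating that $\DLclass{\omega}$ is generated as a filter class by $\set{\BAm{n}}{n\geq 1}$ immediately yields $\DLclass{\omega}\subseteq\class{K}$, which is the first alternative of the statement. So the real work is the second case, where I would prove $\class{K}=\DLclass{N}$.

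One inclusion is free: since $\BAm{N}\in\class{K}$ and $\DLclass{N}$ is generated as a filter class by $\BAm{N}$, we get $\DLclass{N}\subseteq\class{K}$. For the reverse inclusion I would argue by contradiction, supposing that some $\pair{\alg{L}}{F}\in\class{K}$ has a filter $F$ that is \emph{not} an $N$-filter, and aiming to manufacture $\BAm{N+1}$ inside $\class{K}$, contradicting $\BAm{N+1}\notin\class{K}$. By Fact~\ref{fact: restricted definition of n filters}, applied to the meet-semilattice reduct of $\alg{L}$, the failure is witnessed by elements $x_{1},\dots,x_{N+1}\in\alg{L}$ with $\bigwedge_{j\neq i}x_{j}\in F$ for each $i\in\range{N+1}$ but $x_{1}\wedge\dots\wedge x_{N+1}\notin F$.

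The construction then pulls this witness back to a free lattice, where the relevant joins cannot collapse. Let $h\colon\FreeDLat{N+1}\to\alg{L}$ be the homomorphism sending the generators $g_{i}\mapsto x_{i}$, and put $F'\assign h^{-1}[F]$, a $\class{K}$-filter on $\FreeDLat{N+1}$ since filter classes are closed under homomorphic preimages. Writing $b_{i}\assign\bigwedge_{j\neq i}g_{j}$ and $\bot'\assign g_{1}\wedge\dots\wedge g_{N+1}$, we have $b_{i}\in F'$ for each $i$ and $\bot'\notin F'$, because $h(b_{i})=\bigwedge_{j\neq i}x_{j}\in F$ while $h(\bot')=x_{1}\wedge\dots\wedge x_{N+1}\notin F$. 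The key step is to identify the subalgebra $\alg{B}\assign\langle b_{1},\dots,b_{N+1}\rangle$: using the standard inequality criterion in the free distributive lattice one checks that $b_{i}\wedge b_{j}=\bot'$ for $i\neq j$, that distinct joins $\bigvee_{i\in S}b_{i}$ remain distinct, and that $b_{i}\nleq\bigvee_{j\in S}b_{j}$ whenever $i\notin S$, so that $\alg{B}\iso\BA{N+1}$ with the $b_{i}$ as its atoms and $\bot'$ as its bottom. This is exactly the copy of $\BA{N+1}$ supplied by the embedding $\BAm{N+1}\into\pair{\FreeDLat{N+1}}{P(\FreeDLat{N+1})}$ established above. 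Restricting the upset $F'$ to $\alg{B}$ then gives an upset containing every atom but omitting the bottom, hence exactly $\nonempty{N+1}$; thus $\pair{\alg{B}}{F'\cap\alg{B}}\iso\BAm{N+1}$ is a substructure of $\pair{\FreeDLat{N+1}}{F'}\in\class{K}$, forcing $\BAm{N+1}\in\class{K}$, the desired contradiction. I expect the main obstacle to be precisely this identification of $\alg{B}$ with $\BA{N+1}$: one must pass to the free lattice rather than argue inside $\alg{L}$, since in $\alg{L}$ the joins $\bigvee_{i\in S}\bigwedge_{j\neq i}x_{j}$ may coincide, and then verify via the free-lattice ordering that the $b_{i}$ are genuinely independent atoms so that $F'$ restricts to all of $\nonempty{N+1}$.
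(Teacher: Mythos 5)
Your argument is essentially the paper's own: pull the witnessing failure of the $N$-filter condition back to $\FreeDLat{N+1}$ along a homomorphism (using closure of filter classes under not-necessarily-surjective homomorphic preimages), locate a copy of $\BAm{N+1}$ inside the resulting structure, and conclude $\BAm{N+1} \in \class{K}$, a contradiction. Your execution of the key step is in fact slightly leaner than the paper's: the paper first passes to the $(N+1)$-generated subalgebra so that $h$ is surjective, computes $h^{-1}[F] = P(\FreeDLat{N+1})$ exactly, and then invokes a separate lemma embedding $\BAm{N+1}$ into $\pair{\FreeDLat{N+1}}{P(\FreeDLat{N+1})}$; you instead observe that the sublattice $\alg{B}$ generated by the elements $b_{i} = \bigwedge_{j \neq i} g_{j}$ is a copy of $\BA{N+1}$ (which is exactly the standard inequality criterion for free distributive lattices, as you say) and that the two facts $b_{i} \in F'$ and $\bot' \notin F'$ already force $F' \cap \alg{B} = \nonempty{N+1}$, so neither surjectivity of $h$ nor the exact identification of $F'$ is needed.

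There is, however, one degenerate case your construction does not cover: $N = 0$, i.e.\ $\BAm{0} \in \class{K}$ but $\BAm{1} \notin \class{K}$, where you must still prove $\class{K} \subseteq \DLclass{0}$. Here ``$F$ is not a $0$-filter'' means $F$ is a non-empty proper upset; Fact~\ref{fact: restricted definition of n filters} cannot be used in the form you invoke it, since the meets $\bigwedge_{j \neq i} x_{j}$ are empty meets, and $\FreeDLat{1}$ is the one-element lattice, so $b_{1}$ is undefined and no copy of $\BA{1}$ can sit inside $\FreeDLat{1}$. The paper dispatches this case separately at the outset by a direct argument which patches your proof in two lines: if $a \in F$ and $b \notin F$, then $a \wedge b \notin F$ (as $F$ is an upset), $a \wedge b \neq a$, and $\{a \wedge b, a\}$ is a two-element substructure of $\pair{\alg{L}}{F}$ isomorphic to $\BAm{1}$, whence $\BAm{1} \in \class{K}$, the desired contradiction. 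With that addition your proof is complete.
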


\begin{proof}
  Because $\class{K}$ is non-trivial, it contains some $\pair{\alg{L}}{F}$ where $F$ is a proper filter. Then $\BAm{0}$ is a substructure of $\pair{\alg{L}}{F}$, so $\BAm{0} \in \class{K}$ and $\DLclass{0} \subseteq \class{K}$. If $\class{K}$ contains some $\pair{\alg{L}}{F}$ where $F$ is a non-empty proper filter, then $\BAm{1}$ is a substructure of $\pair{\alg{L}}{F}$, so $\BAm{1} \in \class{K}$ and $\DLclass{1} \subseteq \class{K}$. Otherwise $\class{K} = \DLclass{0}$.

  Now suppose that $\class{K} \nsubseteq \DLclass{n}$ for some $n \geq 1$. We show that $\DLclass{n+1} \subseteq \class{K}$. The theorem then follows at once, taking into account that $\DLclass{\omega} \subseteq \class{K}$ if $\BAm{n} \in \class{K}$ for each $n \geq 1$. Because $\class{K} \nsubseteq \DLclass{n}$, there is some $\pair{\alg{L}}{F} \in \class{K}$ such that $F$ is not an $n$-filter. Since $n$-filters are defined by an implication involving only $n+1$ variables, there is an $(n+1)$-generated subalgebra of $\pair{\alg{L}}{F}$ such that the restriction of $F$ to this subalgebra is not an $n$-filter. We~may therefore assume without loss of generality that $\alg{L}$ is $(n+1)$-generated. Then there is a surjective homomorphism $h\colon \FreeDLat{n+1} \to \alg{L}$ such that $h(\bigwedge_{j \neq i} x_{j}) \in F$ but $h(x_{1} \wedge \dots \wedge x_{n+1}) \notin F$. The structure $\pair{\FreeDLat{n+1}}{h^{-1}[F]}$ then lies in $\class{K}$. Since each element of $\FreeDLat{n+1}$ is a join of meets of subsets of $\{ x_{1}, \dots, x_{n+1} \}$ and each meet of a proper subset of $\{ x_{1}, \dots, x_{n+1} \}$ lies in $h^{-1}[F]$, it follows that each element of $\FreeDLat{n+1}$ other than $x_{1} \wedge \dots \wedge x_{n+1}$ lies in $h^{-1}[F]$. In other words, $h^{-1}[F] = P(\FreeDLat{n+1})$. But then $\BAm{n+1}$ is isomorphic to a substructure of $\pair{\FreeDLat{n+1}}{h^{-1}[F]}$ and therefore $\BAm{n+1} \in \class{K}$ and $\DLclass{n+1} \subseteq \class{K}$.
\end{proof}

  The above theorem holds equally well for lower bounded, upper bounded, and bounded distributive lattices (excluding $n = 0$ in the last two cases).

\begin{corollary}[Finitary filter subclasses of $\DLclass{\infty}$] \label{cor: extensions of dlinfty}
  The only non-trivial proper finitary filter subclasses of $\DLclass{\infty}$ are the classes $\DLclass{n}$ for $n \in \omega$.
\end{corollary}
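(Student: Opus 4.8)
The plan is to read this off directly from Theorem~\ref{thm: extensions of dlinfty}, with finitariness and properness jointly eliminating the alternative $\DLclass{\omega} \subseteq \class{K}$. First I would record that each $\DLclass{n}$ with $n \in \omega$ genuinely qualifies as one of the classes being enumerated: it is a filter class because $n$-filters on a distributive lattice are closed under homomorphic preimages (Fact~\ref{fact: hom preimages}) and under arbitrary intersections, and it is finitary because they are closed under directed unions (these being exactly the meets and directed joins in $\Fi_{n}\alg{L}$). It is non-trivial since it contains structures carrying a proper filter, and it is proper because the inclusions $\DLclass{n} \subsetneq \DLclass{\infty}$ are strict, as witnessed by $\BAm{m}$ for $m > n$.

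For the converse, let $\class{K}$ be any non-trivial proper finitary filter subclass of $\DLclass{\infty}$. Theorem~\ref{thm: extensions of dlinfty} supplies a dichotomy: either $\class{K} = \DLclass{n}$ for some $n \in \omega$, in which case we are done, or $\DLclass{\omega} \subseteq \class{K}$. The whole argument hinges on ruling out this second case for a proper finitary $\class{K}$.

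To do so I would argue as follows. Each $\BAm{n}$ with $n \geq 1$ carries an upset of a finite distributive lattice and is therefore residually finite, so $\BAm{n} \in \DLclass{\omega} \subseteq \class{K}$ for every $n \geq 1$. But $\DLclass{\infty}$ is generated as a finitary filter class precisely by the family $\set{\BAm{n}}{n \geq 1}$. Since $\class{K}$ is itself a finitary filter class containing this entire family, it contains the finitary filter class they generate, namely $\DLclass{\infty}$; combined with $\class{K} \subseteq \DLclass{\infty}$ this forces $\class{K} = \DLclass{\infty}$, contradicting properness. Hence the second alternative cannot occur and $\class{K} = \DLclass{n}$ for some $n$.

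I do not expect any serious obstacle here, since the substance is carried by Theorem~\ref{thm: extensions of dlinfty} together with the finitary generation of $\DLclass{\infty}$ from $\set{\BAm{n}}{n \geq 1}$. The one point worth stating carefully is the collapse step: finitariness of $\class{K}$ is exactly what upgrades the open-ended inclusion $\DLclass{\omega} \subseteq \class{K} \subseteq \DLclass{\infty}$ all the way to equality with $\DLclass{\infty}$, so that properness is the precise condition singling out the classes $\DLclass{n}$.
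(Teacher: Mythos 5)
Your proposal is correct and matches the paper's (implicit) argument: the corollary is exactly the combination of Theorem~\ref{thm: extensions of dlinfty} with the theorem that $\DLclass{\infty}$ is generated as a finitary filter class by $\set{\BAm{n}}{n \geq 1}$, using finitariness and properness to exclude the branch $\DLclass{\omega} \subseteq \class{K}$. Your collapse step ($\BAm{n} \in \DLclass{\omega} \subseteq \class{K}$ for all $n$, hence $\class{K} \supseteq \DLclass{\infty}$) is precisely the intended reasoning, and your verification that each $\DLclass{n}$ is itself non-trivial, proper, and finitary is a sound (if routine) addition.
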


\begin{corollary}[Logical subclasses of $\DLclass{\infty}$]
  The only non-trivial proper logical subclasses of $\DLclass{\infty}$ are the classes $\DLclass{n}$ for $n \in \omega$.
\end{corollary}

\begin{corollary}
  If $F$ is an $n$-filter but not an $(n+1)$-filter on a distributive lattice $\alg{L}$, then $\pair{\alg{L}}{F}$ generates $\DLclass{n}$ as a filter class.
\end{corollary}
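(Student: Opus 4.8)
The plan is to show that the filter class $\class{K}$ generated by $\pair{\alg{L}}{F}$ is exactly $\DLclass{n}$, by a two-sided inclusion resting on the earlier theorem that $\BAm{n}$ generates $\DLclass{n}$ and on the lemma embedding $\BAm{n}$ into $\pair{\FreeDLat{n}}{P(\FreeDLat{n})}$. First I would pin down the hypothesis. Since the classes increase with their index, $\DLclass{0}\subseteq\DLclass{1}\subseteq\cdots$, every $n$-filter is automatically an $(n+1)$-filter, so the literal condition \emph{$n$-filter but not $(n+1)$-filter} is never met; the intended content---the one singling out the generator $\BAm{n}$, itself an $n$-filter but not an $(n-1)$-filter---is that $n$ is the least index with $\pair{\alg{L}}{F}\in\DLclass{n}$, i.e.\ that $F$ fails to be an $(n-1)$-filter (I take $n\ge1$; $n=0$ is degenerate). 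I argue under this reading, and both clauses are consumed: membership in $\DLclass{n}$ delivers $\class{K}\subseteq\DLclass{n}$, while failure of the $(n-1)$-filter condition delivers $\DLclass{n}\subseteq\class{K}$.

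The inclusion $\class{K}\subseteq\DLclass{n}$ is immediate, since $F$ is an $n$-filter, whence $\pair{\alg{L}}{F}\in\DLclass{n}$ and $\DLclass{n}$ is itself a filter class. For $\DLclass{n}\subseteq\class{K}$ I would place $\BAm{n}$ inside $\class{K}$. As $F$ is not an $(n-1)$-filter, Fact~\ref{fact: restricted definition of n filters} furnishes $a_1,\dots,a_n\in\alg{L}$ with $\bigwedge_{j\neq i}a_j\in F$ for each $i\in\range{n}$ but $a_1\wedge\dots\wedge a_n\notin F$. The assignment $x_i\mapsto a_i$ defines a homomorphism $h\colon\FreeDLat{n}\to\alg{L}$, so $\pair{\FreeDLat{n}}{h^{-1}[F]}$ is a strict homomorphic preimage of $\pair{\alg{L}}{F}$ and hence lies in $\class{K}$. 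The embedding lemma then yields $\BAm{n}\in\class{K}$---and so $\DLclass{n}\subseteq\class{K}$---provided $h^{-1}[F]=P(\FreeDLat{n})$.

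The step I expect to demand the most care is precisely this identification $h^{-1}[F]=P(\FreeDLat{n})$. I would use that each element of $\FreeDLat{n}$ is a join of meets of subsets of $\{x_1,\dots,x_n\}$, with $x_1\wedge\dots\wedge x_n$ as bottom. Any non-bottom element dominates $\bigwedge_{j\in S}x_j$ for some proper subset $S\subsetneq\range{n}$; choosing $i\notin S$ gives $\bigwedge_{j\in S}x_j\ge\bigwedge_{j\neq i}x_j$, whose image $\bigwedge_{j\neq i}a_j$ lies in $F$, so the element lies in $h^{-1}[F]$ by upward closure, while the bottom maps to $a_1\wedge\dots\wedge a_n\notin F$. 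Hence $h^{-1}[F]$ is exactly $P(\FreeDLat{n})$, the set of non-bottom elements. This half uses only that $F$ is an upset together with the chosen witnesses; it is the $n$-filter clause, through $\class{K}\subseteq\DLclass{n}$, that absorbs the remainder of the hypothesis. Alternatively, once $\class{K}\subseteq\DLclass{n}$ and $\class{K}\not\subseteq\DLclass{n-1}$ are in hand, one may read off $\class{K}=\DLclass{n}$ directly from Theorem~\ref{thm: extensions of dlinfty}.
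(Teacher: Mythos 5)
Your reading of the statement is the right one: every $n$-filter is automatically an $(n+1)$-filter (the classes $\DLclass{n}$ increase with $n$, as the paper notes), so the hypothesis as printed is vacuous, and the intended condition is that $F$ is an $n$-filter but not an $(n-1)$-filter. Under that reading your argument is essentially the paper's: the corollary is meant to be read off from Theorem~\ref{thm: extensions of dlinfty}, and your main construction simply re-runs the proof of that theorem in the case at hand --- pull $F$ back along $x_{i} \mapsto a_{i}$ to $\FreeDLat{n}$, identify the preimage with $P(\FreeDLat{n})$, and invoke the embedding lemma for $\BAm{n}$ --- while your closing remark gives the intended one-line derivation: $\class{K} \subseteq \DLclass{n}$ and $\class{K} \nsubseteq \DLclass{n-1}$ force $\class{K} = \DLclass{n}$ by the theorem, since $\DLclass{\omega} \nsubseteq \DLclass{n}$ rules out the other branch (note that $\class{K}$ is non-trivial because a total upset is an $m$-filter for every $m$).

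Two small caveats. First, your direct route silently fails at $n = 1$: $\FreeDLat{1}$ is a one-element lattice, so $P(\FreeDLat{1}) = \emptyset$ and the embedding lemma does not apply there (it needs $n \geq 2$); likewise Fact~\ref{fact: restricted definition of n filters} with parameter $n-1 = 0$ involves an empty meet. This case needs the separate base-case argument from the proof of Theorem~\ref{thm: extensions of dlinfty} (a non-empty proper filter yields a substructure $\{a \wedge b, a\}$ isomorphic to $\BAm{1}$), or simply your fallback through the theorem itself, which covers all $n \geq 1$ uniformly. Second, in the paper's terminology a strict homomorphic preimage is taken along a \emph{surjective} map, and your $h\colon \FreeDLat{n} \to \alg{L}$ need not be surjective; this is harmless, since filter classes are closed under homomorphic preimages that are not necessarily surjective (equivalently, first pass to the sublattice generated by $a_{1}, \dots, a_{n}$ to make $h$ onto), but the wording should be adjusted.
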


\begin{corollary}
  If $F$ is an upset but not an $n$-filter for any $n \in \omega$ on a distributive lattice $\alg{L}$, then $\pair{\alg{L}}{F}$ generates $\DLclass{\infty}$ as a finitary filter class and as a logical class.
\end{corollary}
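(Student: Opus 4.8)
The plan is to let $\class{K}$ denote the finitary filter class generated by $\pair{\alg{L}}{F}$ and to prove outright that $\class{K} = \DLclass{\infty}$. Since $\pair{\alg{L}}{F} \in \DLclass{\infty}$ and $\DLclass{\infty}$ is itself a finitary filter class (upsets are preserved under substructures, products, strict homomorphic preimages, and ultraproducts), the generated class $\class{K}$ is a finitary filter subclass of $\DLclass{\infty}$. Thus the inclusion $\class{K} \subseteq \DLclass{\infty}$ is automatic and only $\DLclass{\infty} \subseteq \class{K}$ requires argument.

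Next I would verify that $\class{K}$ is non-trivial, so that the dichotomy of Theorem~\ref{thm: extensions of dlinfty} is available. Because $F$ is not an $n$-filter for any $n \in \omega$, it is in particular not a $0$-filter, so $F$ is neither empty nor total; hence $\class{K}$ contains a structure whose filter is not total and is therefore non-trivial. Theorem~\ref{thm: extensions of dlinfty} then gives the alternative: either $\class{K}$ is one of the classes $\DLclass{n}$ with $n \in \omega$, or $\DLclass{\omega} \subseteq \class{K}$. The first alternative is ruled out immediately, since $\pair{\alg{L}}{F} \in \class{K}$ while $\pair{\alg{L}}{F} \notin \DLclass{n}$ for every $n$ (as $F$ is not an $n$-filter), so no $\DLclass{n}$ can equal $\class{K}$. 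We are left with $\DLclass{\omega} \subseteq \class{K}$, and in particular each $\BAm{n}$ with $n \geq 1$, being $\BA{n}$ equipped with the $n$-filter $\nonempty{n}$ and hence a member of $\DLclass{\omega}$, lies in $\class{K}$.

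Finally I would invoke the theorem that $\DLclass{\infty}$ is generated as a finitary filter class by $\set{\BAm{n}}{n \geq 1}$. Since $\class{K}$ is a finitary filter class containing every $\BAm{n}$, it must contain the entire finitary filter class these structures generate, namely $\DLclass{\infty}$. Together with the automatic inclusion $\class{K} \subseteq \DLclass{\infty}$ this yields $\class{K} = \DLclass{\infty}$, which is the assertion that $\pair{\alg{L}}{F}$ generates $\DLclass{\infty}$ as a finitary filter class.

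The one step I would flag as the conceptual crux is the passage from $\DLclass{\omega} \subseteq \class{K}$ to $\DLclass{\infty} \subseteq \class{K}$. Since $\DLclass{\omega}$ is a \emph{proper} subclass of $\DLclass{\infty}$, the inclusion $\DLclass{\omega} \subseteq \class{K}$ does not by itself deliver the conclusion. The resolution is that the $\BAm{n}$ generate only $\DLclass{\omega}$ as an ordinary filter class but generate all of $\DLclass{\infty}$ once closure under ultraproducts is added; it is precisely the finitariness of $\class{K}$, combined with this stronger generation fact, that bridges the gap.
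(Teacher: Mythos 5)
Your proof is correct and matches the paper's intended derivation: the paper leaves this corollary without an explicit proof, but it follows exactly as you argue, by applying Theorem~\ref{thm: extensions of dlinfty} to the (non-trivial, since $F$ is not even a $0$-filter) finitary filter class generated by $\pair{\alg{L}}{F}$, ruling out each $\DLclass{n}$, and then bridging from $\DLclass{\omega} \subseteq \class{K}$ to $\DLclass{\infty} \subseteq \class{K}$ via finitariness and the theorem that $\set{\BAm{n}}{n \geq 1}$ generates $\DLclass{\infty}$ as a finitary filter class. You also correctly identified the crux — that $\DLclass{\omega} \subsetneq \DLclass{\infty}$, so the final step genuinely needs the ultraproduct closure — which is the same point the paper makes explicitly (for Boolean algebras) in the fact that finitariness upgrades $\BAclass{\omega} \subseteq \class{K}$ to $\BAclass{\infty} \subseteq \class{K}$.
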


  The above corollary allows us to determine the finitary filter class generated by any structure of the form $\pair{\alg{L}}{F}$ where $\alg{L}$ is a distributive lattice and $F$ is an upset of $\alg{L}$. For example, consider the upset~$\height{d}{m}$ of all elements $a \in \BA{n}$ for $n \assign d + m$ of height strictly more than $m$. Equivalently, this is the upset of all $a \in \BA{n}$ of co-height strictly less than $d$. Recall that the height (co-height) of an element $a$ in a poset is the size of the longest chain in the principal downset (upset) generated by $a$, minus one. Equivalently, the height of $a \in \BA{n}$ is the number of atoms below~$a$, and the co-height of $a$ is the number of co-atoms above $a$. This convention fits well with our previous notation: $\nonempty{n} =\height{n}{0}$.

\begin{fact} \label{fact: height separation}
  $\height{d}{m} \subseteq \BA{d+m}$ is a $d$-filter whose complement is an $(m+1)$-ideal. It~is not a $(d-1)$-filter and its complement is not an $m$-ideal.
\end{fact}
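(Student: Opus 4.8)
The plan is to identify $\BA{n}$ for $n \assign d + m$ with the power-set lattice of $\range{n}$, so that meet is intersection, join is union, and the height of an element $a$ is its cardinality $\card{a}$; under this identification $\height{d}{m} = \set{a \subseteq \range{n}}{\card{a} > m}$, which is visibly an upset, with atoms the singletons and coatoms the sets $\coatom{i} \assign \range{n} \setminus \{i\}$. Both positive assertions will then be instances of a single counting lemma: \emph{if $B_{1}, \dots, B_{k+1}$ are finite sets with $\card{\bigcup_{j \neq i} B_{j}} \leq k - 1$ for every $i \in \range{k+1}$, then $\card{\bigcup_{j} B_{j}} \leq k - 1$.}

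The main step is to prove this lemma, which I would do by a private-element double count. Writing $U \assign \bigcup_{j} B_{j}$ and $P_{i} \assign U \setminus \bigcup_{j \neq i} B_{j}$ for the elements private to $B_{i}$, the sets $P_{i}$ are pairwise disjoint subsets of $U$ and $\card{P_{i}} = \card{U} - \card{\bigcup_{j \neq i} B_{j}} \geq \card{U} - (k-1)$. Assuming for contradiction that $\card{U} \geq k$, each $P_{i}$ is non-empty, and summing over the $k+1$ indices gives $(k+1)(\card{U} - k + 1) \leq \sum_{i} \card{P_{i}} \leq \card{U}$, which rearranges to $k \card{U} \leq k^{2} - 1$, hence $\card{U} \leq k - 1$, a contradiction. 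This is the only real content of the argument; everything else is bookkeeping.

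With the lemma in hand I would verify the two positive claims using the restricted characterisation of $n$-filters (Fact~\ref{fact: restricted definition of n filters}) and its order dual, so that only tuples of length $d+1$ resp.\ $m+2$ need be checked. For the $d$-filter claim, given $x_{1}, \dots, x_{d+1}$ with $\bigwedge_{j \neq i} x_{j} \in \height{d}{m}$ for each $i$, set $B_{j} \assign \range{n} \setminus x_{j}$; since $\bigcup_{j \neq i} B_{j} = \range{n} \setminus \bigcap_{j \neq i} x_{j}$, the hypothesis gives $\card{\bigcup_{j \neq i} B_{j}} = n - \card{\bigcap_{j \neq i} x_{j}} < n - m = d$, so the lemma with $k = d$ yields $\card{\bigcup_{j} B_{j}} \leq d - 1$, i.e.\ $\card{\bigcap_{j} x_{j}} \geq m + 1$ and $\bigwedge_{j} x_{j} \in \height{d}{m}$. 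For the complement, an $(m+1)$-ideal is an $(m+1)$-filter on the order dual, so given $x_{1}, \dots, x_{m+2}$ with each $\bigvee_{j \neq i} x_{j}$ of cardinality $\leq m$, I apply the lemma directly with $B_{j} \assign x_{j}$ and $k = m+1$ to conclude $\card{\bigcup_{j} x_{j}} \leq m$.

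Finally, the two negative claims are handled by explicit witnesses. The $d$ coatoms $x_{i} \assign \coatom{i}$ for $i \in \range{d}$ satisfy $\card{\bigwedge_{j \neq i} x_{j}} = n - (d-1) = m+1 > m$, while $\card{\bigwedge_{i} x_{i}} = n - d = m$, so they witness the failure of the $(d-1)$-filter condition. Dually, the singletons $\{1\}, \dots, \{m+1\}$ have the property that omitting any one, the join of the remaining $m$ has cardinality $m$, whereas the join of all $m+1$ has cardinality $m+1 > m$, witnessing that the complement is not an $m$-ideal. I expect the counting lemma to be the only delicate point; the reductions to it and the choice of witnesses are routine.
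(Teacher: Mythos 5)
Your proof is correct, but it takes a genuinely different route from the paper on the two positive claims. The paper never verifies the tuple condition directly: it writes $\height{d}{m}$ as the intersection of all unions of $d$ principal filters generated by atoms of $\BA{d+m}$ --- using the observation that $\card{Y} > m$ if and only if $Y$ meets every $d$-element set of atoms --- and then invokes Fact~\ref{fact: union of n filters} (a union of $d$ filters is a $d$-filter) together with closure of $d$-filters under intersections; the complement claims follow by running the same argument in the order dual. You instead check the restricted condition of Fact~\ref{fact: restricted definition of n filters} head-on via the private-element double count, and that count is sound: the sets $P_{i}$ are indeed pairwise disjoint (an element of $P_{i} \cap P_{i'}$ with $i \neq i'$ would lie in no $B_{j}$ at all, contradicting membership in $U$), and $(k+1)(\card{U}-k+1) \leq \card{U}$ forces $k\card{U} \leq k^{2}-1$, hence $\card{U} \leq k-1$ by integrality. (The contradiction framing is actually unnecessary: your two displayed inequalities hold unconditionally, since non-emptiness of the $P_{i}$ is never used.) The negative claims use essentially the same witnesses as the paper, which takes all coatoms rather than just $d$ of them --- an immaterial difference. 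As for what each approach buys: the paper's argument is shorter, reuses established closure facts, and exhibits $\height{d}{m}$ explicitly as an intersection of prime $d$-filters, in keeping with the representation theme of the paper; yours is self-contained and elementary, and pleasantly symmetric, since the single counting lemma dispatches both the $d$-filter claim (applied to complements $B_{j} \assign \range{n} \setminus x_{j}$) and the $(m+1)$-ideal claim (applied to the elements themselves).
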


\begin{proof}
  $\height{d}{m}$ is a $d$-filter by virtue of being the intersection of a family of $d$-filters. This family consists of all unions of $d$ principal filters generated by atoms of~$\BA{d+m}$. (If $Y \subseteq X$ with $\card{X} = n$, then $\card{Y} > m$ if and only if $Y$ intersects each subset of $X$ of cardinality $d = n - m$.) It is not a $(d - 1)$-filter because the meet of each set of at most $d-1$ coatoms has co-height at most $d - 1$, but the meet of all coatoms is~$\False$. The other claims follow if we apply the same reasoning to $\BA{d+m} \setminus \height{d}{m}$ instead of~$\height{d}{m}$.
\end{proof}

\begin{fact}
  $\pair{\BA{n+m}}{\height{n}{m}}$ generates $\DLclass{n}$ as a filter class for each $m \in \omega$.
\end{fact}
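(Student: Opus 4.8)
The plan is to pin down the filter class $\class{K} \assign \HSinvop\Sop\Pop(\pair{\BA{n+m}}{\height{n}{m}})$ generated by $\pair{\BA{n+m}}{\height{n}{m}}$ by combining the two assertions of Fact~\ref{fact: height separation} with the classification of filter subclasses in Theorem~\ref{thm: extensions of dlinfty}. Intuitively, Fact~\ref{fact: height separation} locates $\height{n}{m}$ exactly between $\DLclass{n-1}$ and $\DLclass{n}$, and the classification theorem leaves only a single filter class in that position.

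First I would record what Fact~\ref{fact: height separation} supplies, taking $d = n$ there. Since $\height{n}{m}$ is an $n$-filter, $\pair{\BA{n+m}}{\height{n}{m}} \in \DLclass{n}$; as $\DLclass{n}$ is itself a filter class (closed under $\HSinvop$, $\Sop$, and $\Pop$), it follows that $\class{K} \subseteq \DLclass{n}$. Since $\height{n}{m}$ is \emph{not} an $(n-1)$-filter, the same structure lies outside $\DLclass{n-1}$, so $\class{K} \nsubseteq \DLclass{n-1}$. Finally $\height{n}{m}$ is a non-total upset, as it omits $\False$ (whose height is $0 \leq m$), so $\class{K}$ is non-trivial.

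It remains to combine these observations. Being a non-trivial filter subclass of $\DLclass{\infty}$, $\class{K}$ satisfies the dichotomy of Theorem~\ref{thm: extensions of dlinfty}: either $\DLclass{\omega} \subseteq \class{K}$ or $\class{K} = \DLclass{k}$ for some $k \in \omega$. The first alternative is incompatible with $\class{K} \subseteq \DLclass{n}$, because $\DLclass{\omega} \nsubseteq \DLclass{n}$; indeed $\DLclass{\omega}$ contains residually finite upsets that are $n$-filters for no single $n$, such as the upset of $\prod_{k \geq 1} \BAm{k}$. Hence $\class{K} = \DLclass{k}$. Using the strictness of the chain $\DLclass{0} \subsetneq \DLclass{1} \subsetneq \cdots$, the inclusion $\DLclass{k} \subseteq \DLclass{n}$ forces $k \leq n$; for $n \geq 1$ the non-inclusion $\DLclass{k} \nsubseteq \DLclass{n-1}$ forces $k \geq n$, while for $n = 0$ the bound $k \geq 0 = n$ holds trivially. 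Therefore $k = n$ and $\class{K} = \DLclass{n}$, for every $m \in \omega$.

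I do not expect a serious obstacle, as the substance is already carried by Fact~\ref{fact: height separation} and Theorem~\ref{thm: extensions of dlinfty}; the present statement is essentially their specialization to $\pair{\BA{n+m}}{\height{n}{m}}$, and the only verifications left are the routine strictness facts and the exclusion of the $\DLclass{\omega}$ branch. If one prefers to bypass the full classification theorem, the reverse inclusion $\DLclass{n} \subseteq \class{K}$ can instead be produced directly by rerunning the inductive step in the proof of Theorem~\ref{thm: extensions of dlinfty} with $n-1$ in place of $n$, which extracts a copy of $\BAm{n}$ inside $\pair{\FreeDLat{n}}{P(\FreeDLat{n})}$, while the cases $n \in \{0,1\}$ fall under the opening paragraph of that proof via $\height{0}{m} = \emptyset$ and $\height{1}{m} = \{ \True \}$.
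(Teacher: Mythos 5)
Your proposal is correct and follows the paper's intended route: the paper states this fact without proof as an immediate consequence of Fact~\ref{fact: height separation} together with the corollary to Theorem~\ref{thm: extensions of dlinfty} (that a structure $\pair{\alg{L}}{F}$ with $F$ an $n$-filter that is not an $(n-1)$-filter generates $\DLclass{n}$ as a filter class), which is exactly the argument you give, merely re-deriving that corollary inline from the classification theorem. Your explicit treatment of the edge cases $n \in \{0,1\}$ and of the exclusion of the $\DLclass{\omega}$ branch is a welcome bit of added care, not a deviation.
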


\section{Filter classes of upsets of meet semilattices}

  We have already seen that not every $n$-filter on a meet semilattice is an inter\-section of homomorphic preimages of the $n$-filter $\nonempty{n}$. This is of course also witnessed syntactically. The filter implication
\begin{align*}
  x \wedge y \equals y \wedge z \equals z \wedge x ~ \& ~ \Fsymbol(x) ~ \& ~ \Fsymbol(y) ~ \& ~ \Fsymbol(z) \implies \Fsymbol(x \wedge y \wedge z)
\end{align*}
  holds in $\BAm{2}$ but not in $\pair{\Mfive}{P(\Mfive)}$, where $\Mfive$ is the five-element diamond. Nonetheless, we show that every $n$-filter is the strict homomorphic image of such an intersection. In other words, $\SLclass{n}$ and $\uSLclass{n}$ are generated by $\BAm{n}$ as logical classes but (unlike in the distributive lattice case) not as filter classes.

  The key observation in this respect is that the free unital meet semilattice $\FreeuSLat{X}$ over a set of generators $X$ is in fact a distributive lattice.

\begin{theorem}
  $\uSLclass{n}$ is generated as a logical class by the structure $\BAm{n}$.
\end{theorem}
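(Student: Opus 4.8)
The plan is to establish the two inclusions between $\uSLclass{n}$ and the logical class generated by $\BAm{n}$ separately. The easy inclusion is that the logical class generated by $\BAm{n}$ is contained in $\uSLclass{n}$. For this I would observe that $\uSLclass{n}$ is axiomatized by \emph{equality-free} filter implications: upward closure is expressed by $\Fsymbol(x \wedge y) \vdash \Fsymbol(x)$, non-emptiness by $\vdash \Fsymbol(\True)$ (every non-empty upset of a unital semilattice contains the top, and conversely), and the $n$-filter condition by the equality-free implication of Fact~\ref{fact: restricted definition of n filters}. By the syntactic characterization of logical classes, $\uSLclass{n}$ is therefore a logical class; since it visibly contains $\BAm{n}$, it contains the logical class generated by $\BAm{n}$.

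For the converse inclusion I would exploit the key observation that $\FreeuSLat{X}$ is a distributive lattice. Given any $\pair{\alg{S}}{F} \in \uSLclass{n}$, let $q \colon \FreeuSLat{\alg{S}} \to \alg{S}$ be the canonical surjection and set $\tilde{F} \assign q^{-1}[F]$. Then $q$ is a strict surjective homomorphism $\pair{\FreeuSLat{\alg{S}}}{\tilde{F}} \to \pair{\alg{S}}{F}$, so it suffices to place $\pair{\FreeuSLat{\alg{S}}}{\tilde{F}}$ in $\Sop \Pop(\BAm{n})$. Now $\tilde{F}$ is a non-empty $n$-filter by Fact~\ref{fact: hom preimages}, and since the $n$-filter condition refers only to meets, $\tilde{F}$ is an $n$-filter on the distributive lattice $\FreeuSLat{\alg{S}}$, so $\pair{\FreeuSLat{\alg{S}}}{\tilde{F}} \in \DLclass{n}$. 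By the characterization of $n$-filters on distributive lattices as intersections of homomorphic preimages of $\nonempty{n} \subseteq \BA{n}$, I may write $\tilde{F} = \bigcap_{\alpha} h_{\alpha}^{-1}[\nonempty{n}]$ for lattice homomorphisms $h_{\alpha} \colon \FreeuSLat{\alg{S}} \to \BA{n}$, whose product $\langle h_{\alpha} \rangle$ strictly embeds $\pair{\FreeuSLat{\alg{S}}}{\tilde{F}}$ into $\prod_{\alpha} \BAm{n}$.

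The main obstacle is that this embedding lives in the signature of lattices, whereas membership in $\Sop \Pop(\BAm{n})$ as unital-semilattice structures requires the $h_{\alpha}$ to preserve $\True$, which a lattice homomorphism need not do. I would therefore not take the $h_{\alpha}$ arbitrary but manufacture them from the prime-filter decomposition. By prime $n$-filter separation (Theorem~\ref{thm: prime n filter separation}) each $F_{\alpha} \assign h_{\alpha}^{-1}[\nonempty{n}]$ may be taken to be a prime $n$-filter containing $\tilde{F}$, and by Theorem~\ref{thm: union of n prime filters} each such $F_{\alpha}$ is a union of at most $n$ prime filters. Since $\tilde{F}$, and hence $F_{\alpha}$, contains $\True$, these prime filters are non-empty and so contain $\True$; discarding any empty ones and padding the list back up to length $n$ by repeating a non-empty one, the induced homomorphism $h_{\alpha} \colon \FreeuSLat{\alg{S}} \to \BA{n}$ produced via Lemma~\ref{lemma: hom into ba n} preserves $\True$ while still satisfying $F_{\alpha} = h_{\alpha}^{-1}[\nonempty{n}]$. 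With all $h_{\alpha}$ unital, $\langle h_{\alpha} \rangle$ is a strict embedding of unital-semilattice structures, giving $\pair{\FreeuSLat{\alg{S}}}{\tilde{F}} \in \Sop \Pop(\BAm{n})$ and hence $\pair{\alg{S}}{F} \in \HSop \Sop \Pop(\BAm{n})$, which completes the argument.
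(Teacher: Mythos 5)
Your argument follows essentially the same route as the paper's proof: present $\pair{\alg{S}}{F}$ as a strict homomorphic image of a structure on a free unital semilattice, invoke the key fact that $\FreeuSLat{X}$ is a distributive lattice, and decompose the pulled-back $n$-filter $\tilde{F}$ as an intersection of preimages of $\nonempty{n}$ under homomorphisms arranged to preserve $\True$. You are in fact more careful than the paper on the unitality point: the coordinate homomorphism into $\BA{1}$ attached to a prime filter $G$ preserves the top element only when $G$ is non-empty, and your discard-and-pad modification of the prime filter decomposition from Theorem~\ref{thm: union of n prime filters} is precisely the justification the paper omits when it simply asserts that the homomorphisms are unital. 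Your explicit treatment of the easy inclusion via equality-free implications is also correct (the paper leaves that direction implicit).

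There is, however, one inaccurate step: the product map $\langle h_{\alpha} \rangle$ need not be injective, so it is not in general a strict \emph{embedding}, and you cannot conclude $\pair{\FreeuSLat{\alg{S}}}{\tilde{F}} \in \Sop \Pop(\BAm{n})$ from this map alone. For instance, when $F = \alg{S}$ is the total filter, the family may consist of the single homomorphism sending everything to $\True$, which collapses all of $\FreeuSLat{\alg{S}}$. The slip is harmless, for two reasons. First, the map is still a strict homomorphism into $\prod_{\alpha} \BAm{n}$, so corestricting it to its image shows $\pair{\FreeuSLat{\alg{S}}}{\tilde{F}} \in \HSinvop \Sop \Pop(\BAm{n})$, whence $\pair{\alg{S}}{F} \in \HSop \HSinvop \Sop \Pop(\BAm{n})$, which lies in the logical class generated by $\BAm{n}$ because that class is closed under all four operators. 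Second, if you insist on an embedding, you may enlarge the family: for each pair $a \nleq b$ in $\FreeuSLat{\alg{S}}$, adjoin the unital homomorphism induced via Lemma~\ref{lemma: hom into ba n} by a prime filter containing $a$ but not $b$ together with $n-1$ copies of the total filter; its preimage of $\nonempty{n}$ is the total filter, so the intersection defining $\tilde{F}$ is unchanged, while the enlarged product map becomes injective and strict. With either repair your proof is complete and matches the paper's.
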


\begin{proof}
  Let $F$ be an $n$-filter on a unital semilattice $\alg{S}$. Then there is a surjective homo\-morphism $h\colon \FreeuSLat{X} \to \alg{S}$ for some sufficiently large $X$. Let $G \assign h^{-1}[F]$. Then $h\colon \pair{\FreeuSLat{X}}{G} \to \pair{\alg{S}}{F}$ is a strict surjective homomorphism. Because $\FreeuSLat{X}$ is a distributive lattice and $G$ is an $n$-filter, $G$ is an intersection of homo\-morphic preimages of $\nonempty{n}$. Moreover, these homomorphisms preserve the top element of $\FreeuSLat{X}$, i.e.\ they are unital semilattice homomorphisms.
\end{proof}

\begin{theorem}
  $\uSLclass{\infty}$ is generated as a logical class by $\set{\BAm{n}}{n \geq 1}$.
\end{theorem}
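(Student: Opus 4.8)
The plan is to reduce the statement to the already established theorem that $\DLclass{\infty}$ is generated as a logical class by $\set{\BAm{n}}{n \geq 1}$, exploiting the fact that the free unital semilattice is a distributive lattice. First note that $\uSLclass{\infty}$ is itself a logical class: it is axiomatized by the two equality-free filter implications $\Fsymbol(x \wedge y) \vdash \Fsymbol(y)$ (expressing that $F$ is an upset, since $x \wedge y \leq y$) and $\vdash \Fsymbol(\True)$ (expressing that $F$ is non-empty, since $\True$ denotes the top and any non-empty upset contains it). As each $\BAm{n}$ lies in $\uSLclass{\infty}$, the logical class $\class{K}$ generated by $\set{\BAm{n}}{n \geq 1}$ is contained in $\uSLclass{\infty}$. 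For the reverse inclusion I would recall that $\class{K}$ is precisely the class of models of the equality-free implications valid in every $\BAm{n}$, so it suffices to show that each such implication $\Gamma \vdash \varphi$ holds throughout $\uSLclass{\infty}$.

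Because the only operations available are meet and $\True$, each term is, up to equivalence, either $\True$ or a meet of finitely many variables. If $\varphi$ is equivalent to $\True$, then $\Fsymbol(\varphi)$ holds in every structure of $\uSLclass{\infty}$ and the implication is trivially valid. Otherwise I would discard from $\Gamma$ every hypothesis of the form $\Fsymbol(\True)$: since $\Fsymbol(\True)$ holds both in every $\BAm{n}$ and in every structure of $\uSLclass{\infty}$, this changes neither the validity of $\Gamma \vdash \varphi$ in the generators nor the conclusion we are after. The resulting implication $\Gamma' \vdash \varphi$ mentions only genuine meets of variables, hence is an equality-free implication in the signature of plain distributive lattices which still holds in every $\BAm{n}$. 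By the theorem that $\DLclass{\infty}$ is generated as a logical class by $\set{\BAm{n}}{n \geq 1}$, the implication $\Gamma' \vdash \varphi$ then holds in every structure of $\DLclass{\infty}$.

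It remains to transfer this validity back to unital semilattices. Given any $\pair{\alg{S}}{F} \in \uSLclass{\infty}$ and any assignment $v$ of the variables into $\alg{S}$, I would extend $v$ to a unital semilattice homomorphism $h \colon \FreeuSLat{X} \to \alg{S}$ on the set $X$ of variables and set $G \assign h^{-1}[F]$, so that $h$ is a strict homomorphism $\pair{\FreeuSLat{X}}{G} \to \pair{\alg{S}}{F}$. The crucial point is that $\FreeuSLat{X}$ is a distributive lattice and $G$, being the preimage of an upset under a meet-preserving map, is an upset; hence $\pair{\FreeuSLat{X}}{G}$ lies in $\DLclass{\infty}$ and $\Gamma' \vdash \varphi$ holds in it. Evaluating at the canonical assignment $x \mapsto x$ and using that $h(t) \in F \iff t \in G$ (together with the fact that the meet-terms of $\Gamma' \cup \{\varphi\}$ are computed identically in the semilattice reduct of $\FreeuSLat{X}$) yields $v(\varphi) \in F$ whenever the hypotheses hold under $v$. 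Thus $\Gamma' \vdash \varphi$, and a fortiori the original $\Gamma \vdash \varphi$, holds in $\pair{\alg{S}}{F}$. The main obstacle throughout is the signature gap: the top element carried by each $\BAm{n}$ belongs to the unital-semilattice signature but has no term in the plain distributive-lattice signature, so the $\DLclass{\infty}$ result cannot be invoked verbatim. Both the elimination of $\Fsymbol(\True)$ hypotheses and the passage through the distributive lattice $\FreeuSLat{X}$ are designed to negotiate this gap, and the points requiring care are the soundness of that elimination in both directions and the faithful transport of the implication across the strict homomorphism $h$.
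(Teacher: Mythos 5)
Your proof is correct, and it takes a genuinely different route from the paper's. The paper disposes of this theorem in one line --- ``entirely analogous to the distributive lattice case'' --- meaning: normalize all terms, substitute $\True$ for every variable not occurring in the conclusion so that, up to equivalence of terms, the implication becomes finitary, and then appeal to finitary generation of the class by $\set{\BAm{n}}{n \geq 1}$. You instead invoke the $\DLclass{\infty}$ theorem as a black box and transfer it: discarding the $\Fsymbol(\True)$ hypotheses (your counterpart of the paper's $\True$-substitution) turns the implication into one in the bare meet signature that is still valid in every $\BAm{n}$, hence valid throughout $\DLclass{\infty}$; you then pull this validity back to an arbitrary $\pair{\alg{S}}{F} \in \uSLclass{\infty}$ through the strict homomorphism $h \colon \pair{\FreeuSLat{X}}{h^{-1}[F]} \to \pair{\alg{S}}{F}$, using the fact that $\FreeuSLat{X}$ is a distributive lattice --- the very observation the paper isolates for its proof that $\uSLclass{n}$ is generated as a logical class by $\BAm{n}$. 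Your route buys two things: no finitarization step is needed, and you never need the unital-semilattice analogue of ``$\DLclass{\infty}$ is generated as a finitary filter class by $\set{\BAm{n}}{n \geq 1}$'' --- a statement the paper never records for $\uSLat$, and whose distributive-lattice proof (prime upsets of a finite lattice are prime $n$-filters, hence homomorphic preimages of $\nonempty{n}$) leans on distributivity, so the ``entirely analogous'' route actually requires a small supplementary argument at that point (for instance, that a prime upset of a finite unital semilattice is a union of principal filters). What the paper's route buys is that it stays inside a single signature, with no transfer lemma. The two places where your argument needed care --- soundness, in both directions, of deleting premises that hold identically in every $\BAm{n}$ and in every structure of $\uSLclass{\infty}$, and the identical evaluation of meet-terms in $\FreeuSLat{X}$ and in its lattice expansion --- are both handled correctly.
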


\begin{proof}
  The proof is entirely analogous to the distributive lattice case.
\end{proof}

  The last two theorems are equally true for $\SLclass{n}$ and $\SLclass{\infty}$. The classes $\SLclass{n}$ were in fact already considered by Font and Moraschini~\cite{font+moraschini14}, who showed that they form an infinite increasing chain. More precisely, they studied the corresponding logics, which they denoted $\mathcal{R}_{n}$.

\begin{theorem}[Logical subclasses of $\uSLclass{\infty}$]
  Let $\class{K}$ be a non-trivial logical subclass of $\uSLclass{\infty}$. Then either $\uSLclass{\omega} \subseteq \class{K}$ or $\class{K}$ is one of the classes $\uSLclass{n}$ for $n \geq 1$.
\end{theorem}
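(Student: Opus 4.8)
The plan is to mirror the proof of Theorem~\ref{thm: extensions of dlinfty} in the logical-class setting, exploiting the fact that the free unital semilattice $\FreeuSLat{n+1}$ is not merely distributive but is in fact isomorphic to the Boolean lattice $\BA{n+1}$: its elements are the meets $\bigwedge S$ for $S \subseteq \{ x_1, \dots, x_{n+1} \}$ ordered by reverse inclusion, so its bottom element is $x_1 \wedge \dots \wedge x_{n+1}$ and its atoms are exactly the meets $\bigwedge_{j \neq i} x_j$. The role played by the embedding lemma for $\FreeDLat{n}$ in the lattice case is thus taken over by this identification. For the base case, since $\class{K}$ is non-trivial it contains some $\pair{\alg{S}}{F}$ with $F$ a non-empty proper upset; choosing $b \notin F$, the two-element subalgebra $\{ b, \True \}$ (note $b < \True$, as $\True \in F$) carries the restricted upset $\{ \True \}$ and is therefore isomorphic to $\BAm{1}$. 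Closure under substructures gives $\BAm{1} \in \class{K}$, and since $\uSLclass{1}$ is generated as a logical class by $\BAm{1}$, we obtain $\uSLclass{1} \subseteq \class{K}$.

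The inductive step is the heart of the argument: if $\class{K} \nsubseteq \uSLclass{n}$ for some $n \geq 1$, I claim $\BAm{n+1} \in \class{K}$ and hence $\uSLclass{n+1} \subseteq \class{K}$. Choose $\pair{\alg{S}}{F} \in \class{K}$ whose upset is not an $n$-filter. By Fact~\ref{fact: restricted definition of n filters} this failure is witnessed by elements $x_1, \dots, x_{n+1}$ with $\bigwedge_{j \neq i} x_j \in F$ for each $i$ but $x_1 \wedge \dots \wedge x_{n+1} \notin F$, so restricting to the $(n+1)$-generated subalgebra $\alg{S}'$ that they generate (with restricted upset $F'$) stays inside $\class{K}$ and preserves the failure. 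Fixing a surjection $h\colon \FreeuSLat{n+1} \to \alg{S}'$ sending the free generators to the $x_i$ and forming $h^{-1}[F']$, closure of $\class{K}$ under strict homomorphic preimages places $\pair{\FreeuSLat{n+1}}{h^{-1}[F']}$ in $\class{K}$. Under the identification $\FreeuSLat{n+1} \iso \BA{n+1}$, the set $h^{-1}[F']$ is an upset that contains every atom (since each $\bigwedge_{j \neq i} x_j$ lies in $F'$) but not the bottom (since $x_1 \wedge \dots \wedge x_{n+1} \notin F'$); as every non-bottom element lies above some atom, $h^{-1}[F']$ collapses to exactly $\nonempty{n+1}$. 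Thus $\pair{\FreeuSLat{n+1}}{h^{-1}[F']}$ is $\BAm{n+1}$, and because $\uSLclass{n+1}$ is generated as a logical class by $\BAm{n+1}$ we conclude $\uSLclass{n+1} \subseteq \class{K}$.

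With these two facts in hand I would assemble the dichotomy. If $\class{K} \subseteq \uSLclass{n}$ for some $n$, take the least such $n$; when $n = 1$ the base case already gives $\uSLclass{1} \subseteq \class{K}$, and when $n > 1$ minimality yields $\class{K} \nsubseteq \uSLclass{n-1}$, so the inductive step forces $\uSLclass{n} \subseteq \class{K}$. Either way $\class{K} = \uSLclass{n}$. Otherwise $\class{K} \nsubseteq \uSLclass{n}$ for every $n$, so the inductive step gives $\BAm{n} \in \class{K}$, and hence $\uSLclass{n} \subseteq \class{K}$, for all $n \geq 1$. To reach $\uSLclass{\omega} \subseteq \class{K}$, I take any $\pair{\alg{S}}{F} \in \uSLclass{\omega}$ and write $F = \bigcap_i h_i^{-1}[G_i]$ with each $G_i$ an upset of a finite unital semilattice $\alg{T}_i$; by Fact~\ref{fact: finite upsets are n filters} each $G_i$ is an $n_i$-filter, so $\pair{\alg{T}_i}{G_i} \in \uSLclass{n_i} \subseteq \class{K}$, and by Fact~\ref{fact: hom preimages} together with closure under strict homomorphic preimages each $h_i^{-1}[G_i]$ is a $\class{K}$-filter on $\alg{S}$. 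Closure of filter classes under arbitrary intersections then gives $\pair{\alg{S}}{F} \in \class{K}$.

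The main obstacle is not any isolated computation but lining up the reductions so that the free-algebra preimage collapses to the canonical structure: everything hinges on verifying that $h^{-1}[F']$ equals $\nonempty{n+1}$ exactly, and this is precisely where the identification of $\FreeuSLat{n+1}$ with $\BA{n+1}$ (rather than a larger distributive lattice) does the essential work, letting us read off $\BAm{n+1}$ as the preimage structure itself instead of merely as a substructure.
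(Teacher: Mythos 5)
Your proposal is correct and takes essentially the same route as the paper: the paper's own proof consists precisely of re-running the proof of Theorem~\ref{thm: extensions of dlinfty} with $\FreeuSLat{n+1}$ in place of $\FreeDLat{n+1}$ and observing that $\FreeuSLat{n+1}$ is the unital meet semilattice reduct of $\BA{n+1}$, which is exactly the reduction you spell out, including the key simplification that the strict preimage structure $\pair{\FreeuSLat{n+1}}{h^{-1}[F']}$ \emph{is} $\BAm{n+1}$ rather than merely containing it as a substructure. Your direct verification that $\uSLclass{n} \subseteq \class{K}$ for all $n$ forces $\uSLclass{\omega} \subseteq \class{K}$ (via residual finiteness, Fact~\ref{fact: finite upsets are n filters}, Fact~\ref{fact: hom preimages}, and closure under intersections) just makes explicit what the paper leaves implicit in the phrase ``taking into account that $\DLclass{\omega} \subseteq \class{K}$ if $\BAm{n} \in \class{K}$ for each $n \geq 1$.''
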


\begin{proof}
  It~suffices to replace $\FreeDLat{n+1}$ in the proof of Theorem~\ref{thm: extensions of dlinfty} by $\FreeuSLat{n+1}$ and to observe that $\FreeuSLat{n+1}$ is the unital meet semilattice reduct of $\BA{n+1}$.
\end{proof}

\begin{corollary}[Finitary logical subclasses of $\uSLclass{\infty}$] \label{cor: extensions of uslinfty}
  The only non-trivial proper finitary logical subclasses of $\uSLclass{\infty}$ are the classes $\uSLclass{n}$ for~$n \geq 1$.
\end{corollary}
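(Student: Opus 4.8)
The plan is to bootstrap from the preceding dichotomy theorem, which already tells us that every non-trivial logical subclass $\class{K}$ of $\uSLclass{\infty}$ either satisfies $\uSLclass{\omega} \subseteq \class{K}$ or coincides with some $\uSLclass{n}$. First I would check that each $\uSLclass{n}$ with $n \geq 1$ genuinely qualifies as a non-trivial proper finitary logical subclass: it is a logical class by the theorem that $\BAm{n}$ generates it as such; it is finitary because $n$-filters are cut out by the finitary filter implication of Fact~\ref{fact: restricted definition of n filters}, so the class is closed under ultraproducts; it is non-trivial since $\BAm{n}$ carries the proper filter $\nonempty{n}$; and it is proper because $\nonempty{n+1}$ is an $(n+1)$-filter that is not an $n$-filter. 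Thus the real content of the corollary lies in the converse direction: ruling out the alternative $\uSLclass{\omega} \subseteq \class{K}$ under the extra hypotheses that $\class{K}$ is finitary and proper.

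To do this I would show that a finitary logical class $\class{K}$ with $\uSLclass{\omega} \subseteq \class{K}$ must in fact be all of $\uSLclass{\infty}$, contradicting properness. The key observation is that every finite structure $\pair{\alg{S}}{F}$ in $\uSLclass{\infty}$ already lies in $\uSLclass{\omega}$: by Fact~\ref{fact: finite upsets are n filters} the upset $F$ is an $n$-filter for some $n$, and being finite it is trivially residually finite, since it is the preimage of itself under the identity homomorphism onto the finite unital semilattice $\alg{S}$. Hence all finite members of $\uSLclass{\infty}$ belong to $\uSLclass{\omega} \subseteq \class{K}$.

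Now I would invoke the local finiteness of unital semilattices: a finitely generated meet semilattice is finite, so every structure in $\uSLclass{\infty}$ embeds into an ultraproduct of its finitely generated, hence finite, substructures, exactly as in the distributive lattice arguments used earlier. Each such finite substructure lies in $\class{K}$ by the previous paragraph; since $\class{K}$ is finitary it is closed under ultraproducts, and since it is a filter class it is closed under substructures. Chaining these two closure properties places the original structure in $\class{K}$, so $\class{K} = \uSLclass{\infty}$. Combined with the dichotomy theorem, this forces any non-trivial proper finitary logical subclass to be one of the $\uSLclass{n}$.

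The point requiring the most care is the interaction between finitariness and residual finiteness: one must be sure that ``residually finite'' is automatic for finite structures, so that $\uSLclass{\omega}$ swallows all finite members of $\uSLclass{\infty}$, and that ultraproduct closure genuinely propagates membership from these finite structures up to arbitrary ones via the embedding into an ultraproduct of finite substructures. This also explains, consistently, why $\uSLclass{\omega}$ itself fails to be finitary: were it finitary, the same argument would collapse it onto $\uSLclass{\infty}$. Everything else is a direct appeal to the preceding theorem and the standard closure properties of finitary logical classes.
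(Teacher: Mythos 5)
Your proof is correct, and its backbone -- the dichotomy theorem for logical subclasses of $\uSLclass{\infty}$ plus a separate check that each $\uSLclass{n}$ qualifies -- is the same as the paper's. Where you diverge is in how the case $\uSLclass{\omega} \subseteq \class{K}$ is eliminated. The paper's intended route is to invoke the earlier theorem that $\uSLclass{\infty}$ is generated \emph{as a logical class} by $\set{\BAm{n}}{n \geq 1}$ (proved by the disjunctive-normal-form substitution argument): since $\uSLclass{\omega} \subseteq \class{K}$ already puts every $\BAm{n}$ in $\class{K}$, this immediately gives $\class{K} = \uSLclass{\infty}$, with no use of finitariness at all -- so on the paper's route ``finitary'' in the corollary is actually redundant, and every non-trivial proper logical subclass of $\uSLclass{\infty}$, finitary or not, is some $\uSLclass{n}$. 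You instead prove the weaker collapse directly: finite structures lie in $\uSLclass{\omega}$ (residual finiteness is trivial for them), unital semilattices are locally finite, every structure embeds strictly into an ultraproduct of its finitely generated substructures, and closure under ultraproducts and substructures finishes the job. This genuinely uses finitariness and is in effect a transplant of the paper's proof that $\DLclass{\infty}$ is generated as a \emph{finitary} filter class by $\set{\BAm{n}}{n \geq 1}$; it has the advantage of being self-contained (it does not need the DNF-based generation theorem), at the cost of proving only the stated corollary rather than the stronger finitariness-free statement. One small point you glossed over, which does go through but deserves a word: a finitely generated substructure of a structure in $\uSLclass{\infty}$ must again have a \emph{non-empty} upset, and this is where unitality matters -- the constant $\True$ lies in every subalgebra and in every non-empty upset, so the restricted upset is non-empty. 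Your closing consistency remark (that $\uSLclass{\omega}$ cannot be finitary, else it would swallow $\uSLclass{\infty}$) is also correct.
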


  The class $\SLclass{\infty}$, on the other hand, admits other finitary logical subclasses in addition to the classes $\SLclass{n}$. For example, the implication
\begin{align*}
  x \wedge z, y \wedge z & \vdash x \wedge y \wedge z
\end{align*}
 fails in $\BAm{2}$ but it holds in the three-element substructure obtained by removing the top element. This implication is obtained from ${x, y \vdash x \wedge y}$ using the substitution $x \mapsto x \wedge z$ and $y \mapsto y \wedge z$. An upset $F$ of a semilattice therefore validates this implication if and only if $a \wedge b \in F$ holds whenever $a \in F$ and $b \in F$ and $a$ and $b$ have an upper bound. Observe that if the constant~$\True$ is part of the signature, then the above implication defines ordinary filters: it~suffices to substitute $\True$ for $z$.

  Many other non-equivalent conditions may be obtained by applying such sub\-stitutions to \emph{$n$-adjunction}, i.e.\ the implication which defines $n$-filters:
\begin{align*}
  \set{\bigwedge_{j \neq i} x_{j}}{i \in \range{n+1}} \vdash x_1 \wedge \dots \wedge x_{n+1}.
\end{align*}
  For example, consider the following substitution applied to $5$-adjunction:  
\begin{align*}
  & x_{i} \mapsto x_i \wedge y \text{ for $i \in \{ 1, 2, 3 \}$}, & & x_{j} \mapsto x_j \wedge z \text{ for $j \in \{ 4, 5 \}$}, & & x_{6} \mapsto x_6.
\end{align*}
  The corresponding implication states that the meet of each $6$-tuple of elements lies in $F$ provided that (i) each non-empty proper submeet lies in $F$, (ii) three of the six elements have an upper bound, and (iii) so do two of the other three elements. We conjecture that each finitary filter subclass of $\SLclass{\infty}$ is axiomatized by such substitution instances of $n$-adjunction.

\section{Filter classes of upsets of Boolean algebras}
\label{sec: filter classes of bas}

  The picture for Boolean algebras is substantially more complicated: there are many finitary filter subclasses of $\BAclass{\infty}$ in addition to the classes $\BAclass{n}$ for $n \geq 1$. Nonetheless, the classes $\BAclass{n}$ still play an important role: they are the only ones generated by \emph{prime} structures, i.e.\ structures of the form $\pair{\alg{A}}{F}$ where $F$ is a prime upset. Moreover, as in the case of distributive lattices, each finitary filter subclass of $\BAclass{\infty}$ is in fact a logical class.

  The proofs of the following theorems carry over from distributive lattices.

\begin{theorem}
  $\BAclass{n}$ is generated as a filter class by the structure $\BAm{n}$.
\end{theorem}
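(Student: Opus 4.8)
The plan is to prove the two inclusions witnessing that $\BAclass{n}$ coincides with the filter class $\HSinvop \Sop \Pop(\BAm{n})$ generated by $\BAm{n}$. The soundness inclusion is routine: since $\nonempty{n}$ is a non-empty $n$-filter on $\BA{n}$ for $n \geq 1$ (Fact~\ref{fact: nonempty n is n filter}), the structure $\BAm{n}$ belongs to $\BAclass{n}$; and $\BAclass{n}$ is itself a filter class, being closed under homomorphic preimages (Fact~\ref{fact: hom preimages}), products, and restrictions to subalgebras. Non-emptiness of the filter is preserved throughout, because $\True$ belongs to every non-empty upset, is fixed by every Boolean homomorphism, and lies in every Boolean subalgebra. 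Hence the filter class generated by $\BAm{n}$ is contained in $\BAclass{n}$.

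The substantive inclusion is completeness: every $\pair{\alg{A}}{F} \in \BAclass{n}$ must be realized as a strict homomorphic preimage of a power of $\BAm{n}$, i.e.\ $F$ must be exhibited as an intersection of preimages $h^{-1}[\nonempty{n}]$ along \emph{Boolean} homomorphisms $h \colon \alg{A} \to \BA{n}$. First I would run the distributive-lattice machinery on the lattice reduct of $\alg{A}$: for each $a \notin F$, the $n$-filter $F$ is disjoint from the principal ideal $\downarrow a$ (as $F$ is an upset), so by prime $n$-filter separation (Theorem~\ref{thm: prime n filter separation}) it extends to a proper prime $n$-filter $G_{a}$ with $a \notin G_{a}$. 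Since $\True \in F \subseteq G_{a}$ and $\False \leq a \notin G_{a}$, each $G_{a}$ is a proper non-empty prime $n$-filter, and $F = \bigcap_{a \notin F} G_{a}$.

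The key point, where the Boolean setting is used essentially, is that each $G_{a}$ yields a Boolean homomorphism rather than a mere lattice homomorphism. By Theorem~\ref{thm: union of n prime filters} each $G_{a}$ is a union of at most $n$ proper prime filters $F_{a,1}, \dots, F_{a,k}$ of the lattice reduct, and on a Boolean algebra a proper prime lattice filter is exactly an ultrafilter: $x \vee \neg x = \True$ forces $x \in F_{a,i}$ or $\neg x \in F_{a,i}$, while $x \wedge \neg x = \False$ forbids both. Thus each $F_{a,i}$ corresponds to a Boolean homomorphism $g_{a,i} \colon \alg{A} \to \BA{1}$. Padding by repetition to length $n$ and taking the product gives a Boolean homomorphism $h_{a} \colon \alg{A} \to \BA{n}$ with $h_{a}^{-1}[\nonempty{n}] = F_{a,1} \cup \dots \cup F_{a,k} = G_{a}$, which is the Boolean upgrade of Lemma~\ref{lemma: hom into ba n} and matches the computation $\pair{\BA{n}}{\nonempty{n}} = \bigotimes_{i \in \range{n}} \pair{\BA{1}}{\{ \True \}}$ via Fact~\ref{fact: hom into dual product}. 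Taking the product of the $h_{a}$ over all $a \notin F$ produces a Boolean homomorphism into $\prod_{a \notin F} \BA{n}$ realizing $F$ as the required intersection, so $\pair{\alg{A}}{F}$ is a strict homomorphic preimage of a power of $\BAm{n}$.

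The only step requiring genuine care is the one just highlighted: verifying that the prime filters supplied by the distributive-lattice representation are ultrafilters, so that the witnessing maps are bona fide Boolean homomorphisms and the whole construction remains inside the variety of Boolean algebras. Everything else is bookkeeping—properness and non-emptiness of the $G_{a}$, the padding to exactly $n$ coordinates, and the degenerate case $F = \alg{A}$, which is handled by the empty intersection (equivalently, the empty product).
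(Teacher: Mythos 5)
Your proof is correct and follows essentially the same route as the paper: the paper obtains this theorem by observing that it merely restates prime $n$-filter separation (Theorem~\ref{thm: prime n filter separation}) together with the characterization of prime $n$-filters as homomorphic preimages of $\nonempty{n}$ (Theorem~\ref{thm: union of n prime filters}), with these results ``carrying over'' to Boolean algebras once $n$-filters are required to be non-empty and prime filters proper. Your explicit verification that proper non-empty prime filters on a Boolean algebra are ultrafilters, so that the witnessing maps are genuine Boolean homomorphisms into $\BA{1}$, is exactly the detail the paper leaves implicit in asserting that the distributive-lattice results extend immediately to the Boolean signature.
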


\begin{theorem}
  $\BAclass{\omega}$ is generated as a filter class by $\set{\BAm{n}}{n \geq 1}$.
\end{theorem}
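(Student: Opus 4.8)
The plan is to transcribe the argument for $\DLclass{\omega}$, inserting Boolean-specific adjustments only where the ambient homomorphisms matter. First I would dispatch the easy inclusion. Each $\BAm{n}$ with $n \geq 1$ lies in $\BAclass{\omega}$, since $\nonempty{n}$ is an $n$-filter on the finite Boolean algebra $\BA{n}$ and every $n$-filter is a residually finite upset; and $\BAclass{\omega}$ is a filter class by the very definition of residual finiteness. Hence the filter class generated by $\set{\BAm{n}}{n \geq 1}$ is contained in $\BAclass{\omega}$.

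For the reverse inclusion I would first pin down the generated class concretely. Using the characterization of a filter class by closure of its filters under homomorphic preimages and arbitrary intersections, the collection of all structures $\pair{\alg{A}}{F}$ in which $F$ is an intersection of Boolean-homomorphic preimages $h^{-1}[\nonempty{n}]$ (for $n \geq 1$ and $h\colon \alg{A} \to \BA{n}$) is itself a filter class containing every $\BAm{n}$: it is closed under intersection by merging index families and under homomorphic preimage by composing with $h$. It therefore equals the generated filter class, and the total filter appears as the empty intersection. It thus suffices to show that every non-empty residually finite upset $F$ of a Boolean algebra $\alg{A}$ is such an intersection.

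Next I would decompose $F$. Since residual finiteness of an upset of a Boolean algebra coincides with residual finiteness of the upset on its distributive-lattice reduct, and since a residually finite upset of a distributive lattice is an intersection of upsets that are $n$-filters for various $n$, I may write $F = \bigcap_j F_j$ with each $F_j$ an $n_j$-filter; non-emptiness of $F$ lets me discard any total $F_j$ and so assume each remaining $F_j$ is a proper $n_j$-filter with $n_j \geq 1$. By prime $n$-filter separation (Theorem~\ref{thm: prime n filter separation}), which shows that each $n$-filter on the distributive lattice $\alg{A}$ is an intersection of proper prime $n$-filters, each $F_j$ is an intersection of proper prime $n_j$-filters. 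The whole problem therefore reduces to representing a single proper prime $n$-filter on the Boolean algebra $\alg{A}$ as $e^{-1}[\nonempty{n}]$ for a \emph{Boolean} homomorphism $e\colon \alg{A} \to \BA{n}$.

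The main obstacle is exactly this last step, because Theorem~\ref{thm: union of n prime filters} only supplies a lattice homomorphism into $\BA{n}$, whereas the generated class was described using Boolean homomorphisms. Here I would exploit the Boolean structure: by Theorem~\ref{thm: union of n prime filters} a proper prime $n$-filter is a union $G_1 \cup \dots \cup G_n$ of proper prime filters, and on a Boolean algebra each such $G_i$ is an ultrafilter, hence equals $e_i^{-1}[\{ \True \}]$ for a genuine Boolean homomorphism $e_i\colon \alg{A} \to \BA{1}$ (the characteristic map of an ultrafilter preserves complementation because $e_i(a) \wedge e_i(\neg a) = \False$ and $e_i(a) \vee e_i(\neg a) = \True$, and complements in $\BA{1}$ are unique). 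Forming the product $e = \prod_i e_i\colon \alg{A} \to \BA{n}$ and invoking Fact~\ref{fact: hom into dual product}, the map $e$ becomes a strict Boolean homomorphism $\pair{\alg{A}}{G_1 \cup \dots \cup G_n} \to \pair{\BA{n}}{\nonempty{n}} = \BAm{n}$, so that $G_1 \cup \dots \cup G_n = e^{-1}[\nonempty{n}]$ as required. Assembling these preimages over all $j$ exhibits $F$ as an intersection of Boolean-homomorphic preimages of the $\nonempty{n}$, completing the reverse inclusion.
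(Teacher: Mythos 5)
Your proof is correct and takes essentially the same approach as the paper, whose entire proof is the remark that the argument carries over from the distributive-lattice case: decompose the residually finite upset into $n$-filters, these into proper prime $n$-filters by separation, and exhibit each prime $n$-filter as a homomorphic preimage of $\nonempty{n}$. Your explicit ultrafilter step, upgrading the lattice homomorphisms of Theorem~\ref{thm: union of n prime filters} to Boolean homomorphisms into $\BA{n}$, is precisely the ``appropriate modification'' that the paper leaves implicit in its remark on replacing $\BA{1}$ by its expansion with negation.
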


  We again leave the problem of axiomatizing $\BAclass{\omega}$ open. We do not even know whether $\BAclass{\omega}$ is a logical class, or more specifically whether it coincides with the logical class generated by the structures $\BAm{n}$.

\begin{theorem}
  $\BAclass{\infty}$ is generated as a finitary filter class by $\set{\BAm{n}}{n \geq 1}$.
\end{theorem}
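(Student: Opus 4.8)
The plan is to mirror the argument that established $\DLclass{\infty}$ as a finitary filter class generated by $\set{\BAm{n}}{n \geq 1}$, checking only that the Boolean-specific constraints—working with Boolean homomorphisms rather than mere lattice homomorphisms, and with \emph{non-empty} upsets—cause no trouble. One inclusion is immediate: $\BAclass{\infty}$ is itself a finitary filter class (non-emptiness and upward closure are first-order, so it is closed under substructures, products, strict homomorphic preimages, and ultraproducts) and it contains every $\BAm{n}$, so the finitary filter class generated by the $\BAm{n}$ is contained in $\BAclass{\infty}$. The work is therefore the reverse inclusion: every $\pair{\alg{A}}{F} \in \BAclass{\infty}$ must be obtainable from the $\BAm{n}$ by these operators.

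First I would reduce to the finite case. Each structure embeds into an ultraproduct of its finitely generated substructures, a finitely generated Boolean subalgebra is finite, and the restriction of $F$ to such a subalgebra is again a non-empty upset since it contains $\True$. As a finitary filter class is closed under substructures and ultraproducts, it thus suffices to handle a \emph{finite} structure $\pair{\alg{A}}{F} \in \BAclass{\infty}$.

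Next I would decompose $F$. In any poset $F = \bigcap_{x \notin F} (\alg{A} \setminus {\downarrow}x)$, and each $\alg{A} \setminus {\downarrow}x$ is a prime upset; since $F$ is non-empty we have $\True \in F$, so every such $x$ is a non-top element and every prime upset in the intersection is non-empty. By Fact~\ref{fact: finite upsets are n filters} each such prime upset of the finite $\alg{A}$ is a prime $n$-filter for some $n \geq 1$; by Theorem~\ref{thm: union of n prime filters} it is a union of $n$ prime filters, i.e.\ $n$ ultrafilters of $\alg{A}$, and each ultrafilter is the preimage of $\{\True\}$ under a Boolean homomorphism $\alg{A} \to \BA{1}$. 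Fact~\ref{fact: hom into dual product} (dual-product half) packages these into a strict Boolean homomorphism $\pair{\alg{A}}{P} \to \BAm{n}$ for each prime upset $P \supseteq F$. Applying the product half of Fact~\ref{fact: hom into dual product} to the identity $F = \bigcap_{P} P$ then yields a single strict Boolean homomorphism $h \colon \pair{\alg{A}}{F} \to \prod_{P} \BAm{n_{P}}$. Hence $\pair{\alg{A}}{F}$ is a strict homomorphic preimage of a product of the $\BAm{n}$, placing it in the finitary filter class they generate. (When $F$ is proper we have $\False \notin F$, so the coordinate $P = \nonempty{m}$ occurs and realizes $\alg{A} \cong \BA{m}$, making $h$ an embedding; but a strict homomorphic preimage already suffices.)

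The main obstacle—indeed the only delicate point beyond the distributive-lattice case—is the bookkeeping ensuring that the homomorphisms produced are Boolean rather than merely lattice homomorphisms, and that non-emptiness of the upset is preserved at every step. Both are resolved by the same two observations: prime filters of a Boolean algebra are exactly ultrafilters, hence arise from Boolean homomorphisms into $\BA{1}$; and Boolean homomorphisms preserve $\True$, so a non-empty upset both pulls back (along $h$) and pushes into products as a non-empty upset. With these in hand the distributive-lattice proof transfers verbatim.
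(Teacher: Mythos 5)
Your proof is correct and is essentially the paper's own argument: the paper proves this theorem for distributive lattices (ultraproduct of finitely generated, hence finite, substructures; then decompose $F$ as an intersection of prime upsets, each a prime $n$-filter, hence a strict-homomorphic-preimage of some $\BAm{n}$ via Fact~\ref{fact: hom into dual product}) and simply remarks that the proof carries over to Boolean algebras. Your write-up is that carried-over proof, with the Boolean-specific details (ultrafilters giving genuinely Boolean homomorphisms into $\BA{1}$, and preservation of non-emptiness) checked explicitly, which is exactly what the paper leaves implicit.
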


  It is not the case (as it was for distributive lattices) that $\BAclass{\infty}$ is generated as a logical class by $\set{\BAm{n}}{n \geq 1}$. For example, each of the structures $\BAm{n}$ satisfies the following infinitary equality-free implications:
\begin{align*}
  \set{(x_{i} \wedge \neg x_{j}) \vee y}{i, j \in \omega \text{ and } i < j} & \vdash y, \\
  \set{(x_{i} \wedge \neg x_{j}) \vee y}{i, j \in \omega \text{ and } i > j} & \vdash y.
\end{align*} 
  These express the fact that the algebras $\BA{n}$ are well-partially-ordered and dually well-partially-ordered. Recall that a (dual) well-partial-order is a partial order such that in each sequence $x_{i}$ for $i \in \omega$ there are $i < j$ with $x_{i} \leq x_{j}$ (with $x_{i} \geq x_{j}$). But these implications fail in $\pair{\BA{\omega}}{\nonempty{\omega}}$.

  The key to proving that the only finitary filter subclasses of $\BAclass{\infty}$ generated by prime structures are the classes $\BAclass{n}$ is to find suitable splittings of the lattice of all filter subclasses of $\BAclass{\infty}$. In particular, we identify finitary equality-free implications $(\alpha_{n})$ such that for each filter subclass $\class{K}$ of $\BAclass{\infty}$ either $(\alpha_{n})$ holds in $\class{K}$ or $\BAclass{n} \subseteq \class{K}$, but not both. We now describe these implications.

  Let us pick $k \geq 1$ so that $2^{k} \geq n$. There are $2^{k}$ complete conjunctive clauses over $k$ variables $x_{1}, \dots, x_{k}$ up to equivalence in Boolean algebras. These are conjunctions of variables and negated variables such that for each of the $k$ variables $x_{i}$ exactly one element of $\{ x_{i}, \neg x_{i} \}$ occurs in the conjunction. Let us call these complete conjunctive clauses $\pi_{i}$ for~$i \in \range{2^{k}}$. Now consider the implication
\begin{align*}
  \pi_{1}, \dots, \pi_{n} \vdash \pi_{n+1} \vee \dots \vee \pi_{2^{k}}. \tag{$\alpha_{n}$}
\end{align*}
  In case $n = 2^{k}$, we interpret this simply as $\pi_{1}, \dots, \pi_{n} \vdash y$. In particular, we may take $(\alpha_{2})$ to be the rule $x, \neg x \vdash y$. We take $(\alpha_{1})$ to be $x \vdash y$.

\begin{lemma}
  Let $\alg{A}$ be a finite Boolean algebra and $F$ be a \emph{prime} upset of $\alg{A}$. If $F$ is an $n$-filter but not an $(n-1)$-filter, then $\BAm{n}$ embeds into $\pair{\alg{A}}{F}$.
\end{lemma}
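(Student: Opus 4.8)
The plan is to first pin down the exact shape of $F$ using the characterization of prime $n$-filters, and then to build the required embedding explicitly as a partition of the unit of $\alg{A}$.

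First I would observe that, since $F$ is a prime upset which is also an $n$-filter, $F$ is a prime $n$-filter on the distributive lattice reduct of $\alg{A}$, so Theorem~\ref{thm: union of n prime filters} applies: $F$ is a union of at most $n$ prime filters. On a finite Boolean algebra the prime filters are exactly the principal filters generated by the atoms, hence $F$ is the upset generated by some set of $k$ distinct atoms $a_1, \dots, a_k$ with $k \leq n$. The hypothesis that $F$ is not an $(n-1)$-filter now forces $k = n$: by Fact~\ref{fact: union of n filters} a union of $k$ filters is a $k$-filter, so if $k \leq n-1$ then $F$ would already be an $(n-1)$-filter. Thus $F$ is generated by exactly $n$ distinct atoms $a_1, \dots, a_n$, and an element of $\alg{A}$ lies in $F$ precisely when it lies above one of these atoms.

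Next I would construct the embedding. Writing $r \assign \neg(a_1 \vee \dots \vee a_n)$ for the join of the remaining atoms of $\alg{A}$, I set
\begin{align*}
  b_1 \assign a_1 \vee r, \qquad b_i \assign a_i \text{ for } 2 \leq i \leq n.
\end{align*}
These $n$ elements are non-zero, pairwise disjoint, and join to $\True$, so they form a partition of the unit of $\alg{A}$ into $n$ blocks; sending the $i$-th atom of $\BA{n}$ to $b_i$ therefore extends uniquely to a Boolean embedding $h \colon \BA{n} \to \alg{A}$. Identifying an element of $\BA{n}$ with a subset $T \subseteq \range{n}$, we have $h(T) = \bigvee_{i \in T} b_i$, and for each $j \in \range{n}$ the atom $a_j$ satisfies $a_j \leq h(T)$ if and only if $j \in T$ (since $a_j$ lies below exactly the block $b_j$). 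Consequently $h(T) \in F$ if and only if $T$ is non-empty, that is, $h^{-1}[F] = \nonempty{n}$. Hence $h$ is a strict embedding of $\BAm{n}$ into $\pair{\alg{A}}{F}$.

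The routine verifications (that the $b_i$ genuinely partition the unit and that $h$ is strict) are immediate; the only point requiring care is the counting step, namely deducing from the failure of $F$ to be an $(n-1)$-filter that \emph{exactly} $n$ atoms are needed, together with the small device of absorbing the leftover atoms $r$ into the single block $b_1$, so that the partition has exactly $n$ non-zero parts without disturbing strictness.
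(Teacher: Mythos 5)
Your proof is correct and takes essentially the same route as the paper: the paper likewise reduces $F$ (via Theorem~\ref{thm: union of n prime filters} and the powerset representation of a finite Boolean algebra) to the upset determined by exactly $n$ atoms, and then obtains $\BAm{n}$ as the subalgebra generated by a partition of the unit in which each block contains exactly one of those atoms --- your blocks $b_{1} \assign a_{1} \vee r$, $b_{i} \assign a_{i}$ for $2 \leq i \leq n$ are precisely one such partition. The only difference is that you spell out the counting step (that failure of the $(n-1)$-filter condition forces exactly $n$ atoms, via Fact~\ref{fact: union of n filters}), which the paper asserts without detail.
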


\begin{proof}
  Each finite Boolean algebra is isomorphic to the powerset of some finite set $X$. Each prime $n$-filter $F$ on this powerset which is not an $(n-1)$-filter is then determined by an $n$-element set $\{ x_{1}, \dots, x_{n} \} \subseteq X$ as follows: $U \in F$ if and and only if $x_{i} \in U$ for some $x_{i}$. Each equivalence relation on $X$ determines a subalgebra of the powerset (generated by the equivalence classes). Any equivalence relation where each equivalence class contains exactly one of the elements $x_{i}$ then determines a substructure isomorphic to $\BAm{n}$.
\end{proof}

\begin{lemma}
  Let $\alg{A}$ be a Boolean algebra and $F$ be an upset of $\alg{A}$. If $(\alpha_{n})$ fails in the structure $\pair{\alg{A}}{F}$, then $\BAm{n}$ embeds into $\pair{\alg{A}}{F}$.
\end{lemma}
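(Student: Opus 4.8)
The plan is to read off, directly from a failure of $(\alpha_n)$, a partition of the top element of $\alg{A}$ into $n$ nonzero blocks all lying in $F$, and then to observe that the subalgebra they generate, together with the restriction of $F$, is a copy of $\BAm{n}$. First I would unpack what a failure of $(\alpha_n)$ in $\pair{\alg{A}}{F}$ supplies. An assignment of the variables $x_1, \dots, x_k$ amounts to a homomorphism $h \colon \FreeBA{k} \to \alg{A}$, and I set $p_i \assign h(\pi_i)$ for $i \in \range{2^k}$. Since the complete conjunctive clauses $\pi_1, \dots, \pi_{2^k}$ are exactly the atoms of $\FreeBA{k}$, the elements $p_1, \dots, p_{2^k}$ are pairwise disjoint ($p_i \wedge p_j = \False$ for $i \neq j$) and join to $\True$. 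A failure of $(\alpha_n)$ means that the premises hold while the conclusion fails, that is, $p_1, \dots, p_n \in F$ whereas $r \assign p_{n+1} \vee \dots \vee p_{2^k} \notin F$ (with $r = \False$ in the boundary case $n = 2^k$).

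Next I would extract properness. Because $r \notin F$, the upset $F$ is proper, so $\False \notin F$; consequently each $p_i$ with $i \in \range{n}$ is nonzero, and being pairwise disjoint and nonzero they are $n$ distinct elements of $F$. This is the only place where the failure of the conclusion is genuinely used: it forces $\False \notin F$, which is precisely what makes the restriction of $F$ match $\nonempty{n}$ (which excludes $\False$) rather than degenerate into the total upset.

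Then I would build the embedding by absorbing the leftover $r$ into a single block. Put $b_1 \assign p_1 \vee r$ and $b_i \assign p_i$ for $2 \leq i \leq n$. A short computation shows that $b_1, \dots, b_n$ are pairwise disjoint, nonzero, and join to $\True$, so they generate a subalgebra isomorphic to $\BA{n}$ via the map sending the $i$-th atom to $b_i$. Each $b_i$ lies above some $p_j \in F$, hence $b_i \in F$; since $F$ is an upset, every nonzero element of this subalgebra, being a join of a nonempty subset of $\{ b_1, \dots, b_n \}$, lies above some $b_i$ and so belongs to $F$, while $\False \notin F$. Thus the restriction of $F$ to the subalgebra is exactly its set of nonzero elements, and the map above is an embedding of structures $\BAm{n} \into \pair{\alg{A}}{F}$.

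I do not expect a serious obstacle here; the content is combinatorial bookkeeping rather than a hard argument. The one point demanding care is verifying that the restriction of $F$ equals $\nonempty{n}$ on the nose, in both directions: that all nonzero joins of the $b_i$ land in $F$ (from $F$ being an upset over atoms it contains) and that $\False$ stays out (from properness). The degenerate cases fit the same pattern: when $n = 2^k$ the $p_i$ already partition $\True$ and no absorption is needed, while the special rules $(\alpha_1)$ and $(\alpha_2)$ reduce to the observation that their failure forces $F$ to be a nonempty proper upset, which immediately yields $\BAm{1}$ (respectively a two-block partition giving $\BAm{2}$).
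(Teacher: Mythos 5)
Your proof is correct, but it takes a genuinely more direct route than the paper. Both arguments start from the homomorphism $h\colon \FreeBA{k} \to \alg{A}$ witnessing the failure of $(\alpha_{n})$, but they then diverge. The paper works upstream: it forms the image substructure $\pair{\alg{B}}{G}$ and the preimage upset $H \assign h^{-1}[G]$ on $\FreeBA{k}$, observes that $H$ is exactly the set of elements lying above some $\pi_{i}$ with $i \in \range{n}$, quotients $\FreeBA{k}$ by the complementary principal ideal to recognize $\BAm{n}$ as a strict homomorphic image, concludes that $G$ is a (prime) $n$-filter which is not an $(n-1)$-filter, and finally invokes the preceding lemma (a prime upset of a \emph{finite} Boolean algebra which is an $n$-filter but not an $(n-1)$-filter contains a copy of $\BAm{n}$) to produce the embedding. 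You instead push everything downstream into $\alg{A}$: the images $p_{i} = h(\pi_{i})$ partition $\True$, and absorbing the rejected remainder $r = p_{n+1} \vee \dots \vee p_{2^{k}}$ into one accepted block yields $n$ pairwise disjoint nonzero elements of $F$ joining to $\True$, whose generated subalgebra meets $F$ in exactly its nonzero elements. Your construction is self-contained and more elementary: it bypasses the quotient, the appeal to the previous lemma, and the verification (left implicit in the paper) that the upset $G$ is prime, which that lemma requires. What the paper's detour buys is the additional information that the witnessing upset is an $n$-filter but not an $(n-1)$-filter, and it reuses the block-merging idea already isolated in the preceding lemma instead of repeating it. Your treatment of the boundary cases ($n = 2^{k}$ and the ad hoc rules $(\alpha_{1})$, $(\alpha_{2})$) is also correct: in each, failure of the rule forces $\False \notin F$, which is precisely what prevents the restriction of $F$ from degenerating into the total upset.
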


\begin{proof}
  Suppose that $(\alpha_{n+1})$ fails in $\pair{\alg{A}}{F}$. This is witnessed by a homomorphism $h\colon \FreeBA{k} \to \alg{A}$ from the free Boolean algebra over the generators $x_{1}, \dots, x_{k}$. Let the substructure $\pair{\alg{B}}{G}$ of $\pair{\alg{A}}{F}$ be the image of~$h$ and let $H \assign h^{-1}[G]$. Then $h\colon \pair{\FreeBA{k}}{H} \to \pair{\alg{B}}{G}$ is a strict surjective homo\-morphism. We have $\pi_{i} \in H$ for each $i \in \range{n}$, while $\pi_{n+1} \vee \dots \vee \pi_{2^{k}} \notin H$. Here we identify the terms $\pi_{i}$ with the atoms of the free Boolean algebra $\FreeBA{k}$. This uniquely determines $H$: each element of $\FreeBA{k}$ either lies below $\pi_{n+1} \vee \dots \vee \pi_{2^{k}}$ or above some $\pi_{i}$ for $i \in \range{n}$.

  Let $I \assign \FreeBA{k} \setminus H$. Then $I \subseteq \FreeBA{k}$ is the principal ideal generated by the element $\neg (\pi_{n+1} \wedge \dots \wedge \pi_{2^{k}})$. The projection map $\FreeBA{k} \to \FreeBA{k} / I$ is a strict homomorphism of structures $\pair{\FreeBA{k}}{H} \to \pair{\FreeBA{k} / I}{H / I}$, where we use the notation $H / I \assign \set{h / I}{h \in H}$. Clearly $H / I$ consists precisely of the non-zero elements of $\FreeBA{k} / I$. Moreover, $\FreeBA{k} / I$ has exactly $n$ atoms, namely $\pi_{1} / I, \dots, \pi_{n} / I$. The structure $\pair{\FreeBA{k} / I}{H / I}$ is thus isomorphic to $\BAm{n}$. It follows that $H$, hence also $G$, is an $n$-filter which is not an $(n-1)$-filter. But $\pair{\alg{B}}{G}$ is a finite structure, therefore $\BAm{n}$ is isomorphic to a substructure of $\pair{\alg{B}}{G}$.
\end{proof}

\begin{lemma}
  Let $\alg{A}$ be a Boolean algebra and $F$ be a prime upset of $\alg{A}$. If $(\alpha_{n+1})$ holds in $\pair{\alg{A}}{F}$, then $F$ is an $n$-filter.
\end{lemma}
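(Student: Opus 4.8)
The plan is to argue by contraposition. I will assume that $F$ is not an $n$-filter and produce a homomorphism witnessing that $(\alpha_{n+1})$ fails in $\pair{\alg{A}}{F}$, contradicting the hypothesis. First I would invoke Fact~\ref{fact: restricted definition of n filters}: if $F$ fails to be an $n$-filter, there are $a_1, \dots, a_{n+1} \in \alg{A}$ with $c_i \assign \bigwedge_{j \neq i} a_j \in F$ for each $i \in \range{n+1}$ but $d \assign a_1 \wedge \dots \wedge a_{n+1} \notin F$. Since $d \notin F$ we have $F \neq \alg{A}$, and as an upset containing $\False$ would be total, this also gives $\False \notin F$.

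The crucial step is to convert this configuration into an \emph{orthogonal} family of $n+1$ elements of $F$. Set $e_i \assign c_i \wedge \neg a_i$. Because $c_i \wedge a_i = d$, Boolean distributivity gives $c_i = d \vee e_i$; since $c_i \in F$, $d \notin F$, and $F$ is prime, it follows that $e_i \in F$ for each $i$. Moreover $e_i \leq \neg a_i$ while $e_j \leq a_i$ for every $j \neq i$, so $e_i \wedge e_j \leq \neg a_i \wedge a_i = \False$, i.e.\ the $e_i$ are pairwise disjoint.

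It then remains to extend this orthogonal family to a partition of unity with the membership pattern demanded by $(\alpha_{n+1})$. Recall $k \geq 1$ is chosen with $2^k \geq n+1$, and $\pi_1, \dots, \pi_{2^k}$ are the atoms of $\FreeBA{k}$. I would absorb the leftover $r \assign \neg(e_1 \vee \dots \vee e_{n+1})$ into the first block, putting $b_1 \assign e_1 \vee r$, $b_i \assign e_i$ for $2 \leq i \leq n+1$, and $b_{n+2} = \dots = b_{2^k} = \False$. These $2^k$ elements are pairwise disjoint and join to $\True$, so sending $\pi_i$ to $b_i$ determines a homomorphism $h\colon \FreeBA{k} \to \alg{A}$. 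Each $b_i$ with $i \in \range{n+1}$ lies above some $e_i \in F$, whence $h(\pi_i) \in F$; on the other hand $h(\pi_{n+2} \vee \dots \vee \pi_{2^k}) = \False \notin F$. (In the boundary case $2^k = n+1$ the consequent is a fresh variable $y$, which we simply send to $\False$.) Thus $(\alpha_{n+1})$ fails in $\pair{\alg{A}}{F}$, which is the desired contradiction.

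The step I expect to require the most care is the passage through primeness in the second paragraph: the decomposition $c_i = d \vee e_i$ together with $d \notin F$ is precisely what lets the disjoint pieces $e_i$ land in $F$. This is exactly where the hypothesis that $F$ is a \emph{prime} upset (and not merely an upset) is indispensable, since for a general upset the $e_i$ need not belong to $F$ and the whole construction collapses.
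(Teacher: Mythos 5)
Your proof is correct, but it takes a genuinely different route from the paper. The paper argues structurally: it restricts $F$ to the finite subalgebra $\alg{B}$ generated by the witnesses $a_{1}, \dots, a_{n+1}$, notes that the restriction $G$ is then a prime $m$-filter for some $m > n$ (using Fact~\ref{fact: finite upsets are n filters} and primeness), invokes the characterization of prime $m$-filters to obtain a strict surjective homomorphism from $\pair{\alg{B}}{G}$ onto $\BAm{k}$ with $k > n$, and concludes because $(\alpha_{n+1})$ fails in $\BAm{k}$ and failures pull back along strict homomorphisms. You instead build the falsifying valuation by hand: the orthogonalization $e_{i} \assign c_{i} \wedge \neg a_{i}$, the decomposition $c_{i} = d \vee e_{i}$ (which is where primeness enters pointwise, exactly as you flag), and the completion of the pairwise disjoint family $e_{1}, \dots, e_{n+1}$ to a partition of unity inducing a homomorphism $\FreeBA{k} \to \alg{A}$. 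Your argument is more elementary and self-contained --- it needs no part of the structure theory of prime $n$-filters, only the standard correspondence between homomorphisms out of $\FreeBA{k}$ and $2^{k}$-element partitions of unity (a fact worth stating explicitly, e.g.\ by defining the homomorphism on generators via $x_{l} \mapsto \bigvee \set{b_{i}}{\pi_{i} \leq x_{l}}$). What the paper's route buys is economy given what has already been proved, and a tighter fit with the surrounding narrative: it exhibits the failure of $(\alpha_{n+1})$ as factoring through a strict surjection onto some $\BAm{k}$, which is the form used in the splitting argument of Lemma~\ref{lemma: cl n split}. Note also that your construction implicitly recovers the paper's preceding embedding lemma: the subalgebra generated by your partition $b_{1}, \dots, b_{n+1}$, with the restriction of $F$, is isomorphic to $\BAm{n+1}$, so you have in effect shown directly that $\BAm{n+1}$ embeds into $\pair{\alg{A}}{F}$ whenever the prime upset $F$ fails to be an $n$-filter.
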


\begin{proof}
  Suppose that $F$ is not an $n$-filter. We show that $(\alpha_{n+1})$ fails in $\pair{\alg{A}}{F}$. There are $a_{1}, \dots, a_{n+1}$ such that $\bigwedge_{j \neq i} a_{j} \in F$ for each $i \in \range{n+1}$ but $\bigwedge_{i \in \range{n+1}} a_{i} \notin F$. Let $\alg{B}$ be the subalgebra of $\alg{A}$ generated by $a_{1}, \dots, a_{n+1}$ and let $G$ be the restriction of $F$ to $\alg{B}$. Then $\alg{B}$ is finite, therefore $G$ is a prime $m$-filter for some $m > n$. (Recall that $F$ is prime.) It follows that there is a strict homomorphism $h\colon \pair{\alg{B}}{G} \to \BAm{m}$. The image of $h$ is some substructure of $\BAm{m}$ whose upset is not an $n$-filter. This image is therefore isomorphic to $\BAm{k}$ for some $k > n$. In~other words, we have a strict surjective homomorphism from $\pair{\alg{B}}{G}$ to $\BAm{k}$ for some $k > n$. But $(\alpha_{n+1})$ fails in $\BAm{k}$ (because $\BAm{n+1}$ is a substructure of $\BAm{k}$), hence also in its strict homomorphic preimage $\pair{\alg{B}}{G}$ and in the structure $\pair{\alg{A}}{F}$.
\end{proof}

\begin{lemma} \label{lemma: cl n split}
  Let $\class{K}$ be a filter subclass of $\BAclass{\infty}$. Then either $\BAclass{n} \subseteq \class{K}$ or $(\alpha_{n})$ holds in $\class{K}$, but not both.
\end{lemma}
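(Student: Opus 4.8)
The plan is to establish the two halves of Lemma~\ref{lemma: cl n split} separately: first that $\BAclass{n} \subseteq \class{K}$ and the validity of $(\alpha_{n})$ in $\class{K}$ are mutually exclusive, and then that at least one of them holds. The three preceding lemmas give us exactly the tools we need, so the work is mostly in assembling them correctly.

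For the \emph{mutual exclusivity}, I would argue as follows. Suppose $\BAclass{n} \subseteq \class{K}$. Then in particular $\BAm{n} \in \class{K}$. The third lemma shows (via its contrapositive) that if $(\alpha_{n})$ held in $\pair{\BA{n}}{\nonempty{n}} = \BAm{n}$, then since $\nonempty{n}$ is prime (Fact~\ref{fact: nonempty n is n filter}) and $(\alpha_{n})$ is $(\alpha_{(n-1)+1})$, the upset $\nonempty{n}$ would be an $(n-1)$-filter. But Fact~\ref{fact: nonempty n is n filter} tells us precisely that $\nonempty{n}$ is not an $m$-filter for $m < n$. Hence $(\alpha_{n})$ \emph{fails} in $\BAm{n}$, and since $\BAm{n} \in \class{K}$, the implication $(\alpha_{n})$ does not hold throughout $\class{K}$. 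So the two conditions cannot both obtain.

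For the \emph{dichotomy}, suppose $(\alpha_{n})$ does \emph{not} hold in $\class{K}$. Then there is some structure $\pair{\alg{A}}{F} \in \class{K}$ in which $(\alpha_{n})$ fails. By the second lemma, $\BAm{n}$ embeds into $\pair{\alg{A}}{F}$, i.e.\ $\BAm{n}$ is (isomorphic to) a substructure of a member of $\class{K}$. Since a filter class is by definition closed under substructures, it follows that $\BAm{n} \in \class{K}$. But $\BAclass{n}$ is generated as a filter class by $\BAm{n}$ (by the theorem stated just above), so $\BAclass{n} \subseteq \class{K}$. This completes the dichotomy.

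The main thing to be careful about is the \textbf{indexing and the role of primeness}, which is where the three auxiliary lemmas pull in different directions: the second lemma (which gives the embedding) requires no primeness and concerns the failure of $(\alpha_{n})$ directly, whereas the third lemma (used in the exclusivity half) is stated for prime upsets and relates $(\alpha_{n+1})$ to $n$-filters, so one must apply it at the shifted index to $\nonempty{n}$. I expect no genuine obstacle beyond bookkeeping: once the correct lemma is matched to each half and the offset in $(\alpha_{n})$ versus the $n$-filter condition is tracked, the argument is a short citation-driven synthesis. The only subtlety worth a sentence is confirming that $\nonempty{n}$ really is prime, which is exactly the content of Fact~\ref{fact: nonempty n is n filter}.
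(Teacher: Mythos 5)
Your proof is correct and takes essentially the same route as the paper's: the exclusivity half comes from the fact that $\nonempty{n}$ is prime but not an $(n-1)$-filter (so $(\alpha_{n})$ fails in $\BAm{n}$, via the index-shifted third lemma), and the dichotomy half comes from the embedding lemma plus closure of filter classes under substructures and the fact that $\BAm{n}$ generates $\BAclass{n}$ as a filter class. Your explicit tracking of the index offset and of where primeness is needed is exactly the bookkeeping the paper leaves implicit.
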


\begin{proof}
  The implication $(\alpha_{n})$ fails in $\BAm{n}$ because the upset~$\nonempty{n}$ is not an $(n-1)$-filter, therefore $\BAclass{n} \subseteq \class{K}$ implies that $(\alpha_{n})$ does not hold in $\class{K}$. On the other hand, if $(\alpha_{n})$ fails in some structure $\pair{\alg{A}}{F} \in \class{K}$, then $\BAm{n} \in \class{K}$ by virtue of being isomorphic to a substructure of $\pair{\alg{A}}{F}$. But then $\BAclass{n} \subseteq \class{K}$.
\end{proof}

\begin{fact}
  Each proper filter subclass of $\BAclass{\omega}$ validates some $(\alpha_{n})$.
\end{fact}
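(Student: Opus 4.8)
The plan is to argue by contraposition: I will show that any filter subclass $\class{K}$ of $\BAclass{\omega}$ which validates \emph{none} of the implications $(\alpha_{n})$ must already coincide with $\BAclass{\omega}$, and hence cannot be proper. The key observation is that the hard combinatorial content has already been packaged into the splitting lemma (Lemma~\ref{lemma: cl n split}), so the remaining argument is a matter of combining that dichotomy across all $n$ with the fact that $\set{\BAm{n}}{n \geq 1}$ generates $\BAclass{\omega}$ as a filter class.

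First I would fix a filter subclass $\class{K} \subseteq \BAclass{\omega}$ and read the hypothesis ``$\class{K}$ validates no $(\alpha_{n})$'' as ``for every $n \geq 1$, the implication $(\alpha_{n})$ fails to hold in $\class{K}$.'' For each such $n$, Lemma~\ref{lemma: cl n split} presents the alternative $\BAclass{n} \subseteq \class{K}$ versus ``$(\alpha_{n})$ holds in $\class{K}$,'' exactly one of which is true; since the latter is excluded by assumption, I conclude $\BAclass{n} \subseteq \class{K}$. This holds uniformly for all $n \geq 1$. In particular each generating structure $\BAm{n}$ lies in $\class{K}$, since $\nonempty{n}$ is a prime $n$-filter on $\BA{n}$ (by Fact~\ref{fact: nonempty n is n filter}) and so $\BAm{n} \in \BAclass{n} \subseteq \class{K}$.

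Next I would invoke the earlier theorem that $\BAclass{\omega}$ is generated as a filter class by $\set{\BAm{n}}{n \geq 1}$. Because $\class{K}$ is itself a filter class (closed under substructures, products, and strict homomorphic preimages) containing all of these generators, it must contain the entire filter class they generate, namely $\BAclass{\omega}$. Together with the standing inclusion $\class{K} \subseteq \BAclass{\omega}$ this forces $\class{K} = \BAclass{\omega}$, contradicting the properness of $\class{K}$ and completing the contrapositive.

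There is no genuinely difficult step here, which is appropriate for a statement labelled a Fact: the substantive work—exhibiting, for each $n$, a concrete structure witnessing the failure of $(\alpha_{n})$ and showing that such a witness pulls all of $\BAclass{n}$ into $\class{K}$—was already discharged in Lemma~\ref{lemma: cl n split}. The only point demanding a moment's care is the quantifier bookkeeping: one must apply the dichotomy \emph{separately} at every $n$ so as to obtain $\BAclass{n} \subseteq \class{K}$ simultaneously for all $n$, which is precisely what is needed to trigger the generation theorem for $\BAclass{\omega}$.
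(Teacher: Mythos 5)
Your proposal is correct and follows the paper's own argument exactly: contrapose, apply Lemma~\ref{lemma: cl n split} at each $n$ to conclude $\BAclass{n} \subseteq \class{K}$, and then invoke the generation of $\BAclass{\omega}$ as a filter class by $\set{\BAm{n}}{n \geq 1}$ to force $\BAclass{\omega} \subseteq \class{K}$. The paper compresses this into one line, but the content is the same.
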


\begin{proof}
  If for each $n$ the filter subclass $\class{K}$ fails to validate $(\alpha_{n})$, then $\BAm{n} \in \BAclass{n} \subseteq \class{K}$ for each $n \geq 1$, so $\BAclass{\omega} \subseteq \class{K}$.
\end{proof}

\begin{fact}
  Each proper finitary filter subclass of $\BAclass{\infty}$ validates some $(\alpha_{n})$.
\end{fact}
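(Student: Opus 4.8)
The plan is to argue by contraposition: I would assume that $\class{K}$ validates none of the implications $(\alpha_{n})$ and deduce that $\class{K}$ must be all of $\BAclass{\infty}$, contradicting properness.

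First I would fix an arbitrary $n \geq 1$ and apply Lemma~\ref{lemma: cl n split}, which asserts that for a filter subclass $\class{K}$ of $\BAclass{\infty}$ exactly one of the alternatives $\BAclass{n} \subseteq \class{K}$ and ``$(\alpha_{n})$ holds in $\class{K}$'' obtains. Since by assumption $(\alpha_{n})$ does not hold in $\class{K}$, the dichotomy forces $\BAclass{n} \subseteq \class{K}$. As $n$ was arbitrary, this gives $\BAm{n} \in \BAclass{n} \subseteq \class{K}$ for every $n \geq 1$.

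Next I would invoke the theorem that $\BAclass{\infty}$ is generated as a \emph{finitary} filter class by the family $\set{\BAm{n}}{n \geq 1}$. Since $\class{K}$ is itself a finitary filter subclass of $\BAclass{\infty}$ containing all of these generators, it must contain the entire finitary filter class they generate; that is, $\BAclass{\infty} \subseteq \class{K}$. Combined with $\class{K} \subseteq \BAclass{\infty}$ this yields $\class{K} = \BAclass{\infty}$, contradicting the assumption that $\class{K}$ is a proper subclass.

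This argument is the finitary refinement of the preceding Fact for $\BAclass{\omega}$, and the only point that genuinely deserves attention is the appeal to finitariness. For a mere filter class the same reasoning delivers only $\BAclass{\omega} \subseteq \class{K}$, since the \emph{filter} class generated by the $\BAm{n}$ is $\BAclass{\omega}$ rather than $\BAclass{\infty}$. Closure under ultraproducts is exactly what upgrades this conclusion to all of $\BAclass{\infty}$: every structure in $\BAclass{\infty}$ embeds into an ultraproduct of its finite substructures, and each such finite substructure already lies in the filter class generated by the $\BAm{n}$. I expect no serious obstacle here beyond correctly locating this single use of finitariness.
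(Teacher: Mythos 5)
Your proposal is correct and takes essentially the same route as the paper: the contrapositive via Lemma~\ref{lemma: cl n split} to conclude $\BAm{n} \in \BAclass{n} \subseteq \class{K}$ for every $n \geq 1$, followed by the fact that $\BAclass{\infty}$ is generated as a \emph{finitary} filter class by $\set{\BAm{n}}{n \geq 1}$ to upgrade this to $\BAclass{\infty} \subseteq \class{K}$. Your closing paragraph correctly isolates the single use of finitariness (passing from $\BAclass{\omega} \subseteq \class{K}$ to $\BAclass{\infty} \subseteq \class{K}$ via ultraproducts of finite substructures), which is precisely the content of the paper's one-line proof.
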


\begin{proof}
  If $\class{K}$ is finitary, then $\BAclass{\omega} \subseteq \class{K}$ implies $\BAclass{\infty} \subseteq \class{K}$.
\end{proof} 

\begin{theorem}[Filter subclasses of $\BAclass{\omega}$]
  Let $\class{K}$ be a non-trivial filter subclass of $\BAclass{\omega}$ generated by prime structures. Then either $\BAclass{\omega} \subseteq \class{K}$ or $\class{K}$ is one of the classes $\BAclass{n}$ for $n \geq 1$.
\end{theorem}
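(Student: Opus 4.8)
The plan is to split on whether $\class{K}$ exhausts $\BAclass{\omega}$. If $\class{K} = \BAclass{\omega}$, the first alternative $\BAclass{\omega} \subseteq \class{K}$ holds outright, so I may assume that $\class{K}$ is a \emph{proper} filter subclass of $\BAclass{\omega}$ and aim to show it is some $\BAclass{m}$ with $m \geq 1$. By the fact that each proper filter subclass of $\BAclass{\omega}$ validates some $(\alpha_{n})$, I may let $n$ be the least index for which $(\alpha_{n})$ holds in $\class{K}$; the target will be $\class{K} = \BAclass{n-1}$.

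First I would observe that $n \geq 2$. Since $\class{K}$ is non-trivial, it contains a structure whose filter is proper (non-total); but $(\alpha_{1})$, which is the implication $x \vdash y$, forces every non-empty filter to be total. Hence $(\alpha_{1})$ fails in $\class{K}$, so $n \geq 2$ and $n - 1 \geq 1$, ensuring that $\BAclass{n-1}$ is one of the admissible classes. For the inclusion $\BAclass{n-1} \subseteq \class{K}$ I would appeal to the splitting lemma (Lemma~\ref{lemma: cl n split}): by minimality of $n$ the implication $(\alpha_{n-1})$ does \emph{not} hold in $\class{K}$, and the lemma then yields $\BAclass{n-1} \subseteq \class{K}$ directly.

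The reverse inclusion $\class{K} \subseteq \BAclass{n-1}$ is the crux, and this is the only place where the hypothesis that $\class{K}$ is generated by \emph{prime} structures enters. Writing $\class{K}$ as the filter class generated by a family $\class{K}_{0}$ of prime structures, I note that $(\alpha_{n})$ holds throughout $\class{K}$, hence in every generator $\pair{\alg{A}}{F} \in \class{K}_{0}$. Because each such $F$ is a prime upset, the earlier lemma asserting that $(\alpha_{n})$ holding in a prime structure forces its upset to be an $(n-1)$-filter applies, so each generator lies in $\BAclass{n-1}$ (the resulting $(n-1)$-filter is non-empty, as all upsets in $\BAclass{\omega}$ are). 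Since $\BAclass{n-1}$ is itself a filter class and $\class{K}$ is the least filter class containing $\class{K}_{0}$, we obtain $\class{K} \subseteq \BAclass{n-1}$. Combining the two inclusions gives $\class{K} = \BAclass{n-1}$, which is the desired conclusion. The main obstacle is thus correctly leveraging primeness in the reverse inclusion: without it, $(\alpha_{n})$ would not suffice to bound the filters of the generators, and indeed this is precisely why the analogous statement must fail for general (non-prime) generators over Boolean algebras.
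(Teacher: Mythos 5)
Correct, and essentially the paper's own argument: both proofs combine the splitting lemma (Lemma~\ref{lemma: cl n split}) with the lemma that a prime upset validating $(\alpha_{n})$ is an $(n-1)$-filter, the only difference being that you take the least $n$ for which $(\alpha_{n})$ holds in $\class{K}$ while the paper takes the greatest $n$ with $\BAclass{n} \subseteq \class{K}$ (equivalent choices, by the splitting lemma). A small bonus of your write-up is that it makes explicit where primeness of the generators enters the inclusion $\class{K} \subseteq \BAclass{n-1}$, a step the paper's one-line conclusion leaves implicit.
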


\begin{proof}
  If $\BAclass{\omega} \nsubseteq \class{K}$, then $\BAm{m} \notin \class{K}$ for some $m \geq 1$, i.e.\ $\BAclass{m} \nsubseteq \class{K}$. On the other hand, $\class{K}$ contains some structure $\pair{\alg{L}}{F}$ where $F$ is not the total filter. Then $\BAm{1}$ is a substructure of $\pair{\alg{L}}{F}$ and $\BAclass{1} \subseteq \class{K}$. There is therefore some $n \in \omega$ such that $\BAclass{n} \subseteq \class{K}$ but $\BAclass{n+1} \nsubseteq \class{K}$. But then $(\alpha_{n+1})$ holds in $\class{K}$, so $\class{K} \subseteq \BAclass{n}$.
\end{proof}

\begin{theorem}[Logical subclasses of $\BAclass{\infty}$]
  The only non-trivial proper finitary logical subclasses of $\BAclass{\infty}$ generated by prime structures are $\BAclass{n}$ for~$n \geq 1$.
\end{theorem}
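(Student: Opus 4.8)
The plan is to show that any such class $\class{K}$ equals $\BAclass{n}$ for the index $n \assign \max \set{m \geq 1}{\BAclass{m} \subseteq \class{K}}$, proving $\BAclass{n} \subseteq \class{K}$ from non-triviality and the reverse inclusion $\class{K} \subseteq \BAclass{n}$ by forcing the splitting implication $(\alpha_{n+1})$ through the prime generators. The two engines are the splitting Lemma~\ref{lemma: cl n split}, which converts the syntactic statement ``$(\alpha_{m})$ holds in $\class{K}$'' into the semantic statement ``$\BAclass{m} \nsubseteq \class{K}$'', and the lemma on prime upsets of this section, which says that a prime upset validating $(\alpha_{n+1})$ is an $n$-filter.

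First I would pin down $n$. Since $\class{K}$ is non-trivial it contains some $\pair{\alg{A}}{F}$ with $F$ a non-empty proper upset; the two-element subalgebra $\{ \False, \True \}$ carries the restricted structure $\pair{\{\False,\True\}}{\{\True\}} \iso \BAm{1}$ (here $\True \in F$ because $F$ is a non-empty upset and $\False \notin F$ because $F$ is proper), so $\BAm{1} \in \class{K}$ and thus $\BAclass{1} \subseteq \class{K}$, as $\BAclass{1}$ is generated by $\BAm{1}$. On the other hand $\class{K}$, being a proper finitary filter subclass of $\BAclass{\infty}$, validates some $(\alpha_{N})$, whence $\BAclass{N} \nsubseteq \class{K}$ by Lemma~\ref{lemma: cl n split}; since the chain $\BAclass{1} \subseteq \BAclass{2} \subseteq \cdots$ is increasing, $\BAclass{m} \nsubseteq \class{K}$ for every $m \geq N$. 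Hence the maximum defining $n$ exists with $1 \leq n < N$, giving $\BAclass{n} \subseteq \class{K}$ and $\BAclass{n+1} \nsubseteq \class{K}$; applying Lemma~\ref{lemma: cl n split} to the latter shows $(\alpha_{n+1})$ holds throughout $\class{K}$.

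Next I would obtain $\class{K} \subseteq \BAclass{n}$. Let $\mathcal{G}$ be the family of prime structures generating $\class{K}$. Each $\pair{\alg{A}}{F} \in \mathcal{G}$ lies in $\class{K}$, so $(\alpha_{n+1})$ holds in it; being prime, the lemma on prime upsets forces $F$ to be an $n$-filter, i.e.\ $\mathcal{G} \subseteq \BAclass{n}$. It then remains to check that $\BAclass{n}$ is itself a finitary logical class: it is closed under ultraproducts because $n$-filters are axiomatized by a finitary filter implication, and it is closed under strict homomorphic images by a one-line lifting argument (if $h\colon \pair{\alg{A}}{F} \to \pair{\alg{B}}{G}$ is strict and surjective with $F$ an $n$-filter, then lifting each $b_{i}$ to $a_{i}$ turns the hypotheses $\bigwedge_{j \neq i} b_{j} \in G$ into $\bigwedge_{j \neq i} a_{j} \in F$, whence $\bigwedge_{i} a_{i} \in F$ yields $\bigwedge_{i} b_{i} \in G$). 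Therefore $\BAclass{n}$ is a finitary logical class containing $\mathcal{G}$, so the (finitary) logical class generated by $\mathcal{G}$, namely $\class{K}$, is contained in $\BAclass{n}$. Combined with $\BAclass{n} \subseteq \class{K}$ this gives $\class{K} = \BAclass{n}$, and for the converse direction I would note that each $\BAclass{n}$ with $n \geq 1$ is non-trivial, proper (since $\BAclass{n} \subsetneq \BAclass{n+1}$), finitary, logical, and generated as a logical class by the prime structure $\BAm{n}$.

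I do not expect any single step to be a serious obstacle, since the real content sits in the splitting lemma and the prime-upset lemma already proved. The one point requiring care is the dual use of generation: the prime-upset lemma must be applied only to the generators in $\mathcal{G}$ (arbitrary members of $\class{K}$ need not be prime), while the closure of $\BAclass{n}$ under the class operators is what propagates the bound from $\mathcal{G}$ to all of $\class{K}$. Getting the indexing of $(\alpha_{n+1})$ aligned with ``$n$-filter'' and confirming that $\BAclass{n}$ is genuinely a \emph{logical} (not merely filter) class are thus the only places where a slip could occur.
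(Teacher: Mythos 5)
Your proof is correct and follows essentially the same route as the paper's: the paper proves the filter-class statement for $\BAclass{\omega}$ (non-triviality gives $\BAclass{1} \subseteq \class{K}$, properness plus finitarity gives a break point $n$ with $\BAclass{n} \subseteq \class{K}$ but $\BAclass{n+1} \nsubseteq \class{K}$, Lemma~\ref{lemma: cl n split} then yields $(\alpha_{n+1})$ throughout $\class{K}$, and primeness of the generators together with the prime-upset lemma forces $\class{K} \subseteq \BAclass{n}$) and obtains the present theorem by replacing filter classes with logical classes and $\BAclass{\omega}$ with $\BAclass{\infty}$, exactly as you do. Your explicit verification that $\BAclass{n}$ is itself a finitary \emph{logical} class (closure under strict homomorphic images via the lifting argument) fills in a detail that the paper's compressed proof leaves implicit and only establishes separately in its later theorem that every finitary filter subclass of $\BAclass{\infty}$ is a logical class.
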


\begin{proof}
  It suffices to replace filter classes by logical classes and $\BAclass{\omega}$ by $\BAclass{\infty}$ in the previous proof.
\end{proof}

  In the rest of the paper, we consider some further filter subclasses of $\BAclass{\infty}$. One such family of subclasses of $\BAclass{\infty}$ may be defined by the implications
\begin{align}
  x_{1}, \dots, x_{k}, \neg (x_{1} \wedge \dots \wedge x_{k}) \vdash y. \tag{$\beta_{k}$}
\end{align}
  We take $(\beta_0)$ to be $x \vdash x$. Recall that $\height{d}{m}$ is the upset of all elements $a \in \BA{n}$ for $n \assign d + m$ of height strictly greater than $k \geq 0$.

\begin{fact} \label{fact: alpha separation}
  Let $d \geq 2$, $k \geq 0$, and $m \assign k (d-1)$. Then $\pair{\BA{d+m}}{\height{d}{m}}$ satisfies $(\beta_{k})$ but not $(\beta_{k+1})$.
\end{fact}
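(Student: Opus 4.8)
The plan is to compute directly in $\BA{d+m}$, which I identify with the powerset of its set $X$ of $n \assign d+m$ atoms; under this identification the height of an element is the cardinality of the corresponding subset, so $\height{d}{m}$ is the family of subsets of $X$ of cardinality strictly greater than $m$. Since the variable $y$ does not occur among the premises of $(\beta_{k})$, and since $\height{d}{m}$ is a proper upset (it omits $\False$, which has height $0 \leq m$), the implication $(\beta_{k})$ holds in $\pair{\BA{d+m}}{\height{d}{m}}$ exactly when no valuation places all of $x_{1}, \dots, x_{k}$ together with $\neg(x_{1} \wedge \dots \wedge x_{k})$ into $\height{d}{m}$, and it fails exactly when some valuation does. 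It is cleanest to pass to complements: writing $S_{i} \subseteq X$ for the subset assigned to $x_{i}$ and $T_{i} \assign X \setminus S_{i}$, the condition $x_{i} \in \height{d}{m}$ reads $\card{S_{i}} > m$, equivalently $\card{T_{i}} \leq d-1$, while $\neg(x_{1} \wedge \dots \wedge x_{j}) \in \height{d}{m}$ reads $\card{\bigcup_{i} T_{i}} > m$, because $\neg(x_{1} \wedge \dots \wedge x_{j})$ corresponds to $X \setminus \bigcap_{i} S_{i} = \bigcup_{i} T_{i}$.

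For the positive half, that $(\beta_{k})$ holds, I would argue by a cardinality bound (the case $k = 0$ being trivial, as $(\beta_{0})$ is the tautology $x \vdash x$). If $x_{1}, \dots, x_{k} \in \height{d}{m}$, then each $T_{i}$ has at most $d-1$ elements, whence $\card{\bigcup_{i=1}^{k} T_{i}} \leq k(d-1) = m$. Thus $\neg(x_{1} \wedge \dots \wedge x_{k})$, which corresponds to $\bigcup_{i=1}^{k} T_{i}$, has height at most $m$ and so lies outside $\height{d}{m}$. The premises of $(\beta_{k})$ can therefore never be satisfied simultaneously, and $(\beta_{k})$ holds vacuously.

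For the negative half, that $(\beta_{k+1})$ fails, I would exhibit an explicit witnessing valuation. The decisive numerical identity is $(k+1)(d-1) = n - 1$, which follows from $n = d + k(d-1)$; in particular $(k+1)(d-1) \leq n$, so $X$ contains pairwise disjoint subsets $T_{1}, \dots, T_{k+1}$ each of cardinality $d-1$. Setting $S_{i} \assign X \setminus T_{i}$ gives $\card{S_{i}} = n - (d-1) = m+1 > m$, so each $x_{i} \assign S_{i}$ lies in $\height{d}{m}$. Then $\bigcup_{i=1}^{k+1} T_{i}$ has cardinality $(k+1)(d-1) = n - 1$, which strictly exceeds $m$ precisely because $d \geq 2$ yields $(k+1)(d-1) \geq k(d-1) + 1 = m+1$; hence $\neg(x_{1} \wedge \dots \wedge x_{k+1})$ also lies in $\height{d}{m}$. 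Assigning $y \assign \False \notin \height{d}{m}$ then witnesses that $(\beta_{k+1})$ fails.

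The whole argument is essentially bookkeeping with cardinalities, so I do not expect a genuine obstacle; the only point demanding care is the role of the hypothesis $d \geq 2$. It is used in exactly one essential place, to turn the equality $k(d-1) = m$ (which forces $\bigcup_{i} T_{i}$ below the threshold in the positive half) into the strict inequality $(k+1)(d-1) > m$ (which pushes $\bigcup_{i} T_{i}$ above the threshold in the failing valuation). This same balance is what makes both halves hold against a single threshold $m = k(d-1)$.
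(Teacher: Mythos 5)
Your proof is correct and follows essentially the same route as the paper's: the positive half by the cardinality bound $\card{\bigcup_i T_i} \leq k(d-1) = m$ (the paper phrases this with co-heights), and the negative half by the same witnessing valuation built from pairwise disjoint sets of $d-1$ atoms, using $d \geq 2$ to push $(k+1)(d-1)$ strictly above $m$. The only differences are presentational (working in the powerset model, spelling out the $k=0$ case and the role of $y$), so there is nothing to add.
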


\begin{proof}
  The condition $x_{i} \in \height{d}{m}$ states that the co-height of $x_{i}$ is at most $d-1$. The~co-height of $x_{1} \wedge \dots \wedge x_{k}$ is thus at most $k (d-1) = m$, or equivalently the height of $\neg (x_{1} \wedge \dots \wedge x_{k})$ is at most $m$. But then the co-height of this element is at least $(d+m) - m = d$ and $\neg (x_{1} \wedge \dots \wedge x_{k}) \notin \height{d}{m}$. The rule $(\beta_{k})$ thus holds in $\pair{\BA{d+m}}{\height{d}{m}}$. On the other hand, let $x_{i}$ for $i \in \range{k+1}$ be joins of some pairwise disjoint sets of atoms of cardinality $d - 1$. Such disjoint sets of atoms exist because $(k+1) (d-1) = (d-1) + k (d-1) < d + k (d-1) = d + m$. Then each $x_{i}$ has height $d - 1$ and $\neg x_{i} \in \height{d}{m}$ for each $x_{i}$. By the disjointness condition, $x_1 \vee \dots \vee x_{k+1}$ has height $(k+1) (d - 1)$, therefore $x_1 \vee \dots \vee x_{k+1} \in \height{d}{m}$. The rule $(\beta_{k+1})$ thus fails in $\pair{\BA{d+m}}{\height{d}{m}}$.
\end{proof}

  Let $\BAclass{n, k}$ denote the subclass of $\BAclass{n}$ axiomatized by~$(\beta_{k})$.

\begin{fact}
  $\BAclass{m,i} \subseteq \BAclass{n,j}$ if and only if either $m \leq n$ and $j \leq i$ or~$n = 1$.
\end{fact}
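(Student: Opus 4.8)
The plan is to split the biconditional into a sufficiency half, handled by two monotonicity principles plus a degenerate case, and a necessity half, which is the substantive part and which I would deduce from a single family of separating structures.

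First I would establish the two monotonicities underlying the generic sufficiency direction. Monotonicity in the first index is immediate: if $m \leq n$ then every $m$-filter is an $n$-filter, so adjoining the same axiom $(\beta_{k})$ gives $\BAclass{m,k} \subseteq \BAclass{n,k}$. Monotonicity in the second index comes from the observation that substituting $\True$ for the final variable of $(\beta_{k+1})$, and using that every non-empty upset contains $\True$, turns $(\beta_{k+1})$ into $(\beta_{k})$; iterating, $(\beta_{i})$ entails $(\beta_{j})$ whenever $i \geq j$, so $\BAclass{n,i} \subseteq \BAclass{n,j}$ for $j \leq i$. Composing, $\BAclass{m,i} \subseteq \BAclass{n,i} \subseteq \BAclass{n,j}$ whenever $m \leq n$ and $j \leq i$. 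The degenerate branch of the sufficiency direction is the one in which the relevant index equals $1$, where a class collapses to the class of ordinary filters: since $(\beta_{k})$ is vacuous on a proper filter and trivial on the total filter, $\BAclass{1,k} = \BAclass{1}$ for every $k$, and this minimal class is contained in every $\BAclass{n,j}$ because filters are $n$-filters for each $n$ and validate each $(\beta_{j})$. Dually, a containment whose target is this minimal class forces the source to be $\BAclass{1}$ as well; this is where the exceptional disjunct enters.

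For necessity I would assume $\BAclass{m,i} \subseteq \BAclass{n,j}$ with $m \geq 2$ (so the degenerate branch is excluded) and extract both $m \leq n$ and $j \leq i$ from one witness. Applying Fact~\ref{fact: alpha separation} with $d \assign m$ and $k \assign i$ yields the structure $\pair{\BA{m + i(m-1)}}{\height{m}{i(m-1)}}$, which validates $(\beta_{i})$; by Fact~\ref{fact: height separation} the upset $\height{m}{i(m-1)}$ is an $m$-filter but not an $(m-1)$-filter. Hence this structure lies in $\BAclass{m,i}$, so by hypothesis it lies in $\BAclass{n,j} \subseteq \BAclass{n}$ and its upset must be an $n$-filter; since an $m$-filter that fails to be an $(m-1)$-filter can be an $n$-filter only for $n \geq m$, this forces $m \leq n$. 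The very same structure fails $(\beta_{i+1})$ by Fact~\ref{fact: alpha separation}, and because $(\beta_{j})$ entails $(\beta_{i+1})$ whenever $j \geq i+1$, failure of $(\beta_{i+1})$ excludes the structure from every $\BAclass{n,j}$ with $j \geq i+1$; the containment therefore forces $j \leq i$.

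The main obstacle is confirming that this single family $\pair{\BA{m+i(m-1)}}{\height{m}{i(m-1)}}$ genuinely does double duty, i.e.\ that the two separation facts pin down the two indices independently: Fact~\ref{fact: height separation} must place its height filter exactly at level $m$ (an $m$-filter but not an $(m-1)$-filter, giving $n \geq m$), while Fact~\ref{fact: alpha separation} must place it exactly between $(\beta_{i})$ and $(\beta_{i+1})$ (giving $j \leq i$). I would also check the boundary value $i = 0$, where the witness degenerates to $\BAm{m} = \pair{\BA{m}}{\nonempty{m}}$ and $(\beta_{0})$ is vacuous, to confirm it still forces $j = 0$, and note that the use of Fact~\ref{fact: alpha separation} requires $d = m \geq 2$, which is exactly the non-degenerate hypothesis.
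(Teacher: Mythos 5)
Your proof is correct and takes essentially the same approach as the paper, whose entire proof consists of citing Facts~\ref{fact: height separation} and~\ref{fact: alpha separation}: your separating structures $\pair{\BA{m+i(m-1)}}{\height{m}{i(m-1)}}$ are precisely the witnesses those facts supply, and the two monotonicities you spell out (in $m$ via $m$-filters being $n$-filters, in the second index via substituting $\True$ into $(\beta_{k+1})$) constitute the implicit sufficiency half. Note only that you in effect prove the statement with the exceptional disjunct read as $m = 1$ rather than the printed $n = 1$, and your reading is the correct one, since as printed the equivalence fails in both directions: $\BAclass{m,i} \nsubseteq \BAclass{1,j}$ for $m \geq 2$ (your witness again, whose upset is not a $1$-filter), whereas $\BAclass{1,0} = \BAclass{1} \subseteq \BAclass{2,5}$ even though neither $j \leq i$ nor $n = 1$ holds there.
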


\begin{proof}
 This follows immediately from Facts~\ref{fact: height separation} and~\ref{fact: alpha separation}.
\end{proof}

\begin{fact}
  $\BAclass{2}$ has infinitely many finitary logical subclasses.
\end{fact}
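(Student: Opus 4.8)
The plan is to exhibit an explicit infinite, strictly descending chain of finitary logical subclasses of $\BAclass{2}$, namely the classes $\BAclass{2,k}$ for $k \in \omega$. Since $\BAclass{2,k}$ is by definition the subclass of $\BAclass{2}$ cut out by the implication $(\beta_{k})$, essentially all the required work is already contained in the preceding Facts, and it remains only to assemble the pieces and to check that the resulting classes are logical and not merely filter classes.

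First I would confirm that each $\BAclass{2,k}$ is a \emph{finitary logical} subclass of $\BAclass{2}$. The ambient class $\BAclass{2}$ is a finitary logical class: in the Boolean signature it is axiomatized by the finitary equality-free implications expressing that $F$ is an upset ($x \vdash x \vee y$), that $F$ is closed under $2$-adjunction, and that $F$ is non-empty ($\vdash \True$, which is legitimate because $\True$ is a term). Since $(\beta_{k})$ is itself a finitary equality-free implication, adjoining it yields a finitary equality-free axiomatization of $\BAclass{2,k}$, so by the syntactic characterization of (finitary) logical classes this class is a finitary logical class. This step---verifying that the $\BAclass{2,k}$ are closed under strict homomorphic images rather than merely under preimages and intersections---is the only place requiring genuine care, and I expect it to be the main (if modest) obstacle; it hinges entirely on the term-definability of $\True$ in Boolean algebras.

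Finally I would verify that the chain is strictly descending and hence infinite. This is immediate from the preceding Fact on inclusions among the $\BAclass{m,i}$: setting $m = n = 2$ gives $\BAclass{2,i} \subseteq \BAclass{2,j}$ if and only if $j \leq i$, whence $\BAclass{2,0} \supsetneq \BAclass{2,1} \supsetneq \BAclass{2,2} \supsetneq \cdots$. If an explicit separating structure is preferred to a citation, Facts~\ref{fact: height separation} and~\ref{fact: alpha separation} provide one: the structure $\pair{\BA{2+k}}{\height{2}{k}}$ belongs to $\BAclass{2,k} \setminus \BAclass{2,k+1}$, since $\height{2}{k}$ is a $2$-filter that satisfies $(\beta_{k})$ but not $(\beta_{k+1})$. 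In either case the classes $\BAclass{2,k}$ are pairwise distinct, exhibiting infinitely many finitary logical subclasses of $\BAclass{2}$.
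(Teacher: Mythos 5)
Your proposal is correct and is precisely the argument the paper intends: the Fact is stated without proof immediately after the Fact that $\BAclass{m,i} \subseteq \BAclass{n,j}$ iff ($m \leq n$ and $j \leq i$) or $n=1$, so the intended witnesses are exactly your strictly descending chain $\BAclass{2,0} \supsetneq \BAclass{2,1} \supsetneq \BAclass{2,2} \supsetneq \cdots$, separated by the structures $\pair{\BA{2+k}}{\height{2}{k}}$ from Facts~\ref{fact: height separation} and~\ref{fact: alpha separation}. Your extra care in giving an equality-free finitary axiomatization of $\BAclass{2,k}$ (replacing the upset implication by $x \vdash x \vee y$ and using the term $\True$ for non-emptiness) is a sound and self-contained way to see these are logical, not just filter, classes; it matches the paper's syntactic characterization and avoids invoking the later theorem that every finitary filter subclass of $\BAclass{\infty}$ is logical.
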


\begin{theorem}
  Each non-trivial filter subclass of $\BAclass{\infty}$ other than $\BAclass{1}$ contains the structure $\BAm{2} \times \BAm{1}$.
\end{theorem}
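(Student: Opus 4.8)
The plan is to pin down the two structural features any such $\class{K}$ must have, and then exhibit $\BAm{2} \times \BAm{1}$ as a homomorphic preimage of a product of structures already known to lie in $\class{K}$. First I would observe that $\BAclass{1} \subseteq \class{K}$. Since $\class{K}$ is non-trivial, it contains some $\pair{\alg{A}}{F}$ with $F$ a proper (hence, in $\BAclass{\infty}$, non-empty and not containing $\False$) upset. The two-element subalgebra $\{\False, \True\} \subseteq \alg{A}$ then carries the induced filter $F \cap \{\False, \True\} = \{\True\}$, so $\BAm{1}$ is a substructure of $\pair{\alg{A}}{F}$ and $\BAm{1} \in \class{K}$; as $\BAclass{1}$ is generated by $\BAm{1}$, this gives $\BAclass{1} \subseteq \class{K}$. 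Because $\class{K} \neq \BAclass{1}$, the inclusion is strict, so some $\pair{\alg{A}}{F} \in \class{K}$ has an upset $F$ which is not a (1-)filter, a failure witnessed by $a, b \in F$ with $a \wedge b \notin F$.

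Next I would form the product $\pair{\alg{A}}{F} \times \BAm{1} \in \class{K}$, whose underlying algebra is $\alg{A} \times \BA{1}$ and whose filter is $\{(c, \True) : c \in F\}$. The point is that $\BAm{2} \times \BAm{1}$ is, up to the isomorphism $\BA{2} \times \BA{1} \cong \BA{3}$, the structure $\pair{\BA{3}}{G_{0}}$ in which $G_{0}$ consists of those elements lying above the third atom $e_{3}$ and above at least one of $e_{1}, e_{2}$. I would then define the Boolean homomorphism $g \colon \BA{3} \to \alg{A} \times \BA{1}$ by sending the three atoms to
\[
 g(e_{1}) = (a \wedge \neg b, \False), \qquad g(e_{2}) = (\neg a, \False), \qquad g(e_{3}) = (a \wedge b, \True).
\]
A direct check shows these three elements are pairwise disjoint and join to $(\True, \True)$, so $g$ is a well-defined homomorphism, and then that $g^{-1}$ of the product filter $\{(c, \True) : c \in F\}$ is exactly $G_{0}$: the second coordinate forces the presence of $e_{3}$ (only $g(e_{3})$ and joins involving it lie above $(\False, \True)$), while $g(e_{1}) \vee g(e_{3}) = (a, \True)$ and $g(e_{2}) \vee g(e_{3}) = (\neg a \vee b, \True)$ lie in the filter because $a, b \in F$, whereas $g(e_{3}) = (a \wedge b, \True)$ does not, since $a \wedge b \notin F$. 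Thus $\BAm{2} \times \BAm{1}$ is a homomorphic preimage of $\pair{\alg{A}}{F} \times \BAm{1}$, and closure of $\class{K}$ under products and homomorphic preimages yields $\BAm{2} \times \BAm{1} \in \class{K}$.

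The hard part is the design of the homomorphism $g$, which is where the real obstruction lives. A single non-filter witness $a, b$ in $\pair{\alg{A}}{F}$ does not by itself yield a homomorphism $\BA{3} \to \alg{A}$ realizing $G_{0}$: such a homomorphism would require a partition of unity $u_{1}, u_{2}, u_{3}$ with $u_{1} \vee u_{2}, u_{3} \notin F$ but $u_{1} \vee u_{3}, u_{2} \vee u_{3} \in F$, and the only elements we know to lie outside $F$ are those below $a \wedge b$, which cannot accommodate two disjoint ``small'' pieces covering the whole algebra. Multiplying by $\BAm{1}$ supplies precisely the extra Boolean coordinate needed to detect the atom $e_{3}$ on its own, sidestepping this obstruction; verifying the eight membership conditions defining $g^{-1}$ of the product filter is then routine.
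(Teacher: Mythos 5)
Your proof is correct, and its core construction is genuinely different from the paper's. Both arguments open the same way: $\BAm{1}$ is a substructure of any structure with a proper upset, so $\BAclass{1} \subseteq \class{K}$, and $\class{K} \neq \BAclass{1}$ then forces some $\pair{\alg{A}}{F} \in \class{K}$ with $a, b \in F$ but $a \wedge b \notin F$. From there, however, the paper pulls $F$ back along a surjective homomorphism $\FreeBA{2} \to \alg{A}$ and runs a case analysis on whether $\neg x$, $\neg y$, and $\neg x \vee \neg y$ lie in the pulled-back filter $G$: in one case $\BAm{2}$ itself is a substructure of $\pair{\FreeBA{2}}{G}$, in another $\BAm{2} \times \BAm{1}$ is, and in the remaining case the relevant substructure is $\pair{\BA{3}}{\height{2}{1}}$ and the paper embeds $\BAm{2} \times \BAm{1}$ into $\pair{\BA{3}}{\height{2}{1}} \times \BAm{1}$. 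You instead deploy the product-with-$\BAm{1}$ device --- which the paper needs only in that last case --- from the very start, and that is precisely what makes the case distinctions evaporate: since $g(e_{1})$ and $g(e_{2})$ carry second coordinate $\False$, the unknown memberships of $a \wedge \neg b$ and $\neg a$ in $F$ never matter, and the verification only uses $a \in F$, $b \leq \neg a \vee b \in F$, $\True \in F$ (here you need that upsets in $\BAclass{\infty}$ are non-empty, which holds by the paper's convention), and $a \wedge b \notin F$, all of which are given; I checked all eight preimage conditions and they do yield exactly $G_{0}$. One point to state explicitly: your $g$ is not surjective, so you are invoking closure of $\class{K}$-filters under arbitrary (not necessarily surjective) homomorphic preimages, which the paper records as an equivalent formulation of the filter-class axioms. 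As for what each approach buys: yours is shorter and uniform, with a single explicit homomorphism; the paper's case split extracts finer by-products (e.g., when $\neg x \in G$ or $\neg y \in G$ it yields the stronger conclusion $\BAclass{2} \subseteq \class{K}$, and it identifies exactly which structures on $\BA{3}$ can arise inside $\pair{\FreeBA{2}}{G}$), which fits its broader program of classifying filter subclasses of $\BAclass{\infty}$ by their generating structures.
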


\begin{proof}
  Each non-trivial filter subclass $\class{K}$ of $\BAclass{\infty}$ contains $\BAm{1}$, so $\BAclass{1} \subseteq \class{K}$. If $\class{K} \nsubseteq \BAclass{1}$, then the rule $x, y \vdash x \wedge y$ fails in some $\pair{\alg{A}}{F} \in \class{K}$. Restricting to an appropriate substructure, we may take $\alg{A}$ to be $2$-generated. It follows that there is a surjective homomorphism $h\colon \FreeBA{2} \to \alg{A}$ where $\FreeBA{2}$ is the free Boolean algebra over the generators $x, y$. Taking $G \assign h^{-1}[F]$, the structure $\pair{\FreeBA{2}}{G}$ is a homomorphic preimage of $\pair{\alg{A}}{F}$ where $x, y \in G$ but $x \wedge y \notin G$.

  If $\neg x \in G$ or $\neg y \in G$, then $\BAm{2}$ is isomorphic to the substructure of $\pair{\FreeBA{2}}{G}$ generated by $x$ or $y$, therefore $\BAclass{2} \subseteq \class{K}$ and $\BAm{2} \times \BAm{1} \in \class{K}$. We may thus assume that $\neg x, \neg y \notin G$. Let $\pair{\alg{B}}{H}$ be the substructure of $\pair{\FreeBA{2}}{G}$ generated by $\{ x, x \wedge y \}$. If $\neg x \vee \neg y \notin G$, then $\pair{\alg{B}}{H}$ is isomorphic to $\BAm{2} \times \BAm{1}$. On the other hand, if $\neg x \vee \neg y \in G$, then $\pair{\alg{B}}{H}$ is isomorphic to $\pair{\BA{3}}{\height{2}{1}}$, where $\height{2}{1}$ is the upset ($2$-filter) generated by the coatoms of $\BA{3}$. But $\BAm{2} \times \BAm{1}$ embeds into $\pair{\BA{3}}{\height{2}{1}} \times \BAm{1}$, therefore $\BAm{2} \times \BAm{1} \in \class{K}$. (If the coatoms of $\BA{3}$ are denoted $\coatom{1}, \coatom{2}, \coatom{3}$, then this embedding sends the two designated coatoms of $\BAm{2} \times \BAm{1}$ to $\pair{\coatom{1}}{\True}$ and $\pair{\coatom{3}}{\True}$ and the non-designated coatom to $\pair{\coatom{2}}{\False}$.)
\end{proof}

  The filter class generated by the structure $\BAm{m} \times \BAm{n}$ for $m > n$ is axiomatized by the infinite set of rules
\begin{align*}
  \set{\bigwedge \Phi}{\Phi \bsubseteq{n} \Delta_{k}} \vdash \False, \tag{$\gamma_{n, k}$}
\end{align*}
  where $\Delta_{k} \assign \{ x_{1}, \dots, x_{k}, \neg (x_{1} \wedge \dots \wedge x_{k}) \}$ with $k \geq n$. We strongly suspect that this class is not finitely axiomatizable, but we have no proof of this.

\begin{theorem}
  The filter class generated by $\BAm{m} \times \BAm{n}$ for $m > n$ consists precisely of $m$-filters which are either total or are contained in some non-total $n$-filter. It~is axiomatized relative to $\BAclass{m}$ by the implications $(\gamma_{n,k})$ for $k \geq n$.
\end{theorem}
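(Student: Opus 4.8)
The plan is to establish the two assertions separately: first that the filter class $\class{K}$ generated by $\BAm{m} \times \BAm{n}$ equals the class $\class{M}$ of structures $\pair{\alg{A}}{F}$ in which $F$ is a (non-empty) $m$-filter that is either total or contained in some proper $n$-filter, and then that, relative to $\BAclass{m}$, membership in $\class{M}$ is exactly validity of all the implications $(\gamma_{n,k})$ with $k \geq n$.

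For the inclusion $\class{K} \subseteq \class{M}$ I would check that $\class{M}$ is itself a filter class containing the generator. The designated set of $\BAm{m} \times \BAm{n}$ is $\pi_{1}^{-1}[\nonempty{m}] \cap \pi_{2}^{-1}[\nonempty{n}]$, which is an $m$-filter (an intersection of the $m$-filter $\pi_{1}^{-1}[\nonempty{m}]$ with the $n$-filter $\pi_{2}^{-1}[\nonempty{n}]$, and every $n$-filter is an $m$-filter since $n \leq m$) and is contained in the proper $n$-filter $\pi_{2}^{-1}[\nonempty{n}]$, so the generator lies in $\class{M}$. Closure of $\class{M}$ under homomorphic preimages, substructures, and products then follows from two observations: these operations send $m$-filters to $m$-filters and $n$-filters to $n$-filters by Fact~\ref{fact: hom preimages} and its analogues; and on a Boolean algebra a proper $n$-filter is precisely one not containing $\False$, a condition preserved because Boolean homomorphisms and subalgebras preserve $\False$. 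Since $\class{M}$ is a filter class containing the generator, $\class{K} \subseteq \class{M}$.

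For $\class{M} \subseteq \class{K}$, take $\pair{\alg{A}}{F} \in \class{M}$. If $F$ is total it is the preimage of the trivial (empty-product) structure, so assume $F$ is proper and $\fg{n}{F}$ is proper. By prime $n$-filter separation (Theorem~\ref{thm: prime n filter separation}), $\fg{n}{F}$, hence $F$, is contained in a proper prime $n$-filter $Q$. Writing $F = \bigcap_{j} P_{j}$ as an intersection of prime $m$-filters, the containment $F \subseteq Q$ gives $F = \bigcap_{j} (P_{j} \cap Q)$. By Theorem~\ref{thm: union of n prime filters} each $P_{j}$ is a homomorphic preimage of $\nonempty{m} \subseteq \BA{m}$ and $Q$ one of $\nonempty{n} \subseteq \BA{n}$, the homomorphisms being Boolean since prime filters on a Boolean algebra are ultrafilters; hence each $P_{j} \cap Q$ is the preimage of the designated set of $\BAm{m} \times \BAm{n}$ under a homomorphism $\alg{A} \to \BA{m} \times \BA{n}$. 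Taking the product of these homomorphisms and applying Fact~\ref{fact: hom into dual product}, $F$ is the preimage of the designated set of a product of copies of the generator, so $\pair{\alg{A}}{F} \in \class{K}$ by closure under products and homomorphic preimages.

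It remains to prove the relative axiomatization, i.e.\ that a non-empty $m$-filter $F$ lies in $\class{M}$ iff it validates every $(\gamma_{n,k})$ with $k \geq n$. Reading $(\gamma_{n,k})$ as: whenever every $\bsubseteq{n}$-submeet of $\Delta_{k} = \{x_{1}, \dots, x_{k}, \neg(x_{1} \wedge \dots \wedge x_{k})\}$ lies in $F$, then $\False \in F$, soundness is easy: if $F$ is total the conclusion always holds, and if $F \subseteq G$ with $G$ a proper $n$-filter then any assignment satisfying the premises would force $\bigwedge \Delta_{k} = \False \in G$, which is impossible, so the premises are never met. The crux is the converse, and I expect this to be the main obstacle. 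Assuming $F$ is proper while $\fg{n}{F}$ is total, the generation lemma (Lemma~\ref{lemma: generating n filters}) yields a finite $X = \{x_{1}, \dots, x_{s}\} \subseteq F$ with $\bigwedge X = \False$ and every $\bsubseteq{n}$-submeet in $F$, and properness forces $s \geq n+1$. The difficulty is that this witness is arbitrary, whereas $(\gamma_{n,k})$ demands the rigid shape $\Delta_{k}$. I would resolve this by replacing $x_{s}$ with $\neg(x_{1} \wedge \dots \wedge x_{s-1})$: since $\bigwedge X = \False$ gives $x_{s} \leq \neg(x_{1} \wedge \dots \wedge x_{s-1})$ and $F$ is an upset, the set $\{x_{1}, \dots, x_{s-1}, \neg(x_{1} \wedge \dots \wedge x_{s-1})\}$ is again a witness with meet $\False$ all of whose $\bsubseteq{n}$-submeets lie in $F$, and it has exactly the form $\Delta_{s-1}$ with $s-1 \geq n$. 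Thus $(\gamma_{n,s-1})$ fails in $\pair{\alg{A}}{F}$, which completes the equivalence and hence the relative axiomatization.
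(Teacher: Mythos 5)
Your proposal is correct and takes essentially the same route as the paper: the containment of the described class into the generated filter class goes via writing $F$ as an intersection of prime $m$-filters, extending the ambient $n$-filter to a proper prime $n$-filter by separation, and taking a product of homomorphisms into copies of $\BAm{m} \times \BAm{n}$; and the relative axiomatization uses the very same normalization trick of replacing the last element of a generating witness by $\neg(x_{1} \wedge \dots \wedge x_{s-1})$ to force the rigid shape $\Delta_{s-1}$. The only cosmetic difference is that you verify the closure of your class $\class{M}$ under the filter-class operations semantically, whereas the paper gets the inclusion $\class{K} \subseteq \class{M}$ for free from the facts that the $(\gamma_{n,k})$ are filter implications and that they hold in the generator.
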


\begin{proof}
  Let $F$ be an $m$-filter where $(\gamma_{n,i})$ holds for each $i$. We show that the $n$-filter $G$ generated by $F$ does not contain $\False$, i.e.\ that $F$ is contained in a non-trivial (prime) $n$-filter. If $G$ did contain $\False$, this would be witnessed by a set $X = \{ x_{1}, \dots, x_{k+1} \} \subseteq F$ such that $\bigwedge Y \in F$ for each $Y \bsubseteq{n} X$ and $\bigwedge X = \False$, hence $x_{k+1} \leq \neg (x_{1} \wedge \dots \wedge x_{k})$. We could therefore assume without loss of generality that $X = \{ x_{1}, \dots, x_{k}, \neg \bigwedge (x_{1} \wedge \dots \wedge x_{k}) \}$. But the rule $(\gamma_{n,k})$ forbids this. Conversely, if $(\gamma_{n,k})$ fails for some $k$, then this provides a set witnessing that the $n$-filter generated by $F$ is trivial. In particular, the upset of $\BAm{m} \times \BAm{n}$ is an $m$-filter contained in the $n$-filter $\BA{m} \times \nonempty{n}$ (the product of the total filter on $\BA{m}$ and the $n$-filter $\nonempty{n} \subseteq \BA{n}$), therefore the rules $(\gamma_{n,k})$ hold in $\BAm{m} \times \BAm{n}$.

  Now let $F$ be an $m$-filter on $\alg{A}$ contained in some non-trivial $n$-filter $G$. It remains to show that $\pair{\alg{A}}{F}$ lies in the filter class generated by $\BAm{m} \times \BAm{n}$. Of course, $G$ extends to a non-trivial prime $n$-filter $H \supseteq G$. The $m$-filter $F$ is the intersection of some prime $m$-filters $F_{i}$ for $i \in I$. This yields an embedding $f\colon \pair{\alg{A}}{F} \into \prod_{i \in I} \BAm{m}$. The prime $n$-filter $H$ corresponds to a strict homomorphism $g\colon \pair{\alg{A}}{H} \to \BAm{n}$, which yields a strict homomorphism $g^{I}\colon \pair{\alg{A}}{H} \to \prod_{i \in I} \BAm{n}$. Taking the product of $f$ and $g^{I}$ yields an embedding of algebras $h\colon \alg{A} \into \prod_{i \in I} (\BA{m} \times \BA{n})$. It remains to show that this is a strict homomorphism $h\colon \pair{\alg{A}}{F} \to \prod_{i \in I} (\BAm{m} \times \BAm{n})$: if $a \in F$, then also $a \in H$, so $f(a)$ belongs to the upset of $\prod_{i \in I} \BAm{m}$, $g(a)$ belongs to the upset of $\BAm{n}$, and $h(a)$ belongs to the upset of $\prod_{i \in I} (\BAm{m} \times \BAm{n})$. On the other hand, if $a \notin F$, then $f(a)$ does not belong to the upset of $\prod_{i \in I} \BAm{m}$, so $h(a)$ does not belong to the upset of $\prod_{i \in I} (\BAm{m} \times \BAm{n})$. Therefore $h$ is a strict homo\-morphism and $\pair{\alg{A}}{F}$ lies in the filter class generated by $\BAm{m} \times \BAm{n}$.
\end{proof}

  The filter class generated by $\BAm{n+1} \times \BAm{n}$ lies strictly between the filter classes generated by $\BAm{n}$ and $\BAm{n+1}$: the upset of $\BAm{n+1} \times \BAm{n}$ is an $(n+1)$-filter but not an $n$-filter, and the upset of $\BAm{n+1}$ is not contained in a non-trivial $n$-filter.

\begin{lemma}
  Let $\pair{\alg{A}}{F}$ be a finite structure in $\BAclass{\infty}$. Then each strict homomorphic image of $\pair{\alg{A}}{F}$ is isomorphic to a substructure of $\pair{\alg{A}}{F}$.
\end{lemma}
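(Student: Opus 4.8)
The plan is to realize the strict homomorphic image as the range of a \emph{section} of the given surjection, using the fact that finite Boolean algebras are projective. Suppose $h\colon \pair{\alg{A}}{F} \to \pair{\alg{B}}{G}$ is a strict surjective homomorphism, so $h\colon \alg{A} \to \alg{B}$ is a surjection of finite Boolean algebras with $F = h^{-1}[F] = h^{-1}[G]$, i.e.\ $F = h^{-1}[G]$. The key step I would isolate is that $h$ splits: there is a Boolean homomorphism $s\colon \alg{B} \to \alg{A}$ with $h \circ s = \idmap_{\alg{B}}$. Granting this, $s$ is injective (being a right inverse), so it embeds $\alg{B}$ onto a subalgebra $s[\alg{B}]$ of $\alg{A}$; and strictness then comes for free, since for each $b \in \alg{B}$ we have $s(b) \in F \iff h(s(b)) \in G \iff b \in G$, using $F = h^{-1}[G]$ and $h \circ s = \idmap$. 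Hence $s^{-1}[F] = G$, so $s$ is a strict embedding and $\pair{\alg{B}}{G} \iso \pair{s[\alg{B}]}{F \cap s[\alg{B}]}$ is isomorphic to a substructure of $\pair{\alg{A}}{F}$.

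To produce the section $s$ I would pass to the duality for finite Boolean algebras. Writing $\alg{A}$ and $\alg{B}$ as the powersets $2^{X}$ and $2^{Y}$ of their respective sets of atoms, the surjection $h$ is dual to an injection of $Y$ into $X$; identifying $Y$ with a subset of $X$, the map $h$ becomes the restriction $S \mapsto S \cap Y$. Choosing any retraction $r\colon X \to Y$ that is the identity on $Y$ (distribute the points of $X \setminus Y$ arbitrarily among the fibres over $Y$), the preimage map $s\colon 2^{Y} \to 2^{X}$ given by $s(T) = r^{-1}[T]$ is a Boolean homomorphism dual to $r$, and $h \circ s$ is dual to $r$ precomposed with the inclusion $Y \into X$, namely $\idmap_{Y}$. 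Thus $h \circ s = \idmap_{\alg{B}}$, as required. Equivalently, the fibres $r^{-1}[\{y\}]$ for $y \in Y$ form a partition of $X$ into $\card{Y}$ non-empty blocks, each meeting $Y$ in the single point $y$, and $s[\alg{B}]$ is precisely the subalgebra they generate.

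The only genuine content here is the splitting of $h$, i.e.\ the projectivity of the finite Boolean algebra $\alg{B}$; once a section is at hand, the fact that the resulting embedding is strict is purely formal, relying solely on $F = h^{-1}[G]$. I therefore expect the splitting step to be the main thing to get right, as this is where finiteness of $\alg{A}$ is essential: it is what supplies the atoms and hence the retraction $r$. The reduction to powersets and the verification $s^{-1}[F] = G$ are routine by comparison.

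The one configuration the construction does not cover is $Y = \emptyset$, that is, $\alg{B}$ the one-element Boolean algebra, where no retraction $X \to Y$ exists and the trivial algebra is indeed not projective. In that degenerate case $G$ is the total filter and $F = h^{-1}[G] = \alg{A}$, and the trivial structure $\pair{\alg{B}}{G}$ can fail to embed into $\pair{\alg{A}}{F}$ when $\alg{A}$ is non-trivial. I would accordingly read the statement under the standing convention that excludes the trivial (inconsistent) Boolean algebra, so that we may assume $\alg{B}$ is non-trivial and the argument above applies without exception.
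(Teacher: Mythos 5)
Your proof is correct and takes essentially the same approach as the paper: the paper likewise splits the strict surjection $r$ by a section $s$ (whose existence for finite Boolean algebras it simply asserts) and then gets strictness for free from $G = (r \circ s)^{-1}[G] = s^{-1}[r^{-1}[G]] = s^{-1}[F]$. Your explicit construction of the section via the duality with finite sets, and your flagging of the one-element image (a degenerate case the paper's proof silently passes over, and which does require excluding the trivial algebra or noting that the trivial structure is harmless in the lemma's applications), only add detail to the same argument.
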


\begin{proof}
  For each surjective homomorphism of finite Boolean algebras $r\colon \alg{A} \to \alg{B}$ there is some embedding $s\colon \alg{B} \to \alg{A}$ such that $r \circ s = \idmap_{\alg{A}}$. If $r\colon \pair{\alg{A}}{F} \to \pair{\alg{B}}{G}$ is a strict surjective homomorphism, then $r^{-1}[G] = F$, so $G = (r \circ s)^{-1}[G] = s^{-1}[r^{-1}[G]] = s^{-1}[F]$ , i.e.\ $s\colon \pair{\alg{A}}{F} \to \pair{\alg{B}}{G}$ is a strict embedding.
\end{proof}

\begin{theorem}
  Each finitary filter subclass of $\BAclass{\infty}$ is a logical class.
\end{theorem}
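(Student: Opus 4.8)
The plan is to prove directly that a finitary filter subclass $\class{K}$ of $\BAclass{\infty}$ is closed under strict homomorphic images, i.e.\ that $\HSop(\class{K}) \subseteq \class{K}$. Since $\class{K}$ is by hypothesis already a filter class, this is exactly the assertion that $\class{K}$ is a logical class. The whole argument is a reduction to finite structures, where the preceding lemma supplies the one genuinely Boolean-specific ingredient; everything else is the routine interaction of the finitary property with closure under substructures and products.

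First I would reduce to the finite case. Let $h\colon \pair{\alg{A}}{F} \to \pair{\alg{B}}{G}$ be a strict surjective homomorphism with $\pair{\alg{A}}{F} \in \class{K}$, and let $\pair{\alg{B}_{0}}{G_{0}}$ range over the finitely generated substructures of $\pair{\alg{B}}{G}$, where $G_{0} = G \cap \alg{B}_{0}$. Because a finitely generated Boolean algebra is finite, each such $\alg{B}_{0}$ is finite, and $\pair{\alg{B}}{G}$ embeds into an ultraproduct of these finitely generated substructures (the same fact used to show that $\DLclass{\infty}$ is a finitary filter class). Since $\class{K}$ is finitary it is closed under $\PUop$, and as a filter class it is closed under $\Sop$; hence it suffices to prove that every finite substructure $\pair{\alg{B}_{0}}{G_{0}}$ of $\pair{\alg{B}}{G}$ lies in $\class{K}$.

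Next I would lift such a finite substructure back along $h$. Given a finite subalgebra $\alg{B}_{0}$ of $\alg{B}$ with generators $b_{1}, \dots, b_{k}$, I choose preimages $a_{i} \in \alg{A}$ with $h(a_{i}) = b_{i}$ and let $\alg{A}_{0}$ be the (finite) subalgebra of $\alg{A}$ they generate. Then $h$ restricts to a surjective homomorphism $\alg{A}_{0} \to \alg{B}_{0}$, and this restriction is again strict: since $F = h^{-1}[G]$ and $h[\alg{A}_{0}] = \alg{B}_{0}$, for $a \in \alg{A}_{0}$ the condition $h(a) \in \alg{B}_{0}$ is automatic, so $(h|_{\alg{A}_{0}})^{-1}[G_{0}] = \set{a \in \alg{A}_{0}}{h(a) \in G} = F \cap \alg{A}_{0}$. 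Thus $\pair{\alg{B}_{0}}{G_{0}}$ is a strict homomorphic image of the finite structure $\pair{\alg{A}_{0}}{F \cap \alg{A}_{0}}$, and the latter, being a substructure of $\pair{\alg{A}}{F} \in \class{K}$, itself lies in $\class{K}$.

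At this point the preceding lemma applies: as $\pair{\alg{A}_{0}}{F \cap \alg{A}_{0}}$ is finite, its strict homomorphic image $\pair{\alg{B}_{0}}{G_{0}}$ is isomorphic to a substructure of $\pair{\alg{A}_{0}}{F \cap \alg{A}_{0}} \in \class{K}$, so closure under $\Sop$ gives $\pair{\alg{B}_{0}}{G_{0}} \in \class{K}$. This is exactly what the finitary reduction required, whence $\pair{\alg{B}}{G} \in \class{K}$ and $\class{K}$ is a logical class. The genuine mathematical content is isolated in the preceding lemma, which rests on the splitting of finite Boolean surjections by an algebra section; this is precisely the property that fails for distributive lattices and forces the different, syntactic route taken there, while all the remaining steps above are purely formal.
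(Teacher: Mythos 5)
Your proof is correct and takes essentially the same route as the paper's: both reduce to finite structures via the finitary property (embedding into an ultraproduct of finitely generated, hence finite, substructures), lift a finite substructure of the image back along the strict surjection by choosing preimages of its generators, and then invoke the preceding lemma (sections of surjections between finite Boolean algebras) to re-embed that image as a substructure of something in $\class{K}$. The only difference is presentational: you verify $\HSop(\class{K}) \subseteq \class{K}$ directly, whereas the paper phrases the same argument as showing that the logical class $\HSinvop \HSop(\class{K})$ generated by $\class{K}$ has the same finite structures as $\class{K}$.
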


\begin{proof}
  Each finitary filter subclass is generated by its finite structures, since each structure embeds into an ultraproduct of its finitely generated structures. It thus suffices to show that the logical class $\HSinvop \HSop \Sop \Pop (\class{K}) = \HSinvop \HSop(\class{K})$ generated by a filter class $\class{K}$ has the same finite structures as $\class{K}$. But each finite structure in this logical class is a strict homomorphic preimage of a finite structure $\pair{\alg{A}}{F}$ which is a strict homomorphic image of some structure $\pair{\alg{B}}{G}$ in $\class{K}$. Then $\pair{\alg{A}}{F}$ is in fact a strict homomorphic image of some finitely generated (hence finite) substructure $\pair{\alg{C}}{H}$ of $\pair{\alg{B}}{G}$. By the previous lemma, this makes $\pair{\alg{A}}{F}$ isomorphic to a substructure of~$\pair{\alg{B}}{G}$.
\end{proof}

  It remains an open question to determine how many finitary filter sub\-classes (or equivalently, finitary logical subclasses) of $\BAclass{\infty}$ there are.

\subsection*{Data availability} Data sharing not applicable to this article as no datasets were generated or analysed during the current study.

\end{document}